\newtheorem{dummy}{dummy}[section]
\newtheorem{lemma}[dummy]{Lemma}
\newtheorem{theorem}[dummy]{Theorem}
\newtheorem{corollary}[dummy]{Corollary}
\newtheorem{proposition}[dummy]{Proposition}
\theoremstyle{definition}
\newtheorem{definition}[dummy]{Definition}
\newtheorem{example}[dummy]{Example}
\newtheorem{remark}[dummy]{Remark}
\numberwithin{equation}{section}
\newcommand{\R}{\mathbb {R}}
\newcommand{\Z}{\mathbb {Z}}
\newcommand{\F}{\mathbb {F}}
\newcommand{\dd}{\partial}
\newcommand{\alg}{\mathcal{A}}
\newcommand{\calC}{\mathcal{C}}
\newcommand{\Char}{\mbox{char}\,}
\def\graded#1{\mathbf{#1}}
\begin{document}

\title[Augmented Legendrian cobordism in $J^1S^1$]{Augmented Legendrian cobordism in $J^1S^1$}

\author{Yu Pan}
\address{Tianjin University}
\email{paulinenk65@gmail.com}

\author{Dan Rutherford}
\address{Ball State University}
\email{rutherford@bsu.edu}

\begin{abstract}  We consider Legendrian links and tangles in $J^1S^1$ and $J^1[0,1]$ equipped with Morse complex families over a field $\mathbb{F}$ and classify them up to Legendrian cobordism.  When the coefficient field is $\mathbb{F}_2$ this provides a cobordism classification for Legendrians equipped with augmentations of the Legendrian contact homology DG-algebras.  A complete set of invariants, for which arbitrary values may be obtained, is provided by the  fiber cohomology, a graded monodromy matrix, and a mod $2$ spin number.  We apply the classification to construct augmented Legendrian surfaces in $J^1M$ with $\dim M = 2$ realizing any prescribed monodromy representation, $\Phi:\pi_1(M,x_0) \rightarrow \mathit{GL}(\mathbf{n}, \mathbb{F})$.  
\end{abstract}

\maketitle

\section{Introduction}
In this article, we classify augmented Legendrian links  
in the $1$-jet space of the circle, $J^1S^1$,
 up to cobordism.  
By an augmented Legendrian link we mean a Legendrian link  $\Lambda$ equipped with the additional structure of an {\it augmentation}, $\epsilon:\mathcal{A} \rightarrow \mathbb{F}_2$, of its Legendrian contact homology dg-algebra.  Augmented Legendrians are
 natural objects   
from the point of view of symplectic field theory, and for some purposes they 
 can  be treated analogously to embedded Lagrangian submanifolds.  For instance, Legendrians with augmentations define objects in Fukaya categories \cite{NZ, NRSSZ}, and an analog of the Lagrangian intersection Floer complex can be defined for pairs of augmented Legendrians  using linearized homology, cf. \cite{Che}.
  Moreover, Legendrians that admit augmentations are  more rigid in their behavior than general Legendrian submanifolds.  For example, 
the number of Reeb chords of an augmented Legendrian in $J^1\R^n$ satisfies an Arnold conjecture type lower bound that can fail for general Legendrians; see \cite{EESorientation, EEMS}.  Our main results show that this heightened rigidity 
 persists at the level of cobordisms, as there are substantially more cobordism classes for Legendrians equipped with augmentations than  for those without.

Augmentations of $1$-dimensional Legendrian links are known to be equivalent to (or closely related to) some other important structures in symplectic topology such as {\it constructible sheaves} \cite{STZ, NRSSZ} and {\it generating families} \cite{ChP, FuRu}.  In this article, we treat augmented Legendrians using
 the correspondence between augmentations and {\it Morse complex families} (abbrv. {\it MCFs}) which are combinatorial analogs of generating families, cf. \cite{Henry, HenryRu3, HenryRu2, RuSu3, AGIgusa, PanRu2}.  
  As in \cite{PanRu2}, 
 working over $\mathbb{F}_2$, the correspondence between augmentations and MCFs induces a bijection between cobordism classes;  see Section 2.
 With this identification understood, we will also use the terminology {\it augmented Legendrian} when referring to an ordered pair 
 $(\Lambda, \mathcal{C})$ where $\Lambda \subset J^1M$ is a Legendrian submanifold and $\mathcal{C}$ is an MCF.  

\begin{remark} Throughout, we will consider MCFs with coefficients in an arbitrary field $\mathbb{F}$ which is a natural level of generality for our arguments.  See Section \ref{sec:generalF} for a discussion of an extension of the correspondence  between cobordism classes of MCFs and augmentations to coefficient fields more general than $\mathbb{F}_2$; currently, there are difficulties when $\mathit{Char} \, \mathbb{F} \neq 2$ stemming from the challenge in explicitly evaluating the orientation signs of holomorphic disks in the case of Legendrian surfaces.
\end{remark}

Roughly, a Morse complex family $\mathcal{C}$ over $\mathbb{F}$ for a Legendrian link or surface, $\Lambda \subset J^1M$, consists of a family of chain complexes $(C_{x}, d_x)$ of $\mathbb{F}$-modules defined for generic $x \in M$  where the generators for $C_x$ correspond to the sheets of $\Lambda|_{x}$, 
 subject to axioms 
 coming from the $1$-parameter and $2$-parameter Morse theory, cf. \cite{HatcherWagoner}.  
  In general, we will consider $\rho$-graded Legendrians and MCFs where the grading on the complexes $(C_{x},d_{x})$ is by $\Z/\rho$ with $\rho \geq 0$.  The cases $\rho = 1$ or $2$ are of some special interest due to a connection between $1$- and $2$-graded augmentations and the Kauffman and HOMFLY-PT polynomials; see \cite{HenryRu3}.  When $\rho$ is even, $\rho$-graded Legendrian cobordisms are orientable; non-orientable cobordisms may occur when $\rho$ is odd.

We now state the cobordism classification in $J^1S^1$. 	To a $\rho$-graded augmented Legendrian $(\Lambda, \mathcal{C}) \subset J^1S^1$ and a choice of base point $x_0 \in S^1$ we can associate the {\it fiber cohomology} at $x_0$, $H^*(C_{x_0},d_{x_0})$, together with a {\it monodromy map}
\[
\phi_{\Lambda, \mathcal{C}}:H^*(C_{x_0},d_{x_0}) \rightarrow H^*(C_{x_0},d_{x_0})
\]
that is a degree preserving automorphism of $H^*(C_{x_0},d_{x_0})$.  Writing $\graded{n}:\Z/\rho \rightarrow \Z_{\geq 0}$,  $\graded{n}(i) = \dim H^{i}(C_{x_0},d_{x_0})$ for the graded dimension of the fiber cohomology, a choice of basis for $H^*(C_{x_0},d_{x_0})$ leads to a {\it graded monodromy matrix}
\[
\graded{M}_{\Lambda, \mathcal{C}} \in GL(\graded{n}, \mathbb{F}):= \prod_{i \in \Z/\rho} GL(\graded{n}(i), \mathbb{F})
\]
which is a sequence of matrices indexed by $\Z/\rho$ such that $\graded{M}_{\Lambda, \calC}(i) \in GL(n(i), \mathbb{F})$ is the matrix of $\phi_{\Lambda, \mathcal{C}}:H^i(C_{x_0},d_{x_0}) \rightarrow H^i(C_{x_0},d_{x_0})$.  When $\Char \mathbb{F} \neq 2$ and $\rho$ is even,  a spin invariant $\xi(\Lambda, \mathcal{C}) \in \Z/2$ is also defined; see Section \ref{sec:invariants}.

\begin{theorem} \label{thm:s1}  Assume either $\Char \mathbb{F} =2$ and $\rho \neq 1$, or $\Char \mathbb{F} \neq 2$ and $\rho$ is even.
Then, two $\rho$-graded augmented Legendrians over $\mathbb{F}$ are cobordant if and only if  
\begin{itemize}
\item they have the same spin invariant (if $\mathit{Char}\, \mathbb{F} \neq 2$),
\item their fiber cohomologies have the same graded dimension, $\graded{n}:\Z/\rho \rightarrow \Z_{\geq 0}$, and
\item their monodromy matrices are conjugate in $GL(\graded{n}, \mathbb{F})$.
\end{itemize}
Moreover, all possible values of these cobordism invariants arise from augmented Legendrians.
\end{theorem}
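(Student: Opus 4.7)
The plan is to split the theorem into three parts: the listed data are cobordism invariants, they form a complete invariant, and every admissible value is realized. Invariance is essentially carried out in Section \ref{sec:invariants}: the graded dimension is obviously preserved; the conjugacy class of $\graded{M}_{\Lambda,\mathcal{C}}$ follows from functoriality of fiber cohomology, since any cobordism between augmented Legendrians in $J^1S^1$ induces chain quasi-isomorphisms on fibers at $x_0$ that conjugate one monodromy to the other; and when $\Char \mathbb{F} \neq 2$ and $\rho$ is even, $\xi$ is defined so that the elementary moves comprising an MCF-cobordism preserve it modulo $2$.

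For completeness, my approach is to reduce each $(\Lambda, \mathcal{C})$ to a normal form depending only on $(\graded{n}, \graded{M}_{\Lambda,\mathcal{C}})$, together with $\xi$ in the oriented case. Cutting $S^1$ at $x_0$ gives a tangle in $J^1[0,1]$ with matching endpoints, so I would invoke the $J^1[0,1]$ tangle-cobordism classification that the paper presumably establishes first. I expect it to say that any tangle is cobordant, rel.\ endpoints, to a collection of horizontal strands decorated by a prescribed sequence of handle slides whose composition realizes an arbitrary degree-preserving automorphism of the fiber. Specializing to the monodromy and regluing the endpoints produces a normal form in $J^1S^1$. To finish the completeness half, two such normal forms whose monodromies satisfy $\graded{M}' = P\, \graded{M}\, P^{-1}$ are joined by inserting a handle-slide pattern that realizes $P$ just before the loop and $P^{-1}$ just after; rotating this insertion around $S^1$ in an annular cobordism in $J^1([0,1] \times S^1)$ allows the two sides to cancel, and when $\Char \mathbb{F} \neq 2$ one checks the construction can be arranged to respect $\xi$.

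For realizability I would exhibit explicit models. Take $\sum_i \graded{n}(i)$ parallel Legendrian circles in $J^1S^1$ at distinct heights, with Maslov potentials chosen so that degree $i \in \Z/\rho$ occurs with multiplicity $\graded{n}(i)$. An MCF on this trivial link is assembled from a concatenation of handle slides placed along $S^1$ whose total product in $GL(\graded{n}, \mathbb{F})$ can be made equal to any preselected matrix, exhibiting every conjugacy class. When $\Char \mathbb{F} \neq 2$ and $\rho$ is even, a local stabilization (a pair of cusps) on one component flips $\xi$, so both values are realized. The main obstacle I anticipate is the tangle-classification step in the second paragraph: proving that sequences of handle slides are cobordant rel.\ endpoints precisely when they compose to the same automorphism of the fiber is the technical heart, and I would expect it to absorb most of the paper's earlier machinery, including a careful analysis of how handle slides can be created, cancelled, and transported through crossings and cusps.
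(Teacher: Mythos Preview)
Your overall strategy matches the paper's: invariance is Section \ref{sec:invariants}; completeness is obtained by reducing to the $J^1[0,1]$ tangle classification (Theorem \ref{thm:J101}); and your conjugation argument---insert tangles realizing $P$ and $P^{-1}$ and rotate around $S^1$---is precisely the computation in the proof of Theorem \ref{thm:S1main}. However, there is one genuine gap in your reduction step.

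When you cut $S^1$ at $x_0$, the resulting augmented tangle in $J^1[0,1]$ need not be \emph{full}: the differential $d_{x_0}$ is generally nonzero, so the number of boundary strands can exceed $n=\sum_l \graded{n}(l)$ and the monodromy matrix is not a well-defined element of $GL(\graded{n},\mathbb{F})$ in the sense required by Theorem \ref{thm:J101}. The paper handles this with Lemma \ref{lemma:diff}: one first puts $\mathcal{C}$ into $SR$-form so that $d_{x_0}$ is in Barannikov normal form, and then applies an explicit cobordism (Figure \ref{fig:S1toFull}, essentially a pinch move together with a homology basepoint to normalize the coefficient) to cancel each pair of sheets $\{S_i,S_j\}$ with $d_{x_0}S_j = rS_i$, leaving only the fixed-point strands at $x_0$. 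Only then does cutting at $x_0$ produce a full $n$-tangle to which Theorem \ref{thm:J101} applies. Your sketch jumps directly from ``cut at $x_0$'' to ``invoke the tangle classification,'' which skips this nontrivial step.

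A minor correction on realizability: adding a pair of cusps does not flip $\xi$, since the (R1) move trades a right cusp for a spin basepoint and leaves $\xi$ unchanged (Proposition \ref{prop:SpinInvariant}). The paper instead flips $\xi$ by inserting a spin basepoint together with a homology basepoint of coefficient $-1$; see the last line of the proof of Proposition \ref{prop:surj}.
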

 \noindent In contrast, without augmentations, two Legendrians in $J^1S^1$ are oriented cobordant if and only if they have the same rotation number 
 and are homologous, cf. \cite{ArnoldWave} and Section \ref{sec:J1S1}.

Theorem \ref{thm:s1} is established as a consequence of a cobordism classification of augmented Legendrian tangles in $J^1[0,1]$.
When working with ($\rho$-graded) Legendrians in $J^1[0,1]$ we fix the number of boundary points, $n$, as well as their grading, $\vec{\mu} \in (\Z/\rho)^n$, to be identical at the left and right boundary, and we focus on MCFs that satisfy $d_x=0$ near the boundary.   
Cobordism classes of such {\it full augmented Legendrian $n$-tangles} form a group denoted $\mathit{Cob}^\rho_{\vec{\mu}}(J^1[0,1]; \mathbb{F})$ with the operation of side-by-side concatenation.  Moreover, 
the monodromy matrix is well-defined (not just up to conjugation).
\begin{theorem}\label{thm:01}  There are group isomorphisms:
\begin{itemize}
\item When $\Char \mathbb{F} \neq 2$ and $\rho$ is even
\begin{align*}
\mathit{Cob}^\rho_{\vec{\mu}}(J^1[0,1]; \mathbb{F}) & \rightarrow  GL(\graded{n},\mathbb{F})\times \Z/2  \\
[(\Lambda, \mathcal{C})] & \mapsto \quad (\graded{M}_{\Lambda, \mathcal{C}}, \xi(\Lambda, \mathcal{C})). 
\end{align*}
\item When $\Char \mathbb{F} =2$ and $(n, \rho) \neq (0,1)$
\begin{align*}
\mathit{Cob}^\rho_{\vec{\mu}}(J^1[0,1]; \mathbb{F}) & \rightarrow  GL(\graded{n},\mathbb{F})  \\
[(\Lambda, \mathcal{C})] & \mapsto \quad \graded{M}_{\Lambda, \mathcal{C}}. 
\end{align*}
\end{itemize}
\end{theorem}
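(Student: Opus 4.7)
The plan is to establish the four properties of a group isomorphism — well-definedness, homomorphism, surjectivity, and injectivity — with the real work falling on injectivity. Well-definedness reduces to checking that the graded monodromy matrix and the spin invariant are genuinely cobordism invariants on the nose, not merely up to conjugation. This is possible because the number and grading of boundary points are fixed at $\vec{\mu}$ and the MCFs are required to satisfy $d_x = 0$ near $\partial [0,1]$, pinning down a canonical basis of the fiber cohomology at each endpoint and hence a specific matrix. The homomorphism property is essentially tautological: the monodromy of a concatenation is the composition of the individual parallel-transport maps, giving matrix multiplication, and additivity of $\xi$ follows directly from its definition in Section \ref{sec:invariants}.

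For surjectivity I would construct explicit augmented Legendrian tangles realizing the elementary generators of $GL(\graded{n}, \mathbb{F})$ — permutation matrices, diagonal matrices with a single nonidentity entry, and transvections — acting within each graded piece. For each such elementary operation the underlying Legendrian can be taken as an $n$-strand braid with a single crossing or a single handleslide-type modification of the MCF placed at the appropriate pair of sheets compatible with $\vec{\mu}$, and the monodromy is computed directly from the MCF axioms. Since elementary matrices generate the general linear group, a suitable concatenation realizes any $\graded{M}$. In the $\Char \mathbb{F} \neq 2$ case I would exhibit one further local tangle whose spin invariant is nontrivial to complete surjectivity onto the $\Z/2$ factor.

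The main obstacle is injectivity: given $(\Lambda, \mathcal{C})$ with trivial invariants, show it is cobordant to the flat $n$-strand tangle with zero differential. I would proceed via a normal-form reduction, first using Legendrian cobordism together with Morse-theoretic moves on MCFs to rewrite $(\Lambda, \mathcal{C})$ as a concatenation of the elementary tangles constructed in the surjectivity step; the problem then becomes algebraic. A word in elementary matrices that evaluates to the identity must be reducible to the empty word using only relations realizable by explicit cobordism surfaces — the $E \cdot E^{-1} = I$ cancellations and the Steinberg-type commutator relations among elementaries — and the main task is to construct such a cobordism for each relation. The key calculation is that commutators among scalings can produce an obstruction valued in $\Z/2$ when $\Char \mathbb{F} \neq 2$, which I would show is exactly captured by $\xi$. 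Finally, the exceptional case $(n,\rho) = (0,1)$ in characteristic $2$ must be excluded because the cobordism group of closed MCFs on the empty Legendrian in $J^1[0,1]$ contributes an extra $\Z/2$ factor arising from non-orientable examples not detected by the monodromy.
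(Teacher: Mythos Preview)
Your overall architecture matches the paper's (well-definedness, homomorphism, surjectivity via elementary matrices, then injectivity), but several steps contain genuine gaps.

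\textbf{Injectivity.} Your plan is to write an arbitrary tangle as a word in elementary tangles and then reduce any word evaluating to the identity using cobordisms that realize relations among elementary matrices. This requires a complete presentation of $GL(\graded{n},\mathbb{F})$ by generators and relations, together with an explicit cobordism for \emph{each} relation. You name only $E\cdot E^{-1}=I$ and ``Steinberg-type commutator relations''; these do not suffice to present $GL$ (one also needs relations among diagonals, between diagonals and transvections, and the interaction with permutations), and you give no argument that each such relation is realized by a Legendrian cobordism. The paper sidesteps this entirely: instead of the word problem, it produces a \emph{standard form} (a simple positive permutation braid with a prescribed pattern of handleslides and basepoints), shows every tangle is cobordant to a standard form, and proves via a Bruhat-type uniqueness lemma that two standard forms with the same invariants coincide. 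This uniqueness-of-normal-form argument is the missing idea in your proposal.

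\textbf{The spin invariant.} Two of your claims are incorrect. First, additivity of $\xi$ under concatenation is \emph{not} tautological: the definition of $\xi$ involves the number of components of the closure, and components do not add under concatenation. The paper's argument reduces to braids and identifies $c(b)+n \bmod 2$ with the parity of the underlying permutation, which is multiplicative. Second, the extra $\Z/2$ in the $\Char\mathbb{F}\neq 2$ case does not arise from ``commutators among scalings''; it is toggled by inserting a spin basepoint paired with a homology basepoint of coefficient $-1$, which leaves the monodromy unchanged.

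\textbf{The exceptional case $(n,\rho)=(0,1)$.} Your explanation is wrong: there is no ``extra $\Z/2$ factor from non-orientable examples.'' When $n=0$ the monodromy matrix is vacuous, and in the $1$-graded case the obstruction comes from augmented unknots $(U,\mathcal{C}_b)$ carrying a single handleslide with coefficient $b\in\mathbb{F}$; the paper shows every $0$-tangle is cobordant to some $(U,\mathcal{C}_b)$ but leaves open whether distinct $b$'s give distinct classes.
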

\noindent In the case that $n=0$ and $\rho=1$ we obtain partial results.  See Theorem \ref{thm:J101}.

For an augmented Legendrian surface, $(\Sigma, \mathcal{C}) \subset J^1M$ with $\dim M =2$, as in \cite{RuSu3} the monodromy map construction provides a monodromy representation
\[
\Phi_{\Lambda, \mathcal{C}}: \pi_1(M,x_0) \rightarrow GL(H^*(C_{x_0},d_{x_0})).
\]
As an application of the above theorems, we construct augmented Legendrian surfaces with arbitrary monodromy representations.  See Section \ref{sec:representation}.

\medskip

The remainder of the paper is organized as follows.  Section \ref{sec:MCFoverF} contains the definition of Morse complex families over $\mathbb{F}$ and reviews the continuation maps and monodromy representation; previously MCFs for surfaces have only been defined with $\mathbb{F}_2$ coefficients.  Section \ref{sec:MCFandAug} uses results from \cite{PanRu2} to establish the bijection between cobordism classes of Legendrians equipped with augmentations or MCFs over $\mathbb{F}_2$ and discusses the case of general $\mathbb{F}$.  Section \ref{sec:constructions} provides a set of tools for constructing and working with MCFs and in particular augmented Legendrian cobordisms including:  A characterization of augmented Legendrian cobordisms via moves on $1$-dimensional slices;  the augmented $D_4^-$ cobordism;  a standard form for MCFs (the $SR$-form) from generalized normal rulings;  and extension of MCFs over Legendrian isotopies.        
These methods have grown out of the literature on MCFs.  Eg., M. B. Henry studies equivalence of MCFs via moves on slices in \cite{Henry}; various constructions of $2$-dim MCFs appear in \cite{RuSu3} and \cite{PanRu2}; for knots in $\R^3$ the $SR$-form MCFs are considered in \cite{Henry} and \cite{HenryRu3}.  As the original sources mainly restrict to $\F_2$ coefficients and do not consider $1$-dimensional links in $J^1S^1$, we provide here a mostly self contained exposition that includes fairly detailed proofs.    

With these preparations in hand, Sections \ref{sec:invariants}-\ref{sec:J1S1} turn toward establishing Theorems \ref{thm:s1} and \ref{thm:01}.  Section \ref{sec:invariants} establishes the fiber cohomology, monodromy matrix, and spin numbers as invariants of augmented cobordism.  Section \ref{sec:J101} proves Theorem \ref{thm:01} (see Theorem \ref{thm:J101}).  This involves introducing standard forms for augmented Legendrians up to cobordism that are certain positive permutation braids equipped with particular MCFs.  It is then shown that every augmented Legendrian is cobordant to a standard form and that the standard forms are uniquely determined by their invariants.  
Section \ref{sec:J1S1} establishes Theorem \ref{thm:s1} (see Theorem \ref{thm:S1main}) by realizing augmented Legendrians in $J^1S^1$ up to cobordism as closures of full $n$-tangles and applying Theorem \ref{thm:01}.  Finally, Section \ref{sec:representation} shows how to construct augmented Legendrian surfaces with arbitrary monodromy representation, and illustrates some examples in $J^1T^2$.

\subsection{Acknowledgements}  The second author is partially supported by grant 429536 from the Simons Foundation.

\section{Morse complex families over $\mathbb{F}$}  \label{sec:MCFoverF}

The augmented Legendrians that we consider in this paper are pairs $(\Lambda, \calC)$ consisting of a ($\rho$-graded) Legendrian knot or surface, $\Lambda\subset J^1M$, in a $1$-jet space together with a ($\rho$-graded) Morse complex family (abbrv. MCF), $\calC$.  Motivated by generating families of functions and the bifurcation theory for Morse complexes in $1$ and $2$ parameter families, see \cite{HatcherWagoner}, MCFs have been considered for $1$-dimensional Legendrians knots in \cite{Pushkar, Henry, HenryRu1, HenryRu2}  (over $\F_2$) and \cite{HenryRu2} (over any $\F$) and for $2$-dimensional Legendrian surfaces and cobordisms in \cite{RuSu3} and \cite{PanRu2} (over $\F_2$).  In this section, we review Morse complex families and extend the definition to allow general coefficients in the $2$-dimensional case.  In Section \ref{sec:2-4}, we discuss the monodromy representation of an MCF $\calC$ that is a representation of $\pi_1(M)$ on the fiber cohomology of $\calC$.  

\medskip

\noindent{\bf Matrix Notation:}  Throughout the paper we will frequently use the notation $E_{i,j}$ for a matrix whose $(i,j)$-entry is $1$ and whose other entries are $0$.

\medskip

\subsection{Graded Legendrian submanifolds in $1$-jet spaces} \label{sec:Leg}

We assume basic familiarity with Legendrian knots and surfaces in $1$-jet spaces.  In particular, when $M$ is a manifold the $1$-jet space $J^1M$ has a standard contact structure that appears in coordinates as $dz - y\,dx$ where $(x,y,z)$ are coordinates on $T^*M\times \R_z$ resulting from a local coordinate $x$ on $M$.  Denote by $\pi_{xz}:J^1M \rightarrow J^0M =M \times \R$ and $\pi_{x}:J^1M \rightarrow M$ the {\bf front} and {\bf base projections}.  A Legendrian submanifold  $\Lambda \subset J^1M$ may be conveniently presented via its front projection, $\pi_{xz}(\Lambda) \subset J^0M$, which (generically) in the $1$-dimensional case has standard cusp and crossing singularities all with different base projections.  Generic singularities of $2$-dimensional front projections include $1$-dimensional cusp edges and crossing arcs (aka singularities of type $A_2$ and $A_1^2$), triple points ($A_1^3$), cusp-sheet intersections ($A_2A_1$), and swallow tail points ($A_3$).  See eg. \cite{ArnoldWave} or \cite[Figure 1]{RuSu3}.  We consider compact Legendrian submanifolds, $\Lambda \subset J^1M$, that are properly embedded and such that the front projection $\pi_{xz}(\Lambda)$ appears as a product in a collar neighborhood of $\partial M$.  
When $B \subset \partial M$ is a boundary component, the {\bf restriction} $\Lambda|_{B} \subset J^1B$ is defined by taking the intersection  $\Lambda \cap \pi_{x}^{-1}(B)$ and then projecting out the $y$-coordinate transverse to $B$.  We will refer to $1$-dimensional Legendrians $\Lambda \subset J^1[0,1]$ as {\bf Legendrian tangles} and, in the case where $\Lambda|_{\{i\}}$ consists of $n$ points for $i=0,1$, as {\bf Legendrian $n$-tangles}.  

A {\bf Legendrian cobordism} between two compact $1$-dimensional Legendrians $\Lambda_0, \Lambda_1 \subset J^1M$ is a compact Legendrian surface $\Sigma \subset J^1(M\times [0,1])$ satisfying $\Sigma|_{M\times\{i\}} = \Lambda_{\{i\}}$.  For any $\Lambda \subset J^1M$, there is a {\bf trivial} or {\bf product cobordism} that (by a slight abuse of notation) we denote as $\Lambda \times[0,1] \subset J^1(M\times[0,1])$; it appears in coordinates $(x_1,x_2, y_1,y_2, z)$ with $(x_1,y_1) \in T^*M$ and $(x_2,y_2) \in T^*[0,1]$ as $\{(x_1, x_2, y_1, 0, z)\,|\, (x_1,y_1,z) \in \Lambda\}$.

Let $\rho \geq 0$ be a non-negative integer.  A {\bf $\Z/\rho\Z$-valued Maslov potential} for $\Lambda$ is a locally constant function 
\[
\mu: \Lambda \setminus \Lambda_{\mathit{cusp}} \rightarrow \Z/\rho\Z,
\]
where $\Lambda_{\mathit{cusp}} \subset \Lambda$ is the set of points where $\pi_{xz}|_{\Lambda}$   
has a cusp or swallowtail singularity, such that the value of $\mu$ increases by one when passing from the lower sheet to the upper sheet at a cusp edge.  A {\bf $\rho$-graded Legendrian submanifold} is a pair $(\Lambda, \mu)$ such that $\mu$ is a $\Z/\rho\Z$-valued Maslov potential for $\Lambda$.  

\begin{remark}
If $\rho$ is even and $M$ is oriented, then any $\rho$-graded Legendrian  $\Lambda\subset J^1M$ receives an orientation by making the requirement that the base projection $\pi_x:\Lambda \rightarrow M$ is orientation preserving (resp. reversing) on sheets of $\Lambda$ where $\mu$ is even (resp. odd).  In particular, when considered oriented $1$-dimensional $\rho$-graded Legendrians in $J^1M$ with $M =\R,  [0,1]$ or $S^1$ we will always use this orientation so that $\mu$ is even (resp. odd) where the orientation of the front projection is in the positive (resp. negative) $x$-direction.
\end{remark}

Let $(\Lambda, \mu) \subset J^1M$ be a $\rho$-graded Legendrian submanifold of dimension $1$ or $2$, and  let $R$ be a commutative ring with identity.  
  For each $x_0 \in M$ that is not the base projection of any singular point of $\pi_{xz}(\Lambda)$ (including crossings) we refer to the points in $\pi_x^{-1}(\{x_0\}) \cap \Lambda$ as the {\bf sheets of $\Lambda$ at $x_0$}, and we label them with descending $z$-coordinates as
\[
S_1^{x_0}, \ldots, S_{n}^{x_0},  \quad  z(S^{x_0}_{i}) > z(S^{x_0}_{i+1}), \quad \mbox{for $1\leq i < n = n(x_0)$.}
\] 
To each such $x_0$, we associate a free $\Z/\rho\Z$-graded $R$-module 
\[
C_{x_0} = \mathit{Span}_R\{S_1^{x_0}, \ldots, S_n^{x_0} \}, \quad |S_i^{x_0}| = \mu(S_i^{x_0}).
\]
We will usually omit the superscript $x_0$ from the notation for sheets.

\subsection{1-dimensional Morse complex families}  Assume now that $(\Lambda, \mu) \subset J^1M$ is a $1$-dimensional $\rho$-graded Legendrian link or tangle with generic front and base projections.  In particular, no two front singularities have the same base projection.  
\begin{definition} \label{def:MCF1}
A ($\rho$-graded) {\bf Morse complex family} (abbrv. {\bf MCF}) over $R$ (a commutative ring with identity) for $(\Lambda, \mu)$ is a tuple
\begin{align*}
\mathcal{C} &= \left(\{d_x\}, H, *, \xi \right) \quad \mbox{when $\Char R \neq 2$,} \quad \mbox{or} \\
\mathcal{C} &= \left(\{d_x\}, H, *\right) \quad \quad \mbox{when $\Char R = 2$,}
\end{align*}
consisting of the following items (1)-(4): 
\begin{enumerate}
\item  A collection of {\bf homology basepoints},  $* =\{*_1, \ldots, *_r\} \subset \Lambda$, assigned values $s_i \in R^*$.
\item  When $\Char R \neq 2$, a collection of {\bf spin basepoints},
 $\xi=\{\circ_1, \ldots, \circ_s\}$, each assigned the value $-1$.  {\it When $\Char R =2$, a collection of spin basepoints is NOT included in the defining data for $\mathcal{C}$.}
\item  A handleslide set, $H$, that is a collection of formal handleslides with coefficients $b \in R$.  Here, a {\bf formal handleslide} is a triple $h = (x, u_h, l_h)$
where $x \in M$ and $u_h, l_h\in \Lambda \setminus \Lambda_{\mathit{cusp}}$ are lifts of $x$ having
\[
\pi_x(u_h)=\pi_x(l_h) =x, \quad  z(u_h) > z(l_h), \quad \mbox{and} \quad \mu(u_h) = \mu(l_h) \in \Z/\rho.
\] 
\end{enumerate}
We require that all cusps, crossings, handleslides, and basepoints occur at distinct $x$-coordinates, and we denote by $M^\calC_\mathit{reg} \subset M$ the complement of such $x$-coordinates.
\begin{enumerate}
\item[(4)] A collection of upper triangular differentials $d_x:C_x \rightarrow C_x$ of degree $+1$ mod $\rho$ defined for each $x \in M^\calC_{\mathit{reg}}$ 
 that are locally constant in $x$.  
I.e., writing $A =(a_{i,j})$ for the matrix of $d_x$, 
\[
d_x S_j = \sum_{i} a_{i,j}S_i,
\]
it is required that $A^2=0$; $a_{i,j} =0$ unless $\mu(S_i) = \mu(S_j)+1$; $a_{i,j} = 0$ when $i \geq j$; and the matrices $A$ 
remain constant except when $x$ passes a crossing, cusp, basepoint, or handleslide.  
\end{enumerate}

The data of $\mathcal{C}$ are subject to the following axioms.  Suppose that between $x_-$ and $x_+$ there is a single crossing, cusp,  handleslide, or basepoint at $x_0$.  Write $d_\pm$ and $A_\pm = (a^\pm_{i,j})$ for the differentials and their matrices at $x_-$ and $x_+$.  

\begin{enumerate}
\item[(A1)] When $x_0$ has a {\it crossing} involving strands $k$ and $k+1$,   
\[
A_+ = Q_{(k\,k+1)} A_-Q_{(k\,k+1)}
\]
where $Q_{(k\,k+1)}$ is the permutation matrix of the transposition $(k \, k+1)$.

\item[(A2)] When $x_0$ has a {\it left cusp} involving the sheets $S^+_{k}$ and $S^+_{k+1}$, $A_+$ is obtained from $A_-$ by inserting two new rows and columns at position $k$ and $k+1$ with the $2\times 2$ block 
\[
N = \left[\begin{array}{cc} 0 & 1 \\ 0 & 0 \end{array} \right]
\] 
appearing on the main diagonal and all other entries $0$.  The same requirement with the role of $A_+$ and $A_-$ interchanged is made at right cusps.

\item[(A3)] When $x_0$ has a {\it handleslide} with upper (resp. lower) endpoint on $S_k$ (resp. $S_l$) and coefficient $b$,
\[
A_+ = (I+bE_{k,l})A_-(I+bE_{k,l})^{-1} = (I+bE_{k,l})A_-(I-bE_{k,l})
\] 
or equivalently
\[
A_+(I+B) = (I+B)A_-  \quad \quad \mbox{where } B= bE_{k,l}
\]
where $E_{i,j}$ is the matrix with $i,j$-entry $1$ and all other entries $0$.

\item[(A4)] When $x_0$ has a {\it basepoint} on sheet $S_k$ with coefficient $s$ (for spin basepoints $s=-1$), let $\Delta$ be the diagonal matrix with $s$ (resp. $s^{-1}$) at position $k$ on the diagonal if $\pi_x$ is orientation
 preserving (resp. reversing) at $S_k$; the remaining diagonal entries of $\Delta$ are $1$'s.  Then,
\[
A_+ = \Delta A_-\Delta^{-1}.
\]

\end{enumerate}

\end{definition}

\begin{remark}  
\begin{enumerate}
\item Note that since the $A_\pm$ are upper triangular, (A1) implies that $a^\pm_{k,k+1} = 0$ when $x_-$ and $x_+$ border a crossing between sheets $S_k$ and $S_{k+1}$.
\item As the differentials, $d_x$, are locally constant, one can equivalently (as done in \cite{RuSu3}) assign complexes $(C_\nu, d_\nu)$ to the {\it connected components}, $R_\nu$, of $M^\calC_{\mathit{reg}}$.
\end{enumerate}
\end{remark}

See Figure \ref{fig:Handleslides} for an illustration of the coefficients that appear in the axioms (A2)-(A4). In our figures, we indicate a coefficient in a differential, $\langle d_x S_j, S_i\rangle= a_{i,j}$, as a dotted arrow at $x$ from $S_i$ to $S_j$ labeled with $a_{i,j}$;  handleslides are indicated by solid dark red line segments labeled with coefficients.

\begin{figure}[!ht]
\labellist
\small
\pinlabel $a_{ik}$ [r] at 20 206
\pinlabel $a_{lj}$ [r] at  20 146
\pinlabel $-a_{ik}b$ [l] at 106 206
\pinlabel $ba_{lj}$ [l] at 142 146

\pinlabel $b$ [r] at 62 176
\pinlabel $1$ [l] at  260 184
\pinlabel $1$ [r] at 370 184
\pinlabel $a_{ik}$ [r] at 16 60
\pinlabel $a_{ik}s^{-1}$ [l] at 102 60
\pinlabel $a_{kj}$ [r] at 32 20
\pinlabel $sa_{kj}$ [l] at 118 20
\pinlabel $s$ [b] at 68 54
\pinlabel $a_{ik}$ [r] at 266 60
\pinlabel $a_{ik}s$ [l] at 350 60
\pinlabel $a_{kj}$ [r] at 278 20
\pinlabel $s^{-1}a_{kj}$ [l] at 364 20
\pinlabel $s$ [b] at 312 54

\endlabellist
\includegraphics[width=5in]{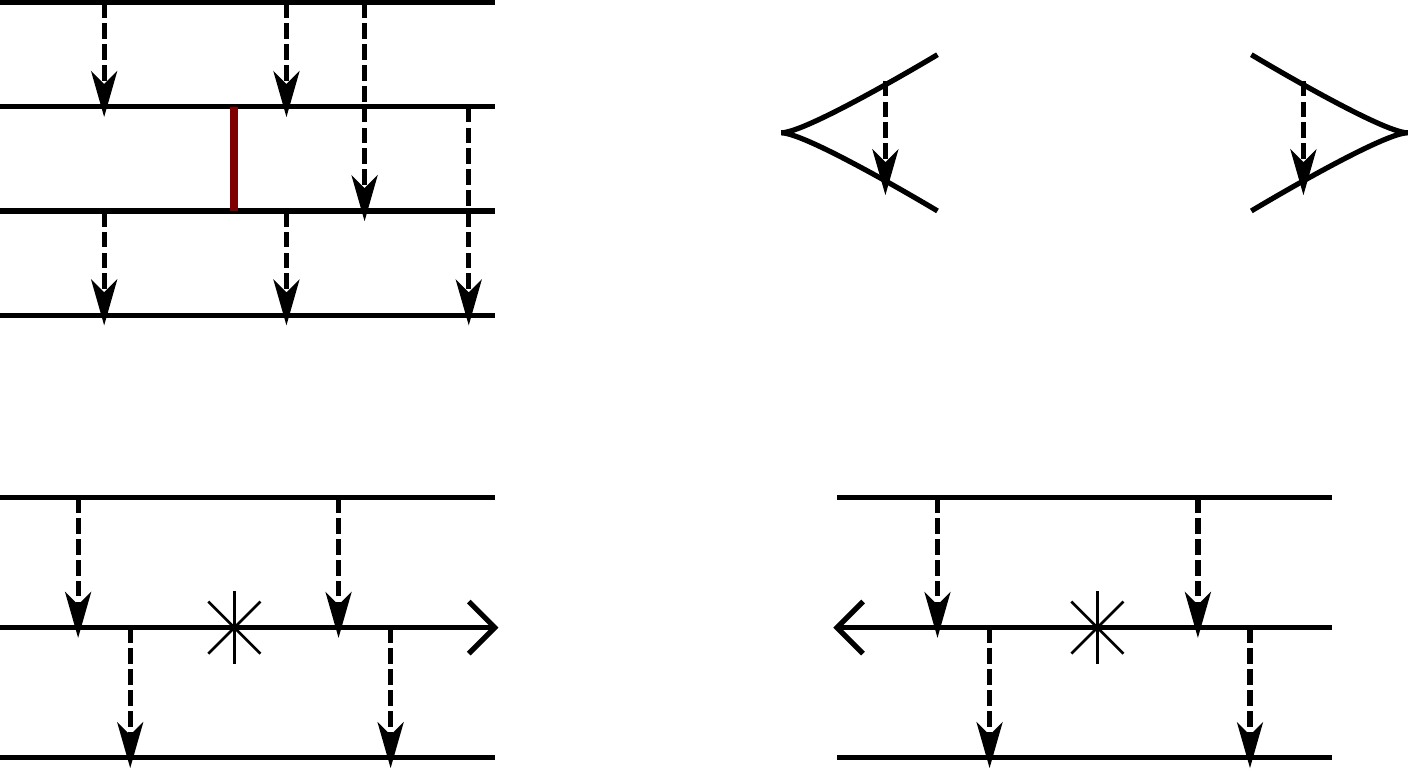}
\caption{The coefficient rules for $1$-dimensional MCFs.}
\label{fig:Handleslides}
\end{figure}

An example of an MCF for a $0$-graded Legendrian knot in $J^1S^1$ appears in Figure \ref{fig:MCFEx2}.

\begin{figure}[!ht]

\quad

\labellist
\tiny
\pinlabel $r_1$ [b] at 112 182
\pinlabel $-r_1^{-1}$ [b] at 142 182
\pinlabel $r_2$ [b] at 160 92
\pinlabel $-r_2^{-1}$ [b] at 194 100
\pinlabel $r_3$ [b] at 360 38
\pinlabel $-r_3^{-1}$ [b] at 390 36

\pinlabel $b$ [t] at 330 84
\pinlabel $s$ [b] at 256 154
\pinlabel $-1$ [b] at 448 18

\pinlabel $1$ [l] at 102 122
\pinlabel $r_1$ [l] at 152 122
\pinlabel $-r_1r_2$ [l] at 228 126
\pinlabel $-sr_1r_2$ [l] at 296 130

\pinlabel $1$ [l] at 152 60
\pinlabel $r_2$ [l] at 236 60
\pinlabel $-r_2r_3$ [l] at 416 58
\pinlabel $r_2r_3$ [bl] at 484 44

\pinlabel $2$ [r] at -2 136
\pinlabel $0$ [r] at -2 68

\endlabellist
\includegraphics[scale=.75]{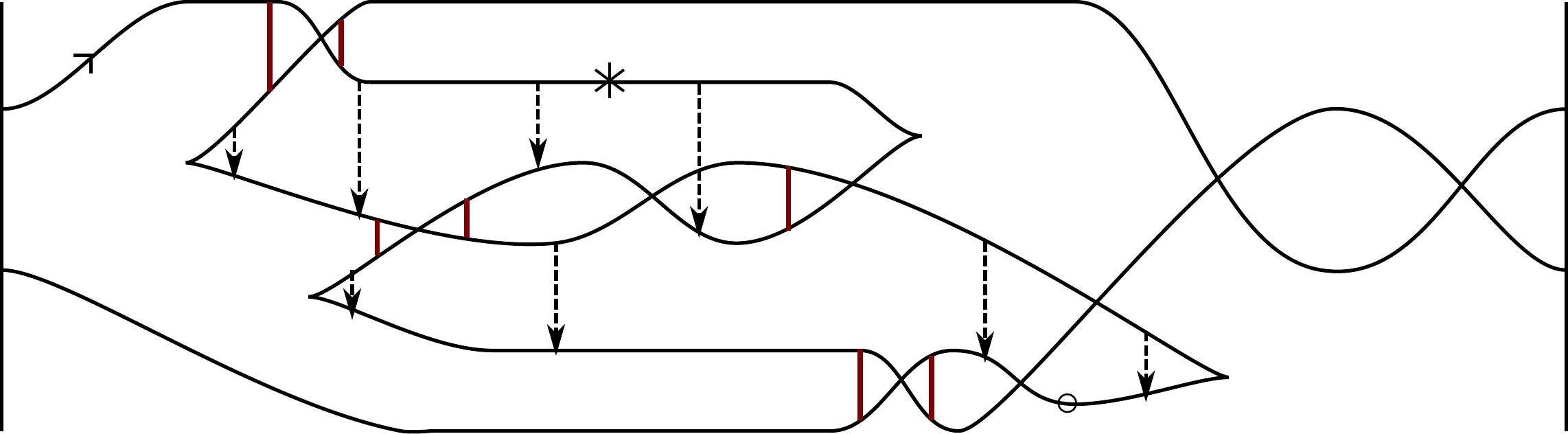}
\caption{An MCF for a $0$-graded Legendrian knot $\Lambda \subset J^1S^1$.  The Maslov potential $\mu$ is indicated at the left.  The coefficients $r_1, r_2, r_3, s \in \mathbb{F}^*$ and $b\in \mathbb{F}$ must satisfy $-sr_1r_2 =1$ and $r_2r_3 =1$.  The differentials $d_x$ have $d_x =0$ at $x=0$ and are then uniquely determined by (A1)-(A4).  Some selected coefficients $a_{ij} = \langle d_xS_j,S_i \rangle$ are indicated by the dotted arrows.}
\label{fig:MCFEx2}
\end{figure}

\begin{remark}
The above definition of MCF over a ring $R$ is similar to the definition from \cite{HenryRu3} with added flexibility allowed for base points and spin points and a simplified axiom at cusps.  For Legendrians in $J^1\R$, the MCFs considered in \cite{HenryRu3} become a special case of Definition \ref{def:MCF1} by choosing $*$ and $\xi$ so that on each component of $\Lambda$ a single $*_i$ point is placed next to some chosen right cusp, and spin base points are placed next to all remaining right cusps.  
\end{remark}

\subsection{2-dimensional Morse complex families}  \label{sec:2dimMCF}

Consider now a $\rho$-graded Legendrian surface $(\Lambda, \mu) \subset J^1M$ and a commutative, unital ring $R$.  
  We allow for the case where $\partial M \neq \emptyset$ with $\Lambda$ properly embedded as in Section \ref{sec:Leg}.

\begin{definition} 
\begin{itemize}
\item  A {\bf homology structure},  $* = \{*_1, \ldots, *_r\}$ is a collection of co-orientable curves in $\Lambda$ satisfying:
\begin{enumerate}
\item The $*_i$ are disjoint from one another except at endpoints and all $*_i$ are transverse
 to $\partial\Lambda$ as well as to the inverse image of the singular set of $\pi_{xz}(\Lambda)$.
\item The arc components of $*$ have their endpoints  either at boundary points of $\Lambda$, or at tri-valent vertices where three arc endpoints are simultaneously located.  The intersections of arcs at tri-valent vertices are pairwise transverse.
\end{enumerate}
\item A {\bf combinatorial spin structure}, $\xi = \{\xi_1, \ldots, \xi_s\}$, is a collection of disjoint embedded curves in $\Lambda$ whose boundary consists of the set of all swallowtail points of $\Lambda$ and some number of points on $\partial \Lambda$.

\item A multiplicative (resp. additive) {\bf co-oriented coefficient} for a co-orientable curve $C \subset \Lambda$ is an assignment of coefficients $\alpha$ and $\beta$ in $R^*$ (resp. in $R$) to the two distinct co-orientations of $C$ such that $\beta = \alpha^{-1}$ (resp. $\beta = -\alpha$.)
\end{itemize}
\end{definition}

\begin{remark} Note that the difference of two combinatorial spin structures is a relative cycle in $C_1(\Lambda, \partial \Lambda; \Z/2)$, so that combinatorial spin structures up to equivalence form an affine space over $H_1(\Lambda, \partial \Lambda; \Z/2) = H^1(\Lambda; \Z/2)$, just like geometric spin structures.  The role of $\xi$ in the definition of MCFs for Legendrian surfaces is analogous to the choice of geometric spin structure required in lifting the coefficients of the Legendrian contact homology DGA to $\Z$, cf. 
\cite{EESorientation}.  
\end{remark}

\begin{definition}  A {\bf MCF} for $(\Lambda, \mu)$ over $R$ is a tuple $\mathcal{C} = \left(\{d_x\}, H, *, \xi \right)$, when $\Char R \neq 2$, or $\mathcal{C} = \left(\{d_x\}, H, * \right)$, when $\Char R =2$, consisting of: 
\begin{enumerate}
\item A homology structure, $*$, with {\it multiplicative} co-oriented coefficients in $R^*$ assigned to the curves $*_i$ such that at each trivalent vertex of $*$ the coefficients $s_1, s_2, s_3$ of the three arcs with respect to a cyclically consistent choice of co-orientations (i.e., either all counter-clockwise or all clockwise in coordinates around the vertex) satisfy $s_1s_2s_3 = 1$.
\item A combinatorial spin structure, $\xi$, when $\Char R \neq 2$.  All curves in $\xi$ are assigned the (multiplicative) coefficient $-1$. 
\item A stratified handleslide set, 
\[
H = H_0 \sqcup H_{-1}.
\]
\begin{itemize}
\item The $H_0$ is a collection of formal {\bf handleslides curves} that are immersed curves, $h:X \rightarrow M$, with $X= S^1$ or $[0,1]$ equipped with {\it additive} co-oriented coefficients $b \in R$ and upper and lower endpoint lifts, 
\[
u,l:X \rightarrow \Lambda, \quad \pi_x\circ u = \pi_x \circ l = h,  
\]
satisfying
\[
 z(u(t)) > z(l(t)), \quad \mbox{and} \quad \mu(u(t)) = \mu(l(t)) \in \Z/\rho 
\]
for all $t \in \mbox{Int}(X)$.   
Moreover, on $\mbox{Int}(M)$ the image of $u$ and $l$ is disjoint from cusp edge and swallowtail points.  The endpoints of handleslide curves are subject to requirements specified in (B2) below.

\item The $H_{-1}$ is a set of {\bf super-handleslide points} in $M$ equipped with (i) co-orientations (in $M$, i.e. a choice of orientation of $M$ near the point), (ii) coefficients $b \in R$, and (iii) upper and lower endpoint lifts to points $u, l \in \Lambda \setminus \Lambda_{\mathit{cusp}}$ with $z(u) > z(l)$ and $\mu(u) = \mu(l) -1$.  

\end{itemize}

Except at endpoints as allowed by the Axiom (B2) below, the curves and points in $H_0$ and  $H_{-1}$ are assumed to be self transverse and transverse to one another and to the base projections of $*$, $\xi$, and singularities of $\pi_{xz}(\Lambda)$.  We will refer to a   curve in $H_0$ or a point in $H_{-1}$ having upper and lower endpoint lifts on sheets $S_i$ and $S_j$, $i<j$, as an {\bf $(i,j)$-(super)-handleslide}.  Note that for curves in $H_0$ this terminology only makes sense locally since the numbering of the sheets of $\Lambda$ can change when passing crossings or cusps of $\Lambda$.

\end{enumerate}

 Let $M^\mathcal{C}_\mathit{reg} \subset M$ consist of those points not coinciding with the base projection of any singularity of $\pi_{xz}(\Lambda)$ (crossing, cusp, or swallowtail point), homology or spin structure curve, or with any handleslide arc or super-handleslide point.  We write $M^\mathcal{C}_\mathit{sing} := M \setminus M^\mathcal{C}_\mathit{reg}$, and refer to this subset as the set of {\bf base  singularities} of $\mathcal{C}$.

\begin{enumerate}

\item[(4)] A locally constant collection, $\{d_x\}$, of upper triangular differentials $d_x:C_x \rightarrow C_x$ of degree $+1$ mod $\rho$ defined for each $x \in M_\mathit{reg}$.

\end{enumerate}

The data is subject to the following axioms.  

\begin{enumerate}
\item[(B1)]  The axioms (A1)-(A4) are required to hold along transverse slices to base projections of crossings arcs, cusp arcs, handleslide curves in $H_0$, and curves in $*$ or $\xi$.   
Here, the role of the matrices $A_-$ and $A_+$, as well as the coefficients of handleslides are specified by the orientation of the transverse slice (which specifies a co-orientation for handleslide arcs).     
For homology curves, $*_i$, a  choice of co-orientation of $*_i$ is used to specify the coefficient $s$ and the (local) orientation of the one dimensional slice of $\Lambda$ that appears in (A4).

\item[(B2)]  The only possible endpoints for handleslide arcs are as follows:

\begin{itemize}
\item (Trivalent vertices)  \quad Three handleslide arcs with the same upper and lower endpoints can all share a common endpoint at a trivalent vertex provided that the sum of the coefficients is equal to $0$ with respect to cyclically consistent co-orientations at the vertex.  See Figure \ref{fig:Endpoints0}.

\item (Commutation relation) \quad Whenever an $(i,j)$-handleslide intersects a $(j,k)$-handleslide arc at a point $x \in M$, a   
third $(i,k)$-handleslide arc has an endpoint at $x$.  The coefficients of the three handleslide arcs are related as in Figure \ref{fig:Endpoints1}.

\item (Super handleslides) \quad  Suppose a $(k,l)$-super-handleslide point at $p\in M$ 
has coefficient $c$, and let $A=(a_{ij})$ be the matrix of $d_x$ in a region of $M_\mathit{reg}$ adjacent to $c$.
Then,  for each $i<k$ a single $(i,l)$-handleslide arc with coefficient $a_{ik}c$ has an endpoint at $p$, and for each $l<j$ a single $(k,j)$-handleslide arc with coefficient $ca_{lj}$ has an endpoint at $p$.  
The coefficients of the handleslide arcs are with respect to their counter-clockwise co-orientations around $p$ as specified by the co-orientation of $p$ itself. 
See Figure \ref{fig:Endpoints2}.

\item (Swallowtail points)  \quad Handleslide arcs and spin structures in a neighborhood of swallowtail points are required to appear as in Figure \ref{fig:Endpoints3}.

\item (Boundary points)  \quad Handleslide arcs (resp. homology, and spin structure arcs) may have endpoints at transverse intersections with $\partial M$ 
(resp. $\partial \Lambda$).  Moreover, in a collar neighborhood of $M$ where $\pi_{xz}(\Lambda)$ appears as a product these arcs appear as products (of points on $M$ or $\Lambda$ with the collar coordinate) as well.

\end{itemize}

\begin{figure}[!ht]

\quad

\quad

\labellist
\tiny
\pinlabel $a$ [t] at 68 22
\pinlabel $b$ [t] at  134 22
\pinlabel $-a-b$ [b] at 96 154
\pinlabel $a$ [r] at 288 112
\pinlabel $b$ [l] at 348 112
\pinlabel $a+b$ [r] at  498 112
\pinlabel $x_1$ [t] at  28 -2
\pinlabel $x_2$ [b] at  2 34
\pinlabel $x_1$ [t] at  256 -2
\pinlabel $z$ [b] at  226 34
\pinlabel $\stackrel{\mathrm{(E2)}}{\longleftrightarrow}$ at 412 100

\endlabellist
\centerline{\includegraphics[scale=.6]{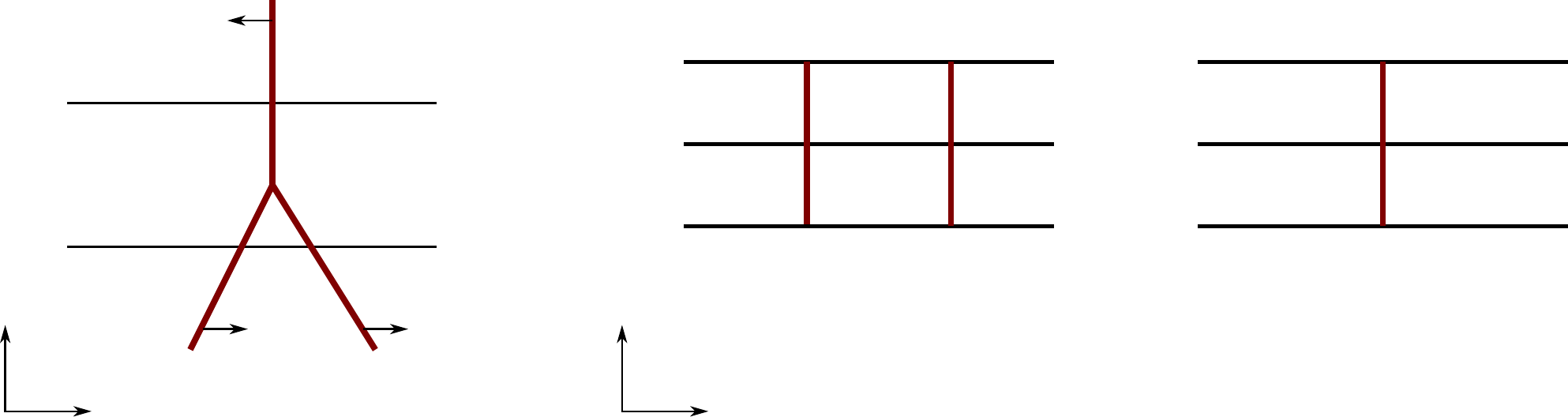}}

\quad

\caption{Handleslide coefficients in the trivalent vertex axiom. In Figures \ref{fig:Endpoints0}-\ref{fig:Endpoints3}  the left column depicts the base projection together with two horizontal lines that indicate slices of the front projection appearing in the right two columns.
The handleslide coefficients in the front projection slices are always written with respect to the left to right co-orientation.  
The label (E2) reflects the enumeration from Section \ref{sec:slice}.}
\label{fig:Endpoints0}
\end{figure}

\begin{figure}[!ht]

\labellist
\tiny
\pinlabel $a$ [t] at 14 -2 
\pinlabel $b$ [t] at  118 -2
\pinlabel $-ab$ [b] at 52 116
\pinlabel $a$ [r] at 258 84
\pinlabel $b$ [l] at 324 42
\pinlabel $b$ [r] at 438 42
\pinlabel $a$ [l] at 528 84
\pinlabel $-ab$ [r] at 474 84
\pinlabel $\stackrel{\mathrm{(E5)}}{\longleftrightarrow}$ at 386  60

\endlabellist
\centerline{\includegraphics[scale=.6]{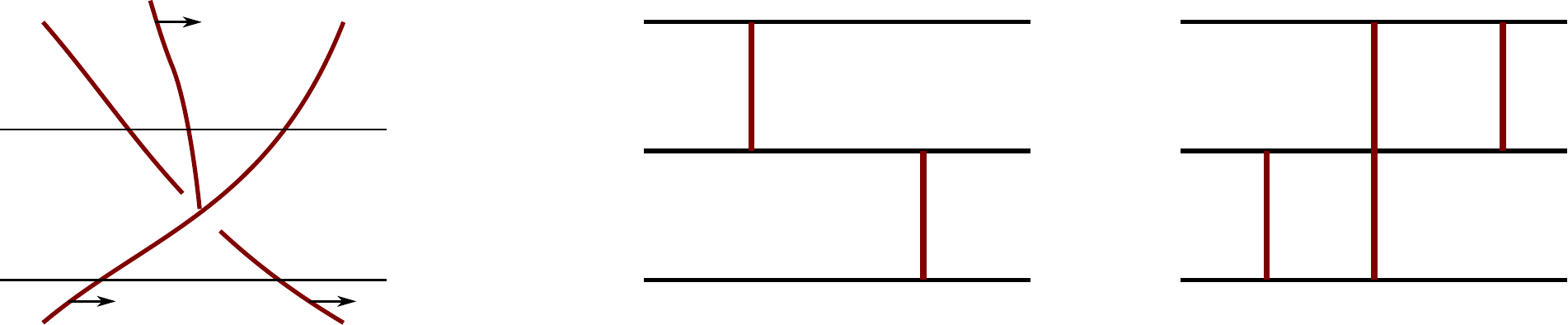}}

\quad

\quad

\labellist
\tiny
\pinlabel $b$ [t] at 14 -2
\pinlabel $a$ [t] at  118 -2
\pinlabel $ab$ [b] at 52 116
\pinlabel $b$ [r] at 250 40
\pinlabel $a$ [l] at 324 84
\pinlabel $a$ [r] at 438 84
\pinlabel $b$ [l] at 520 40
\pinlabel $ab$ [l] at 484 84
\pinlabel $\stackrel{\mathrm{(E5)}}{\longleftrightarrow}$ at 386  60

\endlabellist

\centerline{\includegraphics[scale=.6]{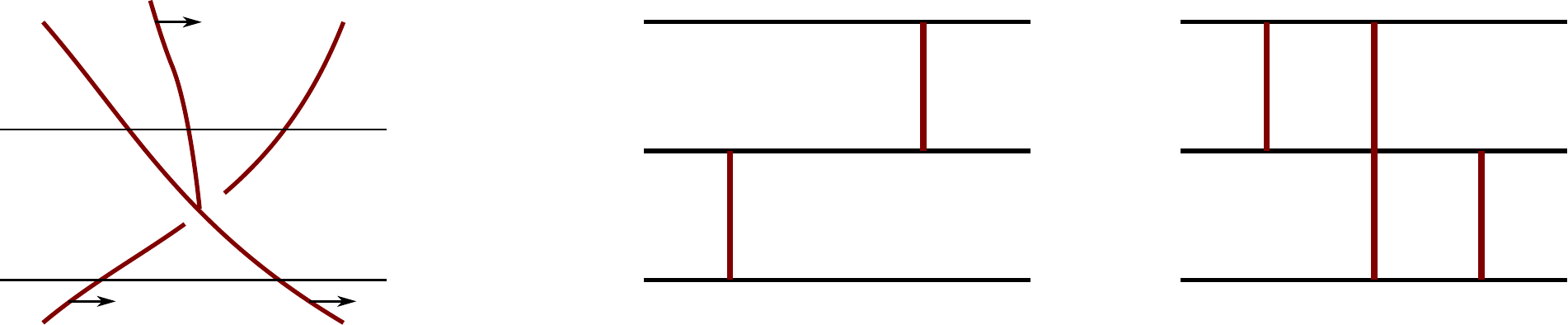}}
\caption{Coefficients in the commutation axiom. 
}
\label{fig:Endpoints1}
\end{figure}

\begin{figure}[!ht]

\labellist
\tiny
\pinlabel $a_{ik}$ [l] at 506 82
\pinlabel $a_{i'k}$ [r] at  468  112
\pinlabel $a_{lj}$ [r] at 438 22
\pinlabel $a_{ik}c$ [l] at 334 82
\pinlabel $a_{i'k}c$ [r] at 288 112
\pinlabel $ca_{lj}$ [r] at  244 22
\pinlabel $\stackrel{\mathrm{(E7)}}{\longleftrightarrow}$ at 378  66

\endlabellist
\centerline{\includegraphics[scale=.6]{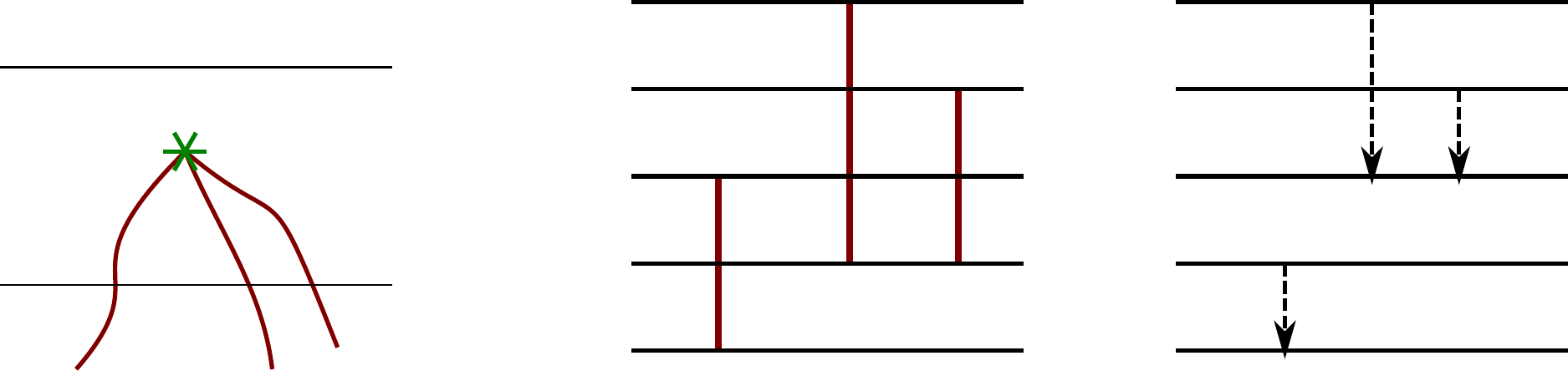}}

\caption{Coefficients in the super-handleslide axiom.}
\label{fig:Endpoints2}
\end{figure}

\begin{figure}[!ht]

\labellist
\tiny
\pinlabel $a_{i,k}$ [r] at 250 86
\pinlabel $a_{i',k}$ [r] at  274 134
\pinlabel $-a_{i,k}$ [r] at 322 86
\pinlabel $-a_{i',k}$ [r] at 346 134
\pinlabel $-1$ [r] at  438 46
\pinlabel $1$ [l] at 514 46
\pinlabel $-a_{i,k}$ [r] at 446 86
\pinlabel $-a_{i',k}$ [r] at 462 134
\pinlabel $\xi$ [l] at 80 12
\pinlabel $\xi$ [t] at 300 26
\pinlabel $\stackrel{\mathrm{(R1)}}{\longleftrightarrow}$ at 386  90

\endlabellist
\centerline{\includegraphics[scale=.6]{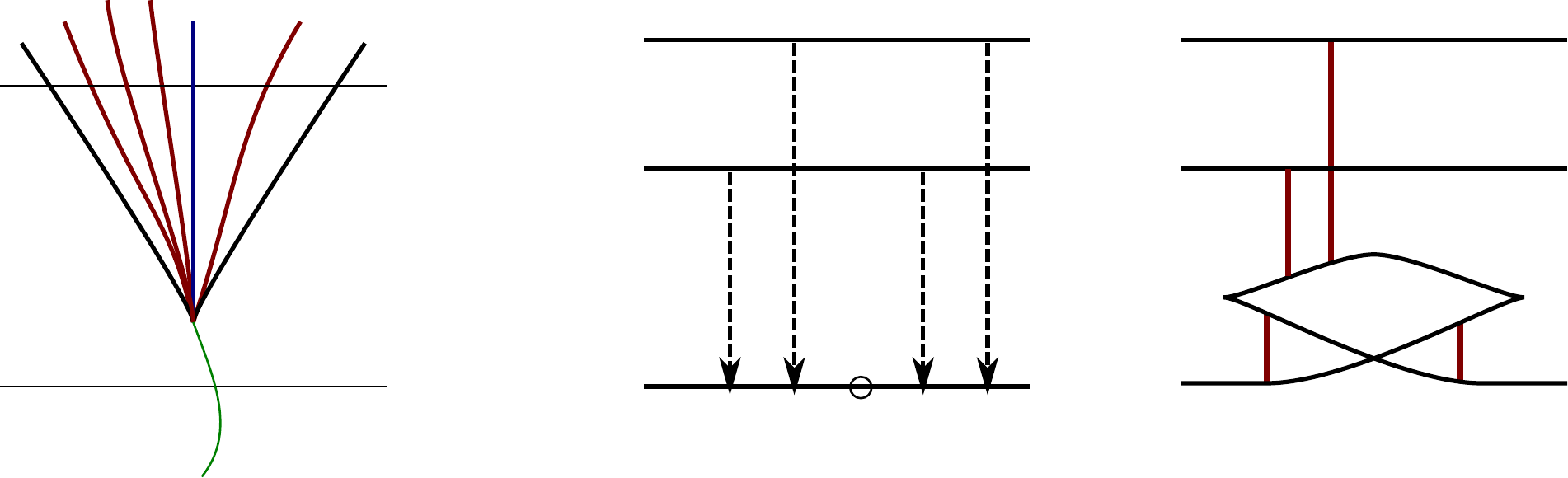}}

\vspace{1cm}

\labellist
\tiny
\pinlabel $x_1$ [l] at 36 2
\pinlabel $x_2$ [b] at 2 36
\pinlabel $x_1$ [l] at  260 2
\pinlabel $z$ [b] at 226 36
\pinlabel $a_{l,j}$ [r] at 254 128
\pinlabel $a_{l,j'}$ [r] at  276 86
\pinlabel $-a_{l,j}$ [r] at 324 128
\pinlabel $-a_{l,j'}$ [r] at 348 86
\pinlabel $1$ [r] at 440 170
\pinlabel $-1$ [l] at  514 170
\pinlabel $a_{l,j}$ [r] at 448 128
\pinlabel $a_{l,j'}$ [r] at 464 86
\pinlabel $\xi$ [l] at 80 44
\pinlabel $\xi$ [t] at 302 176
\pinlabel $\stackrel{\mathrm{(R1)}}{\longleftrightarrow}$ at 386  120

\endlabellist

\centerline{\includegraphics[scale=.6]{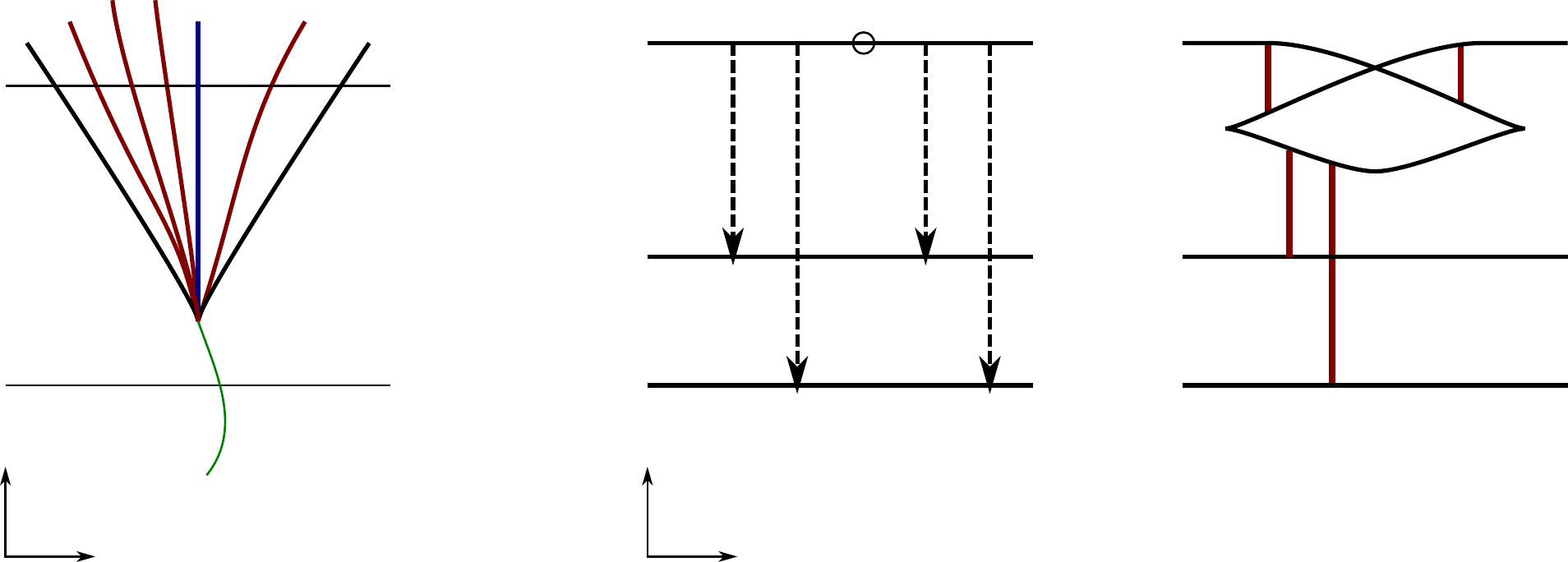}}
\caption{For an upward (resp. downward) swallowtail point, $s_0$, a spin arc $\xi$ on sheet $S_k$ (resp. $S_l$) outside of the swallow tail region, i.e. 
where the additional swallowtail cusp sheets do not exist, has an endpoint at $s_0$ (when $\Char \mathbb{F} \neq 2$). Within the swallowtail region, two $(k+1,k+2)$-handleslide arcs (resp. $(l,l+1)$-handle slide arcs) one on each side of the crossing arc have their endpoint at $s_0$.  In addition, for each $i<k$ (resp. $l<j$) an $(i,k)$- (resp. $(l+2,j+2)$-) handleslide arc with coefficient $-a_{i,k}$ (resp. $a_{l,j}$) inside the swallowtail region has endpoint at $s_0$ where the matrix coefficients  $a_{i,k} = \langle d_x S_k, S_i \rangle$ (resp. $a_{l,j} = \langle d_x S_j, S_l \rangle$) are computed with $x$ to the left of $\xi$.}
\label{fig:Endpoints3}
\end{figure}

\item[(B3)] Handleslide coefficients change when a handleslide endpoint passes a homology or spin structure curve as indicated in Figure \ref{fig:Axiom3}.  

\end{enumerate}

\begin{figure}[!ht]

\labellist
\tiny
\pinlabel $b$ [r] at 26 116
\pinlabel $s$ [t] at 66 82
\pinlabel $bs^{-1}$ [l] at  302 116
\pinlabel $b$ [r] at 26 20
\pinlabel $sb$ [l] at 302 20
\pinlabel $b$ [r] at  492 116
\pinlabel $bs$ [l] at 760 116
\pinlabel $b$ [r] at 492 20
\pinlabel $s^{-1}b$ [l] at 760 20
\pinlabel $\mathrm{(E6)}$ [b] at 172 102
\pinlabel $\mathrm{(E6)}$ [b] at 172 42
\pinlabel $\mathrm{(E6)}$ [b] at 622 42
\pinlabel $\mathrm{(E6)}$ [b] at 622 102

\endlabellist

\centerline{\includegraphics[scale=.6]{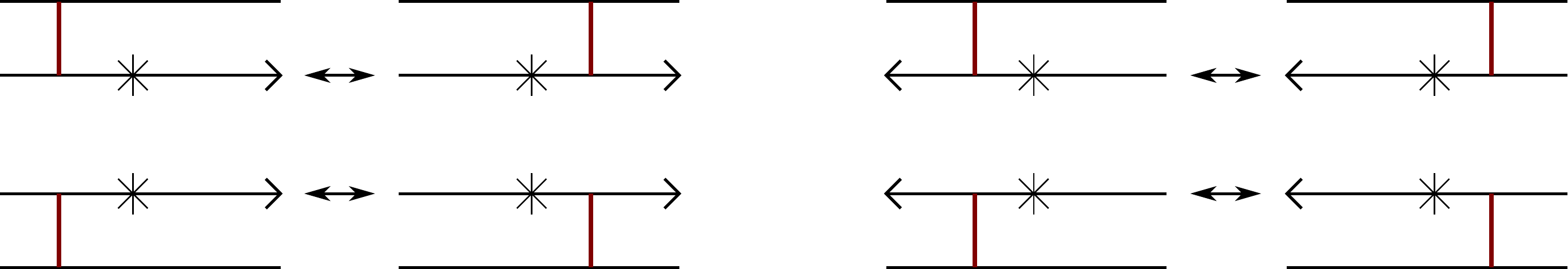}}
\caption{The arrows should be interpreted as the co-orientation of the homology curve $*_i$.}
\label{fig:Axiom3}
\end{figure}

\end{definition}

\begin{example}  \label{ex:2dimMCF}
An example of a $0$-graded MCF for a Legendrian pair of pants  $\Lambda \subset J^1(S^1\times[0,1])$ is pictured in Figure \ref{fig:2DimMCF}.  The differentials $d_x$ vanish for $x=(x_1,x_2)$ where $\Lambda$ is $2$-sheeted, and are uniquely determined by the axioms for other values of $x$.   Results in Section \ref{sec:constructions} will ease considerations involving the differentials $d_x$ when constructing MCFs.
\end{example}

\begin{figure}[!ht]

\labellist
\tiny
\pinlabel $a^{-1}$ [l] at 20 70
\pinlabel $a$ [b] at 398 254
\pinlabel $-a^{-1}$ [b] at 440 254
\pinlabel $-a^{-1}$ [t] at 458 158
\pinlabel $a$ [t] at 492 160
\pinlabel $a^{-1}$ [bl] at 688 240
\pinlabel $a$ [l] at 720 224

\pinlabel $a$ [r] at  450 206
\pinlabel $1$ [r] at 690 206

\pinlabel $a^{-1}$ [b] at  610 110
\pinlabel $a$ [b] at 766 14

\pinlabel $1$ [b] at  392 106
\pinlabel $-1$ [b] at 424 106
\pinlabel $-1$ [t] at  474 4
\pinlabel $1$ [t] at 506 4

\pinlabel $\mu=1$ [r] at  354 254
\pinlabel $\mu=0$ [r] at 354 156

\endlabellist

\centerline{\includegraphics[scale=.6]{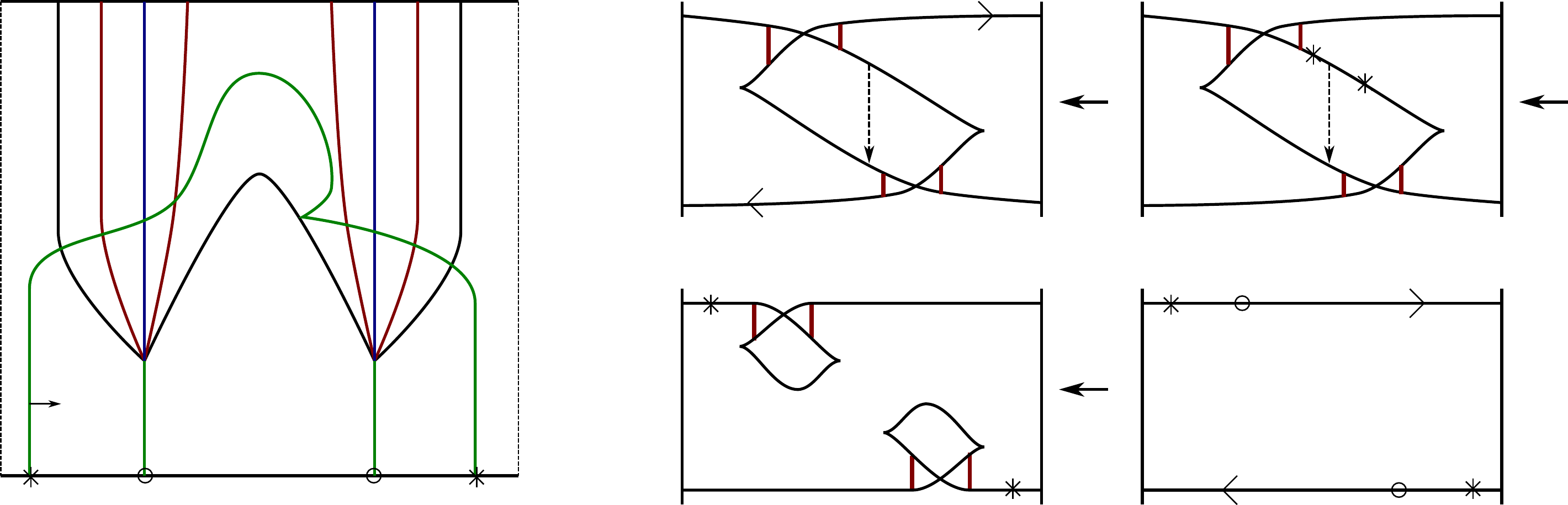}}
\caption{An example of a $2$-dimensional MCF $\mathcal{C}$ for $\Lambda \subset J^1(S^1\times[0,1])$ defined for any choice of $a \in \mathbb{F}^*$. 
On the left, the base projection of cusp (black), crossing (blue), handleslide (red), and homology and spin structure (green) curves are pictured.  Slices of the front projection at different values of $x_2 \in [0,1]$ appear on the right, with the arrows indicating the positive $x_2$ direction.  Except for the homology structure, all coefficients are with respect to left to right co-orientations.  The coefficient of the homology curve with respect to the global co-orientation indicated in the base projection is $a^{-1}$.  However, the coefficients of homology basepoints in the front slices are written   with respect to the co-orientation specified by the orientation of the slice of $\Lambda$.}
\label{fig:2DimMCF}
\end{figure}

\begin{remark}  \label{rem:MC2F}
\begin{enumerate} 
\item  
It may be worthwhile to emphasize that for handleslide arcs the (additive) co-oriented coefficients are with respect to co-orientations in $M$ while for homology structure curves $*_i$ the co-orientations are in $\Lambda$.  When $\mathit{char} R =2$, the co-orientation can be ignored when considering coefficients of handleslides, but this is not so for the $*_i$.  If $M$ (resp. $\Lambda$) is oriented one could equivalently use orientations rather than co-orientations for specifying coefficients of handleslide (resp. $*_i$) arcs.  However, we will make use of 
 non-orientable Legendrians, eg. when $\rho$ is odd the cobordisms constructed in the proofs of Theorems \ref{thm:s1} and \ref{thm:01} may be non-orientable.

\item  
Note that the coefficients $a_{ik}$ and $a_{lj}$ of $d_x$ that appear in (B2) (Super handleslides) are not affected by the handleslide arcs with endpoints at $p$, and hence are independent of the choice of the point $x$ near $p$.  If one associates the matrices $C=cE_{k,l}$ to the $(k,l)$-super-handleslide at $p$ with coefficient $c$, and $B_m = b_{m} E_{i_m,j_m}$ for $1\leq m \leq M$ to the $(i_m,j_m)$-handleslides with endpoints at $p$ where the $b_m$ are their (counterclockwise) coefficients, then the axiom at handleslide points is equivalent to  
the identity
\begin{equation} \label{eq:superRemark}
A_xC +CA_x = \sum_m B_m = (I+B_m) \cdots (I+B_2)(I+B_1) - I.
\end{equation}

\item   
The configuration of handleslide arcs with endpoints at a swallowtail arc specified in (B2) (Swallowtail points) is essentially the only one allowed in order for the $1$-dimensional axioms (A1)-(A3) to hold when passing through a transverse slice containing the swallowtail sheets in the right column of Figure \ref{fig:Endpoints2}.  Moreover, the effect on coefficients of $d_x$ when passing through  this slice from left to right is to negate all coefficients of arrows beginning or ending on the swallowtail point, and this explains the necessity of spin arc endpoints at swallowtail points.  See Section \ref{sec:STunique}.

\end{enumerate}
\end{remark}

Note that for any $1$-dimensional Legendrian $\Lambda \subset J^1M$ with MCF, $\mathcal{C}$, there is a {\bf product MCF} on $\Lambda \times [0,1]$ that we will denote by $\mathcal{C}\times[0,1]$ where all of the data $(\{d_x\}, H,*, \xi)$ is constant in the $[0,1]$ direction.  Here, the co-oriented coefficients on $H$ and $*$ are assigned from those of $\mathcal{C}$ using the co-orientations arising from the orientations of $M$ and $\Lambda$.

\subsection{Continuation maps and monodromy}  \label{sec:2-4}

Let $\calC$ be an MCF for a Legendrian $\Lambda \subset J^1M$ of dimension $1$ or $2$, and let $\sigma : [0,1] \rightarrow M$ be a path that is transverse to the base singularities of $\calC$.  That is, $\sigma$ passes through  base projections of cusps, crossings, handleslide arcs, and basepoint arcs transversally at a finite collection of times $0< s_1 < s_2< \ldots < s_N < 1$, and $\sigma$ does not intersect co-dimension $2$ base singularities of $\calC$. 
On each of the intervals in $[0,1] \setminus \{s_1, s_2, \ldots, s_N \}$, the complexes $(C_{\sigma(s)}, d_{\sigma(s)})$  do not change.  Choose $0 = s_0' < s_1'< \ldots < s_N' =1$  
with $s_{i-1}' < s_i < s_i'$ for $1 \leq i \leq N$, and write 
$x_i = \sigma(s_i')$.   
For each $i$, the axioms (A1)-(A4) give rise to a quasi-isomorphism
\[
f_i: (C_{x_{i-1}}, d_{x_{i-1}})\to (C_{x_i}, d_{x_i})
\]
defined as follows depending on the type of singularity of $\mathcal{C}$ that occurs at $s_i$:
\begin{enumerate}
\item When $s_i$ is a {\it left cusp}, i.e. when two new sheets in positions $k$ and $k+1$ appear  as $s$ increases past the cusp point at $s_i$, we have   
\begin{equation} \label{eq:lcuspmap} f_i(S_j) = \begin{cases} S_j, & \mbox{ if } j< k;\\
S_{j+2}, & \mbox{ if } j \geq k.\end{cases}
\end{equation}
\item When $s_i$ is a {\it right cusp}, i.e. sheets $S_k$ and $S_{k+1}$ that exist at $x_{i-1}$ meet at the cusp point as $s$ increases,
\begin{equation}  \label{eq:rcuspmap}
f_i(S_j)= \begin{cases} S_j, & \mbox{ if } j< k;\\
S_{j-2}, & \mbox{ if } j>k+1\\
0 , & \mbox{ if } j=k,k+1.\end{cases}
\end{equation}
\item When $s_i$ is a {\it crossing} between sheets $S_k$ and $S_{k+1}$,  
 \begin{equation} \label{eq:crossingmap} f_i(S_j) = \begin{cases} S_{k+1}, & \mbox{ if } j=k;\\
 S_k, & \mbox{ if } j= k+1;\\
S_j, & \mbox{ if } j\neq k, k+1.\end{cases}
\end{equation}
\item When $s_i$ is a {\it handleslide} with upper and lower end points sheets $k$ and $l$ (with $k<l$) and coefficient $a \in R$ with respect to the co-orientation specified by $\sigma$,
\begin{equation} \label{eq:handleslidemap}
f_i(S_j)= \begin{cases} aS_k+S_l, & \mbox{ if } j= l;\\
S_j, & \mbox{ if } j\neq l.\end{cases}
\end{equation}
\item When $s_i$ is a {\it basepoint} on sheet $k$ with coefficient $t \in R^*$ with respect to the co-orientation specified by locally lifting $\sigma$ to $\Lambda$,
\begin{equation} \label{eq:basepointmap}
f_i(S_j)= \begin{cases} tS_k, & \mbox{ if } j= k;\\
S_j, & \mbox{ if } j\neq k.\end{cases}
\end{equation}
\end{enumerate}

Composing all of the chain maps as they occur along $\sigma$
we get a {\bf continuation map} 
$$f_\sigma: (C_{x_0}, d_{x_0})\to (C_{x_1}, d_{x_1}), \quad f_\sigma = f_N \circ \cdots \circ f_1.$$
We refer to the homology $H^*(C_{x}, d_{x})$ at $x \in M^\mathcal{C}_\mathit{reg}$ as the {\bf fiber cohomology} of $\mathcal{C}$ at $x$.  The continuation maps induce {\bf continuation isomorphisms} on the fiber cohomologies
$$\phi_{\sigma}: H^*(C_{x_0}, d_{x_0}) \stackrel{\cong}{\rightarrow} H^*(C_{x_1}, d_{x_1}).
$$

\begin{example} 
The MCF from Example \ref{ex:2dimMCF} has $H^0(C_{x}, d_{x}) \cong H^1(C_{x}, d_{x}) \cong \mathbb{F}$.  Viewing $S^1 = [0,1]/\{0,1\}$, for any $x_2 \in [0,1]$ the horizontal loop $\sigma:[0,1] \rightarrow S^1\times[0,1]$, $\sigma(t) = ([t],x_2)$ has continuation isomorphism $\phi_\sigma= -a^{-1} \mathrm{id}$. 
\end{example}

The fundamental properties of this construction are summarized in:

\begin{proposition} \label{prop:continuation} Let $\sigma,\tau:[0,1]\rightarrow M$ be paths transverse to $M^\mathcal{C}_\mathit{sing}$.
\begin{enumerate}
\item If $\sigma$ and $\tau$ are path homotopic (i.e. homotopic with endpoints fixed), then $f_\sigma$ and $f_\tau$ are chain homotopic.
\item If $\sigma(1)=\tau(0)$, then $f_{\sigma* \tau} = f_{\tau} \circ f_{\sigma}$.
\end{enumerate}
In particular,  the continuation isomorphisms provide a well-defined anti-homomorphism
\[
\Phi_{\Lambda, \mathcal{C}}:\pi_1(M,x_0) \rightarrow GL(H^*(C_{x_0}, d_{x_0})), \quad [\sigma] \mapsto \phi_{\sigma},
\]
i.e. $\Phi_{\Lambda,\mathcal{C}}([\sigma_1]\cdot[\sigma_2]) = \Phi_{\Lambda,\mathcal{C}}([\sigma_2])\cdot\Phi_{\Lambda,\mathcal{C}}([\sigma_1])$. 
\end{proposition}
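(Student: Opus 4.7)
The plan is to prove part (2) directly from the definition and then use it, together with a transversality argument, to reduce part (1) to the verification of chain-homotopy invariance under a finite list of elementary path moves; the final sentence of the proposition then follows formally.

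For (2), if $\sigma(1)=\tau(0)$ the base singularities of $\calC$ encountered along $\sigma*\tau$ are the disjoint union of those along $\sigma$ followed by those along $\tau$ in order. Choosing the sequence of regular intermediate points for $\sigma*\tau$ to include $\sigma(1)=\tau(0)$ as one of them, the continuation map factors literally as $f_{\sigma*\tau}=f_\tau\circ f_\sigma$ as chain maps, not merely up to homotopy.

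For (1), let $H:[0,1]^2\to M$ be a homotopy rel endpoints from $\sigma$ to $\tau$. A standard perturbation of $H$ rel $\partial[0,1]^2$ makes the family of paths $\sigma_s=H(\cdot,s)$ transverse to the stratification of $M^\calC_{\mathit{sing}}$ for all but finitely many $s\in(0,1)$, at each of which exactly one of the following elementary moves occurs:
\begin{itemize}
\item[(M1)] $\sigma_s$ acquires or loses a pair of transverse intersections with a single codim-$1$ stratum (a crossing arc, cusp arc, handleslide arc, or homology/spin arc);
\item[(M2)] $\sigma_s$ passes through a codim-$2$ singularity of $M^\calC_{\mathit{sing}}$, either a transverse double intersection of two codim-$1$ strata of distinct types or one of the intrinsic codim-$2$ strata (handleslide trivalent vertex, commutation intersection, super-handleslide point, swallowtail point).
\end{itemize}
Between consecutive bad times $f_{\sigma_s}$ is literally constant, so it suffices to verify at each bad time that the continuation maps on the two sides are chain homotopic. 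For (M1) the forward-then-backward composition across a codim-$1$ stratum is chain homotopic to the identity: for crossings the permutation is self-inverse; for handleslides one uses $(I+bE_{k,l})(I-bE_{k,l})=I$ via $E_{k,l}^2=0$; for basepoints the scalar identity $ss^{-1}=1$; and for cusps the composition projects onto the subcomplex omitting the cusp sheets $S_k,S_{k+1}$, which by (A2) form a two-step acyclic subcomplex with $dS_{k+1}=S_k$, contracted by the explicit chain homotopy $h(S_k)=S_{k+1}$. For (M2) the MCF axioms are designed so that the two orderings of codim-$1$ singularities encountered on each side of the codim-$2$ point yield chain-homotopic continuation maps; in most cases (handleslide-through-crossing/cusp, basepoint-through-handleslide, trivalent vertex, commutation) this reduces to matrix identities such as $(I+aE_{k,l})(I+bE_{k,l})=I+(a+b)E_{k,l}$ at trivalent vertices and $[aE_{i,j},bE_{j,k}]=abE_{i,k}$ at commutation intersections, while at super-handleslide points the identity \eqref{eq:superRemark} from Remark \ref{rem:MC2F}(2) exhibits $C=cE_{k,l}$ as the explicit chain homotopy between the two sides, and the swallowtail case is handled analogously using the endpoint configurations of Figure \ref{fig:Endpoints3}.

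The main technical obstacle in this outline lies in the case-by-case verification for (M2), particularly at super-handleslide and swallowtail points where the homotopies between the two sides are genuinely nontrivial; the remaining checks are routine matrix identities encoded directly in the axioms. Given (1) and (2), the assignment $[\sigma]\mapsto\phi_\sigma$ is well-defined on $\pi_1(M,x_0)$ and satisfies $\Phi([\sigma_1][\sigma_2])=\phi_{\sigma_1*\sigma_2}=\phi_{\sigma_2}\circ\phi_{\sigma_1}=\Phi([\sigma_2])\Phi([\sigma_1])$, so $\Phi_{\Lambda,\calC}$ is a well-defined anti-homomorphism as claimed.
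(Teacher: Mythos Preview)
Your proposal is correct and follows essentially the same strategy as the paper: reduce part (1) via a generic homotopy to a finite list of local moves, and verify chain-homotopy invariance move by move using the matrix identities encoded in the axioms (with the super-handleslide case supplying a genuine homotopy via $C=cE_{k,l}$). The only organizational difference is that the paper packages the case-by-case verification separately as Proposition~\ref{prop:MCF}(3), then proves Proposition~\ref{prop:continuation} by pulling back $(\Lambda,\mathcal{C})$ along the homotopy $I:[0,1]^2\to M$ to obtain an augmented Legendrian in $J^1([0,1]^2)$ and invoking the decomposition into elementary cobordisms (Proposition~\ref{prop:cobord}); you instead perturb the homotopy and do the case analysis directly on the path side, which amounts to the same computations.
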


The anti-homomorphism $\Phi_{\Lambda,\mathcal{C}}$ is called the {\bf monodromy representation} of $(\Lambda,\mathcal{C})$.  

\begin{remark} The proposition is established with $R = \mathbb{F}_2$ coefficients in \cite[Section 4.2]{RuSu3}, and a proof using the same method carries through in the present setting.  For completeness, and because of the more general coefficients, we have included a proof in Section \ref{sec:slice} below.  
\end{remark}

\section{MCFs and augmented Legendrian cobordism}  \label{sec:MCFandAug}

The section begins with notions of equivalence and cobordism for MCFs, and then makes the connection with augmentations of the Legendrian contact homology DGA using the results from \cite{PanRu2}.  Specifically, the notion of equivalence for MCFs translates to DGA homotopy for augmentations, and there is a bijection between MCFs and augmentations up to cobordism;  see Theorem \ref{thm:augbiject}.  While $\mathbb{F}_2$ coefficients are used in the cited literature, we include a discussion about general coefficient fields.

\subsection{Equivalence of MCFs and augmented Legendrian cobordism}  \label{sec:equivcob}

Note that when $M$ has boundary, any MCF, $\mathcal{C}$, for a Legendrian surface $\Lambda \subset J^1M$ can be restricted to a boundary component $B \subset \partial M$ to produce a $1$-dimensional MCF, denoted $\mathcal{C}|_B$ for $\Lambda|_B$.  As a slight extension of this construction, when $\Lambda \subset J^1([0,1]\times[0,1])$ we can also restrict MCFs from $\Lambda$ to the Legendrian tangles sitting above any of the boundary edges $[0,1]\times\{i\}$ or $\{i\} \times [0,1]$ for $i=0,1$.

\begin{definition} Let $(\Lambda, *, \xi)$ be a $1$-dimensional Legendrian link or tangle in $J^1M$ equipped with fixed collections of homology and spin base points, $*$ and $\xi$.  We say that two $\rho$-graded MCFs, $\mathcal{C}_0$ and $\mathcal{C}_1$, for $(\Lambda, *, \xi)$ are {\bf equivalent}  
if there exists a $\rho$-graded $2$-dimensional MCF $\mathcal{C}$ on the trivial cobordism $\Lambda \times [0,1]$ having
\begin{itemize}
\item homology and spin structure $*\times[0,1]$ and $\xi\times[0,1]$, and
\item  $\mathcal{C}|_{M\times\{i\}} = \mathcal{C}_i$ for $i =0,1$.
\item In the case where $M=[0,1]$ we require that  the handleslide set  of $\mathcal{C}$ is disjoint from a neighborhood of $\partial M \times [0,1]$.
\end{itemize}
\end{definition}

The terminology used in the following definitions is justified by the connection between MCFs and augmentations of the Legendrian contact homology dg-algebra; see Section \ref{sec:aug} below.

\begin{definition}
A {\bf ($\rho$-graded) augmented Legendrian}  in $J^1M$ is a pair $(\Lambda, \calC)$ consisting of a ($\rho$-graded) Legendrian $(\Lambda, \mu)$ in $J^1M$ with a Morse complex family $\calC$.
\end{definition}
Often we suppress the explicit reference to $\rho$ from terminology with the understanding that all augmented Legendrians under consideration are $\rho$-graded with respect to some fixed $\rho \geq 0$.
\begin{definition}
Two augmented Legendrians  $(\Lambda_0,\calC_0)$ and $(\Lambda_1,\calC_1)$ in $J^1 M$, where $M$ is $S^1$ or $[0,1]$, are {\bf cobordant} if there is an augmented Legendrian cobordism $(\Lambda, \calC)$ in $J^1(M \times  [0,1])$ such that 
in neighborhoods of $M\times\{i\}$, for $i=0,1$, $(\Lambda, \calC)$ agrees with the product MCF, $(\Lambda_i\times[0,1], \mathcal{C}_i\times[0,1])$. 
When $M = [0,1]$, we require in addition that there is a neighborhood $N$ of $\dd M \times I$ such that
\begin{itemize} \item in $N\times\R_z$ the front projection of $\Lambda$ consists of $n$ non-singular, non-intersecting sheets, and
\item all curves and points in $H$, $*$, and $\xi$ have their base projections disjoint from $N$.  In particular, the differentials $\{d_x\}$ are constant in $x$ on both components of $\dd M \times I$.
\end{itemize}
\end{definition}

Note that cobordism of augmented Legendrians defines an equivalence relation.

\subsection{MCFs and augmentations of the Legendrian contact homology DGA}   \label{sec:aug}

Recall that for compact Legendrian submanifolds in $1$-jet spaces the {\bf Legendrian contact homology dg-algebra} (abbrv. {\bf LCH DGA}) was constructed by Ekholm, Etnyre,  and Sullivan in \cite{EESR2n+1, EES}.  In its most basic form, the LCH algebra of $\Lambda \subset J^1M$, denoted as $(\mathcal{A}(\Lambda), \partial)$, is a free associative $\mathbb{F}_2$-algebra with identity generated by the Reeb chords of $\Lambda$ and graded by 
$\Z/\rho\Z$ when $\Lambda$ is $\rho$-graded.  
The degree $-1$ differential, $\partial$, is defined by a count of rigid holomorphic disks.  When $\Lambda$ is equipped with a spin structure, the coefficients can be upgraded from $\mathbb{F}_2$ to $\mathbb{Z}$, see \cite{EESorientation}, and there are various ways to incorporate  the group ring $\mathbb{F}_2[H_1(\Lambda)]$ or $\mathbb{Z}[H_1(\Lambda)]$ into $(\mathcal{A}(\Lambda), \partial)$ by paying attention to the boundary values of holomorphic disks, see eg. \cite{EESR2n+1} or \cite[Section 2]{EENS} for the ``fully non-commutative'' DGA.

A {\bf $\rho$-graded augmentation} of $\Lambda$ to $\mathbb{F}$ is a unital homomorphism of differential algebras 
\[
\epsilon: (\mathcal{A}(\Lambda), \partial) \rightarrow (\mathbb{F}, 0), \quad \epsilon(1) =1, \quad \epsilon \circ \partial =0
\] 
that preserves the $\Z/\rho$-grading, i.e.  
\[
\epsilon(x) \neq 0 \quad \Rightarrow \quad |x| = 0 \,(\mbox{mod $\rho$}).
\]
Two $\rho$-graded augmentations, $\epsilon_1$ and $\epsilon_2$, are {\bf DGA homotopic} if there exists an $(\epsilon_1, \epsilon_2)$-derivation 
\[
H:\mathcal{A}(\Lambda) \rightarrow \mathbb{F}, \quad H(xy) = H(x)\epsilon_2(y) +(-1)^{|x|} \epsilon_1(x) H(y)
\]
 of degree $1$ mod $\rho$ such that $\epsilon_1-\epsilon_2 = H \circ \partial$.  Here, when $\rho$ is odd it is required that $\mathit{Char}\, \mathbb{F} =2$.  Denote the set of all $\rho$-graded augmentations of $\Lambda$ to $\mathbb{F}$ by $\mathit{Aug}_\rho(\Lambda; \mathbb{F})$, and denote the set of DGA homotopy classes 
by  $\mathit{Aug}_\rho(\Lambda; \mathbb{F})/{\sim}$.

\begin{remark}
When considering augmentations to $\mathbb{F}_2$, eg. for computing $\mathit{Aug}_\rho(\Lambda; \mathbb{F}_2)$ or $\mathit{Aug}_\rho(\Lambda; \mathbb{F}_2)/{\sim}$, we can work with the version of $\mathcal{A}$ generated by Reeb chords only, without needing to make use of 
homology classes in $H_1(\Lambda)$.  This is because
any $\mathbb{F}_2$-valued augmentation must map all elements of $H_1(\Lambda) \subset \mathcal{A}^\times$ to $\mathbb{F}_2^\times =\{1\}$.  
\end{remark}

Consider now a $\rho$-graded Legendrian cobordism $\Sigma \subset J^1(M\times I)$ with $M=\R$ or $S^1$ between $\Lambda_0$ and $\Lambda_1$.  As in \cite{EK}, after modifying $\Sigma$ near $M\times \partial I$ to have Morse minimum ends and working with an appropriate class of almost complex structures, the LCH DGA of $\Sigma$ is well-defined and contains the DGAs of $\Lambda_0$ and $\Lambda_1$ as sub-DGAs.  See also \cite[Section 5.3]{PanRu1} and \cite[Section 2.5]{PanRu2}.  Thus, we have a diagram
\[
\mathcal{A}(\Lambda_1) \hookrightarrow \mathcal{A}(\Sigma) \hookleftarrow \mathcal{A}(\Lambda_0).
\] 
For augmentations $\epsilon_0 \in \mathit{Aug}(\Lambda_0; \mathbb{F})$ and $\epsilon_1 \in \mathit{Aug}(\Lambda_1;\mathbb{F})$ we say that $(\Lambda_0, \epsilon_0)$ and $(\Lambda_1, \epsilon_1)$ are {\bf cobordant} if there exists a Legendrian cobordism $\Sigma$ from $\Lambda_0$ and $\Lambda_1$ with an augmentation $\epsilon$ such that $\epsilon|_{\mathcal{A}(\Lambda_i)} \simeq \epsilon_i$ 
(DGA homotopy).

The works \cite{Henry, HenryRu2} established over $\mathbb{F}_2$ a bijection between MCFs and DGA homotopy classes of augmentations
for Legendrians in $J^1\R$, see \cite{PanRu2} for the case of Legendrians in $J^1S^1$.  A correspondence between MCFs and augmentations for closed Legendrian surfaces is established in \cite{RuSu2}.   
In \cite{PanRu1}, 
it is shown that a bijection arises as well at the level of cobordism classes.  More precisely, for $M = \R$ or $S^1$ let $\mathit{Cob}^{\mathit{MCF}}_\rho(J^1M;\mathbb{F}_2)$ (resp.  $\mathit{Cob}^{\mathit{Aug}}_\rho(J^1M;\mathbb{F}_2)$) denote the set of cobordism classes of pairs $(\Lambda, \mathcal{C})$ (resp. $(\Lambda, \epsilon)$) consisting of $\rho$-graded Legendrians in $J^1M$ equipped with MCFs (resp. augmentations) defined over $\mathbb{F}_2$.  Let $\mathit{MCF}_\rho(\Lambda;\mathbb{F}_2)/{\sim}$ denote the set of equivalence classes of $\rho$-graded MCFs for a Legendrian $\Lambda \subset J^1M$.

\begin{theorem} \label{thm:augbiject} Let $M= \R$ or $S^1$.  For any $\Lambda \subset J^1M$ after modifying $\Lambda$ by a Legendrian isotopy induced by a planar isotopy of $\pi_{xz}(\Lambda)$ there is a bijection  
\[
\Phi: \mathit{MCF}_\rho(\Lambda;\mathbb{F}_2)/{\sim} \stackrel{\cong}{\rightarrow} \mathit{Aug}_{\rho}(\Lambda;\mathbb{F}_2)/{\sim}.
\]
Moreover, the maps $\Phi$ induce a bijection 
\[
\mathit{Cob}^{\mathit{MCF}}_\rho(J^1M;\mathbb{F}_2) \, \cong \,\mathit{Cob}^{\mathit{Aug}}_\rho(J^1M;\mathbb{F}_2).
\]
\end{theorem}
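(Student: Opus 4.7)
The plan is to leverage the existing fiber-wise correspondences between MCFs and augmentations from \cite{Henry, HenryRu2, PanRu2}, upgrade them to work for $2$-dimensional Legendrian cobordisms via the results of \cite{RuSu2, PanRu1}, and then check that the induced map descends to cobordism classes and is a bijection. The required planar isotopy of $\pi_{xz}(\Lambda)$ is to put the front into a form (plat or ``nearly plat'') for which the combinatorial bijection $\Phi$ is constructed in the cited references.

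First I would invoke \cite{Henry, HenryRu2} (for $M=\R$) and \cite{PanRu2} (for $M=S^1$) to obtain, for each fixed $\Lambda$ in standard form, a bijection $\Phi:\mathit{MCF}_\rho(\Lambda;\mathbb{F}_2)/{\sim_{\mathit{Henry}}} \stackrel{\cong}{\rightarrow} \mathit{Aug}_{\rho}(\Lambda;\mathbb{F}_2)/{\sim}$, where $\sim_{\mathit{Henry}}$ is the ambient ``handleslide moves'' equivalence of MCFs used in those papers. To identify this with the equivalence defined in Section \ref{sec:equivcob} (i.e.\ the existence of a product MCF on $\Lambda\times[0,1]$ interpolating between $\mathcal{C}_0$ and $\mathcal{C}_1$), I would argue in both directions: one shows that each Henry-type local move on an MCF can be realized by an explicit $2$-dimensional MCF on the trivial cobordism $\Lambda\times[0,1]$ (this is essentially a dictionary of local pictures, using the construction toolkit advertised in Section \ref{sec:constructions}); conversely, one decomposes any product MCF on $\Lambda\times[0,1]$ into a sequence of elementary moves by a generic perturbation argument on the base singular set $M^{\mathcal{C}}_{\mathit{sing}}$, pulling it through a generic $1$-parameter family to translate between base slices. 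This gives agreement of the two equivalence relations and hence the first bijection.

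For the cobordism statement, the plan is to show $\Phi$ is compatible with cobordism in both directions. Given a $2$-dimensional MCF $\mathcal{C}$ on a Legendrian cobordism $\Sigma$ from $\Lambda_0$ to $\Lambda_1$, the surface-level correspondence of \cite{RuSu2, PanRu1} (adapted with Morse-minimum ends as in \cite{EK}) produces an augmentation $\epsilon$ of $\mathcal{A}(\Sigma)$ whose restrictions $\epsilon|_{\mathcal{A}(\Lambda_i)}$ are DGA-homotopic to $\Phi(\mathcal{C}|_{M\times\{i\}})$; this promotes a cobordism of augmented Legendrians in the MCF sense to one in the augmentation sense, giving a well-defined map at the cobordism level and its surjectivity. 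Conversely, starting from an augmentation cobordism one constructs a $2$-dimensional MCF on $\Sigma$ using the same correspondence (after the standard planar isotopy on $\Sigma$) whose boundary MCFs are equivalent to those one started with, producing the inverse. Injectivity reduces to the following statement: if $\Phi(\mathcal{C}_0)$ and $\Phi(\mathcal{C}_1)$ are cobordant as augmented Legendrians, then $(\Lambda_0,\mathcal{C}_0)$ and $(\Lambda_1,\mathcal{C}_1)$ are cobordant as MCFs; this follows by applying the surface correspondence to the augmented cobordism to produce the required MCF.

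The main obstacle, in my view, is the identification of the MCF equivalence relation of Section \ref{sec:equivcob} with the move-based equivalence used by Henry and Pan--Rutherford, because a priori the former allows very general $2$-dimensional MCFs on $\Lambda\times[0,1]$ (including super-handleslides, swallowtail-type features that do not occur for genuinely trivial cobordisms, and complicated handleslide arc systems), while the latter is generated by a short list of local moves. Bridging this gap cleanly amounts to a generic-position argument showing that any $2$-dimensional MCF on a trivial cobordism can be put into a form where its base-singular set is a finite disjoint union of the elementary configurations listed in Section \ref{sec:constructions}; once that normal form is established, reading off the sequence of moves along the $[0,1]$-direction yields the desired Henry-type equivalence. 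The remaining ingredients---well-definedness of cobordism of augmentations, independence from the choice of planar isotopy used to put $\Lambda$ in standard form, and the Morse-end extension of the LCH DGA---are treated in the references and can be cited directly.
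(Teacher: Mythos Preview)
Your overall architecture is correct and matches the paper's: invoke the existing $1$-dimensional bijections from \cite{Henry, HenryRu2, PanRu2}, then use a $2$-dimensional correspondence to push cobordisms in either direction. Where you diverge from the paper is in two places worth noting.

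First, the ``main obstacle'' you identify---reconciling the product-cobordism equivalence of Section~\ref{sec:equivcob} with the move-based equivalence of Henry---is not where the real work lies. The paper handles this identification as a routine consequence of the slice-decomposition toolkit (what becomes Proposition~\ref{prop:cobord}(i)): any MCF on $\Lambda\times[0,1]$, after a generic perturbation, breaks into elementary pieces each realizing one of the moves (E1)--(E7). There are no swallowtails on a product cobordism, so the list of possible local models is short and the argument is genuinely straightforward. You are right that this step is needed, but it is not the crux.

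Second, and more substantively, the paper does not literally use a direct surface-level bijection ``MCF on $\Sigma$ $\leftrightarrow$ augmentation of $\mathcal{A}(\Sigma)$'' as you propose. Instead it routes through the \emph{induced augmentation set} formalism of \cite{PanRu1,PanRu2}: to a conical cobordism $\Sigma$ one attaches a subset $I_\Sigma\subset \mathit{Aug}(\Lambda_0)/{\sim}\times\mathit{Aug}(\Lambda_1)/{\sim}$, and the key lemma is that $(\Lambda_0,\epsilon_0)$ and $(\Lambda_1,\epsilon_1)$ are augmentation-cobordant if and only if $([\epsilon_0],[\epsilon_1])\in I_\Sigma$ for some $\Sigma$. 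This lemma requires \cite[Proposition 2.15]{PanRu2} to pass between Morse-minimum and conical models. One then shows, via a commutative diagram factoring through the cellular DGA, that $I^{\mathit{MCF}}_\Sigma$ and $I_{\widetilde\Sigma}$ correspond under $\Phi\times\Phi$. Your direct approach would work in principle, but the reference you cite for the surface correspondence (\cite{RuSu2}) is really for closed surfaces, and extending it to cobordisms with boundary so that restriction to the ends matches $\Phi$ is exactly what the induced-augmentation-set machinery packages. The paper's route buys you that compatibility for free by citing \cite[Corollary 4.12, Propositions 5.17 and 5.19]{PanRu2} rather than reproving it.
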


\begin{proof} 
The bijection $\Phi$ is from   \cite[Proposition 5.17]{PanRu2}.
That the $\Phi$ collectively 
induce a bijection on cobordism classes follows from \cite[Corollary 4.12 and Proposition 5.19]{PanRu2} and the characterization of induced augmentation sets in terms of Morse minimum cobordisms obtained in \cite[Proposition 2.15]{PanRu2}.   
We discuss the details a little more carefully using terminology from \cite{PanRu2}.

Using the {\it immersed LCH functor} constructed in \cite{PanRu1}, Section 3 of \cite{PanRu2} associates to a ($\rho$-graded) {\it conical Legendrian cobordism}, $\Sigma:\Lambda_0 \rightarrow \Lambda_1$ with $\Lambda_i \subset J^1M$, an {\it induced augmentation set}, \[
I_{\Sigma} \subset\mathit{Aug}_{\rho}(\Lambda_0;\mathbb{F}_2)/{\sim}  \times  \mathit{Aug}_{\rho}(\Lambda_1;\mathbb{F}_2)/{\sim}
\]
that is a conical Legendrian isotopy invariant of $\Sigma$.  The key point for our current discussion is the following.

\begin{lemma}  Two augmentations $\epsilon_i \in \mathit{Aug}_{\rho}(\Lambda_i;\mathbb{F}_2)$, $i=0,1$ belong to the same equivalence class in $\mathit{Cob}^{\mathit{Aug}}_\rho(J^1M;\mathbb{F}_2)$ if and only if there exists a conical Legendrian cobordism $\Sigma:\Lambda_0 \rightarrow \Lambda_1$ such that $([\epsilon_0], [\epsilon_1]) \in I_{\Sigma}$.   
\end{lemma}

\begin{proof}
Assuming the $(\Lambda_i, \epsilon_i)$ are cobordant, we have a Morse minimum cobordism $\Sigma_{\mathit{min}}:\Lambda_0 \rightarrow \Lambda_1$ together with an augmentation $\alpha$ of $\alg(\Sigma_{\mathit{min}})$ that restricts to $\epsilon_i$ on $\alg(\Lambda_i)$.  Applying \cite[Proposition 2.15]{PanRu2}, when we modify $\Sigma_{\mathit{min}}$ near its ends to form an associated conical Legendrian cobordism $\Sigma_{\mathit{conic}}$ the {\it immersed DGA map} associated to $\Sigma_{\mathit{conic}}$ is {\it immersed homotopic} to 
\[
\alg(\Lambda_1)  \stackrel{i_1}{\hookrightarrow} \alg(\Sigma_{min}) \stackrel{i_0}{\hookleftarrow} \alg(\Lambda_0).
\]
It follows from the definition and homotopy invariance of the {\it induced augmentation set} (see \cite[Definition 3.1 and Proposition 3.2]{PanRu2}) that $([\epsilon_0],[\epsilon_1]) = ([i^*_1\alpha], [i^*_0\alpha]) \in I_{\Sigma_{\mathit{conic}}}$.  

Conversely, suppose that there is a conical cobordism $\Sigma:\Lambda_0 \rightarrow \Lambda_1$ such that $([\epsilon_0], [\epsilon_1]) \in I_{\Sigma}$.  Applying a conical Legendrian isotopy supported near the ends of $\Sigma$, we can arrange that $\Sigma$ is the conical cobordism associated to a Morse minimum cobordism $\Sigma_{\mathit{min}}$; this is accomplished as in the constructions in \cite[Section 2.5]{PanRu2}.  Then, \cite[Proposition 2.15]{PanRu2} shows that there is an augmentation of $\alg(\Sigma_{\mathit{min}})$ whose restrictions to the $\alg(\Lambda_i)$ agree with the $\epsilon_i$ up to DGA homotopy.
\end{proof}

Now, for any Legendrians $\Lambda_0, \Lambda_1 \subset J^1M$, after modifying each $\Lambda_i$ by a particularly simple Legendrian isotopy, $\mathcal{L}_i$, to $\widetilde{\Lambda_i}$ we have that for any (compact) Legendrian cobordism $\Sigma \subset J^1(M \times[0,1])$ from $\Lambda_0$ to $\Lambda_1$ there is a commutative diagram of bijections and inclusions
\begin{equation} \label{eq:diagram}
\begin{array}{ccccc}
\mathit{MCF}_\rho(\Lambda_0)/{\sim} \times \mathit{MCF}_\rho(\Lambda_1)/{\sim} & \stackrel{\cong}{\longrightarrow} 
& \mathit{Aug}_\rho(\widetilde{\Lambda}_0)/{\sim} \times \mathit{Aug}_\rho(\widetilde{\Lambda}_1)/{\sim}  \\  
\rotatebox[origin=t]{90}{\reflectbox{$\hookleftarrow$}} & 
& \rotatebox[origin=t]{90}{\reflectbox{$\hookleftarrow$}} \\
I^\mathit{MCF}_\Sigma  
& \stackrel{\cong}{\longrightarrow} & I_{\widetilde{\Sigma}}
\end{array}
\end{equation}
where $I^\mathit{MCF}_\Sigma$ is the set of pairs $(\mathcal{C}_0,\mathcal{C}_1)$ that can be obtained from restricting an MCF on $\Sigma$ to $\Lambda_0$ and $\Lambda_1$ and $\widetilde{\Sigma}$ is an extension of $\Sigma$ to a conical (non-compact) Legendrian cobordism.  The arrow in the top row is the composition of bijections from \cite[Proposition 5.17 and Corollary 4.12]{PanRu2} that is the map $\Phi$ in the statement of the current proposition.  That the diagram is commutative follows from \cite[Proposition 5.19 and Corollary 4.12]{PanRu2}.  (In \cite{PanRu2} the bijection factors through an intermediary set,   $\mathit{Aug}_\rho(\Lambda_0, \mathcal{E}_{||})/{\sim} \times \mathit{Aug}_\rho(\Lambda_1, \mathcal{E}_{||})/{\sim}$, that consists of augmentations of the cellular DGAs of the $\Lambda_i$.)  The diagram (\ref{eq:diagram}) together with the lemma show that the $\Phi$ induce a well-defined injective map $\mathit{Cob}^{\mathit{MCF}}_\rho(J^1M;\mathbb{F}_2) \rightarrow \mathit{Cob}^{\mathit{Aug}}_\rho(J^1M;\mathbb{F}_2)$.  To see that the map is also surjective, we use that since $\Lambda_i$ and $\widetilde{\Lambda}_i$ are Legendrian isotopic, for any augmentation $\epsilon_i$ of $\mathcal{A}(\Lambda_i)$ the pair $(\Lambda_i, \epsilon_i)$ is cobordant to $(\widetilde{\Lambda}_i, \widetilde{\epsilon}_i)$ for some augmentation $\widetilde{\epsilon_i}$ of $\mathcal{A}(\widetilde{\Lambda_i})$.  (Here, we use that the Legendrian isotopy $\mathcal{L}_i$ leads to a conical Legendrian isotopy $\Sigma_{\mathcal{L}_i}:\Lambda_i \rightarrow \widetilde{\Lambda}_{i}$ with embedded Lagrangian projection.  In this situation, there is a DGA homomorphism $f: \alg(\widetilde{\Lambda_i}) \rightarrow \alg(\Lambda_i)$ so that the induced augmentation set has the form $I_{\Sigma_{\mathcal{L}_i}} = \{([\epsilon], [f^*\epsilon])\,|\, \epsilon \in \mathcal{A}(\Lambda_i)\}$.  See \cite[Section 3.1.1]{PanRu2}.  Then, apply the lemma again.)

\end{proof}

\subsection{Remarks about general $\mathbb{F}$} \label{sec:generalF}
We expect that the bijections from Theorem \ref{thm:augbiject} should remain valid  
 with  an arbitrary field $\mathbb{F}$ provided   that one works with an appropriate version of the LCH DGA.   The approach taken in \cite{PanRu2} is extendable to the case when $\Char \mathbb{F}=2$ as follows.
 With the formulation of MCFs over $\mathbb{F}$ used in this article, the bijections $\Phi:\mathit{MCF}_\rho(\Lambda;\mathbb{F})/{\sim} \stackrel{\cong}{\rightarrow} \mathit{Aug}_{\rho}(\Lambda;\mathbb{F})/{\sim}$ will take equivalence classes of MCFs of $\Lambda$ with a fixed collection of homology basepoints, $* = \{*_1, \ldots, *_r\}$, to the set of DGA homotopy classes of augmentations of the multi-basepointed DGA, $\mathcal{A}(\Lambda, *)$, as defined in \cite{NgR}.  Recalling that $\mathcal{A}(\Lambda, *)$ has one invertible generator $t_i^{\pm1}$ for each $*_i$, we remark that we require DGA homotopy operators to satisfy $H(t_i)=0$, and  under the bijection the coefficient $s_i \in \mathbb{F}^\times$ associated to the basepoint $*_i$ as part of an MCF $\mathcal{C}$ satisfies $\epsilon_i(t_i) = s_i^{-1}$.
We also clarify the meaning of cobordism for augmentations of the multi-basepointed DGAs.  When considering augmentations $\epsilon_i: \mathcal{A}(\Lambda_i, *^i) \rightarrow \mathbb{F}$ for $i=0,1$, one should allow Morse minimum cobordisms $\Sigma \subset J^1(M \times[0,1])$ equipped with homology structures $*$ that agree with $*^{i}\times[0,1]$ near the boundary components $\Lambda_i\times\{i\}$ of $\Sigma$ for $i=0,1$.   A $2$-dimensional analog of the multi-basepointed DGA, $\mathcal{A}(\Sigma, *)$,  then results by associating invertible generators $u^{\pm1}_i$ to the (co-orientations of the) edges of $*$, subject 
to the relations $u_1u_2u_3= 1$ (assuming cyclically consistent co-orientation) at trivalent vertices. The differential of Reeb chords is modified to record intersections of the boundary of holomorphic disks with $*$ using appropriate factors of $u_i^{\pm1}$.  The pairs $(\Lambda_i, \epsilon_i)$, $i=0,1$, should then be declared cobordant if there exists some $(\Sigma, *)$ with an augmentation $\alpha: \mathcal{A}(\Sigma,*) \rightarrow \mathbb{F}$ satisfying $h_i^*\alpha \simeq \epsilon_i$ where $h_i: \mathcal{A}(\Lambda_i, *^i) \rightarrow \mathcal{A}(\Sigma,*)$ is the DGA map that is the inclusion on Reeb chords and maps $t_i \mapsto u_j^{\pm1}$ appropriately.  In fact, a closely related construction is used in the context of embedded Lagrangian cobordisms in the 
the work of Gao-Shen-Weng, \cite[Section 2.4]{GSW}.

The proof of Theorem \ref{thm:augbiject} above, essentially from \cite{PanRu2}, factors through the cellular DGA from \cite{RuSu1, RuSu2, RuSu25} (defined over $\mathbb{F}_2$) and the $2$-dimensional correspondence between MCFs and augmentations of the cellular DGA (over $\mathbb{F}_2$) from \cite{RuSu3}.    In establishing the isomorphism between the cellular DGA and the LCH DGA, the relevant rigid holomorphic disks, or rather their degenerations to gradient flow trees, are all enumerated in \cite{RuSu2,RuSu25}.  Thus, it is not difficult to record intersections between their boundaries and the curves that make up a homology structure.  When this is done the isomorphism between the cellular DGA and the LCH DGA may be upgraded to incorporate homology structures, and the arguments of \cite{PanRu2} and \cite{RuSu3} may be carried forward.  As this would take us somewhat far afield, we do not consider these details in the present article.

We expect that the bijection continues to hold as well when $\Char \mathbb{F} \neq 2$, although it is not clear that the method of proof from \cite{PanRu2} can be pushed through  
with the readily available techniques.  The cellular DGA can be formulated with signs by making use of a combinatorial spin structure in the presence of swallowtail points;  see \cite{RuSu4} for a formulation in all dimensions for Legendrians with at worst cusp singularities.  However, it is more difficult to extend the isomorphism with LCH to include signs.  The key issue is to be able  to explicitly compute the orientation sign of a holomorphic disk from the combinatorics of the corresponding GFT.  See \cite{EENS, Karlsson} for progress on this difficult technical problem.

\section{Constructions of MCFs}  \label{sec:constructions}

In this section we establish some tools for efficiently constructing MCFs and augmented Legendrian cobordisms that we will apply in later sections.  In Section \ref{sec:slice}, we present a complete list of 
local moves on $1$-dimensional slices that can be used to build up an arbitrary augmented Legendrian cobordism out of elementary ones.  For ease of use, it is helpful that one can typically perform these moves 
without having to pay undue attention to the differentials $\{d_x\}$; see Proposition \ref{prop:MCF}.    In Section \ref{sec:D4minus}, we illustrate the moves by building up the (useful but non-elementary) $D_4^-$ cobordism.  Sections \ref{sec:properly} and \ref{sec:SRform} establish standard forms for handleslide sets and MCFs up to equivalence that simplify later arguments.  Finally, Section \ref{sec:LegIso} records that MCFs can always be extended over cobordisms arising from Legendrian isotopies.

\subsection{$2$-dimensional MCFs via movies of slices} \label{sec:slice}

A $2$-dimensional augmented Legendrian, $(\Sigma, \mathcal{C})$, in $J^1(M \times [0,1])$  
can be built up out of elementary cobordisms that modify the $1$-dimensional slices in a local way as described in the following moves.  
As above, coefficients of handleslides that intersect a slice $M\times\{t\}$ are always specified with respect to the co-orientation arising from the standard orientation  of $M$ (left to right in figures);   
co-orientations for base points are indicated by arrows in the figures. 

\medskip

\noindent {\bf Equivalence moves:}   

\begin{enumerate}
\item[(E1)]  Two adjacent $(i,j)$-handleslides with coefficients $a$ and $-a$ can be cancelled or created.
\item[(E2)]  Two adjacent $(i,j)$-handleslides with coefficients $a$ and $b$ can be interchanged with a single $(i,j)$-handleslide with coefficient $a+b$.
\item[(E3)]  An $(i,j)$-handleslide can be moved past the $x$-value of a crossing or a cusp as long as the endpoints trace out a continuous path on $\Lambda \setminus \Lambda_{\mathit{cusp}}$.  
\item[(E4)]  An $(i,j)$-handleslide can be commuted with a $(k,l)$-handleslide as long as $j \neq k$ and $i\neq l$. 
\item[(E5)]  Commuting an $(i,j)$-handleslide and a $(j,k)$-handleslide results in the appearance of a new $(i,k)$-handleslide with coefficient as in Figure \ref{fig:Endpoints1}.
\item[(E6)]  An $(i,j)$-handleslide may be commuted past a homology or spin base point provided its coefficient changes as in Figure \ref{fig:Axiom3}.
\item[(E7)]  A group of handleslides is created or removed by passing a super-handleslide point as in Figure \ref{fig:Endpoints2}. 
\end{enumerate}

\medskip

\noindent {\bf Base point moves:}
\begin{enumerate}
\item[(F1)]  Two adjacent homology or spin base points with inverse coefficients, $s$ and $s^{-1}$, with respect to a consistent co-orientation can be cancelled or created.
\item[(F2)]  Two adjacent homology base points with coefficients, $s$ then $t$, with respect to a consistent co-orientation 
can be replaced with a single base point with coefficient $ts$, and vice-versa.  
\item[(F3)]  Basepoints can be moved continuously (in $\Lambda$) through cusps or crossings without change to coefficients.
\item[(F4)]  Homology basepoints can be commuted with spin base points.
\end{enumerate} 

\medskip

\begin{figure}[!ht]
\labellist
\small
\pinlabel $a$ [r] at 18 268
\pinlabel $-a$ [l] at  56 268
\pinlabel $\mbox{(E1)}$ [b] at 104 272
\pinlabel $\mbox{(E3)}$ [b] at 322 272
\pinlabel $\mbox{(E3)}$ [b] at 570 272
\pinlabel $\mbox{(E4)}$ [b] at 196 182
\pinlabel $\mbox{(E4)}$ [b] at 464 182
\pinlabel $a$ [r] at 244 254
\pinlabel $a$ [r] at 394 254
\pinlabel $a$ [r] at 484 250
\pinlabel $a$ [r] at 656 250 
\pinlabel $a$ [r] at 110 198
\pinlabel $b$ [r] at 138 152
\pinlabel $a$ [r] at 274 198
\pinlabel $b$ [r] at 244 152
\pinlabel $a$ [r] at 378 152
\pinlabel $b$ [r] at 410 198
\pinlabel $a$ [r] at 544 152
\pinlabel $b$ [r] at 514 198
\pinlabel $s$ [b] at 110 98
\pinlabel $t$ [b] at 410 98
\pinlabel $ts$ [b] at 532 98
\pinlabel $s$ [b] at 388 98
\pinlabel $s^{-1}$ [b] at 138 98
\pinlabel $\mbox{(F1)}$ [b] at 192 92
\pinlabel $\mbox{(F2)}$ [b] at 468 92
\pinlabel $\mbox{(F3)}$ [b] at 118 22
\pinlabel $\mbox{(F3)}$ [b] at 328 22
\pinlabel $\mbox{(F4)}$ [b] at 530 22
\pinlabel $s$ [t] at 34 0
\pinlabel $s$ [b] at 198 34
\pinlabel $s$ [t] at 256 0
\pinlabel $s$ [b] at 378 34
\pinlabel $s$ [b] at 446 28
\pinlabel $s$ [b] at 606 28
\pinlabel $-1$ [b] at 470 28
\pinlabel $-1$ [b] at 580 28

\endlabellist
\includegraphics[width=7in]{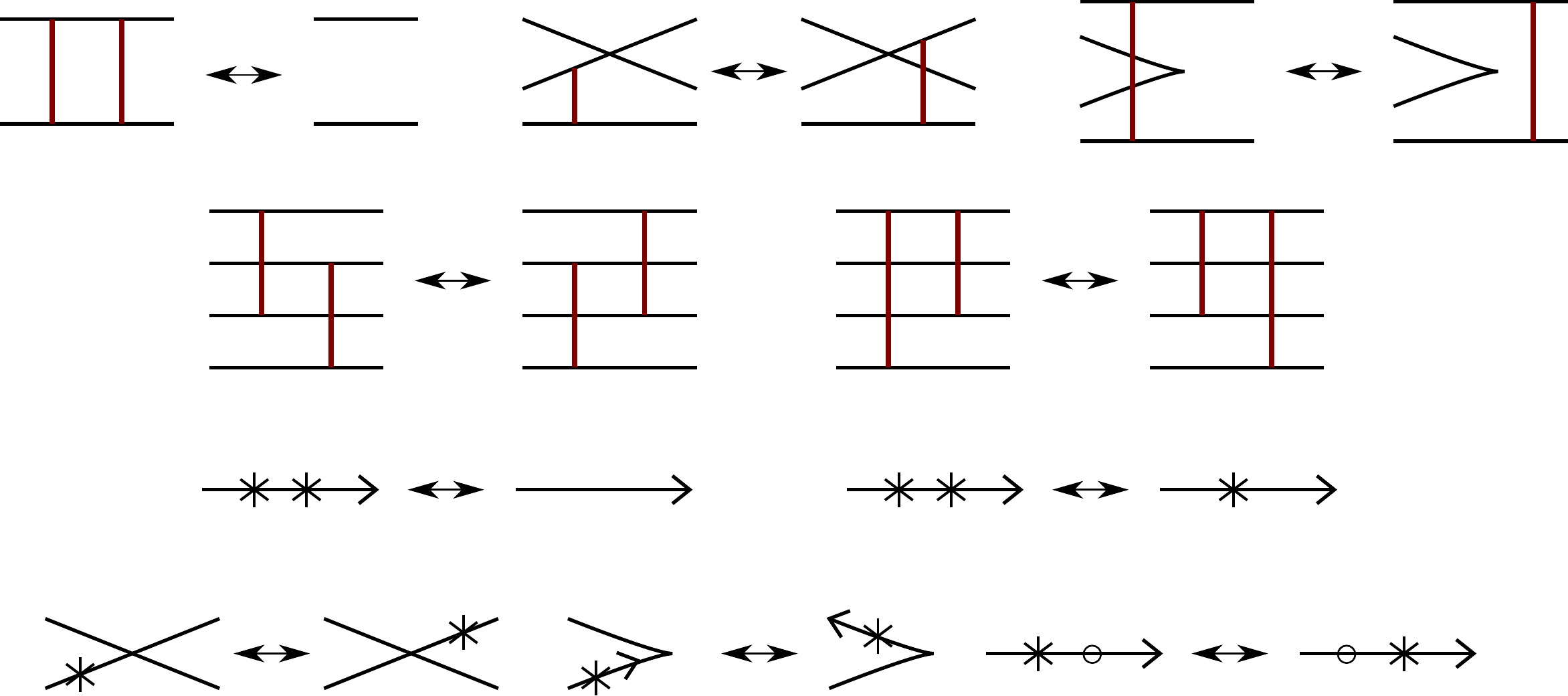}
\caption{Examples of equivalence and basepoint moves.  Note that moves (E2), (E5), (E6), and (E7) are pictured in the figures in Section \ref{sec:2dimMCF}.}
\label{fig:EMoves}
\end{figure}

\noindent {\bf Legendrian isotopy moves:}  

\begin{enumerate}
\item[(R0)]  A pair of front singularities (two crossings, two cusps, or a crossing and a cusp) appearing in separate $z$-intervals in the front projection can have their $x$-coordinates pass one another.  

\item[(R1)]  The standard Legendrian Reidemeister Move 1 can be performed provided the handleslides and spin base points are positioned on the two sides of the move as in Figure \ref{fig:Endpoints3}.

\item[(R2)] and (R3)  Legendrian Reidemeister Moves 2 and 3 can be performed in $x$-intervals that does not contain handleslides.

\end{enumerate}

\medskip

\begin{figure}

\labellist
\small
\pinlabel $\mbox{(R1)}$ [b] at 120 50
\pinlabel $\mbox{(R2)}$ [b] at 382 50
\pinlabel $\mbox{(R3)}$ [b] at  630 50

\endlabellist

\includegraphics[width=6in]{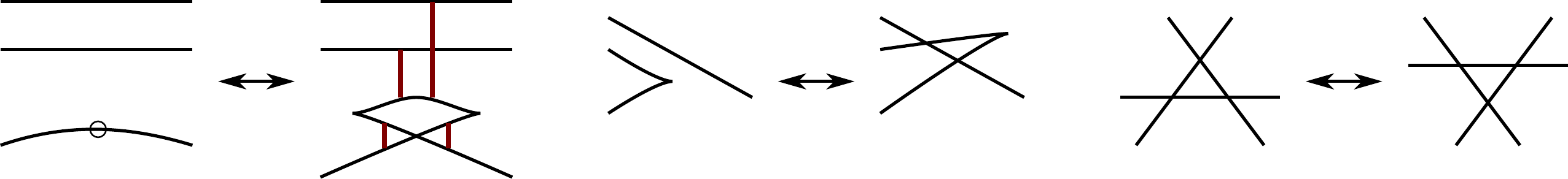}
\caption{Legendrian Reidemeister moves.  Reflections of (R1) (vertical) and (R2) (horizontal and vertical) are allowed.  The coefficients of handleslides in the (R1) move are as in   Figure \ref{fig:Endpoints3}.
}
\label{fig:RMoves}
\end{figure}

\noindent {\bf Legendrian cobordism moves:}

\begin{enumerate}
\item[(C1)] (Unknot Move)  A standard Legendrian unknot component, $U$, disjoint from other components in the front projection can be added or removed provided that there are no handleslide endpoints or basepoints on $U$.

\item[(C2)]  (Pinch Move)  Two sheets, $S_k$ and $S_{k+1}$, defined near $x=x_0$ with no other sheets appearing between them (in the $z$-direction) such that $d_x S_{k+1}= S_k$
and $\langle d_x S_{i},S_k\rangle = \langle d_x S_i, S_{k+1}\rangle = 0$ for $i \neq k,k+1$ can be interchanged with a left cusp followed by a right cusp having equal Maslov potential values (in $\Z/\rho$) on their upper sheets as in Figure \ref{fig:CMoves}.

\item[(C3)]  (Clasp Move)  Two consecutive crossings involving the same sheets, and without any handleslides located between them can be removed.  Conversely, for two adjacent sheets with $\langle d_xS_{k+1}, S_k \rangle =0$ two consecutive crossings can be added near $x$.
\end{enumerate}

\begin{figure}

\labellist
\small
\pinlabel $\mbox{(C1)}$ [b] at 48 26
\pinlabel $\mbox{(C2)}$ [b] at 280 26
\pinlabel $\mbox{(C3)}$ [b] at  534 26
\tiny
\pinlabel $1$ [l] at 354 26
\pinlabel $0$ [l] at 474 26
\pinlabel $i+1$ [l] at 170 46
\pinlabel $i$ [l] at 170 0
\pinlabel $i+1$ [r] at 238 46
\pinlabel $i$ [r] at 238 0
\pinlabel $i+1$ [bl] at 322 46
\pinlabel $i$ [tl] at 322 0

\endlabellist

\includegraphics[width=6in]{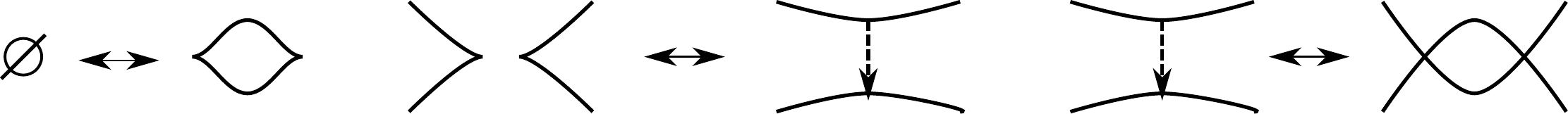}
\caption{The cobordism moves for MCFs.    
To perform $\stackrel{(C2)}{\leftarrow}$ it is required that $d_x S_{k+1}= S_k$,
and $S_k$ and $S_{k+1}$ must not appear in the differentials of any other sheets.  For $\stackrel{(C2)}{\rightarrow}$ the Maslov potentials must take the same values, $i, i+1 \in \Z/\rho$, near the left and right cusp. To perform $\stackrel{(C3)}{\rightarrow}$ $S_k$ must not appear in $d_xS_{k+1}$, although $S_k$ and $S_{k+1}$ may appear in the differentials of other sheets.  No handleslides may appear in the $x$-intervals where these local moves are performed.  In particular, there cannot be any handleslides inside the unknot or clasp in move (C1) and (C3).  
}
\label{fig:CMoves}
\end{figure}

In all of the moves, the augmented Legendrians $(\Lambda_0, \mathcal{C}_0)$ and $(\Lambda_1, \mathcal{C}_1)$ appearing on the two sides of the move are identical outside an interval $(x_a,x_b) \subset M$ where the move takes place, and no singularities of the MCFs $(\Lambda_i, \mathcal{C}_i)$ (crossings, cusps, handleslides, or basepoints) besides those referred to in the statement of the move appear in the interval $(x_a,x_b)$.  However, any number of other sheets of $\Lambda$, not pictured in the figures, without singularities in the front projection may appear as long as their positions are allowed by the statement of the move.

For each move there is a corresponding {\bf elementary augmented Legendrian cobordism} $(\Sigma, \mathcal{C}) \subset J^1(M\times[0,1])$ from $(\Lambda_0, \mathcal{C}_0)$ to $(\Lambda_1, \mathcal{C}_1)$ such that the MCF $(\Sigma, \mathcal{C})$ either (i) contains a unique co-dimension $2$ base singularity of $(\Sigma, \mathcal{C})$, or (ii) contains a single critical point of the $t$-coordinate function, $M\times[0,1]  \rightarrow [0,1]$, restricted to some co-dimension $1$ base singularity of $(\Sigma,\mathcal{C})$.  

\medskip
 
\begin{center}
\begin{tabular}{c|c}
Singularity of $(\Sigma, \mathcal{C})$ & Move  \\
\hline 
$\begin{array}{c}
\mbox{Two cusps, crossings, handleslides, or basepoints} \\ \mbox{with the same $x$-coordinate} \end{array}$  & (E3), (E4), (E5), (E6), (F3), (F4), (R0) \\
& \\
$\begin{array}{c}
\mbox{Local max or min of $t$ restricted to a} \\ 
\mbox{handleslide, basepoint, cusp, or crossing arc} \end{array}$ & (E1), (F1), (C1)-(C3) \\
 & \\
Codimension $2$ front singularity of $\Sigma$ &  (R1)-(R3) \\
 & \\
Trivalent vertex of handleslide or basepoint arcs & (E2), (F2) \\
 & \\
Super-handleslide point & (E7)
\end{tabular}
\end{center}

\medskip

We have not specified explicitly how the differentials $\{d_x\}$ for $x \in(x_a,x_b)$ are changed when a move occurs.  The following proposition guarantees their existence.

\begin{proposition} \label{prop:MCF} Let $(\Lambda_0, \mathcal{C}_0) \subset J^1M$ be an augmented Legendrian corresponding to one side (either one) of one of the above moves.   
\begin{enumerate}
\item There exists an augmented Legendrian $(\Lambda_1, \mathcal{C}_1)$ together with a cobordism $(\Sigma, \mathcal{C}) \subset J^1(M\times[0,1])$ between $(\Lambda_0, \mathcal{C}_0)$ and $(\Lambda_1, \mathcal{C}_1)$ such that the singular sets 
 of $(\Lambda_1, \mathcal{C}_1)$ and $(\Sigma, \mathcal{C})$ are as specified in the description of the move and the corresponding elementary cobordism.  
\item The family of differentials $\{d_x\}$ for $(\Lambda_1, \mathcal{C}_1)$ and $(\Sigma, \mathcal{C})$ are uniquely determined by $(\Lambda_0, \mathcal{C}_0)$.
\item The continuation maps $f_{\sigma_0}$ and $f_{\sigma_1}$ associated to the paths 
\[
\sigma_0 = [x_a,x_b] \times \{0\} \quad \mbox{and} \quad  \sigma_1 = (\{x_a\} \times [0,1]) * ([x_a,x_b] \times \{1\}) *(\{x_a\} \times [0,1])^{-1}
\]
 are chain homotopic.   
\end{enumerate}
\end{proposition}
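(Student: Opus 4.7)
The proof is a case analysis over the full list of moves (E1)--(E7), (F1)--(F4), (R0)--(R3), and (C1)--(C3). Before proceeding case-by-case, I would dispatch part (2) uniformly: since $\{d_x\}$ is locally constant away from codimension one base singularities and changes in a completely prescribed way (per axioms (A1)--(A4)) upon crossing any cusp, crossing, handleslide, or basepoint arc, the specification of the singular structure of $(\Sigma,\mathcal{C})$ and $(\Lambda_1,\mathcal{C}_1)$ together with the given $\{d_x\}$ for $(\Lambda_0,\mathcal{C}_0)$ determines $\{d_x\}$ on all of $M^{\mathcal{C}}_{\mathrm{reg}}$ by propagating along any path; independence of the path follows from the 2-dimensional axioms (B1)--(B3) that we will verify in establishing part (1).

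For part (1), working in a neighborhood $(x_a,x_b)\times[0,1]$, the underlying Legendrian surface is either the trivial cobordism (for (E$*$), (F$*$)), a standard cobordism realizing the corresponding Reidemeister or cobordism move ((R$*$), (C$*$)). I then insert the codimension one and two base singularities of $\mathcal{C}$ as specified by the move and its corresponding elementary cobordism. The genuine content is verifying axioms (B1)--(B3) at the codimension two points. For the equivalence moves this is essentially pre-packaged: (E2) is the trivalent vertex axiom of Figure 2.5, (E5) is the commutation axiom of Figure 2.6, (E7) is the super-handleslide axiom together with identity (4.1) from Remark 3.5, and (R1) is the swallowtail configuration of Figure 2.9. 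For (E1), (E3), (E4), (E6), (F$*$), (R0), (R2), (R3) one simply checks the 1-dimensional axioms (A1)--(A4) hold along horizontal slices, which reduces to elementary matrix manipulations. For the cobordism moves (C1)--(C3), the hypothesis in each case ($d_xS_{k+1}=S_k$, no arrows entering, etc.) is precisely what is needed so that the new cusps or crossings can be inserted consistently with (A1), (A2).

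For part (3), I compute the continuation maps $f_{\sigma_0}$ and $f_{\sigma_1}$ from the formulas \eqref{eq:lcuspmap}--\eqref{eq:basepointmap} and verify they are chain homotopic. In the majority of cases they are in fact \emph{equal} on the nose:
\begin{itemize}
\item (E1), (E2) use $E_{ij}^2 = 0$ to give $(I+aE_{ij})(I\pm aE_{ij}) = I\pm 2aE_{ij}$ with the appropriate sign;
\item (E3), (E4), (E6), (F3), (F4), (R0) follow from commutation of disjointly-supported elementary matrices;
\item (E5) is the matrix identity $(I+aE_{ij})(I+bE_{jk}) = (I+abE_{ik})(I+bE_{jk})(I+aE_{ij})$;
\item (E7), (R1) follow from the defining identity of the super-handleslide axiom and a direct verification at swallowtails;
\item (R2), (R3) are straightforward, and (F1), (F2) are diagonal conjugations.
\end{itemize}
The genuinely homotopical content appears in the three cobordism moves. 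For (C1), the cobordism adds an acyclic two-term complex $(U,d)$ with $dS_{k+1}^U = S_k^U$ to the fiber, and the continuation map is the canonical quasi-isomorphism $(C_x,d_x)\hookrightarrow (C_x\oplus U, d_x\oplus d)$; its homotopy inverse is the projection, giving the required chain homotopy. For (C2), the composition of the right cusp map \eqref{eq:rcuspmap} after the left cusp map \eqref{eq:lcuspmap} along $\sigma_1$ either kills two adjacent generators or leaves them; using the hypothesis that $d_xS_{k+1}=S_k$ with no arrows entering or leaving, this composition equals the identity on $C_x$, matching $f_{\sigma_0}$. For (C3) the two crossing maps \eqref{eq:crossingmap} compose to the identity.

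The main obstacle is the volume of cases combined with sign-bookkeeping when $\Char\,\mathbb{F}\neq 2$; in particular, (R1) (swallowtail) is delicate because the introduction of two new sheets forces an intricate pattern of handleslide endpoints in Figure 2.9 whose coefficients must be computed so that the axioms (A1)--(A3) hold along every horizontal slice, and the role of the spin arc endpoint at the swallowtail is exactly to reconcile the sign discrepancy. Once the swallowtail case is handled, the remaining cases are essentially mechanical.
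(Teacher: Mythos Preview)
Your overall approach---case-by-case analysis of the moves, with part (2) following from propagation via (A1)--(A4)---matches the paper's. However, there are genuine errors in two cases of part (3).

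For (E7), you list it among the ``equal on the nose'' cases, but the continuation maps are \emph{not} equal: one side is the identity, the other is the product $f = \prod_m (I+B_m)$, and the super-handleslide identity (which you cite) is precisely the statement $f - \mathrm{id} = d_{x_a}K + K d_{x_a}$ with $K = cE_{k,l}$. This is a nontrivial chain homotopy, not an equality.

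For (C2), your claim that ``this composition equals the identity on $C_x$, matching $f_{\sigma_0}$'' is incorrect. On the side with cusps, as $x$ increases one first meets a \emph{right} cusp and then a \emph{left} cusp; the composite continuation map sends $S_k, S_{k+1} \mapsto 0$ and is the identity on the complementary summand $C'$. On the side without cusps the continuation map is the full identity on $C' \oplus C_{\mathrm{cusp}}$. These differ on $C_{\mathrm{cusp}}$, and the required chain homotopy is $K(S_k)=S_{k+1}$, $K(S_i)=0$ for $i\neq k$, using the hypothesis $d_x S_{k+1} = S_k$. Your (C1) discussion is also more elaborate than necessary: since the unknot lies entirely in $(x_a,x_b)$, both $C_{x_a}$ and $C_{x_b}$ exclude the unknot sheets and both continuation maps are literally the identity.
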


Note that for the equivalence moves (E1)-(E7) over $\mathbb{F}_2$ a detailed proof of (1) appears in \cite[Proposition 3.8]{Henry}.

\begin{proof}
Within the rectangle $[x_a, x_b]\times[0,1]$, the preceding table indicates how the elementary cobordism $\Sigma$, as well as its the handleslide set $H= H_0 \sqcup H_{-1}$, and its homology and spin structures, $*$ and $\xi$, are all specified by the different moves. Outside of this rectangle, $(\Sigma, \mathcal{C})$ agrees with the product cobordism.  Thus, it just  needs to be verified that differentials $\{d_{(x,t)}\}$ can be defined in $[x_a, x_b]\times[0,1]$ to agree with those from $\mathcal{C}_0$ when $t=0$ and satisfy the axioms (A1)-(A4). 
Observe in all cases:
\begin{itemize}
\item[(i)] Each component of $(M\times[0,1])^\mathcal{C}_\mathit{reg}$ in $[x_a,x_b]\times[0,1]$ borders at least one of $[x_a,x_b]\times\{0\}$ or $[x_a,x_b]\times\{1\}$; 
\item[(ii)]  out of these components, only the regions that border $\{x_a\} \times [0,1]$ or $\{x_b\}\times[0,1]$ intersect both $[x_a,x_b]\times\{0\}$ and $[x_a,x_b]\times\{1\}$; and
\item[(iii)]  each codimension $1$ base singularity of $(\Sigma, \mathcal{C})$ (crossing, cusp, handleslide, homology arc, or spin arc) intersects at least one of $M\times\{0\}$ and $M\times\{1\}$. 
\end{itemize}
Thus, if one can define the differentials for $\mathcal{C}_1$ so that 
\begin{itemize}
\item[(a)] axioms (A1)-(A4) hold along $[x_a,x_b]\times \{1\}$, and
\item[(b)] $\mathcal{C}_1$ agrees with $\mathcal{C}_0$ at $x=x_a$ and $x=x_b$,
\end{itemize}
 then, after extending the differentials of $\mathcal{C}_0$ and $\mathcal{C}_1$ to all of $[x_a,x_b]\times[0,1]$ to be constant in the components of $(M\times[0,1])^\mathcal{C}_\mathit{reg}$,  it follows that the axioms (A1)-(A4) hold everywhere for the $2$-dimensional MCF $(\Sigma,\mathcal{C})$.  This would complete the construction.  Moreover, since (a) and (b) must hold, $d_{x_a}$ and (A1)-(A4) uniquely specify the remaining differentials along $[x_a,x_b]\times \{1\}$ so that the uniqueness statement (2) of the proposition follows. 

The existence of the differentials $\{d_x\}$ for $\mathcal{C}_1$ satisfying (a) and (b), as well as the existence of the chain homotopy from (3) of the proposition, is verified by a case-by-case check.  We explain the procedure that should be followed in general, and then examine the more involved cases in detail. Let $d^0_{x_a}$ and $d^0_{x_b}$ denote the differentials from $\mathcal{C}_0$ at $x_a$ and $x_b$.  Choose a sequence of $x$-values  $x_a=x_0 < x_1 < \ldots < x_N = x_b$ such that a single singularity, $s$, of $(\Lambda_1, \mathcal{C}_1)$ (crossing, cusp, handleslide, or basepoint) occurs in $[x_{i-1}, x_i]$ for $1 \leq i \leq N$.  Inductively construct the differentials $d_{x_0}, \ldots, d_{x_N}$ for $\mathcal{C}_1$, starting with $d_{x_0} =d^0_{x_a}$.  Next, $d_{x_{i}}$ is determined by $d_{x_{i-1}}$ by the singularity in $[x_{i-1},x_i]$ as uniquely specified by the Axioms (A1)-(A4).  This goes as follows:

\begin{itemize}
\item In the case of a {\it crossing}, {\it handleslide}, or {\it basepoint}, define
\begin{equation} \label{eq:dxi}
d_{x_i} = f_i d_{x_{i-1}} f_i^{-1}
\end{equation}  
where $f_i$ is the continuation map from (\ref{eq:crossingmap}), (\ref{eq:handleslidemap}), or (\ref{eq:basepointmap}).  In the case of a crossing, it is important to check that $\langle d_{x_{i-1}}S_{k+1}, S_k \rangle =0$ so that $d_{x_{i}}$ will still be upper triangular.

\item In the case of a {\it left cusp}, use the continuation map (\ref{eq:lcuspmap}) to identify
\[
C_{x_i} \cong C_{x_{i-1}} \oplus C_{\mathit{cusp}} \quad  \mbox{where} \quad C_\mathit{cusp} = \mbox{Span}_R\{S_k,S_{k+1}\}
\]
and define $d_{x_i}$ to be the direct sum
\begin{equation} \label{eq:split}
d_{x_i} = d_{x_{i-1}} \oplus d_0 \quad \mbox{where} \quad d_0S_{k+1} = S_k.
\end{equation}

\item In the case of a {\it right cusp},  it is important to verify that with respect to the direct sum decomposition $C_{x_{i-1}} = C_{x_{i}} \oplus C_{\mathit{cusp}}$ the differential $d_{x_{i-1}}$ has the form $d_{x_{i-1}} = d' \oplus d_0$.  Then, define $d_{x_i} = d'$.
\end{itemize}

During this constructive procedure, the required condition at crossings and right cusps is seen to hold by either the hypothesis on the move in (C1)-(C3) or by the fact that the axioms are known to be valid for $\mathcal{C}_0$.  Finally, one needs to verify that the resulting differential $d_{x_N}$ agrees with $d^0_{x_b}$ from $\mathcal{C}_0$, and check that the two continuation maps $f_{\sigma_0}$ and $f_{\sigma_1}$ are chain homotopic. (These two checks are usually somewhat interrelated.)  Note that since there are no singularities along the segments $\{x_a\}\times[0,1]$ and $\{x_b\} \times[0,1]$, $f_{\sigma_0}$ and $f_{\sigma_1}$ are the composition 
\[
f_{\sigma_0} = f^0_{N_0} \circ \cdots \circ f^0_1, \quad f_{\sigma_1} = f^1_{N_1} \circ \cdots \circ f^1_1
\]
of the maps  (\ref{eq:lcuspmap})-(\ref{eq:basepointmap}) from Section \ref{sec:2-4} associated to the sequence of singularities appearing along $[x_a,x_b]\times\{0\}$ and $[x_a,x_b] \times\{1\}$ respectively.

We comment now on the specifics of the above procedure for individual moves.

\medskip

\noindent {\it When there are no cusps in $[x_a,x_b]\times[0,1]$:}

\medskip

From (\ref{eq:dxi}) we have
\[
d_{x_N} = f_{\sigma_1} d^0_{x_a} f_{\sigma_1}^{-1},
\]
and we know that 
\[
d^0_{x_b} = f_{\sigma_0} d^0_{x_a} f_{\sigma_0}^{-1}.
\]
In all cases except for the super-handleslide move (E7), that is for the moves (E1), (E2), (E4)-(E6), (F1), (F2), (F4), (R3), (C3) and the non-cusp cases of (E3), (F3), (R0), quick matrix computations show that $f_{\sigma_0} = f_{\sigma_1}$, eg. (E5) follows from the ``Steinberg relation'', $[aE_{ij}, bE_{jk}] = abE_{ik}$, and (R3) from the celebrated braid relation for the permutation matrices, $Q_{(k \, k+1)} Q_{(k+1 \, k+2)} Q_{(k \, k+1)} = Q_{(k+1 \, k+2)} Q_{(k \, k+1)} Q_{(k+1 \, k+2)}$.  This simultaneously verifies that $d_{x_N} = d^0_{x_b}$ and $f_{\sigma_0}$ and $f_{\sigma_1}$ are chain homotopic.

In the case of the super-handleslide move (E7), using the matrix reformulation of the axiom (B2) discussed in Remark \ref{rem:MC2F}, the product $(I+B_m) \cdots (I+B_1)$ that appear in (\ref{eq:superRemark}) is the matrix of the continuation map, $f$, on the side of the (E7) move  where the handleslides appear while $I$ is the matrix for the other continuation map.  Thus, equation (\ref{eq:superRemark}) translates to the identity
\begin{equation} \label{eq:E7}
d_{x_a} K +K d_{x_a} = f - \mbox{id}
\end{equation}
where  $K(S_l) = cS_k$ and $K(S_i) = 0$ for $i \neq l$ and we simplified notation to $d_{x_a} = d^0_{x_a}$.  This implies that 
\begin{align*}
f  d_{x_a} & =  d_{x_a}K d_{x_a} + K d_{x_a}^2 + d_{x_a} \\
 & = d_{x_a}^2 K + d_{x_a}K d_{x_a} + d_{x_a} = d_{x_a} f. 
\end{align*}
In the case that $f_{\sigma_0} = \mathit{id}$, we have 
\[
d_{x_N} = f_{\sigma_1} d_{x_a} f_{\sigma_1}^{-1} = f d_{x_a} f^{-1}= d_{x_a} = d_{x_b}
\]
and in the case that $f_{\sigma_0} = f$ we have
\[
d_{x_N} = f_{\sigma_1} d_{x_a} f_{\sigma_1}^{-1} = \mbox{id} \,d_{x_a}\, \mbox{id}^{-1} = d_{x_a}  = f d_{x_a} f^{-1} = f_{\sigma_0} d_{x_a} f_{\sigma_0}^{-1} = d_{x_b}
\]
as required.  Moreover, (\ref{eq:E7}) shows that $f_{\sigma_1}$ and $f_{\sigma_0}$ are chain homotopic.

\medskip

\noindent {\it When there are cusps in $[x_a,x_b]\times[0,1]$:}

\medskip

  In all cases except for (C2), the continuation maps are equal, $f_{\sigma_0} = f_{\sigma_1}$.  The verification for moves (E3), (R0), (R2),  and (C1) is routine.  Consulting the  bottom row of Figure \ref{fig:Handleslides} is helpful when verifying the cusp case of move (F3). 

For the pinch move (C2),  write
\[
C_{x_a} = C_{x_b} = C' \oplus C_{\mathit{cusp}}
\]
where  $C_{\mathit{cusp}}$ is spanned by $S_{k+1}$ and $S_k$ and $C'$ by the remaining strands.  Using the hypothesis that when the move is performed in the $\leftarrow$ direction the differential $d_{x_a} = d_{x_b}$ from $\mathcal{C}_0$ splits as in (\ref{eq:split}), the construction of the differentials is clear.  The continuation maps, $f_0$ and $f_1$, associated to the sides of the move where the cusps do and do not exist  respectively have the block matrix form
\[
f_0 = \left[\begin{array}{cc} \mbox{id}_{C'} & 0 \\ 0 & 0 \end{array} \right]  \quad f_1 = \left[\begin{array}{cc} \mbox{id}_{C'} & 0 \\ 0 & \mbox{id}_{C_\mathit{cusp}} \end{array} \right]
\]
so that setting $K(S_k) = S_{k+1}$ and $K(S_i) = 0$, $i\neq k$ we have the required homotopy
\[
f_1-f_0 = d_{x_b} K + K d_{x_a}.
\] 

Finally, we turn to the (R1) move.  See \cite[Proposition 6.2]{RuSu3} for the case of $\mathbb{F}_2$ coefficients.  We consider the  upward swallowtail, the downward case is similar.  First, suppose that the direction of the (R1) move is such that the cusps exist in $(\Lambda_1, \mathcal{C}_1)$ but not $(\Lambda_0, \mathcal{C}_0)$.  Let $S_k$ be the sheet of $(\Lambda_0, \mathcal{C}_0)$ that contains the spin base point $\xi$ that terminates at the swallowtail point.  In between the cusps of $\Lambda_1$ the sheets numbered $S_k, S_{k+1}, S_{k+2}$ form the three sheets involved with the swallowtail point.  As usual the sheets are numbered from top to bottom at each value of $x$, so that the way the bottom two sheets are numbered changes at the crossing.  Let $A_- = (a_{i,j}^-)$ and $A_+ = (a_{i,j}^+)$ denote the matrices of $d_{x_a}$ and $d_{x_b}$ respectively;  because of the spin base point these matrices are related by
\[
A_+ \Delta_k = \Delta_k A_-
\]
where $\Delta_i$ denotes the diagonal matrix with $-1$ in position $i$ and $1$'s at all other diagonal entries.  Let 
\begin{align*}
H_S &= (I-a^-_{1,k}E_{1,k}) \cdots (I-a^-_{k-1,k}E_{k-1,k})(I-E_{k+1,k+2}) = I -\sum_{i<k} a^-_{i,k}E_{i,k} - E_{k+1,k+2}, \\
H_T &= I+E_{k+1,k+2}
\end{align*}
denote the products of matrices associated to the handleslides of $(\Lambda_1,\mathcal{C}_1)$ appearing to the left and right side of the crossing respectively.  To establish that the differentials can be extended along $[x_a,x_b]\times\{1\}$ to agree with $d_{x_a}$ and $d_{x_b}$ from $\mathcal{C}_0$, it needs to be shown that 
\begin{equation} \label{eq:A+}
\widehat{A}_+ (H_TQH_S) = (H_TQH_S) \widehat{A}_-
\end{equation}
where $Q = Q_{(k+1\,k+2)}$ is the permutation matrix 
and $\widehat{A}_\pm$ is the block matrix obtained from inserting two new rows and columns at positions $k$ and $k+1$ with the $2\times 2$ block $\left[\begin{array}{cc} 0 & 1 \\ 0 & 0 \end{array} \right]$ placed on the diagonal.  Note that $\widehat{A}_+ = \Delta_{k+2} \widehat{A}_-\Delta_{k+2}$.  Computing 
\begin{equation} \label{eq:HQH}
(H_TQH_S) S_{j} = \left\{\begin{array}{cr} S_j, & j \notin \{k,k+1,k+2\}, \\ S_k- \sum_{i<k} a^{-}_{i,k} S_i, & j =k, \\
 S_{k+1}+S_{k+2}, & j = k+1, \\ -S_{k+2}, & j=k+2
\end{array} \right.  
\end{equation}
one checks (\ref{eq:A+}) by verifying the equality when both sides are applied to $S_j$.  In detail, 
when $j \notin \{k,k+1,k+2\}$,  since  $\widehat{A}_- S_{j} \in \mathit{Span}(\{S_1, \ldots, S_n\} \setminus\{S_k,S_{k+1}\})$ and $(H_TQH_S)$ agrees with $\Delta_{k+2}$ when restricted to this subspace, we compute 
\begin{align*}
(H_TQH_S) \widehat{A}_- S_{j} &= \Delta_{k+2} \widehat{A}_- S_{j}  \\
 &=  \Delta_{k+2} \widehat{A}_- (\Delta_{k+2} S_{j}) = \widehat{A}_+S_j = \widehat{A}_+ (H_TQH_S) S_{j}.
\end{align*}
When $j =k$, 
\begin{align*}
(H_TQH_S) \widehat{A}_- S_k&  = 0, \quad \mbox{and} \\
\widehat{A}_+ (H_TQH_S) S_k & = \widehat{A}_+(S_k- \sum_{i<k} a^-_{ik} S_i) = 0 - \sum_{l<i<k} a^-_{li}a^-_{ik} S_l = 0 
\end{align*}
since $A_-^2=0$.  When $j=k+1$,
\begin{align*}
(H_TQH_S) \widehat{A}_- S_{k+1} &= (H_TQH_S) S_{k} = S_k - \sum_{i<k} a^-_{i,k} S_i,  \quad \mbox{and} \\
\widehat{A}_+ (H_TQH_S) S_{k+1} &= \widehat{A}_+ (S_{k+1}+S_{k+2}) \\
 &= S_k + \Delta_{k+2}\widehat{A}_-\Delta_{k+2} S_{k+2} = S_k - \sum_{i<k} a^{-}_{i,k} S_i.
\end{align*}
Finally, when $j=k+2$,
\begin{align*}
(H_TQH_S) \widehat{A}_- S_{k+2} &=  \widehat{A}_- S_{k+2},  \quad \mbox{and} \\
\widehat{A}_+ (H_TQH_S) S_{k+2} &= \Delta_{k+2}\widehat{A}_-\Delta_{k+2}(-S_{k+2})  = \widehat{A}_- S_{k+2}.
\end{align*}
Moreover, (continuing to blur the distinction between a linear map and its matrix) the continuation maps are
\[
f_{\sigma_1} = p \circ (H_TQH_S) \circ \iota \quad \quad \mbox{and} \quad \quad f_{\sigma_0} = \Delta_{k}
\]
where $\iota$ and $p$ are respectively the inclusion and projection maps from (\ref{eq:lcuspmap}) and (\ref{eq:rcuspmap}).  Using (\ref{eq:HQH}) it is then easy to check that $f_{\sigma_1} = f_{\sigma_0}$.  

If the (R1) move occurs in the opposite direction so that the cusps exist above $\Lambda_0$ and not above $\Lambda_1$, then the same calculation as above computes $A_+$ in terms of $A_-$ as $A_+ = \Delta_k A_- \Delta_k$. Thus the differentials $d_{x_a}$ and $d_{x_b}$ satisfy (A4) at the spin basepoint $\xi$ of $\mathcal{C}_1$.  
\end{proof}

A pleasant consequence of Proposition \ref{prop:MCF} is that one can construct augmented Legendrian cobordisms by presenting a sequence of $1$-dimensional slices with handleslides and basepoints indicated and related as in the above moves, but {\it without needing to explicitly specify the families of differentials $\{d_x\}$} at every step.  One just needs to know that the initial slice $(\Lambda_0, \mathcal{C}_0)$ is equipped with differentials making $\mathcal{C}_0$ into a valid MCF.  Note that occasionally, while constructing such a movie, partial  information is needed about some of the differentials $\{d_x\}$ in order to perform one of the moves, eg. (E7), (R1), (C2), and (C3).  Typically, the relevant data can be reconstructed directly from the handleslides and basepoints at the current slice; provided that $d_x$ is known for at least one value of $x$ in the slice, the axioms (A1)-(A4) uniquely specify the rest of the differentials.

In fact, this method suffices for constructing arbitrary augmented Legendrian cobordisms.

\begin{proposition} \label{prop:cobord} Let $M = \R, [0,1],$ or $S^1$.

\begin{itemize}
\item[(i)] Two MCFs for a Legendrian $\Lambda \subset J^1M$ are equivalent if and only if they can be related by the equivalence moves (E1)-(E7).

\item[(ii)]  Two augmented Legendrians $(\Lambda_0, \mathcal{C}_0)$ and $(\Lambda_1, \mathcal{C}_1)$ are cobordant if and only if they can be related by a sequence of elementary moves (E1)-(E7), (F1)-(F4), (R0)-(R3), and (C1)-(C3).
\end{itemize}
\end{proposition}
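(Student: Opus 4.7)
The $(\Leftarrow)$ implication in both (i) and (ii) follows immediately from Proposition \ref{prop:MCF}: each listed move corresponds to an elementary augmented Legendrian cobordism between the two configurations, and concatenating these elementary cobordisms yields the required global cobordism. In case (i) the moves (E1)-(E7) all preserve the underlying $(\Lambda, *, \xi)$, so their concatenation is an equivalence over the trivial cobordism $\Lambda \times [0,1]$.

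For the $(\Rightarrow)$ implication, my plan is a Morse-theoretic decomposition with respect to the $t$-coordinate on $M \times [0,1]$. Given a cobordism $(\Sigma, \mathcal{C})$ realizing the equivalence or cobordism, I would first apply a small cobordism-preserving perturbation of $\mathcal{C}$ (composing with thin product cobordisms at the ends if needed and perturbing the base projection) to put it in generic position with respect to $t$, meaning: the $t$-coordinate restricts to a Morse function on every codimension $1$ stratum of the base singular set of $\mathcal{C}$ (cusp arcs, crossing arcs, handleslide curves, homology arcs, spin arcs); and no two of the following share a common $t$-value: a critical point of such a restricted Morse function, a transverse intersection of two codimension $1$ strata, a trivalent vertex, a super-handleslide point, or a codimension $2$ front singularity of $\Sigma$. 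Such genericity is achievable by standard transversality once one stratifies the space of MCF configurations by combinatorial type.

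Slicing $(\Sigma, \mathcal{C})$ by the levels $M \times \{t\}$ then gives a family of $1$-dimensional augmented Legendrian slices whose combinatorial type is constant on the regular intervals of $t$ and changes only at the critical $t$-values. At each critical value $t_0$, a single local event occurs in a small rectangle $[x_a, x_b] \times (t_0 - \epsilon, t_0 + \epsilon)$; outside the rectangle the cobordism is a product. By inspection of the axioms (B1)-(B3) and the allowed endpoint configurations in Section \ref{sec:2dimMCF}, together with the standard classification of codimension $\leq 2$ singularities in generic one-parameter families of Legendrian surface fronts (cf.\ \cite{ArnoldWave}), the possible local events are exactly those tabulated just before Proposition \ref{prop:MCF}: local extrema of $t$ on codimension $1$ strata give (E1), (F1), (C1)-(C3); transverse intersections of codimension $1$ strata give (E3)-(E6), (F3), (F4), (R0); trivalent vertices give (E2), (F2); super-handleslide points give (E7); and codimension $2$ front singularities of $\Sigma$ give (R1)-(R3). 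Each such local event is an elementary cobordism realizing precisely the corresponding elementary move, so $(\Sigma, \mathcal{C})$ decomposes as a finite concatenation of elementary cobordisms. For part (i), since $\Sigma = \Lambda \times [0,1]$ with $*, \xi$ products, no local extrema or intersections involving cusps, crossings, or basepoints occur, leaving only (E1)-(E7).

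The main obstacle is exhaustiveness of the event list, not the Morse/transversality step. Specifically, one must verify that the axiomatic constraints on where handleslide arcs may begin, end, or meet (trivalent vertices, commutation intersections, super-handleslide points, swallowtail endpoints, boundary intersections with $*$ and $\xi$) together with the Arnold-style classification of generic codimension $\leq 2$ Legendrian front singularities account for every codimension $1$ degeneration in a one-parameter family of $1$-dimensional MCFs. This is a case analysis rather than new mathematics, and is where the bulk of a careful writeup would go.
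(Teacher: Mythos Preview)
Your proposal is correct and follows essentially the same approach as the paper: perturb the cobordism so that its base singularities are generic with respect to the $t$-coordinate, then slice into elementary cobordisms matching the tabulated moves. The paper's proof is simply a terser version of your argument (it calls the perturbation step ``a mild wiggling'' and notes one additional normalization, arranging that handleslide arcs at super-handleslide points approach from the negative $t$-direction so as to match the (E7) picture); your phrasing for part (i) is slightly loose---handleslides can still intersect the vertical cusp/crossing/basepoint arcs, giving (E3) and (E6)---but your conclusion that only (E1)--(E7) survive is correct.
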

\begin{proof}
After a mild wiggling, any augmented Legendrian cobordism can be cut up into the elementary cobordisms described in the above table.   [Applying a $C^\infty$-small isotopy of $\Sigma$ induced by an isotopy of the base space $M\times[0,1]$ rel. boundary arranges that   
base singularities of $(\Sigma, \mathcal{C})$ are positioned generically with respect to the $t$ coordinate.  In addition a $C^0$-small modification of the handleslide arcs near endpoints at super-handleslides arranges that the arcs approach there endpoints from the negative $t$ direction as in (E7).]  For an equivalence, the underlying Legendrian cobordism is the product cobordism with basepoints remaining fixed, and so only the elementary cobordisms corresponding to moves (E1)-(E7) appear.
\end{proof}

\begin{remark}  \label{rem:F1}
Note that using a combination of moves (F1) and (F2) (resp. (E1) and (E2)) a homology base point with coefficient $1$ (resp. a handleslide with coefficient $0$) can be added or removed by a cobordism (resp. equivalence).  
\end{remark}

In addition, Proposition \ref{prop:MCF} allows us to prove the statements about the continuation maps from Proposition \ref{prop:continuation}.

\begin{proof}[Proof of Proposition \ref{prop:continuation}]   Note that (2) is clear from the definition.  With Proposition \ref{prop:MCF} and \ref{prop:cobord} in hand, (1) follow as in the proof for the $\mathbb{F}_2$-coefficient case found in \cite[Proposition 4.7]{RuSu3}.  Let $(\Lambda, \mathcal{C}) \subset J^1M$ be an augmented Legendrian, and let  $\sigma, \tau: [0,1] \rightarrow M$ be path homotopic and transverse to the singular set of $(\Lambda, \mathcal{C})$, $M^\mathcal{C}_\mathit{sing}$.  Let $I: [0,1]\times[0,1] \rightarrow M$, $I(s,t) = \sigma_t(s)$ with $\sigma_0 =\sigma$, $\sigma_1 = \tau$, $\sigma_t(0) = \sigma(0)$, and $\sigma_t(1) = \sigma(1)$, be a path homotopy between $\sigma$ and $\tau$ that is also transverse to $M_\mathit{sing}^\mathcal{C}$, i.e. $I$ is transverse to the base singularities of all codimension.  We can then pull back $(\Lambda, \mathcal{C})$ along $I$ (in the expected manner) to get an augmented Legendrian cobordism $I^*(\Lambda, \mathcal{C}) \subset J^1([0,1]\times[0,1])$ such that $f_{\sigma}$ and $f_{\tau}$ are the continuation maps associated to the $[0,1]\times\{0\}$ and $[0,1]\times\{1\}$ slices of the cobordism.  From Proposition \ref{prop:cobord} and Proposition \ref{prop:MCF} (3) we see that $f_{\sigma}$ and $f_{\tau}$ are chain homotopic.
\end{proof}

\subsubsection{Uniqueness of the (R1) handleslides}  \label{sec:STunique} We make an observation that simplifies considerations involving the (R1) move.  Clearly, the (R1) move can always be performed in the $\rightarrow$ direction:  If the spin basepoint is not already present at the desired location of the (R1) move, then a pair of spin basepoints can be created using (F1) so that the (R1) move can be performed (leaving one additional spin point not involved in the move).  At first glance, it may appear that there is a restriction on whether the $\leftarrow$ (R1) move can be performed, since the handleslides that appear in the interval $[x_a,x_b]$ where the move occurs must match the right column of Figure \ref{fig:Endpoints3} with coefficients determined by $d_{x_a}$.  In fact, the correctness of the coefficients is automatic once all other handleslides and basepoints have been removed from $[x_a,x_b]$.   

\begin{lemma} \label{lem:R1} Suppose that we have a 1-dimensional MCF $(\Lambda_0,\mathcal{C}_0)$ where in $[x_a, x_b] \subset M$ the Legendrian $\Lambda_0$ appears as in the top row and right side of (R1) in Figure \ref{fig:Endpoints3}, and let $x=x_l$ and $x=x_r$ be the location of the left and right cusps.  Suppose further that   
\begin{itemize}
\item there are no basepoints in $[x_a,x_b]$, and
\item all handleslides in $[x_a,x_b]$ are located in $(x_l,x_r)$ and are either $(k+1,k+2)$-handleslides (these can appear on both sides of the crossing) or $(i,k)$ handleslides for $i<k$, where the sheets that meet the cusp points are labeled $k,k+1$ in $(x_l,x_r)$.  
\end{itemize}
Then, after commutation (E4) moves and grouping handleslides with common endpoints together using (E2) moves the handleslides can be made to appear with coefficients as in Figure \ref{fig:Endpoints3}.
\end{lemma}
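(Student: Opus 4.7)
My plan is to first bring the handleslides in $(x_l, x_r)$ into a normal form using (E4) and (E2), and then apply the MCF axioms across the swallowtail region to pin down the resulting coefficients.

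I will first verify that the allowed handleslide types commute pairwise under (E4). For two $(i_1, k)$- and $(i_2, k)$-handleslides with $i_1 \neq i_2 < k$, the commutation condition ``$j \neq p$ and $i \neq q$'' of (E4) becomes $k \neq i_2$ and $i_1 \neq k$, both trivially true. For an $(i, k)$- and a $(k+1, k+2)$-handleslide it becomes $k \neq k+1$ and $i \neq k+2$, and two $(k+1, k+2)$-handleslides commute similarly. Applying (E4) on each side of the crossing brings all handleslides of each type together, and (E2) combines them. The result is: on each side of the crossing, at most one $(k+1, k+2)$-handleslide (with coefficient $\alpha_L$ on the left and $\alpha_R$ on the right) and, for each $i < k$, at most one $(i, k)$-handleslide (with coefficient $c_L^i$ on the left and $c_R^i$ on the right).

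Next, let $a_{i,k}$ denote the $(i, k)$-entry of $d_{x_a}$ for $i < k$. By axiom (A2), the matrix $B$ of $d_x$ just after the left cusp is obtained from $d_{x_a}$ by inserting the $N$-block at positions $(k, k+1)$. Combining (A3) at each handleslide with (A1) at the crossing, the matrix just before the right cusp is $MBM^{-1}$, where
\[
M = (I + \alpha_R E_{k+1, k+2})\Bigl(I + \sum_{i<k} c_R^i E_{i, k}\Bigr) Q_{(k+1, k+2)} \Bigl(I + \sum_{i<k} c_L^i E_{i, k}\Bigr)(I + \alpha_L E_{k+1, k+2}).
\]
Axiom (A2) at the right cusp now requires $MBM^{-1}$ to again have $N$-block structure at positions $(k, k+1)$. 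Writing $B' = MBM^{-1}$ and comparing the $(k, k+1)$-, $(k+1, k+2)$-, and $(i, k+2)$-entries of $MB$ with those of $B'M$ produces the equations $\alpha_R = 1$, $1 + \alpha_L \alpha_R = 0$ (hence $\alpha_L = -1$), and $c_L^i + c_R^i = -a_{i, k}$. These are precisely the coefficient values indicated in Figure~\ref{fig:Endpoints3}.

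The main obstacle I anticipate is the bookkeeping in the matrix expansion above. The permutation $Q_{(k+1, k+2)}$ does not commute cleanly with the elementary matrices appearing in $M$; for instance $Q_{(k+1, k+2)} E_{k+1, k+2}$ simplifies to the projection $E_{k+2, k+2}$, not to another elementary handleslide matrix. Nevertheless, because $B$ has only one nonzero entry in the cusp block (the $(k, k+1)$-entry, equal to $1$) and its $(k+2)$-column encodes exactly the outer coefficients $\{a_{i, k}\}_{i<k}$, the product $MB$ is sparse, and all three coefficient equations can be read off from a single pass of matrix multiplication.
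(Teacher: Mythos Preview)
Your approach is essentially the paper's: normalize the handleslides with (E2)/(E4), push the differential across the interval by conjugation, and read off the coefficients from the cusp axiom (A2) at the right cusp.  Two bookkeeping points need fixing, both of which you anticipated as the ``main obstacle.''  First, the entries you name do not all produce the stated constraints.  The equation $MB=B'M$ is an identity, so the content comes from entries where the $N$-block structure of $B'$ is known; the correct entries of $B'=MBM^{-1}$ to inspect are $(k,k{+}1)$, $(k,k{+}2)$, and $(i,k{+}1)$, which yield $-\alpha_L=1$, $1+\alpha_L\alpha_R=0$, and $a_{i,k}-(c_L^i+c_R^i)\alpha_L=0$ respectively.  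Your $(k{+}1,k{+}2)$-entry gives $0=0$ and your $(i,k{+}2)$-entry involves unconstrained coefficients of $B'$.  The paper simply tracks these three entries of $d_{x_r^-}$ step by step rather than assembling the full product $M$.

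Second, by keeping $(i,k)$-handleslides on both sides of the crossing you obtain only the sum constraint $c_L^i+c_R^i=-a_{i,k}$, which does not by itself place them as in Figure~\ref{fig:Endpoints3} (where a single $(i,k)$-handleslide with coefficient $-a_{i,k}$ sits to one side).  The fix is easy: since the endpoints of an $(i,k)$-handleslide lie on sheets $i$ and $k$, neither of which is involved in the $(k{+}1,k{+}2)$-crossing, (E3) lets you slide any right-side $(i,k)$-handleslide past the crossing and then merge it via (E2) into a single coefficient $b_{i,k}=c_L^i+c_R^i$.  The paper makes this consolidation tacitly, writing a single $b_{i,k}$ from the outset.
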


\begin{proof}
Clearly the handleslides can be rearranged with (E4) and (E2) moves to have their locations as in Figure \ref{fig:Endpoints3}.  For $i<k$, let $a_{i,k} = \langle d_{x_a} S_k, S_i \rangle$  
and let $b_{i,k}$ be the coefficient of the $(i,k)$-handleslide in $(x_r,x_l)$, where $b_{i,k}=0$ means there is no handleslide.  Let $b_-$ and $b_+$ denote the coefficients of the $(k+1,k+2)$-handleslides that appear to the left and right of the crossings.  Using (A1)-(A3) to determine from $d_{x_a}$ the differential $d_{x_r^{-}}$ with $x_r^{-}<x_r$ just preceding the right cusp, we compute (consulting Figure \ref{fig:Handleslides} is helpful here)
\[
\langle d_{x^-_r} S_{k+1}, S_k \rangle = -b_-,  \quad \langle d_{x^-_r} S_{k+2}, S_k\rangle = 1+b_-b_+, \quad \langle d_{x^-_r} S_{k+1}, S_{i} \rangle  =  a_{i,k} -b_{i,k}b_{-}. 
\]  
Thus, (A4) implies that $-b_-=1$, $1+b_-b_+ =0$, and  $a_{i,k} - b_{i,k}b_-=0$, so that the coefficients indeed must have the form $b_{-} =-1$, $b_+=1$, and $b_{i,k} = -a_{i,k}$.  
\end{proof}

In particular, when constructing an augmented Legendrian cobordism via a movie of slices, if one wants to perform the $\leftarrow$ (R1) move, it is enough to first arrange that the handleslides near the move are as in Lemma \ref{lem:R1}.  This can always be done; see  Proposition \ref{prop:Rmove} below.

\begin{remark}
A similar statement holds for a downward swallowtail point as in the bottom row of Figure \ref{fig:Endpoints3}.  We leave the precise formulation to the reader.
\end{remark}

\subsection{The $D_4^-$ cobordism} \label{sec:D4minus} As an example of the above method, we combine several elementary moves to establish a (non-elementary) augmented cobordism move, useful later in Section \ref{sec:J101}, that corresponds to passing a desingularized $D_4^{-}$ point on an augmented Legendrian surface.  The (non-generic) $D_4^-$ singularity has proven to be useful for the efficient construction of Lagrangian fillings and cobordisms, cf. \cite{TZ, CZ}.  See eg. loc. cit. or \cite{ArnoldGZV} for a description of the $D_4^-$ singularity and its desingularization.

\begin{proposition} \label{prop:D4minus}
For any $r \in R^*$, there is an augmented Legendrian cobordism

\quad

\[
\labellist
\small
\pinlabel $r$ [l] at 70 28
\pinlabel $r$ [b] at 240 60
\pinlabel $1$ [r] at 252 28
\pinlabel $-1$ [l] at 318 28
\pinlabel $-1$ [br] at 330 60
\pinlabel $r^{-1}$ [bl] at 342 60
\endlabellist
\includegraphics[scale=.8]{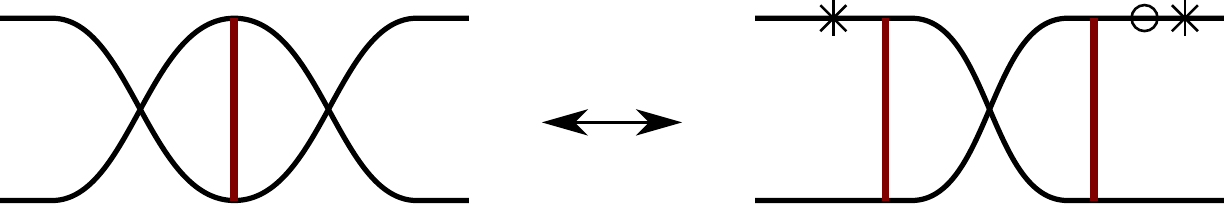}
\]
In the pictured region, the cobordism surface $\Sigma$ has a standard desingularization of the $D_4^-$ singularity and a single homology arc with  endpoints as pictured; outside of the pictured region the cobordism is a product.
\end{proposition}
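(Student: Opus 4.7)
The plan is to construct the $D_4^-$ cobordism as an explicit sequence of elementary moves from Proposition~\ref{prop:cobord}, taking advantage of Proposition~\ref{prop:MCF} to avoid explicitly tracking the differentials $\{d_x\}$ at each intermediate slice. The cobordism will be built in three stages that respectively (i) introduce auxiliary canceling structure near the initial $r$-handleslide, (ii) modify the Legendrian front to produce the desingularized $D_4^-$ topology, and (iii) adjust the handleslide and basepoint coefficients to match the right-hand picture, with the homology arc emerging as a byproduct.

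For stage~(i), I use (F1) and (E1) to create canceling pairs of homology basepoints and handleslides, and commute them into the needed positions via (F3), (F4), (E3), (E4). For stage~(ii), I apply pinch moves (C2) to install the new cusp edges that appear on the right, clasp moves (C3) for the new double points, and Reidemeister moves (R0)--(R3) to assemble them; Proposition~\ref{prop:MCF}~(1)--(2) guarantees that the differentials extend uniquely along the cobordism provided the hypotheses of (C2) and (C3) hold, and these hypotheses can be checked directly from the axioms (A1)--(A4) applied to the starting slice. For stage~(iii), I split the original $r$-handleslide using the super-handleslide move (E7), the trivalent vertex move (E2), and the handleslide--basepoint commutation (E6), and position the two endpoints of the new homology arc using (F2)--(F4). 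The trivalent-vertex relation $s_1 s_2 s_3 = 1$ built into the definition of a homology structure then forces the coefficient $r^{-1}$ at one endpoint of the homology arc once $r$ and $-1$ are specified at the remaining endpoints, producing the configuration claimed.

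The main obstacle will be the bookkeeping in stage~(iii): (E7) produces a whole family of handleslides whose coefficients are proportional to entries of the local matrix of $d_x$ on the sheets involved, (E6) multiplies a handleslide coefficient by $s^{\pm 1}$ each time it crosses a homology arc, and (E2) requires precise additive cancellation at each trivalent vertex to leave exactly the handleslides of coefficients $r$, $1$, and $-1$ in the pictured positions. I expect the chain of identities to close up consistently because, by Proposition~\ref{prop:MCF}~(2), once the differential $d_{x_a}$ on the left-hand slice is specified, every subsequent differential and every compatibility condition along the cobordism is determined; verifying the final picture then reduces to a finite matrix computation on the few sheets neighboring the $D_4^-$ point, which is where the specific coefficients $r$, $r^{-1}$, and $-1$ of the proposition must be read off.
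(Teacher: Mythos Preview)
Your proposal is not a proof but a vague outline, and several of its concrete suggestions are misdirected. The right-hand slice of the $D_4^-$ move has a single crossing, two handleslides with coefficients $1$ and $-1$, a spin basepoint, and homology basepoints with coefficients $r$ and $r^{-1}$; it has \emph{no} cusps. So your stage~(ii) plan to ``apply pinch moves (C2) to install the new cusp edges that appear on the right'' is based on a misreading of the target. Likewise, the claimed homology arc is a single arc with two endpoints, so the trivalent-vertex relation $s_1s_2s_3=1$ you invoke in stage~(iii) is not applicable. Finally, neither (C3) nor (E7) plays any role in the construction the paper uses, and you give no evidence that the (E7) output---which depends on entries of $d_x$ involving sheets above and below the two pictured strands---can be cancelled without leaving residual handleslides on those external sheets.

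The paper's proof is a specific movie: first use (F1) to create a pair of homology basepoints with coefficients $r,r^{-1}$ and a pair of spin basepoints adjacent to the original $r$-handleslide; then perform an (R1) move to open a swallowtail, which (by the swallowtail axiom) introduces the $\pm 1$ handleslides together with a family of $(i,k)$-handleslides indexed by sheets $S_i$ above the swallowtail sheet $S_k$; split each such $(i,k)$-handleslide via (E2) and position the two pieces near the two cusps; now the two swallowtail sheets form a direct summand subcomplex with differential $S_{k+1}\mapsto S_k$, so the $\leftarrow$ direction of the pinch move (C2) applies; finally, close off the remaining swallowtails with further (R1) moves, using Lemma~\ref{lem:R1} to ensure the handleslide coefficients are correct. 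The essential idea you are missing is that the $\pm 1$ handleslides on the right arise \emph{for free} from the (R1) swallowtail axiom rather than from super-handleslide manipulations, and that (C2) is used in the $\leftarrow$ direction (removing two sheets that meet at cusps) rather than to add cusps.
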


\begin{proof}
Construct the cobordism via a sequence of elementary moves as indicated by:

\[
\labellist
\tiny
\pinlabel $r$ [l] at 68 362
\pinlabel $r$ [t] at 82 326
\pinlabel $r^{-1}$ [b] at 140 392
\pinlabel $-1$ [b] at 82 392
\pinlabel $-1$ [t] at 140 326
\pinlabel $r$ [b] at 228  392
\pinlabel $r^{-1}$ [b] at 344  392
\pinlabel $1$ [l] at 272 348
\pinlabel $-1$ [r] at 274 372
\pinlabel $1$ [l] at 294  372
\pinlabel $-1$ [b] at 344 338
\pinlabel $-1$ [l] at 442 396
\pinlabel $1$ [r] at 536  396
\pinlabel $-1$ [r] at 474 362
\pinlabel $1$ [l] at 504  362

\pinlabel $-1$ [t] at 114 254
\pinlabel $-1$ [r] at 154 236
\pinlabel $1$ [l] at 204 236
\pinlabel $1$ [t] at 240 254

\pinlabel $-1$ [t] at 366 262
\pinlabel $-1$ [t] at 430 226
\pinlabel $1$ [t] at 480 226
\pinlabel $1$ [t] at 542 262
\pinlabel $-1$ [b] at 510 268
\pinlabel $1$ [b] at 496 272

\pinlabel $1$ [t] at 68 122
\pinlabel $1$ [t] at 90 122
\pinlabel $-1$ [t] at 146 122
\pinlabel $1$ [t] at 196 122
\pinlabel $1$ [t] at 212 122
\pinlabel $-1$ [t] at 80 158
\pinlabel $-1$ [t] at 224 148
\pinlabel $1$ [t] at 258 158

\pinlabel $1$ [t] at 352 122
\pinlabel $-1$ [t] at 366 158
\pinlabel $1$ [t] at 406 148
\pinlabel $1$ [t] at 482 122
\pinlabel $-1$ [t] at 508 148
\pinlabel $1$ [t] at 542 158

\pinlabel $1$ [t] at 14 0
\pinlabel $-1$ [t] at 28 40
\pinlabel $1$ [t] at 68 28
\pinlabel $1$ [t] at 142 0
\pinlabel $-1$ [t] at 172 26
\pinlabel $1$ [t] at 206 38

\pinlabel $1$ [t] at 300 8
\pinlabel $1$ [t] at 359 8

\pinlabel $1$ [t] at 484 8
\pinlabel $-1$ [t] at 544 8
\pinlabel $r$ [b] at 474 56
\pinlabel $-1$ [b] at 560 56
\pinlabel $r^{-1}$ [b] at 588 56
\pinlabel (C2) [b] at 590 152
\pinlabel $1$ [l] at 440 184

\endlabellist
\includegraphics[scale=.6]{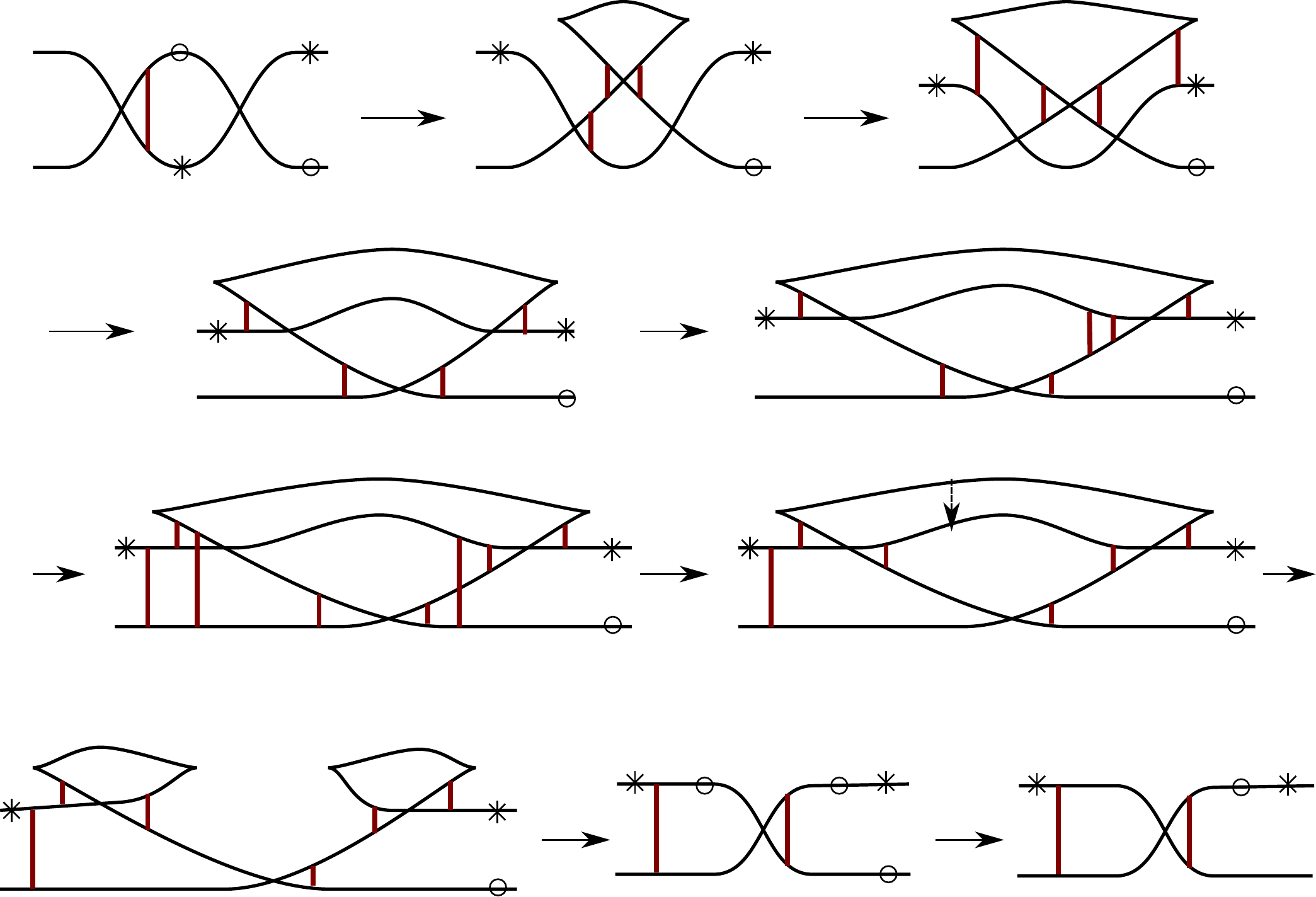}
\]

\quad

Let $A_- = (a^-_{i,j})$ denote the matrix of $d_x$ at the far left side of the pictured interval, and let $S_k$ and $S_{k+1}$ be the numbering of the two pictured sheets at this location.  The construction starts by using (F1) to create a new pair of homology base points with coefficients $r$ and $r^{-1}$, and a new pair of spin basepoints (with coefficients $-1$ and notated with $\circ$).  After the first (R1) move for each $i<k$ there is an additional $(i,k)$-handleslide (not pictured) connecting $S_i$ to the upper sheet of the cusps with coefficients $-a^{-}_{i,k+1}$.  Immediately after their appearance, we convert each of these via a tri-valent vertex into two $(i,k)$-handleslides with coefficients $-a^{-}_{i,k} r^{-1}$ and $-a^-_{i,k+1} + a_{i,k}^- r^{-1}$ positioned near the left and right cusp respectively.  (These handleslides with different values of $i$ all commute with one another, so there is no issue in the repositioning.)  At the location, $x=x_c$, where sheets $S_k$ and $S_{k+1}$ are connected by the dotted arrow just before the $\leftarrow (C2)$ move  occurs,  the sheets $S_k$ and $S_{k+1}$ form a subcomplex with $d_{x_c}(S_{k+1})= S_k$ that splits off from the rest of $(C_{x_c}, d_{x_c})$ as a direct sum.  [This follows from the identity (\ref{eq:A+}) established in the proof of Proposition \ref{prop:MCF}; in notation of (\ref{eq:A+}) $\widehat{A}_-$ is now the matrix of $d_x$ immediately to the right of the left cusp, and $\widehat{A}_+$ is the matrix of $d_{x_c}$ at the dotted arrow.  Note also that the handleslides and crossing that appear between $x_l$ and $x_c$ are consistent with the product $H_SQH_T$ appearing in (\ref{eq:A+}) with the coefficients $-a^{-}_{i,k} r^{-1}$ of the $(i,k)$-handleslides equal to the negative of the coefficient $\langle d_{x'} S_k, S_i\rangle$ with $x'$ just before the left cusp.]  Thus, the required hypothesis is satisfied so that the $\leftarrow$ direction of the pinch move $(C2)$ can be performed as pictured.  Moreover, the subsequent (R1) moves can then be carried out using Lemma \ref{lem:R1}.  
\end{proof}

\subsection{Properly ordered handleslide collections} \label{sec:properly} It will be convenient to have a standard positioning (up to equivalence) for a collection of handleslides in a region where a $1$ dimensional Legendrian has no front singularities.  

\begin{definition}  \label{def:Properly}
Let $(\Lambda, \mathcal{C}) \subset J^1M$ be a $1$-dimensional augmented Legendrian, and let $[x_a,x_b] \subset M$ be an interval  where the front projection of $\Lambda$ consists of $n$ parallel strands without crossings or cusps.  We say the handleslides of $\mathcal{C}$ appearing in the interval $[x_a, x_b]$ are {\bf properly ordered} if 
\begin{itemize}
\item[(i)] for each $1\leq i < j \leq n$ there is at most one $(i,j)$-handleslide in $[x_a, x_b]$, and
\item[(ii)] the handleslides appear in lexicographical order with respect to their indices, i.e. whenever an $(i_1, j_1)$-handleslide appears before (to the left of) an $(i_2,j_2)$-handleslide we have either $i_1< i_2$ or $i_1=i_2$ and $j_1<j_2$.
\end{itemize}
See Figure \ref{fig:Proper}.
\end{definition}

\begin{figure}

\quad

\quad

\labellist
\small
\pinlabel $b_{1,2}$ [b] at 22 116
\pinlabel $b_{1,3}$ [b] at 45 116
\pinlabel $b_{1,4}$ [b] at 68 116
\pinlabel $b_{2,3}$ [b] at 98 78
\pinlabel $b_{2,4}$ [b] at 121 78
\pinlabel $b_{3,4}$ [b] at 150 40

\endlabellist

\includegraphics[scale=.8]{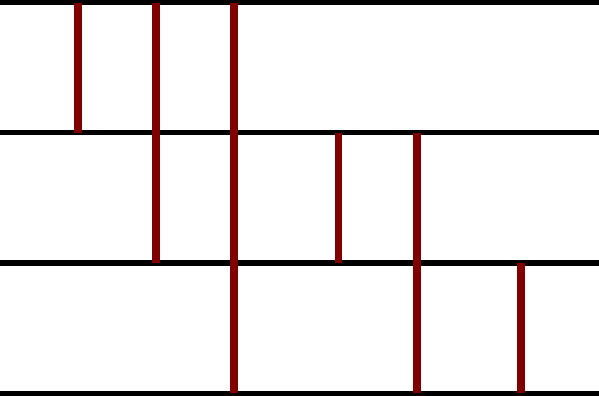}
\caption{A properly ordered handleslide collection. The continuation map has matrix $I+\sum_{i<j} b_{i,j} E_{i,j}$. 
}
\label{fig:Proper}
\end{figure}

If they are not properly ordered, then we can always arrange the handleslides in $[x_a,x_b]$ to become properly ordered by an equivalence that leaves $\mathcal{C}$ unchanged outside of $[x_a,x_b]$.  

\begin{proposition} \label{prop:proper}
Let $(\Lambda, \mathcal{C}) \subset J^1[x_a,x_b]$ be an augmented Legendrian without crossings or cusps in the interval $[x_a,x_b] \subset M$.
\begin{enumerate}
\item There is an equivalence of MCFs, $\mathcal{C} \sim \mathcal{C}'$, such that the handleslides of $\mathcal{C}'$ are properly ordered.
\item If $\mathcal{C}$ has no basepoints and the handleslides are properly ordered, then the continuation map $f_{[x_a,x_b]}$ has matrix $I + \sum_{i<j} b_{i,j}E_{i,j}$ where $b_{i,j}$ is the coefficient of the $(i,j)$-handleslide of $\mathcal{C}$ and is $0$ if no such handleslide exists. 
\end{enumerate}
\end{proposition}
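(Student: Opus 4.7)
My plan is to prove (2) by a direct matrix calculation, and then use the resulting normal form as a template for the sorting argument in (1).

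For (2), each handleslide contributes a chain map as in \eqref{eq:handleslidemap}, which in matrix form is $I + bE_{ij}$ for an $(i,j)$-handleslide of coefficient $b$. Since continuation maps are composed left-to-right along the interval, the matrix of $f_{[x_a,x_b]}$ acting on column vectors is the reverse-order product $M_{h_N}M_{h_{N-1}}\cdots M_{h_1}$. Expanding this product, a term of degree $r \ge 2$ has the form $b_{k_r}\cdots b_{k_1}\, E_{i_{k_r},j_{k_r}}\cdots E_{i_{k_1},j_{k_1}}$ with $k_r>\cdots>k_1$, and this product of elementary matrices is nonzero iff $j_{k_{s+1}}=i_{k_s}$ for each $s$. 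I will observe that lex order rules this out: from $(i_{k_s},j_{k_s})\le(i_{k_{s+1}},j_{k_{s+1}})$ one has $i_{k_s}\le i_{k_{s+1}}<j_{k_{s+1}}=i_{k_s}$, a contradiction. Only the linear terms survive, giving the stated formula.

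For (1), the plan has two steps. First, I reduce to the basepoint-free case: using (E4) or (E5) to maneuver, each handleslide can be made adjacent to a basepoint and then pushed across it by (E6), with the coefficient changes of Figure \ref{fig:Axiom3}. Iterating, I may assume all handleslides lie in a single basepoint-free (and crossing- and cusp-free) sub-interval. Second, I process handleslide types $(i,j)$ in lex order, maintaining a splitting of this sub-interval into a sorted processed prefix on the left and an unprocessed suffix on the right. To process type $T=(i,j)$, I bubble each remaining $(i,j)$-handleslide leftward through the unprocessed region, using (E4) for disjoint-index commutations and (E5) when indices interact; then I merge all accumulated $(i,j)$-handleslides at the front of the unprocessed region using (E2), invoking (E1) to remove the result if coefficients sum to zero.

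The main obstacle is termination, since (E5) produces new handleslides. The resolution rests on the following case analysis: when an $(i,j)$-handleslide is commuted leftward past an unprocessed $(k,l)$-handleslide, the (E5) pattern of a shared middle index can only occur when $(k,l)=(j,b)$ for some $b>j$, in which case the newly created $(i,b)$-handleslide has lex type strictly between $(i,j)$ and $(j,b)$ and so is still unprocessed. The remaining cases, namely $(k,l)=(i,b)$ or $(k,l)=(a,j)$ with $i<a<j$, fall under (E4), while the sharing $i=l$ is excluded because it would force $(k,i)$ into the processed region. In particular, no new $(i,j)$-handleslides are ever created during the processing of type $(i,j)$, so bubbling all existing ones to the leftmost unprocessed position requires only finitely many moves. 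Since there are just $\binom{n}{2}$ lex types and each is processed exactly once, the entire sorting terminates. As a consistency check, (2) applied to the resulting properly ordered configuration recovers the continuation map as $I+\sum b_{ij}E_{ij}$, an invariant preserved throughout by (E1)--(E5) and the coefficient adjustments of (E6).
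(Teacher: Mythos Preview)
Your proof is correct and, for part (2), essentially identical to the paper's: both expand the product of elementary matrices and kill the cross terms by observing that $j_{k_{s+1}}=i_{k_s}$ contradicts the lex ordering.

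For part (1) the paper gives only a one-line proof, ``a straightforward induction on the number of handleslides, cf.\ [Henry, Section 6e1]'', whereas you spell out an explicit sorting algorithm (process lex types in order, bubble each to the front of the unprocessed region, merge) together with a careful termination argument. Your key observation---that the only (E5) interaction encountered when bubbling an $(i,j)$-handleslide through the unprocessed region is with a $(j,l)$-handleslide, producing a new $(i,l)$-handleslide of strictly larger lex type---is exactly what makes the induction go through, and it is worth noting (as the matrix identity $(I+aE_{i,j})(I+bE_{j,l})=(I+abE_{i,l})(I+bE_{j,l})(I+aE_{i,j})$ confirms) that the newly created handleslide lands to the \emph{right} of the bubbled one, so the number of handleslides to its left strictly decreases. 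Two minor remarks: your basepoint reduction is slightly over-engineered, since one can simply push the handleslide already adjacent to a basepoint across via (E6) repeatedly, without ever invoking (E4)/(E5); and your case ``$(k,l)=(a,j)$ with $i<a<j$'' should allow $a=i$ as well (another $(i,j)$-handleslide still awaiting bubbling), which is harmless since that too is an (E4) commutation.
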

\begin{proof}
The proof of (1) is a straightforward induction on the number of handleslides of $\mathcal{C}$, cf. \cite[Section 6e1]{Henry}.  The proof of (2) is a calculation: The matrix of $f_{[x_a,x_b]}$ is the product of handleslide matrices $I+b_{i,j}E_{i,j}$ in the order that the handleslides appear from {\it right to left}, and all of the $(b_{i_2,j_2}E_{i_2,j_2})(b_{i_1,j_1}E_{i_1,j_1})$ terms vanish since $i_1< j_2 $.
\end{proof}

\subsection{$SR$-from MCFs} \label{sec:SRform}

Brad Henry introduced in \cite{Henry} a standard form for MCFs of Legendrian knots in $\Lambda \subset J^1\R$ called the $SR$-form.  MCFs in $SR$-form have their handleslides in standard positions near the switches and returns of a normal ruling of $\Lambda$, and every MCF is equivalent to one in $SR$-form.  Here, we extend this definition and result to MCFs in $J^1[0,1]$ or $J^1S^1$.

\subsubsection{Generalized normal rulings} Let $M = [0,1]$ or $S^1$.  A {\bf $\rho$-graded generalized normal ruling}, $\sigma$, of a $\rho$-graded Legendrian link $\Lambda$ is a family of involutions, 
\[
\sigma_{x_0}:\Lambda \cap \pi^{-1}_x(\{x_0\}) \rightarrow \Lambda \cap \pi^{-1}_x(\{x_0\}), \quad  \sigma_{x_0}^2 = \mathit{id}, 
\]
defined for all $x_0 \in M$ not equal to the base projection of any crossing or cusp of $\pi_{xz}(\Lambda)$, and subject to several requirements.  We refer the reader to \cite{LaRu1} for the thorough definition, and give a briefer summary here.  For each $x_0$, the involution $\sigma_{x_0}$ partitions the sheets of $\Lambda$ at $x_0$, $S_1, \ldots, S_n \in C_{x_0}$, into some number of {\it fixed point sheets} and some sets of {\it paired sheets}.  The fixed point sheets satisfy $\sigma_{x_0}(S_i) = S_i$ and the paired sheets form two element subsets $\{S_i,S_j\}$ with $\sigma_{x_0}(S_i) = S_j$.  The $\rho$-graded condition is: if $S_i$ and $S_j$ are paired and $z(S_i) > z(S_j)$, then the Maslov potential satisfies $\mu(S_i) = \mu(S_j) +1$.  
At cusp points, the two sheets that meet at the cusp must be paired by $\rho$ for $x_0$ near the cusp point. 
At crossings of $\pi_{xz}(\Lambda)$, the two sheets that meet at the crossing cannot be paired for $x_0$ near the crossing.  Crossings where the pairing of sheets changes are called {\bf switches}.  At switches the vertical ordering ($z$-coordinate) of the two crossing strands and their pairs (if they exist) must satisfy the normality condition pictured in Figure \ref{fig:NormC}.  The other crossings of $\pi_{xz}(\Lambda)$ are called {\bf departures} and {\bf returns} depending on whether the normality condition is satisfied to the left or to the right of the crossing.  By convention, crossings between fixed point sheets are considered to be returns, but not departures or switches.  See Figure \ref{fig:RulinEx} for an example.

\begin{figure}[!ht]

\labellist
\small

\endlabellist

\centerline{\includegraphics[scale=.8]{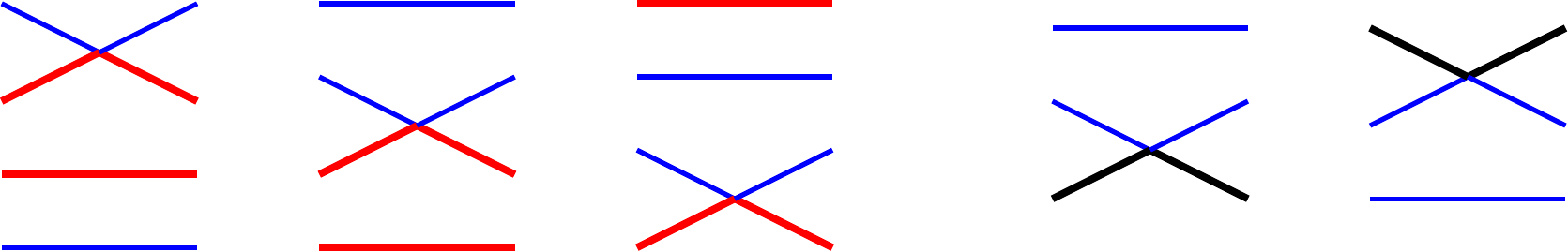}}

\caption{The normality condition at switches.  Sheets of the same color are paired.  The strands pictured in black are fixed point sheets.  Crossings between fixed point sheets are not considered to be switches.  (There may be any number of other strands above, below and in between the pictured ones.)}
\label{fig:NormC}
\end{figure}

\begin{figure}[!ht]

\labellist
\small

\endlabellist

\centerline{\includegraphics[scale=.6]{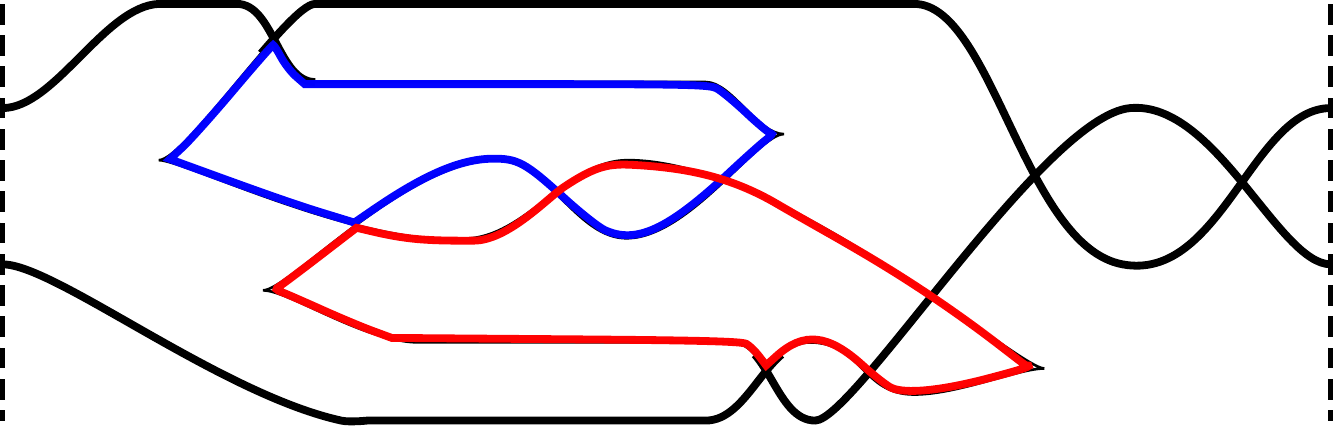}}

\caption{A generalized normal ruling of a Legendrian link in $J^1S^1$.  The black strands are fixed point sheets.  From left to right the crossings are two switches, a departure, a return, a switch, a departure, and three returns.}
\label{fig:RulinEx}
\end{figure}

\begin{remark}  The terminology {\it generalized normal ruling} is from \cite{LaRu1} where its purpose is to highlight the difference from ordinary normal rulings where the involutions $\sigma_{x_0}$ are required to be fixed point free, eg. in \cite{Fuchs, ChP}.  In this article, we will not have much need for the fixed point free requirement, and will sometimes use the shortened terminology {\it normal ruling} to refer to any generalized normal ruling whether or not the $\sigma_{x_0}$ have fixed points.
\end{remark}

\subsubsection{Barannikov normal form}  

Let $C$ be a $\Z/\rho$-graded $\mathbb{F}$-vector space  with basis $\{e_1, \ldots, e_n\}$ and let $\sigma:\{1, \ldots, n\}\rightarrow \{1, \ldots, n\}$ be an involution satisfying $|e_i| = |e_j|+1$ if $\sigma(j) = i$ and $i<j$.  A differential $d_\sigma:C^* \rightarrow C^{*+1}$ is said to be in {\bf Barannikov normal form with respect to $\sigma$} if  
\[
d(e_j) = \left\{\begin{array}{cr} \alpha_{i,j}e_i, & \mbox{if $i= \sigma(j) < j$}, \\ 0, & \mbox{else}, \end{array} \right.
\]
with the $\alpha_{i,j} \neq 0$.

\begin{proposition} \label{prop:Barannikov}
For any upper triangular differential 
\[
d:C^* \rightarrow C^{*+1}, \quad d^2 =0, \quad d(e_j) \in \mbox{Span}_{\mathbb{F}}\{e_1, \ldots, e_{j-1}\} \, \mbox{for all $j$},
\]
there exists a unique involution $\sigma: \{1, \ldots, n\} \rightarrow \{1, \ldots, n\}$ and a (non-unique) grading preserving upper triangular change of basis of the form
\[
\Phi(e_j) = e_j + \sum_{i<j} \beta_{i,j} e_j
\]
such that $\Phi \circ d \circ \Phi^{-1}$ is in Barannikov normal form with respect to $\sigma$.
\end{proposition}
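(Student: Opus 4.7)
The plan is to induct on the canonical filtration $F_0 \subset F_1 \subset \cdots \subset F_n = C$ with $F_k := \mathrm{Span}\{e_1, \ldots, e_k\}$, which the upper-triangular hypothesis forces $d$ to preserve, and separately to extract $\sigma$ from filtration-level invariants in order to obtain uniqueness. For existence, I will inductively construct $e'_j$ and extend $\sigma$ so that $F_j = \mathrm{Span}\{e'_1, \ldots, e'_j\}$, each $e'_j - e_j$ lies in $F_{j-1}$, and Barannikov form holds on $F_j$. Given the data for $j-1$, I first write $d(e_j)$ in the current basis. Since by the inductive hypothesis the only nonzero $d(e'_k)$ are the upper-partner relations $d(e'_k) = \alpha_{\sigma(k),k}\, e'_{\sigma(k)}$ (with $\sigma(k) < k$), and since these have linearly independent right-hand sides, $d^2(e_j) = 0$ forces the coefficient of $d(e_j)$ along each upper partner $e'_k$ to vanish. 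Thus $d(e_j) = \sum_{i \in L} c_i e'_i + \sum_{i \in U} c_i e'_i$, where $L$ consists of already-paired lower partners and $U$ of as-yet-unpaired indices among $\{1, \ldots, j-1\}$.

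Two cases then arise. If $c_i = 0$ for all $i \in U$, I will subtract from $e_j$ the explicit preimage of $\sum_{i \in L} c_i e'_i$ under $d$ (a combination of upper-partner vectors), producing $e'_j$ with $d(e'_j) = 0$; the index $j$ remains unpaired. Otherwise, let $m = \max\{i \in U : c_i \neq 0\}$. I will first replace $e'_m$ by the normalized combination $(1/c_m) \sum_{i \in U,\, i \leq m} c_i e'_i$, an upper-triangular modification within $F_m$. The crucial observation is that this substitution preserves Barannikov form on $F_{j-1}$ because $m$ was unpaired, so no previously established relation $d(e'_k) = \alpha_{\sigma(k),k}\, e'_{\sigma(k)}$ involved $e'_m$. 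After this, subtracting the appropriate upper-partner combination from $e_j$ to kill the $L$-components yields $d(e'_j) = c_m e'_m$, and I set $\sigma(j) = m$. Verifying that this modification of $e'_m$ is legitimate and does not disturb the induction is the main obstacle.

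For uniqueness, I will observe that the graded dimensions $\dim H^*(F_k, d|_{F_k})$ are invariants of $(C, d)$ together with the filtration, independent of any basis choice. In Barannikov form, $\dim H^m(F_k)$ jumps up by one in degree $|e_k|$ when $k$ is a fixed point or lower partner, and jumps down by one in degree $|e_k| - 1$ when $k$ is an upper partner, killing the class born at step $\sigma(k)$. A standard persistence-style matching of births to deaths then recovers $\sigma$ uniquely from these sequences of dimensions. The non-uniqueness of $\Phi$ reflects the free scalar in the redefinition of $e'_m$ and in the coefficients used to cancel $L$-components, neither of which affects $\sigma$.
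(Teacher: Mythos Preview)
Your existence argument is correct and is essentially the standard inductive proof that the paper refers to (the paper itself just says ``induction on $n$'' and cites Barannikov).  The case split on whether $d(e_j)$ hits an unpaired vector, the choice of the maximal unpaired index $m$, and the redefinition of $e'_m$ are all handled properly; in particular your observation that modifying $e'_m$ does not disturb the previously established Barannikov relations (since $m$ is not yet a lower partner) is exactly the point that makes the induction go through.  One small slip: since $d$ has degree $+1$, when $k$ is an upper partner the dimension that drops is $\dim H^{|e_k|+1}(F_k)$, not $\dim H^{|e_k|-1}(F_k)$.

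The uniqueness argument, however, has a genuine gap.  You claim that $\sigma$ can be recovered from the sequence of graded dimensions $\dim H^*(F_k)$ alone.  This is false: take $n=4$ with $|e_1|=|e_2|=1$, $|e_3|=|e_4|=0$, and compare the two Barannikov-form differentials $d_A(e_3)=e_1,\ d_A(e_4)=e_2$ (so $\sigma_A=(1\,3)(2\,4)$) and $d_B(e_3)=e_2,\ d_B(e_4)=e_1$ (so $\sigma_B=(1\,4)(2\,3)$).  Both give the identical sequence $\dim H^1(F_k)=1,2,1,0$, yet the involutions differ and the two differentials are \emph{not} conjugate by a grading-preserving upper-triangular $\Phi$.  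Knowing only the birth/death times is not enough to match births to deaths.  The fix, which is what the paper does, is to use the homologies of the \emph{subquotients} $H^*(F_j/F_i)$ for all $i<j$ (equivalently, the ranks of the maps $H^*(F_i)\to H^*(F_j)$); in the example above $H^*(F_3/F_1)$ is acyclic for $d_B$ but not for $d_A$.  These subquotient dimensions are filtration invariants and do determine $\sigma$, which is the content of the persistence barcode uniqueness you allude to.
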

\begin{proof}
The existence of $\Phi$ and $\sigma$ can be proven using induction on $n$.  The involution $\sigma$ is uniquely determined by the homology of the subquotients $H^*(C_{\leq j}/C_{\leq i})$ where $C_{\leq j}$ is the sub-complex spanned by $e_1, \ldots, e_j$.   See \cite[Lemma 2]{Bar} for a detailed proof.
\end{proof}
 
\begin{remark}  
In \cite{Bar}, Barannikov also analyzed the way that the involution $\sigma$ can change when the order of two generators is interchanged.  This is the key ingredient in a construction of normal rulings from generating families due to Chekanov and Pushkar; see  eg. \cite{ChP, FuRu} for more discussion.  Henry generalized this in \cite{Henry, ChP} to the following map from equivalence classes of MCFs to normal rulings.
\end{remark}

Given an MCF $\mathcal{C}$ for a $1$-dimensional Legendrian $\Lambda \subset J^1M$ define a family of involutions $\sigma(\mathcal{C})_{x_0} : \Lambda \cap \pi_{x}^{-1}(\{x_0\}) \rightarrow \Lambda \cap \pi_{x}^{-1}(\{x_0\})$ for $x_0 \in M_{\mathit{reg}}^\mathcal{C}$ by taking $\sigma(\mathcal{C})_{x_0}$ to be the involution of the Barannikov normal form of $d_{x_0}:C_{x_0} \rightarrow C_{x_0}$.  

\begin{proposition} \label{prop:nruling}
The construction 
\[
\mathcal{C} \mapsto \sigma(\mathcal{C})
\] 
defines a mapping from the set of $\rho$-graded MCFs to the set of $\rho$-graded generalized normal rulings of $\Lambda$. Moreover, the mapping is well defined on equivalence classes of MCFs.
\end{proposition}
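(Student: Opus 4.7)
The plan is to verify in two stages: first, that the family of involutions $\sigma(\calC)_{x_0}$ assembles into a $\rho$-graded generalized normal ruling (i.e., that it interacts correctly with the front singularities and satisfies the normality condition at switches); second, that the assignment descends to equivalence classes, which by Proposition \ref{prop:cobord}(i) reduces to checking invariance under the elementary equivalence moves (E1)-(E7).

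For the first stage, I would analyze how $\sigma(\calC)_{x_0}$ behaves as $x_0$ crosses each of the four types of $1$-dimensional singularities governed by (A1)-(A4). Handleslides (A3), basepoints (A4), and trivalent or super-handleslide configurations all modify the matrix of $d_x$ by conjugation by a \emph{grading-preserving invertible upper triangular} matrix (for super-handleslides one uses the matrix identity from Remark \ref{rem:MC2F}(2) composed with the endpoint handleslides). By the uniqueness clause of Proposition \ref{prop:Barannikov}, conjugation by such a matrix preserves the Barannikov involution, so $\sigma(\calC)$ is constant across these singularities. At a left cusp (A2), the matrix of $d_+$ is obtained from that of $d_-$ by inserting the block $N$ in positions $k,k+1$ as a direct summand, so the Barannikov involution of $d_+$ is the involution of $d_-$ with the new pair $\{k,k+1\}$ appended; the Maslov potentials on $S_k$ and $S_{k+1}$ differ by one by the defining property of $\mu$, so the $\rho$-graded condition for the new pair holds automatically. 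Right cusps are symmetric.

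The main obstacle lies at crossings, where the axiom (A1) gives $A_+ = Q A_- Q$ with $Q = Q_{(k\,k+1)}$. Here $Q$ is \emph{not} upper triangular, so the Barannikov normal form can genuinely change. The key step is a case analysis, due to Barannikov \cite{Bar} and extended by Henry \cite{Henry}, describing precisely how $\sigma_+$ relates to $Q \sigma_- Q$ in terms of the positions of $k, k+1$ and their partners. One shows that only two outcomes are possible: either $\sigma_+ = Q \sigma_- Q$ (in which case the crossing is a departure or return, depending on which side of the crossing the triangularity holds), or $\sigma_+$ differs from $Q \sigma_- Q$ by recombining the two pairs or fixed-point sheets involving $S_k$ and $S_{k+1}$ in exactly the configuration pictured in Figure \ref{fig:NormC}. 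This second case is the switch, and the combinatorial analysis of the subquotients $H^*(C_{\leq j}/C_{\leq i})$ mentioned in the proof of Proposition \ref{prop:Barannikov} shows that the normality condition is forced by the requirement $d_\pm^2 = 0$ together with the upper triangularity of $d_\pm$. The $\rho$-graded hypothesis on $\mu$ at the crossing (the two crossing sheets have Maslov potentials differing by $0$ mod $\rho$ after the swap and the grading jump at cusps) delivers the $\rho$-graded condition on the new pair. This completes the verification that $\sigma(\calC)$ is a $\rho$-graded generalized normal ruling.

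For the second stage, by Proposition \ref{prop:cobord}(i) it is enough to verify that $\sigma(\calC)$ is unchanged under each of (E1)-(E7). Moves (E1), (E2), (E4)-(E6), and the local portion of (E7) not containing a super-handleslide point all preserve the family $\{d_x\}$ at every $x$ outside the small interval where the move occurs, and inside that interval the matrix at each slice differs from that at the neighboring slice by an upper triangular conjugation, so by uniqueness of Barannikov normal form the involution does not change. Move (E3) slides a handleslide endpoint past a crossing or cusp; the involution to the left and right of the small region is unchanged because the first stage already showed the involution is determined entirely by $d_x$ outside the handleslide. Move (E7) inserts or removes a super-handleslide together with its attached handleslides, and the matrix identity \eqref{eq:superRemark} expresses the composite continuation across the region as conjugation by an upper triangular matrix, again preserving the involution. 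Thus $\sigma(\calC)$ is an invariant of the equivalence class of $\calC$, completing the proof.
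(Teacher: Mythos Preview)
Your proposal is essentially correct and matches the approach that the paper defers to in its citations of \cite{Bar}, \cite{Henry}, and \cite{LaRu1}: the Barannikov involution is invariant under upper-triangular conjugation (handling handleslides, basepoints, and all equivalence moves), the cusp axiom forces the cusp sheets to be paired, and Barannikov's bifurcation analysis \cite[Lemma~4]{Bar} at crossings yields the switch/departure/return trichotomy together with the normality condition.

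Two small points to tighten. First, your sentence about the $\rho$-graded condition at crossings is confused: there is no Maslov-potential constraint at a crossing. The $\rho$-graded condition on the ruling (paired sheets differ by $1$ in $\mu$) holds everywhere simply because $d_x$ has degree $+1$, so any pair in the Barannikov normal form automatically satisfies $\mu(S_i)=\mu(S_j)+1$; no crossing-specific argument is needed. Second, you do not explicitly verify that the two crossing sheets are \emph{not} paired by $\sigma$. This follows because axiom (A1) forces $a_{k,k+1}^\pm=0$ adjacent to the crossing, hence the induced differential on the subquotient $C_{\leq k+1}/C_{\leq k-1}$ vanishes; since an upper-triangular change of basis preserves this subquotient, the Barannikov form cannot have $d_\sigma S_{k+1}=\alpha S_k$ with $\alpha\neq 0$, so $\sigma(k+1)\neq k$.
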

\begin{proof}
See \cite[Lemma 3.14 and Proposition 3.15]{Henry} for Legendrians in $J^1\R$ with $\mathbb{F}_2$ coefficients.  See also \cite[Section 5.B]{LaRu1} for a related statement for Legendrians in $J^1S^1$ again over $\mathbb{F}_2$.  Barannikov's result \cite[Lemma 4]{Bar}  establishes the normality condition at switches.  Since this statement is valid over any {\it field}, the proof goes through over  $\mathbb{F}$.
\end{proof}

\subsubsection{$SR$-form MCFs with respect to a generalized normal ruling}
Let $\Lambda \subset J^1M$ with $M = [0,1]$ or $S^1 = [0,1]/\{0,1\}$.  For each crossing, $c$, choose a small open interval $I_c \subset M$ containing the crossing and no other singularities of $\pi_{xz}(\Lambda)$.  In the case $M =[0,1]$ choose small half open intervals $I_0$ and $I_1$ containing $x = 0$ and $1$ and no crossings or cusps of $\Lambda$; when $M =S^1$ choose such an open interval $I_0$ containing $x = 0$.  The intervals should be chosen small enough as to all be disjoint from one another.  

\begin{definition} \label{def:SRform} Let $\sigma$ be a ($\rho$-graded) generalized normal ruling for $\Lambda$.  An MCF $\mathcal{C}$ for $\Lambda$ is in {\bf $SR$-form with respect to $\sigma$}, if for some choice of intervals as above the following conditions are satisfied:
\begin{enumerate}
\item For all $x$ not belonging to any of the intervals $I_0, I_1$, or $I_c$, the differential $d_{x}:C_{x} \rightarrow C_{x}$ is in Barannikov normal form with respect to $\sigma_x$.  
\item The only handleslides of $\mathcal{C}$ are arranged as follows.
\begin{itemize}
\item[(i)]  In the intervals $I_c$ when $c$ is a switch or return collections of handleslides appear as in Figure \ref{fig:SR}.
\item[(ii)]  If $M= [0,1]$ properly ordered handleslide collections appear in $I_0$ and $I_1$.
\item[(iii)] If $M=S^1$ a properly ordered handleslide collection appears in $I_0$.
\item[(iv)] If $\rho=1$, i.e. if $(\Lambda, \mathcal{C})$ is $1$-graded, then handleslides may appear at right cusps as in Figure \ref{fig:SR}.
\end{itemize}
\end{enumerate}
\end{definition}

\begin{figure}[!ht]

\labellist
\tiny
\pinlabel $a$ [r] at 0 128
\pinlabel $b$ [l] at 16 128
\pinlabel $r$ [b] at 26 164
\pinlabel $-r^{-1}$ [b] at 86 164
\pinlabel $b^{-1}r^{-1}a$ [t] at 48 90
\pinlabel $rb$ [r] at 98 128
\pinlabel $-r^{-1}a$ [l] at 112 128

\pinlabel $a$ [r] at 174 160
\pinlabel $b$ [r] at 174 106
\pinlabel $-ar$ [l] at 268 160 
\pinlabel $rb$ [l] at 268 106
\pinlabel $r$ [r] at 196 130
\pinlabel $-r^{-1}$ [l] at 246 130

\pinlabel $a$ [r] at 332 134
\pinlabel $b$ [l] at 348 134
\pinlabel $-br$ [r] at 428 134
\pinlabel $ar^{-1}$ [l] at 444 134
\pinlabel $ar^{-1}b^{-1}$ [b] at 378 170
\pinlabel $r$ [t] at 356 96
\pinlabel $-r^{-1}$ [t] at 419 96

\pinlabel $a$ [r] at 2 46
\pinlabel $-ar$ [l] at 98 46
\pinlabel $r$ [r] at 18 18
\pinlabel $-r^{-1}$ [l] at 84 18

\pinlabel $a$ [r] at 168 14
\pinlabel $ra$ [l] at 266 14
\pinlabel $r$ [r] at 182 40
\pinlabel $-r^{-1}$ [l] at 254 40

\pinlabel $1$ [r] at 338 34
\pinlabel $r$ [r] at 368 34

\endlabellist

\centerline{\includegraphics[scale=.8]{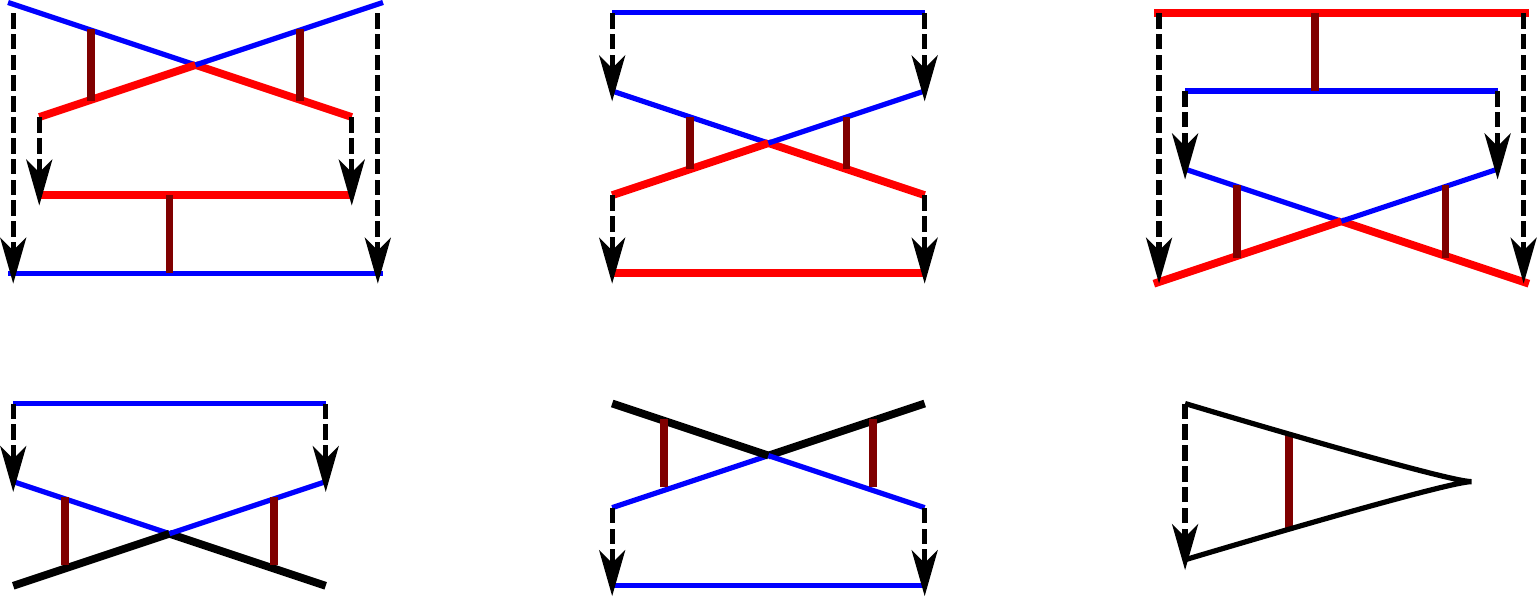}}

\quad

\quad

\labellist
\tiny
\pinlabel $a$ [r] at 8 154
\pinlabel $b$ [r] at 2 116
\pinlabel $b$ [l] at 112 128
\pinlabel $a$ [r] at 98 128
\pinlabel $a^{-1}rb$ [t] at 42 90
\pinlabel $r$ [b] at 26 164

\pinlabel $a$ [r] at 166 148
\pinlabel $b$ [r] at 172 102
\pinlabel $r$ [b] at 198 140
\pinlabel $a$ [l] at 268 156
\pinlabel $b$ [l] at 268 104

\pinlabel $a$ [r] at 332 148
\pinlabel $b$ [r] at 338 108
\pinlabel $r$ [t] at 356 96
\pinlabel $arb^{-1}$ [b] at 376 168
\pinlabel $b$ [r] at 428 134
\pinlabel $a$ [l] at 440 134

\pinlabel $a$ [r] at 8 42
\pinlabel $r$ [b] at 28 30
\pinlabel $a$ [l] at 104 42

\pinlabel $a$ [r] at 174 16
\pinlabel $r$ [t] at 192 28
\pinlabel $a$ [l] at 268 16

\pinlabel $r$ [r] at 346 32

\endlabellist

\centerline{\includegraphics[scale=.8]{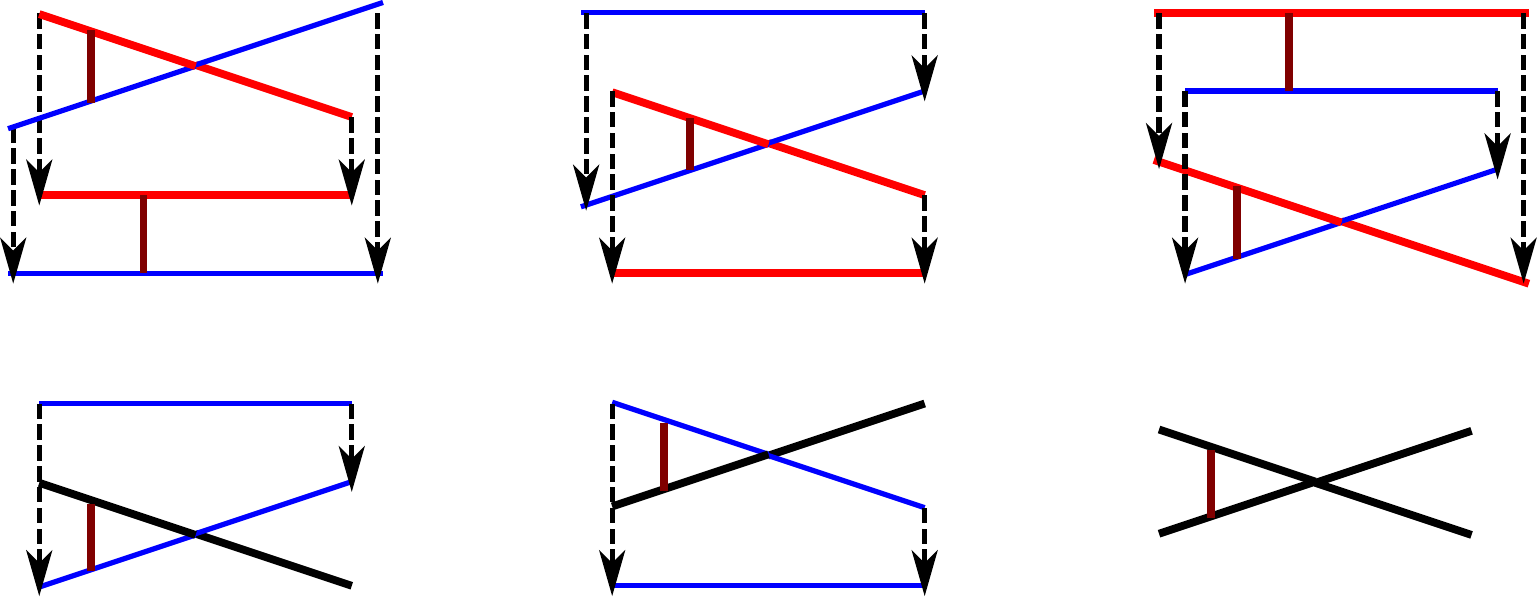}}

\caption{Handleslides near switches, right cusps, and returns for an $SR$-form MCF.}
\label{fig:SR}
\end{figure}

As an example, the MCF from Figure \ref{fig:MCFEx2} is in $SR$-form with respect to the normal ruling from Figure \ref{fig:RulinEx}.

\begin{proposition}  \label{prop:SRalgorithm}
Any $\rho$-graded MCF, $\mathcal{C}$, for a $1$-dimensional Legendrian $\Lambda \subset J^1M$ is equivalent to an $SR$-form MCF, $\mathcal{C}'$, for a $\rho$-graded generalized normal ruling, $\sigma$,
of $\Lambda$.  Moreover, if $M=[0,1]$ and $d_0:C_0 \rightarrow C_0$ is already in Barranikov normal form, then $\mathcal{C}'$ can be constructed without any handleslide collection in the interval $I_0$ containing $x=0$. 
\end{proposition}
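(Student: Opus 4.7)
The plan is to give an explicit algorithm that, starting from an arbitrary MCF $\mathcal{C}$, produces an equivalent one in $SR$-form with respect to the generalized normal ruling $\sigma = \sigma(\mathcal{C})$ furnished by Proposition \ref{prop:nruling}. The main tool is Proposition \ref{prop:proper}(2): any grading-preserving upper triangular change of basis of the form $\Phi(e_j) = e_j + \sum_{i<j}\beta_{i,j}e_i$ can be realized by inserting a properly ordered collection of handleslides, with existing handleslides merged via (E1), (E2), and commuted via (E4)-(E6). Combined with Proposition \ref{prop:Barannikov}, this means that on any region where the front is non-singular we can use an equivalence to arrange that $d_x$ is in Barannikov normal form with respect to $\sigma_x$.

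I would proceed inductively along $x$ from left to right. For $M=[0,1]$ the starting point is $x=0$; for $M=S^1$ one cuts at $x=0$ and works on the resulting tangle. Having arranged that $d_x$ is in Barannikov normal form on one side of the next base singularity $s$, the task is to see what is required to maintain $SR$-form across $s$. At a left cusp, the axiom (A2) automatically extends the Barannikov form since the newly inserted $2$-dimensional summand is precisely $\sigma$-paired by the ruling near cusps. At a departure, both the permutation and the unchanged pairing preserve Barannikov normal form, so no handleslides are needed. At switches, returns, and right cusps when $\rho=1$, a specific configuration of handleslides must be inserted to effect the required change, and a direct matrix computation using axioms (A1)-(A4) shows that these coincide exactly with the patterns in Figure \ref{fig:SR}. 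Any pre-existing handleslides or basepoints of $\mathcal{C}$ lying between base singularities of $\Lambda$ are either absorbed into the inserted basis-change handleslides via (E1)-(E6), (F1)-(F4), or pushed along to the boundary intervals. After traversing all base singularities, the surviving handleslides collect at $I_0$ (and $I_1$ when $M=[0,1]$), and a final application of Proposition \ref{prop:proper}(1) makes them properly ordered. The ``moreover'' statement then follows immediately: if $d_0$ is already in Barannikov normal form with respect to $\sigma_0$, the initial basis change is the identity and no handleslides need to be inserted in $I_0$.

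The main technical obstacle is the case-by-case verification that the handleslide configurations in Figure \ref{fig:SR} precisely implement the required changes of Barannikov form across switches, returns, and right cusps. Each case reduces to a small matrix calculation, but one must check that the coefficients indicated in the figure are exactly those forced by matching the Barannikov form on the two sides of the crossing; the return configuration and the $\rho=1$ right cusp configuration are particularly delicate, because their handleslide coefficients depend on entries of $d_x$ on the opposite side of the crossing. A secondary obstacle appears in the $M=S^1$ case: after the algorithm traverses the full circle, the Barannikov form recovered at $x=0$ may differ from the one chosen initially by a monodromy change of basis, and this discrepancy must be absorbed into the properly ordered handleslide collection in $I_0$, which is possible precisely because any upper triangular change of basis is realizable by such a collection.
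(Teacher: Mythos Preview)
Your approach is essentially the paper's: a left-to-right sweep that maintains Barannikov normal form between front singularities, realizing the required basis changes by handleslide collections. The paper makes this concrete by carrying a single properly ordered collection $\mathcal{H}_i$ rightward, absorbing each new handleslide and depositing the patterns of Figure~\ref{fig:SR} at crossings as it goes; your description is more abstract but has the same content. One small clarification: the parameter $r$ in those patterns is not determined by matching Barannikov forms on the two sides of a crossing, but is simply the coefficient of the $(k,k+1)$-handleslide that happens to be in the swept collection when it reaches that crossing.

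There is one genuine gap. The right cusp case (for all $\rho$, not only $\rho=1$) is not just a delicate matrix calculation. When the swept collection arrives at a right cusp on sheets $S_k,S_{k+1}$, it may contain $(i,k+1)$- and $(k,j)$-handleslides for $i<k$ and $j>k+1$, and these cannot be pushed past the cusp since one endpoint runs into it. The paper removes each such handleslide by applying a super-handleslide move (E7) at its location, using that $\langle d_x S_{k+1},S_k\rangle=1$ nearby; after this cleanup one argues from (A2) that any surviving $(i,k)$- and $(k+1,j)$-handleslides must have coefficient zero. Your sketch does not address this mechanism, and without it the sweep stalls at right cusps. A minor correction: the moves (F1)--(F4) are not available in an equivalence, since equivalences fix $*$ and $\xi$; basepoints stay where they are and one merely commutes handleslides past them via (E6).
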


\begin{proof}  For Legendrians in $J^1\R$ with $\mathbb{F}_2$ coefficients this is proven in \cite[Section 6]{Henry}.  In our proof, we pay particular attention to the changes required for Legendrians in $J^1[0,1]$ or $J^1S^1$ with $\mathbb{F}$ coefficients.  

Let $(\Lambda, \mathcal{C}) \subset J^1M$ with $M = [0,1]$ or $S^1$.  Beginning at $x=0$, we apply the Proposition \ref{prop:Barannikov} to find an upper-triangular isomorphism $\Phi:C_{0} \rightarrow C_{0}$ and a Barannikov normal form differential $d_{\sigma_0}$ with $\Phi d_{0} = d_\sigma \Phi$ where $\sigma_{0}:\{S_1, \ldots, S_{n_0}\} \rightarrow \{S_1, \ldots, S_{n_0}\}$ is an involution of the sheets of $\Lambda$ at $x=0$.  Using Proposition \ref{prop:proper}, there exists a properly ordered handleslide collection $\mathcal{H}_0$ whose continuation map is $\Phi$.  Now, applying a sequence of $\leftarrow$ (E1) moves, we can modify $\mathcal{C}$ in the interval $I_0$ to begin with two handleslide collections $\mathcal{H}_0$ followed by $\mathcal{H}_1$ where $\mathcal{H}_1$ is the reflection of $\mathcal{H}_0$ across a vertical line with coefficients negated.  We leave $\mathcal{H}_0$ in the interval $I_0$ and then follow Henry's procedure of ``sweeping'' $\mathcal{H}_1$ past the singularities of $(\Lambda, \mathcal{C})$ from left to right, identifying the generalized normal ruling $\sigma$  as we go.  Observe, if $d_0$ was already in Barranikov normal form, then we can take $\Phi= \mathit{id}$ so that $\mathcal{H}_0 =\emptyset$ as claimed. 

Let $0= x_0 < x_1 < x_2 < \ldots < x_N=1$ be so that $x_i$ with $1 \leq i \leq N-1$ are the $x$-coordinates of the singularities of $(\Lambda, \mathcal{C})$.  Suppose inductively that we have constructed a normal ruling $\sigma$ of $\Lambda$ on $[x_0, x_i)$ and an equivalence from $\mathcal{C}$ to $\mathcal{C}_i$ such that $\mathcal{C}_i$ satisfies the conditions of Definition \ref{def:SRform} from $x_0$ up to the location of a properly ordered collection of handleslides $\mathcal{H}_i$ that is located in a small interval of the form $(x_i-\epsilon, x_i)$.  To extend this situation up to $x_{i+1}$ we consider cases.  

\medskip

\noindent {\bf Case 1.}  $x_{i}$ is a {\it handleslide}.  Then, we simply group the new handleslide into $\mathcal{H}_i$, apply Proposition \ref{prop:proper} (1) to produce a new properly ordered collection $\mathcal{H}_{i+1}$, and then slide $\mathcal{H}_{i+1}$ over to $x_{i+1}$.

\medskip 

\noindent {\bf Case 2.}  $x_i$ is a {\it crossing} $c$ between sheets $S_{k}$ and $S_{k+1}$.  We sweep the handleslides in $\mathcal{H}_i$ past $x_i$ using (E3)-(E5) to become $\mathcal{H}_{i+1}$.  If $\mathcal{H}_i$ has no $(k,k+1)$-handleslide, this can be done without issue, and $\sigma$ is extended so that $c$ is not a switch.  If $\mathcal{H}_i$ does have a $(k,k+1)$-handleslide with coefficient $r \in \mathbb{F}^*$, it cannot be slid past the crossing; instead it is left behind adjacently to the left of the crossing.  All other handleslides slide past the crossing to form part of $\mathcal{H}_{i+1}$.  In doing so, additional handleslides may be produced from (E5) when the $(k,k+1)$-handleslide is passed, but these additional handleslides can then themselves be moved into $\mathcal{H}_{i+1}$.  Next, if the normality condition for $\sigma$ holds (resp. does not hold) at the left side of $c$, then $\sigma$ extends so that $c$ is a switch (resp. return).  Move (E1) is then applied to produce the remaining required handleslides (with coefficients determined by $r$ and $d_{x_i-\epsilon}$) as in Figure \ref{fig:SR}.  The use of the (E1) move produces handleslides in pairs, and the extra handleslides that are not needed for Figure \ref{fig:SR} are then moved into $\mathcal{H}_{i+1}$, leaving the handleslides in $I_c$ in the required form.

An important point is that with the handleslides and coefficients as in Figure \ref{fig:SR}, and with $d_{x_i-\epsilon}$ known to be Barannikov form, a computation shows that $d_{x}$ is also in Barannikov form for $x$ to the right of $I_c$ up to $\mathcal{H}_{i+1}$.  The way the Barannikov form differential changes is indicated in Figure \ref{fig:SR}.    

\medskip 

\noindent {\bf Case 3.}  $x_i$ is a {\it basepoint}. We use (E6) to sweep all of the handleslides past the basepoint, changing their coefficients appropriately.

\medskip

\noindent {\bf Case 4.}  $x_i$ is a {\it left cusp}.  We use (E3) to continuously sweep $\mathcal{H}_i$ past the cusp.  The new cusp sheets are paired by $\sigma$.

\medskip

\noindent {\bf Case 5.}  $x_i$ is a {\it right cusp} connecting sheets $S_k$ and $S_{k+1}$.  If there is a $(k,k+1)$-handleslide, $h_{k,k+1}$, (this is only possible if $\rho =1$), then pull it out of the left side of $\mathcal{H}_i$.  Next, note that for any $x$-value within $\mathcal{H}_i$ we have $\langle d_{x} S_{k+1}, S_k\rangle =1$.  [This is true just to the left of the cusp by (A2), and passing handleslides cannot change the matrix coefficients of $d_x$ corresponding to vertically adjacent sheets.]  For each $i<k$, if there is an $(i,k+1)$-handleslide, $h_{i,k+1}$, in $\mathcal{H}_i$ with coefficient $b_{i,k+1} \neq 0$, then we apply a $(i,k)$-super-handleslide point, aka Move $\leftarrow$(E7), at its location with coefficient $-b_{i,k+1}$.  Because $\langle d_{x} S_{k+1}, S_k\rangle =1$, this produces a new $(i,k+1)$-handleslide with coefficient $-b_{i,k+1}$ that we use to cancel $h_{i,k+1}$ and possibly some additional $(i',k)$- and $(i,j)$-handleslides with $i'<i$ and $k+1<j$.  Similarly, for each $k+1<j$, if there is a $h_{k,j}$ handleslide in $\mathcal{H}_i$ then we use a $(k+1,j)$-super-handleslide to cancel it, possibly introducing some additional $(i,j)$ and $(k+1,j')$-handleslides with $i<k$ and $j<j'$.  At this point there are no $(i,k+1)$ or $(k,j)$-handleslides left in $\mathcal{H}_i$, so the only handleslides with endpoints on the cusp sheets are $h_{k,k+1}$ (off to the left, should it exist), and some number of $(i,k)$- and $(k+1,j)$-handleslides.  We now sweep all other handleslides past the right cusp using (E3).  Possibly some additional $(i,k)$ and $(k+1,j)$-handleslides are created during this process, (but it is important to note that no new $(i,k+1)$ or $(k,j)$ handleslides appear).  Now we use (E2) to group the remaining $(i,k)$ and $(k+1,j)$ handleslides so that for each $i<k$ and $k+1<j$ there is at most one such handleslide.  We argue that the coefficients, $b_{i,k}$ and $b_{k+1,j}$, of all such handleslides actually has to be $0$.  The reason is that, since $d_x$ is in Barannikov form for $x$ to the left of $h_{k,k+1}$ and $\mathcal{H}_i$, $\langle d_xS_{k+1},S_k \rangle$ is the only nonzero matrix coefficient involving $S_k$ and $S_{k+1}$.  Thus, for such $d_x$ we have $\langle d_x S_{k+1}, S_i \rangle = 0$ for all $i<k$.  Therefore, one sees that after passing through the remaining handleslides of $\mathcal{H}_{i}$ we get $\langle d_{x'} S_{k+1}, S_i \rangle = b_{i,k}$ for $x'$ immediately to the left of the cusp.  Similarly, for $k+1 <j$ we get $\langle d_{x'} S_k, S_j \rangle = -b_{k+1,j}$.  Now, (A3) tells us that these matrix coefficients of $d_{x'}$ have to be zero.  At this point, the only handleslide remaining to the left of the right cusp is $h_{k,k+1}$ (should it exist), and its positioning is as allowed in Figure \ref{fig:SR}.

\medskip

Once we get to $x_i = x_N$ the construction is completed as follows:  In the $J^1[0,1]$ case, $\mathcal{H}_N$ just becomes the properly ordered handleslide collection at $x=1$.  In the $J^1S^1$ case, we combine $\mathcal{H}_{N}$ and $\mathcal{H}_0$ into a single group of handleslides and properly order them with Proposition \ref{prop:proper}.

\end{proof}

\subsection{Augmented cobordism and Legendrian isotopy}  \label{sec:LegIso}
Recall that when $\Lambda_t,  0 \leq t \leq 1$ is a Legendrian isotopy in $J^1M$ there is a Legendrian cobordism $\Sigma: \Lambda_0 \rightarrow \Lambda_1$ in $J^1(M \times [0,1])$ such that the $t$ slices of the front projection of $\Sigma$ coincide with the front projections of the $\Lambda_t$.  See eg. \cite[Section 3.3]{ArnoldWave}.

\begin{proposition} \label{prop:Rmove}
Let $(\Lambda_0, \mathcal{C}_0) \subset J^1M$ be an augmented Legendrian, and let $\Lambda_t$, $0\leq t\leq 1$ be a generic Legendrian isotopy with associated Legendrian cobordism $\Sigma \subset J^1([0,1]\times M)$.  Then, there exists MCFs $\mathcal{C}$ and $\mathcal{C}_1$ for $\Sigma$ and $\Lambda_1$ respectively such that $(\Sigma, \mathcal{C})$ is an augmented Legendrian cobordism from $(\Lambda_0,\mathcal{C}_0)$ to $(\Lambda_1, \mathcal{C}_1)$. 
\end{proposition}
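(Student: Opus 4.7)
The plan is to factor the Legendrian isotopy into a finite sequence of elementary moves on front projections and then extend the MCF across each one in turn using the tools of Section \ref{sec:slice}. After a $C^\infty$-small perturbation rel endpoints, we may assume that the isotopy $\Lambda_t$ is generic in the sense that the trace cobordism $\Sigma \subset J^1(M \times [0,1])$ has front projection whose singular set is generic with respect to the $t$-coordinate. Concretely, this means that, after choosing $0 = t_0 < t_1 < \cdots < t_N = 1$, the front of $\Sigma$ on each slab $M \times [t_{i-1}, t_i]$ either is a product or contains exactly one of the following features: two transverse front singularities (cusps/crossings) exchanging $x$-coordinates, a single Reidemeister-$2$ birth/death of crossings, a single Reidemeister-$3$ triple point, or a single swallowtail point.

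On the product slabs I simply extend $\calC_0$ as the product MCF. On each non-trivial slab, the feature encountered corresponds to exactly one of the moves (R0), (R2), (R3), or (R1) from Section \ref{sec:slice}, and I extend the MCF inductively using Proposition \ref{prop:MCF}: given the MCF on the bottom of the slab, the elementary cobordism for the move produces an MCF on the top. For (R0), (R2), (R3) the move can be performed directly once the other data (handleslides, basepoints) has been swept out of the affected $x$-interval via (E3)--(E6), (F3), (F4), which is always possible since these moves only require the pictured Legendrian features in the interval.

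The main obstacle is the swallowtail move (R1). In the $\rightarrow$ direction (swallowtail birth) one needs a spin basepoint on the specified sheet located at the swallowtail point (when $\Char \mathbb F \neq 2$); this can be arranged by first using (F1) to create a pair of spin basepoints with coefficients $\pm 1$ on the relevant sheet, then using (F3), (F4) to slide one of them to the swallowtail location. The $\rightarrow$ (R1) move then introduces the standard handleslide configuration from Figure \ref{fig:Endpoints3}. In the $\leftarrow$ direction (swallowtail death) the concern is that the handleslides in the interval $[x_a,x_b]$ where the move is applied must precisely match the pattern in Figure \ref{fig:Endpoints3}; however, after first clearing the interval of extraneous handleslides and basepoints using the equivalence and basepoint moves, Lemma \ref{lem:R1} guarantees that the remaining handleslides automatically have the correct coefficients. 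Thus the $\leftarrow$ (R1) move can always be carried out.

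Concatenating these elementary augmented cobordisms across all the slabs $M \times [t_{i-1}, t_i]$ yields an MCF $\calC$ on $\Sigma$ restricting to $\calC_0$ on the bottom; defining $\calC_1 := \calC|_{M \times \{1\}}$ then produces the required augmented Legendrian cobordism $(\Sigma, \calC) \colon (\Lambda_0, \calC_0) \to (\Lambda_1, \calC_1)$.
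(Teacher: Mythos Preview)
Your approach matches the paper's: factor the isotopy into (R0)--(R3) bifurcations and extend the MCF across each using Proposition~\ref{prop:MCF} together with the equivalence and basepoint moves to prepare the interval. Two places in your sketch need more care, and the paper treats both explicitly.

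First, your assertion that handleslides can always be ``swept out of the affected $x$-interval via (E3)--(E6)'' before an (R2) or (R3) move is not quite right. For (R2)$\leftarrow$ there may be a $(k,k+1)$-handleslide trapped between the two crossings of sheets $k$ and $k+1$; (E3) does not apply because tracing the endpoints continuously through either crossing would put the upper endpoint below the lower one. The paper handles this by first putting $\mathcal{C}_0$ into $SR$-form via Proposition~\ref{prop:SRalgorithm} (so that any such handleslide sits at a return of the ruling) and then removing it with a super-handleslide move (E7). So (E7) is genuinely needed here, not just (E3)--(E6).

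Second, for (R1)$\leftarrow$ you invoke Lemma~\ref{lem:R1} after ``clearing the interval of extraneous handleslides,'' but you do not say how to reach the hypotheses of that lemma (only $(k{+}1,k{+}2)$- and $(i,k)$-handleslides remaining, all inside $(x_l,x_r)$). The paper again uses the $SR$-form to guarantee this positioning, and then observes that when $\rho=1$ there may be an additional handleslide at the right cusp connecting the two cusp sheets which must be moved aside by a short explicit cobordism (Figure~\ref{fig:Type1handle}) before Lemma~\ref{lem:R1} applies. Your sketch omits this $\rho=1$ step.
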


\begin{proof}
See item (B2) in Section 7.2 of \cite{PanRu2} for a proof over $\mathbb{F}_2$.
From Proposition \ref{prop:MCF} we know that the MCF can be extended during a Reidemeister move of type (R0), (R2), or (R3) as long as there are no handleslides or basepoints in the interval where the move occurs.  Using the equivalence and basepoint moves (E1)-(E7) and (F1)-(F4) it is straightforward to arrange this.  [Considerations may be simplified by first applying Proposition \ref{prop:SRalgorithm} to arrange that $\mathcal{C}_0$ is in $SR$-from.  For the (R2) $\leftarrow$ move reflected in the horizontal direction, in the case that there is a handleslide at the second crossing (necessarilly a return of any normal ruling) a super-handleslide move (E7) can be applied to remove the handleslide as in \cite{PanRu2}.]  To perform (R1) $\rightarrow$ we can first apply (F1) to create the necessary spin basepoint.  To perform (R1) $\leftarrow$, note that it is enough to arrange for handleslides to be
positioned as in the statement of Lemma \ref{lem:R1}, as then the coefficients are correct in order for the move to be performed.  With $\mathcal{C}_0$ in $SR$-form the handleslides are in the correct positions, except in the case where $\rho=1$ when there may be an additional handleslide adjacent to the right cusp and connecting the two cusp sheets.  In this case, Figure \ref{fig:Type1handle} indicates how to move this handleslide out of the way, should it exist, arranging the positioning of Lemma \ref{lem:R1}.

\end{proof}

\begin{figure}[!ht]
\labellist
\tiny
\pinlabel $\mbox{(A)}$ [b] at -10 196
\pinlabel $\mbox{(B)}$ [b] at -10 42

\pinlabel $a$ [r] at 4 166
\pinlabel $-b$ [t] at 32 180
\pinlabel $1$ [b] at 44 230
\pinlabel $-1$ [t] at 60 180
\pinlabel $1$ [t] at 120 180
\pinlabel $b$ [t] at 142 208
\pinlabel $-a$ [l] at 176 166

\pinlabel $-ba$ [t] at 240 142
\pinlabel $-b$ [t] at 254 180
\pinlabel $-1$ [b] at 270 202
\pinlabel $1$ [b] at 330 202
\pinlabel $b$ [b] at 352 228

\pinlabel $-1$ [t] at 480 180
\pinlabel $1$ [t] at 538 180
\pinlabel $-ba$ [t] at 534 120
\pinlabel $b$ [b] at 548 232
\pinlabel $-b$ [t] at 574 180
\pinlabel $-ba$ [t] at 578 120
\pinlabel $-a$ [t] at 592 142
\pinlabel $1$ [b] at 562 230

\pinlabel $-1$ [t] at 690 180
\pinlabel $1$ [t] at 750 180
\pinlabel $-ba$ [t] at 794 142

\pinlabel $a$ [b] at 8 88
\pinlabel $-b$ [t] at 30 0
\pinlabel $1$ [b] at 44 50
\pinlabel $-1$ [t] at 60 0
\pinlabel $1$ [t] at 118 0
\pinlabel $-a$ [b] at 82 88
\pinlabel $b$ [b] at 142 48
\pinlabel $-a$ [b] at 172 88

\pinlabel $-b$ [b] at 256 50
\pinlabel $-1$ [t] at 270 0
\pinlabel $-a$ [b] at 292 88
\pinlabel $1$ [t] at 330 0
\pinlabel $b$ [b] at 352 48

\pinlabel $-1$ [r] at 478 12
\pinlabel $ab$ [t] at 494 0
\pinlabel $-a$ [l] at 506 72
\pinlabel $1$ [b] at 536 22
\pinlabel $b$ [b] at 546 52
\pinlabel $-b$ [t] at 572 0
\pinlabel $1$ [b] at 562 48

\pinlabel $ab$ [t] at 638 0
\pinlabel $-1$ [t] at 690 0
\pinlabel $-a$ [b] at 712 88
\pinlabel $1$ [t] at 750 0

\endlabellist

\centerline{\includegraphics[scale=.6]{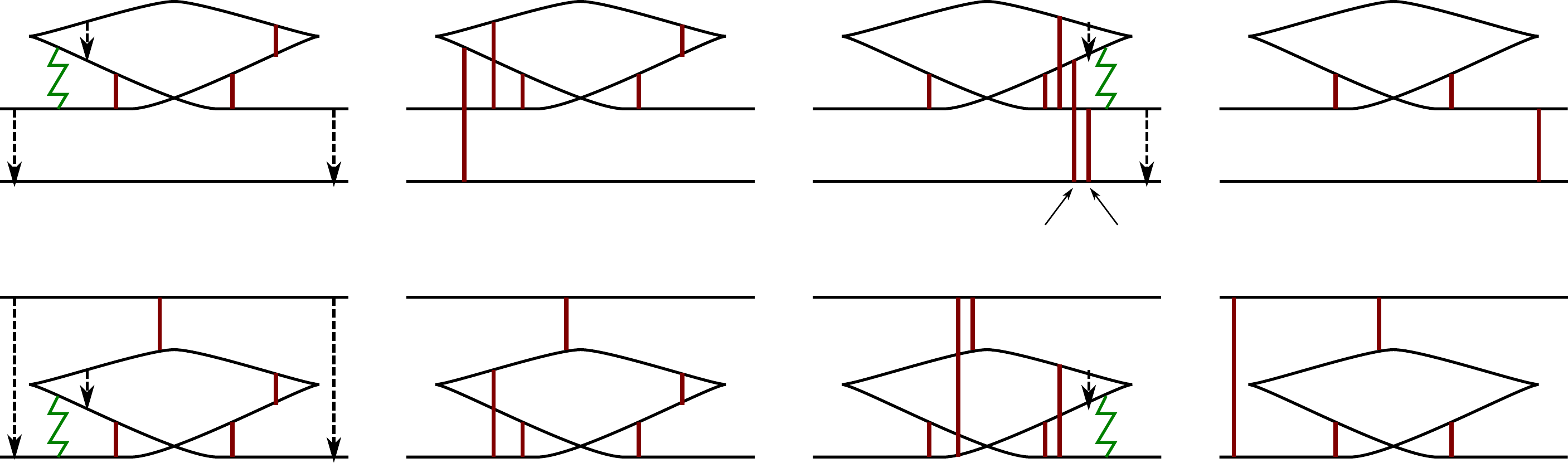}}

\quad

\caption{Moving a handleslide at a right cusp in an $SR$-form MCF away from an (R1) move.  Two cases (A) and (B) are pictured depending on the  configuration of the normal ruling at the (R1) move.  The green lightning bolts are super-handleslides.  A similar cobordism (not pictured) applies in the case where the bottom strand of the (R1) move is a fixed point strand.  The vertical reflection is similar.  
}
\label{fig:Type1handle}
\end{figure}

\section{Cobordism invariants of augmented Legendrians} \label{sec:invariants}
In this section we record a complete set of cobordism invariants for $\rho$-graded augmented Legendrians in $J^1S^1$ and (full) $n$-tangles in $J^1[0,1]$.  The spin invariant, defined in the case when $\mbox{Char} \, \mathbb{F} \neq 2$ and $\rho$ is even is an element of $\Z/2$ determined by the combinatorial spin structure of  $(\Lambda,\mathcal{C})$.  A richer invariant, the graded monodromy matrix, arises from the continuation maps of Section \ref{sec:2-4}.  In the $J^1S^1$ case, only the conjugacy class of the monodromy matrix is a well-defined invariant.

\subsection{Full augmented Legendrian $n$-tangles}  \label{sec:fullaug} Let $n \geq 0$ and $\vec{\mu} =( \mu_1, \ldots, \mu_n) \in (\Z/\rho)^n$.  We say that a $\rho$-graded Legendrian $n$-tangle, $\Lambda \subset J^1[0,1]$, has {\bf boundary Maslov potential} $\vec{\mu}$ if 
on the $n$ boundary points of $\Lambda$ at $x=0$ and $x=1$ the Maslov potential satisfies
\[
\mu(S^0_i) = \mu(S^1_i) = \mu_i, \quad \mbox{for $1 \leq i \leq n$.}
\]

\begin{definition} An augmented Legendrian $n$-tangle $(\Lambda, \mathcal{C})$ is {\bf full}  if the differentials of $\mathcal{C}$ at $x=0$ and $x=1$ are both $0$, i.e. $d_0:C_0 \rightarrow C_0$ and $d_1:C_1 \rightarrow C_1$ have $d_0=d_1 =0$.  For fixed $n$ and $\vec{\mu}$ we write $\mathit{Leg}^\rho_{\vec{\mu}}(J^1[0,1]; \mathbb{F})$ for the set of $\rho$-graded full augmented Legendrian $n$-tangles with boundary Maslov potential $\vec{\mu}$ and $\mathit{Cob}^\rho_{\vec{\mu}}(J^1[0,1]; \mathbb{F})$ for the corresponding set of cobordism classes. 
\end{definition}

Observe that concatenation induces a well-defined operation on $\mathit{Cob}^\rho_{\vec{\mu}}(J^1[0,1]; \mathbb{F})$, 
\[
[(\Lambda_1, \calC_1)] \cdot [(\Lambda_2, \calC_2)] = [(\Lambda_1 *\Lambda_2, \calC_1* \calC_2)],
\] 
with $\Lambda_1$ placed to the left of $\Lambda_2$ when forming $\Lambda_1 *\Lambda_2$; since the augmented tangles are full the differentials from $\calC_1$ and $\calC_2$ agree where $\Lambda_1$ and $\Lambda_2$ are joined.  Moreover, for $(\Lambda, \mathcal{C}) \in \mathit{Leg}^\rho_{\vec{\mu}}(J^1[0,1]; \mathbb{F})$ we can form its {\bf closure}, $(\widehat{\Lambda}, \widehat{\mathcal{C}}) \subset J^1S^1$, by identifying $x=0$ with $x=1$ and gluing corresponding sheets of $\Lambda$.  This provides a well-defined closure map on cobordism classes, $\mathit{Cob}^\rho_{\vec{\mu}}(J^1[0,1]; \mathbb{F}) \rightarrow \mathit{Cob}^\rho(J^1S^1; \mathbb{F})$.

\begin{proposition} For any $\rho$ and $\vec{\mu}$,  
$\mathit{Cob}^\rho_{\vec{\mu}}(J^1[0,1]; \mathbb{F})$ is a group with respect to concatenation.
\end{proposition}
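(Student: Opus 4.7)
The set $\mathit{Cob}^\rho_{\vec{\mu}}(J^1[0,1]; \mathbb{F})$ is a monoid under concatenation in a straightforward way: associativity follows from the associativity of side-by-side concatenation up to a reparametrization of the $x$-coordinate, which is a Legendrian isotopy and hence provides a cobordism by Proposition \ref{prop:Rmove}; and the identity is the class of the \emph{trivial full $n$-tangle} $(\Lambda_{\mathrm{triv}}, \mathcal{C}_{\mathrm{triv}})$, consisting of $n$ parallel horizontal strands with Maslov potential $\vec{\mu}$, zero differential, and no front singularities, handleslides, or basepoints, since concatenation with this is likewise a Legendrian isotopy. The content of the proposition is therefore the existence of inverses.

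My candidate for the inverse of $(\Lambda, \mathcal{C})$ is the \emph{horizontal mirror} $(\Lambda^\ast, \mathcal{C}^\ast)$, obtained by applying the contactomorphism $(x, y, z) \mapsto (1 - x, -y, z)$ of $J^1[0,1]$: this reflects the positions of handleslides, basepoints, crossings, cusps, and spin arcs about $x = 1/2$ and flips co-orientations in $[0,1]$ appropriately (so that additive handleslide coefficients are negated and multiplicative basepoint coefficients are inverted when written with respect to the left-to-right co-orientation in $\Lambda^\ast$). Since $(\Lambda, \mathcal{C})$ is full with boundary Maslov potential $\vec{\mu}$, so is $(\Lambda^\ast, \mathcal{C}^\ast)$, and the concatenation $(\Lambda, \mathcal{C}) \ast (\Lambda^\ast, \mathcal{C}^\ast)$ is a well-defined element of $\mathit{Leg}^\rho_{\vec{\mu}}(J^1[0,1]; \mathbb{F})$.

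To produce a cobordism from $(\Lambda, \mathcal{C}) \ast (\Lambda^\ast, \mathcal{C}^\ast)$ to $(\Lambda_{\mathrm{triv}}, \mathcal{C}_{\mathrm{triv}})$, I would first use Proposition \ref{prop:SRalgorithm} to replace $\mathcal{C}$ by an equivalent $SR$-form MCF for some generalized normal ruling $\sigma$; then $\mathcal{C}^\ast$ is $SR$-form for the mirror ruling, $\Lambda \ast \Lambda^\ast$ is reflection symmetric about $x = 1/2$, and near $x = 1/2$ it consists of $n$ parallel strands with vanishing differential. The plan is then to induct on the total number of features (crossings, cusps, handleslides, basepoints) of $\mathcal{C}$, processing mirror-symmetric pairs of features starting from the midpoint and working outward so that the trivial middle region grows at each step. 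The relevant cancellations are: mirror handleslide pairs via (E1); inverse basepoint pairs via (F1); matched crossing pairs via (C3), whose precondition $\langle d_x S_{k+1}, S_k\rangle = 0$ is maintained inside the growing trivial region; and matched cusp pairs (paired by the normal ruling $\sigma$ in $SR$-form) that form isolated unknot components via (C1). Non-adjacent features obstructing a cancellation can be shifted out of the way using the equivalence moves (E3)--(E7) and the basepoint moves (F3)--(F4), together with the $D_4^-$ cobordism of Proposition \ref{prop:D4minus} when more intricate cusp interactions arise.

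The main obstacle is executing this induction coherently: individual cancellations are elementary, but one must ensure that after each cancellation the differential near the expanding trivial region remains in the form required for the next cancellation. The $SR$-form framework is essential for this bookkeeping, since the pairing of cusps by the normal ruling $\sigma$ and the canonical positioning of handleslides at switches and returns make the structure near each cancellation site predictable. Once the induction is complete, $(\Lambda^\ast, \mathcal{C}^\ast)$ is verified as a two-sided inverse and the monoid becomes a group.
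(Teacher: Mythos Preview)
Your candidate inverse (the horizontal mirror) and your strategy of cancelling mirror-symmetric feature pairs from the midpoint outward are exactly the paper's approach, and the paper's proof is correspondingly short. But there is a concrete gap in your treatment of cusps. A mirror pair of cusps comes in two flavours. If the innermost singularity of $\Lambda$ is a \emph{left} cusp, then together with its mirror (a right cusp) it forms an eye $\subset\,\supset$, and (C1) removes it as you say. If instead the innermost singularity of $\Lambda$ is a \emph{right} cusp, the pair is $\supset\,\subset$: two strands meet at a right cusp, then are reborn at a left cusp. This is not an unknot component at all---the cusp strands connect outward into $\Lambda$ and $\Lambda^\ast$---so (C1) does not apply. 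This configuration is precisely the input of the pinch move (C2)$\,\rightarrow$, whose Maslov-potential hypothesis is automatic from the mirror symmetry. The paper lists exactly the five moves (E1)$\,\rightarrow$, (F1)$\,\rightarrow$, (C3)$\,\leftarrow$, (C2)$\,\rightarrow$, (C1)$\,\leftarrow$; your omission of (C2) leaves half of the cusp cases unhandled, and neither the $SR$-form nor the $D_4^-$ cobordism addresses it.

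Separately, the $SR$-form preprocessing and the anticipated need for (E3)--(E7), (F3)--(F4), or $D_4^-$ are all unnecessary. Because $\Lambda * \Lambda^\ast$ is perfectly reflection-symmetric about $x=1/2$, at every stage the two innermost features are adjacent with nothing between them; after each cancellation the differential in the enlarged middle region is, by the mirror construction, exactly the differential of $\mathcal{C}$ just outside the now-innermost singularity of $\Lambda$. In particular the hypothesis $\langle d_x S_{k+1},S_k\rangle=0$ for (C3)$\,\leftarrow$ is forced by axiom (A1) at the adjacent crossing, and the (C1) and (C2) hypotheses are forced by the reflection. The induction therefore runs without any rearrangement or normal-form step.
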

\begin{proof}  Clearly the augmented tangle consisting of $n$ parallel strands is an identity element $\mathbf{1}_n$. 
The inverse for an augmented $n$-tangle $(\Lambda, \mathcal{C})$ in $\mathit{Cob}^\rho_{\vec{\mu}}(J^1[0,1]; \mathbb{F})$ is provided by the {\bf reverse}, $(\overline{\Lambda},\overline{\mathcal{C}})$, obtained from reflecting the front projection of $(\Lambda,\mathcal{C})$ across the vertical line $x=1/2$ and negating (resp. inverting) all coefficients of handleslides (resp. basepoints).  [A cobordism from $(\Lambda, \mathcal{C})*(\overline{\Lambda},\overline{\mathcal{C}})$ to $\mathbf{1}_n$ arises from applying a combination of the moves (E1) $\rightarrow$, (F1) $\rightarrow$, (C3) $\leftarrow$, (C2) $\rightarrow$, and (C1) $\leftarrow$ at the location where the concatenation occurs to successively cancel each singularity of $(\Lambda, \mathcal{C})$ with its reflection in $(\overline{\Lambda},\overline{\mathcal{C}})$. Moreover, $(\overline{\Lambda},\overline{\mathcal{C}})* (\Lambda, \mathcal{C})= (\overline{\Lambda},\overline{\mathcal{C}})* (\overline{\overline{\Lambda}},\overline{\overline{\mathcal{C}}}) \sim \mathbf{1}_n$ follows also.]
\end{proof}

\subsection{Spin invariants}
When $\mbox{Char}\, \mathbb{F} \neq 2$ and the grading $\rho \in \Z_{\geq 0}$ is even (in particular, this implies that $\rho$-graded cobordisms are orientable)
we can associate a {\bf spin invariant}, $\xi(\Lambda, \mathcal{C}) \in \Z/2$, to an augmented Legendrian with coefficients in $\mathbb{F}$ as follows:
\begin{itemize}
\item For $(\Lambda, \mathcal{C}) \subset J^1S^1$, define $\xi(\Lambda, \mathcal{C})$ as the mod $2$ sum
\[
\xi(\Lambda, \mathcal{C}) =  \# \mbox{(spin basepoints)} +  \# \mbox{(right cusps)} + \# \mbox{(components of $\Lambda$)}.
\]
\item For $(\Lambda, \mathcal{C}) \in \mathit{Leg}^\rho_{\vec{\mu}}(J^1[0,1]; \mathbb{F})$, define
\[
\xi(\Lambda, \mathcal{C}) = \xi(\widehat{\Lambda}, \widehat{\mathcal{C}}) +n.
\]
\end{itemize}

\begin{proposition} \label{prop:SpinInvariant} 
The spin invariant $\xi(\Lambda, \mathcal{C}) \in \Z/2$ depends only on the cobordism class of $(\Lambda, \mathcal{C})$. 
\end{proposition}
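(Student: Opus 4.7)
The plan is to invoke Proposition \ref{prop:cobord}, which reduces cobordism invariance of $\xi$ to checking that $\xi(\Lambda, \mathcal{C})$ is unchanged mod $2$ under each of the elementary moves (E1)--(E7), (F1)--(F4), (R0)--(R3), (C1)--(C3). Since $\xi(\Lambda, \mathcal{C})$ depends only on the three counts appearing in its definition, I just need to track the effect of each move on the number of spin basepoints, right cusps, and components of $\Lambda$.

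First I would dispatch the $J^1[0,1]$ case by reducing to the $J^1S^1$ case. A cobordism between full augmented Legendrian $n$-tangles has product form near $\{0,1\} \times [0,1]$ and therefore closes up to a cobordism in $J^1(S^1 \times [0,1])$ between the closures $(\widehat{\Lambda}_i, \widehat{\mathcal{C}}_i)$. Because $n$ is determined by $\vec{\mu}$ and is hence constant along any cobordism in $\mathit{Leg}^{\rho}_{\vec{\mu}}$, invariance of $\xi(\Lambda, \mathcal{C}) = \xi(\widehat{\Lambda}, \widehat{\mathcal{C}}) + n$ will follow from invariance of $\xi$ in $J^1S^1$.

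For the $J^1S^1$ case, the large majority of the moves are immediate: all purely handleslide-based equivalences (E1)--(E7) leave all three counts untouched, as do (F2)--(F4), (R0), (R2), (R3), and (C3). The remaining cases will come in matched pairs in which two of the three counts change simultaneously by $\pm 1$, giving an even total change. Specifically, move (F1) applied to a pair of spin basepoints changes the spin count by $\pm 2$; move (R1) simultaneously adds or removes one right cusp together with the spin basepoint that must terminate at the swallowtail point (compare Figure \ref{fig:Endpoints3}); move (C1) adds or removes a standard Legendrian unknot, contributing $\pm 1$ to both the right cusp count and the component count; and the pinch move (C2) changes the right cusp count by $\pm 1$ while simultaneously changing the number of components by $\pm 1$.

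The one move requiring genuine topological attention will be (C2). Here I will verify that the pinch move always changes the number of components of $\Lambda$ by exactly one, by observing that converting two parallel sheets $S_k, S_{k+1}$ into a right cusp followed by a left cusp is a standard saddle (band-attachment) operation on a closed $1$-manifold and hence either merges two components into one or splits one component into two. This is the main (though mild) obstacle; the rest of the argument is straightforward bookkeeping.
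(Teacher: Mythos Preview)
Your approach is the same as the paper's, and nearly all of the case analysis is correct. However, there is a genuine gap in your treatment of the pinch move (C2).

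You assert that the pinch move ``is a standard saddle (band-attachment) operation on a closed $1$-manifold and hence either merges two components into one or splits one component into two.'' This is not true for an arbitrary band surgery on a closed $1$-manifold: if the band is attached in an orientation-reversing way, the number of components can stay the same. Concretely, write the four endpoints of the two removed arcs as $(a,k),(b,k),(a,k{+}1),(b,k{+}1)$; before the pinch they are joined as $(a,k)\!-\!(b,k)$ and $(a,k{+}1)\!-\!(b,k{+}1)$, after the pinch as $(a,k)\!-\!(a,k{+}1)$ and $(b,k)\!-\!(b,k{+}1)$. If in $\Lambda$ minus these arcs the pairing of the four boundary points is $\{(a,k),(b,k{+}1)\}$ and $\{(b,k),(a,k{+}1)\}$, then both before and after the pinch you get a single circle, so the component count is unchanged and $\xi$ would jump by $1$.

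What rules this bad pairing out is precisely the hypothesis that $\rho$ is even. In that case $\Lambda$ carries the orientation determined by the parity of the Maslov potential, and since $\mu(S_k)=\mu(S_{k+1})+1$ the two strands $S_k$ and $S_{k+1}$ are oppositely oriented. Tracking in/out endpoints then shows that the pairing above is incompatible with any orientation of $\Lambda$, so only the two ``good'' pairings occur and the component count does change by $\pm 1$. The paper makes exactly this point in its proof, flagging it as the place where the evenness of $\rho$ is crucial. You should insert this orientation argument into your (C2) case.
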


\begin{proof}
From Proposition \ref{prop:cobord} it suffices to establish that $\xi(\Lambda, \mathcal{C})$ is unchanged when one of the moves
(E1)-(E7), (F1)-(F4), (R0)-(R3), and (C1)-(C3) is performed.  
All of the quantities involved in the definition of $\xi$ are unchanged by any of the moves (E1)-(E7), (F2)-(F4), (R0), (R2)-(R3), and (C3).  In move (F1), the number of spin basepoints increases or decreases by $2$. In (R1), a spin basepoint disappears but is replaced with a new right cusp.  In (C1), the number of right cusps and components both decrease by $1$ when the unknotted component is removed.  Finally, when (C2) $\rightarrow$ is performed the number of right cusps decreases by $1$, and the number of components must either decrease or increase by $1$ depending on whether the two cusps belonged to different components or the same component before the move.  [In the case that they belong to the same component, it is crucial that the grading $\rho$ is even so that the two parallel strands on the right side of (C2) have opposite orientations.  If the orientations were the same then the number of components would not be changed.]
\end{proof}

\begin{remark}
The reason for the addition of $n$ in defining $\xi$ in the $J^1[0,1]$ case is so that the spin invariant will provide a group homomorphism $\mathit{Cob}^\rho_{\vec{\mu}}(J^1[0,1]; \mathbb{F})\rightarrow \Z/2$.
\end{remark}

\subsection{Monodromy matrices}

Given $\vec{\mu} = (\mu_1, \ldots, \mu_n)$ we write
\[
\graded{n}: \Z/\rho \rightarrow \Z_{\geq 0}, \quad \graded{n}(l) = \#\{i \,|\, \mu_i = l\}.
\] 
For $(\Lambda, \mathcal{C}) \in \mathit{Leg}^\rho_{\vec{\mu}}(J^1[0,1]; \mathbb{F})$, 
$\graded{n}$ is the {\bf graded dimension} of the fiber cohomology 
\[
\graded{n}(l) = \dim H^l( C_{x}, d_x), \quad l \in \Z/\rho
\]
for $x \in [0,1]^\mathcal{C}_\mathit{reg}$.  [This is clear at $x=0$ or $1$ since $d_0=d_1=0$ as $(\Lambda,\mathcal{C})$ is full, and the fiber homology is independent of $x$ with isomorphisms provided by the continuation maps.]
For $m=0,1$ we have an ordered basis $[S^m_{1}], \ldots, [S^m_{n}] \in H^*(C_m, d_m)$ giving rise to ordered bases in each graded component $H^l(C_m,d_m)$ for $l \in \Z/\rho$.  
To  $(\Lambda, \mathcal{C})$ we associate the {\bf graded monodromy matrix} 
\[
\graded{M}_{\Lambda, \calC}  \in GL(\graded{n}, \mathbb{F}) := \prod_{l \in \Z/\rho} GL(\graded{n}(l), \mathbb{F})
\]
that is the matrix with respect to the above bases of the continuation isomorphism (as defined in Section \ref{sec:2-4}),
\[
\phi_{\Lambda, \calC}:H^*(C_0,d_0) \rightarrow H^*(C_1,d_1), 
\] 
associated to the path $\sigma(t) = t$ from $0$ to $1$ in $[0,1]$.  That is, $\graded{M}_{\Lambda, \calC}(l) \in GL(\graded{n}(l), \mathbb{F})$ is the matrix of $\phi_{\Lambda, \calC}:H^l(C_0,d_0) \rightarrow H^l(C_1,d_1)$ in cohomological grading $l$. 

\begin{remark} \label{rmk:onemono} When it is convenient, we can also view $\graded{M}_{\Lambda, \calC}$ as the single $n\times n$ matrix $M_{\Lambda, \calC} \in GL(n,\mathbb{F})$ of $\phi_{\Lambda, \calC}$ considered as a linear map  $\oplus_l H^l(C_0,d_0) \rightarrow \oplus_l H^l(C_1,d_1)$.  The individual $\graded{M}_{\Lambda, \calC}(l)$ are the sub-matrices of $M_{\Lambda, \calC}$ obtained from the rows and columns corresponding to sheets with Maslov potential $\mu(S_m^k) = l$.  This amounts to embedding $GL(\graded{n}, \mathbb{F})= \prod_{l \in \Z/\rho} GL(\graded{n}(l), \mathbb{F}) \subset GL(n,\mathbb{F})$ as the subgroup of degree $0$ linear automorphisms.
\end{remark}

\begin{proposition}\label{prop:inva}
If $(\Lambda_i, \calC_i)$ for $i=0,1$ are cobordant full augmented Legendrian $n$-tangles in $J^1[0,1]$, then their monodromy matrices are equal, $\graded{M}_{\Lambda_0, \calC_0}=\graded{M}_{\Lambda_1, \calC_1}$.
\end{proposition}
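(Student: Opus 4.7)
The plan is to lift the monodromy computation for each $(\Lambda_i, \mathcal{C}_i)$ to continuation maps on a cobordism $(\Sigma, \mathcal{C})$ between them, and then to apply the path-homotopy invariance of continuation maps (Proposition \ref{prop:continuation}) together with the product condition imposed near $\partial[0,1] \times [0,1]$.

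In more detail: let $(\Sigma, \mathcal{C}) \subset J^1([0,1]_x \times [0,1]_t)$ be a cobordism from $(\Lambda_0, \mathcal{C}_0)$ to $(\Lambda_1, \mathcal{C}_1)$. By the definition of cobordism for full augmented $n$-tangles, there is a neighborhood $N$ of $\{0, 1\} \times [0,1]$ in the base in which the front projection consists of $n$ parallel non-intersecting sheets and no handleslides, basepoints, or spin arcs of $\mathcal{C}$ occur; likewise, in neighborhoods of $[0,1] \times \{i\}$ the cobordism agrees with the product MCF $(\Lambda_i \times [0,1], \mathcal{C}_i \times [0,1])$. In $[0,1] \times [0,1]$ consider the four paths
\[
\sigma_i(s) = (s, i), \quad \alpha(s) = (0, s), \quad \beta(s) = (1, s), \quad s \in [0, 1], \quad i = 0, 1.
\]
By construction, $f_{\sigma_i}$ induces $\phi_{\Lambda_i, \mathcal{C}_i}$ on cohomology, and hence determines $\graded{M}_{\Lambda_i, \mathcal{C}_i}$.

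Because $\alpha$ and $\beta$ lie in the singularity-free product region $N$, the continuation maps $f_\alpha$ and $f_\beta$ are identity maps, after the canonical identification of fibers supplied by the product structure. Since $[0,1] \times [0,1]$ is simply connected, the concatenation $\tau = \alpha * \sigma_1 * \beta^{-1}$ is path-homotopic rel endpoints to $\sigma_0$; after a small perturbation of $\tau$ to make it transverse to $([0,1]^2)^{\mathcal{C}}_{\mathit{sing}}$, Proposition \ref{prop:continuation}(1) yields $f_{\sigma_0} \simeq f_\tau$, and the anti-homomorphism property (Proposition \ref{prop:continuation}(2)) gives $f_\tau = f_{\beta^{-1}} \circ f_{\sigma_1} \circ f_\alpha = f_{\sigma_1}$. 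Hence $f_{\sigma_0}$ and $f_{\sigma_1}$ are chain homotopic and so induce the same map on cohomology.

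The only subtlety worth emphasizing is the matching of boundary data: the monodromy matrices are computed using the ordered sheet bases at $(0, i)$ and $(1, i)$ for $i = 0, 1$, and these bases must be canonically identified across the top and bottom of the cobordism. This is immediate from the product condition in $N$, which forces the sheets and their Maslov potentials to be constant along each vertical edge of $[0,1]^2$; fullness of the tangles further ensures that $d = 0$ at each of the four corners, so the sheets themselves are the cohomology generators. With these identifications in hand, the equality $\graded{M}_{\Lambda_0, \mathcal{C}_0} = \graded{M}_{\Lambda_1, \mathcal{C}_1}$ follows.
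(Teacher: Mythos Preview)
Your proof is correct and follows essentially the same approach as the paper: both use that the vertical edges $\{0\}\times[0,1]$ and $\{1\}\times[0,1]$ are free of singularities (so their continuation maps are the identity) together with the path-homotopy invariance of continuation maps from Proposition~\ref{prop:continuation} on the simply connected square. Your version is slightly more explicit about the identification of sheet bases at the corners, but the argument is the same.
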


\begin{proof}
Given a cobordism $(\Sigma, \calC) \subset J^1([0,1]\times [0,1])$  from  $(\Lambda_0, \calC_0) \subset J^1([0,1]\times\{0\})$ to $(\Lambda_1, \calC_1) \subset J^1([0,1]\times\{1\})$, the continuation maps for $(\Sigma,\calC)$ associated to the two paths around the
boundary of $[0,1]\times[0,1]$ from $(0,0)$ to $(1,1)$ agree with $\phi_{\Lambda_0, \calC_0}$ and $\phi_{\Lambda_1, \calC_1}$
respectively.  [The continuation maps for the vertical edges $\{0\} \times[0,1]$ and $\{1\} \times[0,1]$ are the identity map because these edges do not intersect any singularities of $(\Sigma, \calC)$.]  Thus, Proposition \ref{prop:continuation} implies that $\phi_{\Lambda_0, \calC_0} = \phi_{\Lambda_1, \calC_1}$.  
\end{proof}

For an augmented Legendrian $(\Lambda, \calC)$ in $J^1(S^1)$,  
we have a monodromy map 
\[
\phi_{\Lambda, \calC}: H^*(C_0, d_0) \to H^*(C_0, d_0)
\]
that is the continuation map on homology for the loop $\sigma: [0,1] \rightarrow S^1= [0,1]/\{0,1\}, \sigma(t) =t$.
In this case, we form the {\bf graded monodromy matrix} $\graded{M}_{\Lambda, \calC} \in GL(\graded{n},\mathbb{F})$, where $\graded{n}$ is the graded dimension of the fiber cohomology of $\calC$, by making a choice of basis of $H^l(C_0, d_0)$ for each $l\in \Z/\rho$.  Note that $\graded{M}_{\Lambda, \calC}$ is well-defined up to conjugacy in $GL(\graded{n},\mathbb{F})$.

\begin{proposition}\label{prop:invaS1}
If two augmented Legendrians $(\Lambda_0, \calC_0)$ and $(\Lambda_1, \calC_1)$ in $J^1S^1$ are cobordant, then their fiber cohomologies have the same graded dimension $\graded{n}$ and their monodromy matrices $\graded{M}_{\Lambda_0, \calC_0}$ and $\graded{M}_{\Lambda_1, \calC_1}$ are conjugate in $GL(\graded{n},\mathbb{F})$. 
\end{proposition}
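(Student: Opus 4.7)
The plan is to adapt the argument of Proposition \ref{prop:inva} to the circle case, with the extra twist that conjugation by a path-dependent continuation isomorphism intervenes because there is no canonical basepoint identification between the fibers of $\calC_0$ and $\calC_1$.

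First, given a cobordism $(\Sigma,\calC) \subset J^1(S^1\times[0,1])$ from $(\Lambda_0,\calC_0)$ to $(\Lambda_1,\calC_1)$, I would pick basepoints $x_0\in S^1\times\{0\}$ and $x_1\in S^1\times\{1\}$ (disjoint from the singular set of $\calC$) together with a path $\gamma$ in $S^1\times[0,1]$ from $x_0$ to $x_1$ transverse to the base singularities of $(\Sigma,\calC)$. The induced continuation isomorphism
\[
\phi_\gamma : H^*(C_{x_0},d_{x_0}) \xrightarrow{\cong} H^*(C_{x_1},d_{x_1})
\]
is a degree-preserving isomorphism (each of the local moves \eqref{eq:lcuspmap}--\eqref{eq:basepointmap} preserves the $\Z/\rho$-grading). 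This immediately yields that $(\Lambda_0,\calC_0)$ and $(\Lambda_1,\calC_1)$ have fiber cohomologies of the same graded dimension $\graded{n}:\Z/\rho\rightarrow\Z_{\geq 0}$.

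Next, let $\sigma_i$ be the loop based at $x_i$ going once around $S^1\times\{i\}$ in the positive direction, so that $\phi_{\sigma_i}$ represents the monodromy of $(\Lambda_i,\calC_i)$ up to the chosen basis. The loops $\sigma_0$ and $\gamma * \sigma_1 * \gamma^{-1}$ are both based at $x_0$ in $S^1\times[0,1]$ and, since this ambient space deformation retracts onto $S^1$, they represent the same class in $\pi_1(S^1\times[0,1],x_0)$; hence they are path-homotopic, and by a small perturbation we may take the homotopy to be transverse to the base singularities of $\calC$. Proposition \ref{prop:continuation} then gives that $f_{\sigma_0}$ and $f_{\gamma*\sigma_1*\gamma^{-1}}= f_{\gamma^{-1}} \circ f_{\sigma_1} \circ f_\gamma$ are chain homotopic, whence on fiber cohomology
\[
\phi_{\sigma_0} = \phi_\gamma^{-1} \circ \phi_{\sigma_1} \circ \phi_\gamma.
\]
Equivalently, $\phi_{\sigma_1}$ and $\phi_{\sigma_0}$ are conjugate via the degree-preserving isomorphism $\phi_\gamma$.

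Finally, to translate this into a statement about matrices, I would fix bases of $H^*(C_{x_i},d_{x_i})$ subordinate to the grading that were used to define $\graded{M}_{\Lambda_i,\calC_i} \in GL(\graded{n},\mathbb{F})$. Writing $\graded{P}\in GL(\graded{n},\mathbb{F})$ for the matrix of $\phi_\gamma$ with respect to these bases (which lies in $\prod_l GL(\graded{n}(l),\mathbb{F})$ precisely because $\phi_\gamma$ is degree-preserving), the conjugation identity above becomes
\[
\graded{M}_{\Lambda_0,\calC_0} = \graded{P}^{-1}\, \graded{M}_{\Lambda_1,\calC_1}\, \graded{P},
\]
which is the desired conjugacy in $GL(\graded{n},\mathbb{F})$. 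The only real subtlety is the transversality/generic-position arrangement needed to invoke Proposition \ref{prop:continuation}, which is not a serious obstacle: any choice of $\gamma$ and any path homotopy can be $C^\infty$-perturbed (rel.\ endpoints) into general position with respect to $M^\calC_{\mathit{sing}}$ exactly as in the proof given there.
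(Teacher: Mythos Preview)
Your argument is correct and follows essentially the same route as the paper's proof: both use the continuation isomorphism along a path from the $t=0$ slice to the $t=1$ slice (the paper takes the specific path $\sigma_0(t)=(0,t)$) together with Proposition~\ref{prop:continuation} to conjugate $\phi_{\Lambda_1,\calC_1}$ into $\phi_{\Lambda_0,\calC_0}$. Your write-up is a bit more explicit about the path-homotopy and transversality steps, but the substance is the same.
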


\begin{proof}
If $(\Sigma, \mathcal{C}) \subset J^1(S^1\times [0,1])$ is a cobordism between $(\Lambda_0, \calC_0)$ and $(\Lambda_1, \calC_1)$, then 
the continuation map, $\phi_{\sigma_0}$, associated to the path $\sigma_0(t) = (0, t)$ provides an isomorphism between the fiber cohomologies of $\calC_0$ and $\calC_1$.  Moreover, using Proposition
\ref{prop:continuation} 
we have 
\[
\phi_{\Lambda_0, \calC_0}=\phi^{-1}_{\sigma_0} \circ \phi_{\Lambda_1, \calC_1} \circ \phi_{\sigma_0}.
\]
Choosing bases we see that the monodromy matrices are conjugate.
\end{proof}

\section{Computation of cobordism classes in $J^1[0,1]$}  \label{sec:J101}

As in previous sections, the augmented Legendrians under consideration, $(\Lambda, \mathcal{C})$, are $\rho$-graded Legendrians equipped with MCFs.
In this section we establish the following classification of full augmented Legendrian $n$-tangles in $J^1[0,1]$ up to cobordism.  Recall the notations $\mathit{Cob}^\rho_{\vec{\mu}}(J^1[0,1]; \mathbb{F})$ for the set of cobordism classes of full augmented $n$-tangles with boundary Maslov potential $\vec{\mu}$ with concatenation; $\xi(\Lambda, \calC)$ and $\graded{M}_{\Lambda, \calC}$ are the spin and monodromy matrix invariants.  We use $GL(\graded{n}, \mathbb{F})^\mathit{op}$ to denote $GL(\graded{n}, \mathbb{F})$ with the opposite operation, $\mathbf{A}\cdot \mathbf{B} = \mathbf{BA}$.

\begin{theorem}\label{thm:J101}
Let $n \geq 0$ and $\vec{\mu} \in (\Z/\rho)^{n}$.
\begin{enumerate}
\item When $\mbox{Char}\,\mathbb{F}\neq 2$ and $\rho$ is even, the map
\[
\begin{array}{ccc}
\Phi: \mathit{Cob}^\rho_{\vec{\mu}}(J^1[0,1]; \mathbb{F})  & \stackrel{\cong}{\rightarrow} & GL(\graded{n}, \mathbb{F})^\mathit{op} \times \Z/2 \\
  \mbox{$[(\Lambda, \calC)]$}  & \mapsto & \left(\graded{M}_{\Lambda, \calC}, \xi(\Lambda, \calC)\right) 
\end{array}
\]
is a group isomorphism.  
\item When $\mbox{Char} \, \mathbb{F} =2$, except in the case when $n=0$ and $\rho = 1$, the map
\[
\begin{array}{ccc}
\Phi: \mathit{Cob}^\rho_{\vec{\mu}}(J^1[0,1]; \mathbb{F})  & \stackrel{\cong}{\rightarrow} & GL(\graded{n}, \mathbb{F})^\mathit{op}  \\
  \mbox{$[(\Lambda, \calC)]$}  & \mapsto & \graded{M}_{\Lambda, \calC} 
\end{array}
\]
is a group isomorphism.
\item When $n=0$ and $\rho=1$, 
$\mathit{Cob}^\rho_{\vec{\mu}}(J^1[0,1]; \mathbb{F}) = \{[(U, \mathcal{C}_b)] \,|\, b \in \mathbb{F}\}$ where $(U, \mathcal{C}_b)$ is the standard Legendrian unknot having a single handleslide with coefficient $b$ as in Figure \ref{fig:UnknotStd}.   
\end{enumerate}
\end{theorem}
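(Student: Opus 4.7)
The plan is to prove Theorem \ref{thm:J101} in three stages: verifying $\Phi$ is a well-defined group homomorphism, then surjectivity, then injectivity via reduction to standard forms. Invariance of $\graded{M}_{\Lambda,\calC}$ and $\xi(\Lambda,\calC)$ is already given by Propositions \ref{prop:inva} and \ref{prop:SpinInvariant}, so $\Phi$ is at least well-defined on cobordism classes. Compatibility with concatenation for the monodromy matrix follows immediately from the composition property of continuation maps in Proposition \ref{prop:continuation}(2): for $(\Lambda_1*\Lambda_2,\calC_1*\calC_2)$ the path from $0$ to $1$ decomposes as the concatenation of the two horizontal segments, so $\phi_{\Lambda_1*\Lambda_2,\calC_1*\calC_2} = \phi_{\Lambda_2,\calC_2}\circ \phi_{\Lambda_1,\calC_1}$, giving the product in $GL(\graded{n},\mathbb{F})^{\mathit{op}}$. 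For the spin invariant, one checks that $\xi(\widehat{\Lambda_1*\Lambda_2}) \equiv \xi(\widehat{\Lambda_1}) + \xi(\widehat{\Lambda_2}) + n \pmod{2}$ (the extra $n$ accounts for the $n$ extra components created when joining both ends); the ``$+n$'' term in the definition of $\xi$ for tangles is precisely calibrated so $\Phi$ becomes additive on the $\mathbb{Z}/2$ factor.

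For surjectivity, I would construct explicit standard forms. Given $\graded{M}\in GL(\graded{n},\mathbb{F})$, write it as a product of elementary matrices $I + cE_{i,j}$ (handleslides) and permutation matrices; this may be realized by a full $n$-tangle $\mathbf{T}_{\graded{M}}$ consisting of $n$ parallel strands with Maslov potential $\vec{\mu}$, containing the corresponding properly-ordered handleslides (Proposition \ref{prop:proper}) and, at each permutation, a pinched crossing created via a (C2) move together with appropriate cancellation. Flipping the spin value $\xi$ is accomplished by inserting one extra standard Legendrian unknot component disjoint from the strands, carrying a trivial MCF. Combining these, every pair in $GL(\graded{n},\mathbb{F})^{\mathit{op}}\times \mathbb{Z}/2$ (or $GL(\graded{n},\mathbb{F})^{\mathit{op}}$ when $\Char\mathbb{F}=2$) arises from such a standard tangle.

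The main obstacle is injectivity. Here I would define a \emph{standard form} for $(\Lambda,\calC)$ and show every full augmented tangle is cobordant to one. Starting from an arbitrary $(\Lambda,\calC)\in \mathit{Leg}^\rho_{\vec{\mu}}(J^1[0,1];\mathbb{F})$, I would first apply Proposition \ref{prop:SRalgorithm} to place $\calC$ in $SR$-form with respect to some generalized normal ruling $\sigma$, noting that since $d_0=0$, no leading handleslide collection is needed. Then I would use Proposition \ref{prop:cobord}, Proposition \ref{prop:Rmove}, and in particular the $D_4^-$ cobordism of Proposition \ref{prop:D4minus} together with pinch (C2) and clasp (C3) moves to systematically eliminate all cusps and crossings; each switch or return and each paired cusp/crossing complex can be removed via a local cobordism that absorbs any associated handleslides into a residual handleslide collection placed at, say, $x=1/2$. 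The spin count is adjusted as needed by creating or removing trivial unknotted components (move (C1)). After this reduction $(\Lambda,\calC)$ is cobordant to a standard tangle $\mathbf{T}_{\graded{M},\xi}$ consisting of $n$ parallel strands plus a properly ordered handleslide collection (and possibly one extra unknot encoding the spin). Since $\graded{M}$ and $\xi$ are cobordism invariants they must equal $\graded{M}_{\Lambda,\calC}$ and $\xi(\Lambda,\calC)$, and Proposition \ref{prop:proper}(2) shows that the handleslide collection is uniquely determined by $\graded{M}$ up to equivalence. This forces injectivity.

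Finally, for the exceptional case $(n,\rho)=(0,1)$, the standard reduction argument breaks down because the unknot's cusps cannot be eliminated (the unknot is $1$-graded but non-contractible via MCF moves when handleslides are present; the pinch move (C2) fails to apply to an isolated unknot carrying a handleslide since $\rho=1$ permits a $(1,2)$-handleslide between the two cusp sheets that cannot be slid off). One shows every $1$-graded MCF on the empty-boundary tangle reduces to a disjoint union of standard unknots $(U,\calC_b)$ with $b\in\mathbb{F}$, and the coefficient $b$ of the single handleslide is a complete invariant; this produces the parametrized family described in (3). The hard part of this entire argument is certifying that the reduction to standard form is always achievable -- in particular ensuring that when a crossing or cusp is eliminated by (C2)/(C3), the attendant handleslides can be commuted out of the way without obstruction, which is where the $SR$-form and Lemma \ref{lem:R1} are essential.
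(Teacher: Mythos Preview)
Your overall architecture (well-definedness, homomorphism, surjectivity, reduction to standard form for injectivity) matches the paper, but several key steps are genuinely wrong.

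\textbf{The proposed standard form cannot work.} You want to reduce every full augmented $n$-tangle to ``$n$ parallel strands plus a properly ordered handleslide collection.'' But by Proposition~\ref{prop:proper}(2) the continuation map of such a configuration is upper triangular with $1$'s on the diagonal; adding basepoints only multiplies by diagonal matrices. Hence the monodromy of any crossing-free tangle is upper triangular, and a transposition matrix (the monodromy of a single crossing between same-graded strands) can never arise this way. Since the monodromy matrix is a cobordism invariant, a single crossing is \emph{not} cobordant to anything crossing-free, and your reduction cannot succeed. The paper's standard form is instead a \emph{simple positive permutation braid} $B_\pi$ with handleslides at returns and a terminal properly ordered collection; the factorization $M=RT_\pi\Delta\Xi$ (Proposition~\ref{prop:mono}) and its uniqueness (Lemma~\ref{lem:uni}, essentially a Bruhat decomposition) are what drive injectivity. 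Relatedly, the $D_4^-$ move reduces $\sigma_i^2$ to $\sigma_i$, not $\sigma_i$ to nothing, so it cannot ``eliminate all crossings.''

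\textbf{The spin-flip is incorrect.} Inserting an unknot with trivial MCF changes the spin count by $0+1+1\equiv 0 \pmod 2$ (no spin basepoints, one right cusp, one new component), so it does not flip $\xi$. The paper instead places a spin basepoint together with a homology basepoint of coefficient $-1$ on an existing strand; this changes the spin count without affecting the monodromy.

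\textbf{Part (3) is overstated.} You assert that ``the coefficient $b$ of the single handleslide is a complete invariant.'' The paper does \emph{not} prove this: it only shows every class is represented by some $(U,\calC_b)$ and explicitly leaves open whether distinct $b$'s give distinct cobordism classes.
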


\begin{remark}
In Theorem \ref{thm:J101} (3) it should be emphasized that it is not known whether the $[(U, \mathcal{C}_b)]$ are all distinct or not; for any $b \in \mathbb{F}$ the monodromy matrix is $0$.  We conjecture that they are distinct simply because we have not been able to construct a cobordism between $(U,\mathcal{C}_{b_1})$ and $(U,\mathcal{C}_{b_2})$ when $b_1 \neq b_2$.
\end{remark}

The proof of Theorem \ref{thm:J101} (appearing at the end of the section) is based on several ingredients.  From Proposition \ref{prop:inva}, we have that $\Phi$ is a well-defined map.  
Next, Proposition \ref{prop:surj} will establish the surjectivity of $\Phi$.  Injectivity requires more work and is established
	according to the following outline.  After a preparatory discussion about positive braids 
we introduce 
a class of standard form augmented Legendrian $n$-tangles that for $n \geq 1$ are certain {\it simple positive permutation braids} with MCFs 
in a special case of the $SR$-form with the underlying normal ruling consisting entirely of fixed point strands.  To show that $\Phi$ is injective, we show (i) any two standard form augmented Legendrians with the same monodromy matrix and spin invariant (when defined) are cobordant (in fact, isotopic), and (ii) an arbitrary full augmented Legendrian $n$-tangle is cobordant to one in standard form.  See Corollary \ref{cor:standard}  and Proposition \ref{prop:standardform} respectively.

\subsection{Surjectivity of $\Phi$} \label{sec:surjPhi}

We call a (full augmented) Legendrian $n$-tangle in $J^1[0,1]$ without cusps a  ({\bf full augmented}) {\bf Legendrian $n$-braid}.

\begin{proposition}\label{prop:surj}
For any boundary Maslov potential $\vec{\mu}$, and any graded matrix $\graded{P}\in GL(\graded{n}, \mathbb{F})$, there exists a full augmented Legendrian $n$-braid $(\Lambda, \calC) \in \mathit{Leg}^\rho_{\vec{\mu}}(J^1[0,1]; \mathbb{F})$ with monodromy matrix $\graded{M}_{\Lambda, \calC} = \graded{P}$.  Moreover, when $\mbox{Char}\, \mathbb{F} \neq 2$ and $\rho$ is even, the spin invariant $\xi(\Lambda, \calC)\in \Z/2$ may be arbitrary. 
\end{proposition}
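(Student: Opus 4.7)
The plan is to build $(\Lambda, \mathcal{C})$ as a concatenation of elementary full augmented $n$-braids, each realizing a single generator of $GL(\graded{n}, \mathbb{F})$, starting from the trivial $n$-braid with $n$ parallel strands and $d_x \equiv 0$. A key observation that simplifies the construction is that each of the axioms (A1), (A3), (A4) transforms $d_x$ by conjugation, and consequently preserves the zero matrix. So provided we introduce no cusps, the property $d_x \equiv 0$ persists across every crossing, handleslide, and basepoint we insert. This automatically ensures both that the braid is full ($d_0 = d_1 = 0$) and that the upper-triangularity hypothesis $a^{\pm}_{k,k+1} = 0$ required for axiom (A1) holds at every crossing, regardless of the Maslov potentials of the two crossing strands.

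The second step is to identify three families of elementary braids that realize the standard generators of $GL(\graded{n}, \mathbb{F}) = \prod_{l \in \Z/\rho} GL(\graded{n}(l), \mathbb{F})$. For $i < j$ with $\mu_i = \mu_j$ and any $a \in \mathbb{F}$, a single $(i,j)$-handleslide with coefficient $a$ has monodromy $I + aE_{i,j}$ by Proposition~\ref{prop:proper}(2). For $c \in \mathbb{F}^\times$ and any $i$, a single homology basepoint on $S_i$ with coefficient $c$ contributes a diagonal continuation map with $c^{\pm 1}$ at position $i$; replacing $c$ by $c^{-1}$ as needed, any diagonal entry can be realized. For $i > j$ with $\mu_i = \mu_j$, the lower transvection $I + aE_{i,j}$ is realized by a crossing sandwich: perform a sequence of adjacent crossings realizing a permutation $\sigma$ of the sheets with $\sigma(i) < \sigma(j)$ (for instance $\sigma = (i\,j)$ after first bringing the two sheets adjacent via additional crossings), insert a handleslide of coefficient $a$ between positions $\sigma(i)$ (upper) and $\sigma(j)$ (lower), and then undo the crossings. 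A direct calculation using $Q_\sigma^{-1} E_{\sigma(i),\sigma(j)} Q_\sigma = E_{i,j}$ shows that the total monodromy is $I + aE_{i,j}$. The net permutation is trivial, so the boundary Maslov potential $\vec{\mu}$ is preserved, and the inserted handleslide is properly graded because the sheets occupying positions $\sigma(i)$ and $\sigma(j)$ at the handleslide still carry Maslov potential $l$.

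Since $GL(m, \mathbb{F})$ is generated by transvections $I + aE_{p,q}$ (with $p \neq q$) together with diagonal matrices, concatenating finitely many instances of the three building blocks above realizes any prescribed $\graded{P}$. Composition of continuation maps along concatenated braids is the expected matrix multiplication by Proposition~\ref{prop:continuation}(2). To additionally prescribe the spin invariant $\xi \in \Z/2$ when $\mathrm{Char}\,\mathbb{F} \neq 2$ and $\rho$ is even (and $n \geq 1$ so that a sheet is available to host basepoints), I would append a pair consisting of a spin basepoint and a homology basepoint both of coefficient $-1$, placed adjacently on the same sheet with coincident co-orientations. Their combined contribution to the continuation map is $(-1)(-1) = 1$, leaving the monodromy unchanged, while the count of spin basepoints increases by one, toggling $\xi$. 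Including such a pair zero or one time then realizes either value of $\xi$.

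The main technical point I anticipate is verifying that the intermediate crossings in the crossing sandwich are valid when they involve sheets of differing Maslov potentials, and that the handleslide inserted there satisfies its own Maslov-potential hypothesis. The first is immediate from $d_x \equiv 0$ throughout, as observed in the first paragraph, which forces the off-diagonal entries required to vanish in (A1) to do so automatically. The second is guaranteed because the handleslide connects the two sheets originally at positions $i$ and $j$, which by assumption share Maslov potential $l$; this property is intrinsic to those sheets and is unaffected by the positional relabeling induced by the intermediate crossings.
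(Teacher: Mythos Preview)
Your proposal is correct and follows essentially the same approach as the paper: both realize the target matrix as a product of elementary pieces (crossings for permutations, handleslides for upper transvections, basepoints for diagonal entries, and conjugation by crossings for lower transvections), rely on the observation that $d_x\equiv 0$ is preserved under (A1), (A3), (A4), and toggle the spin invariant via an adjacent spin/homology basepoint pair with coefficient $-1$. The only cosmetic difference is that the paper first treats the single-degree case and then stacks the blocks and conjugates by a braid to match a general $\vec{\mu}$, whereas you handle general $\vec{\mu}$ directly by allowing crossings between strands of different Maslov potential (which is fine since $d_x=0$); both routes are valid.
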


\begin{proof}  First, considering the case where the grading is concentrated in a single degree, we  prove that any matrix $P \in GL(n,\mathbb{F})$ can be realized as the monodromy matrix of a full augmented Legendrian $(\Lambda, \calC) \subset J^1[0,1]$ whose boundary strands all have the same grading, $\mu_i = l$.  Given $P \in GL(n,\Z_2)$, the usual Gauss-Jordan elimination algorithm decomposes $P$ into a product of the following types of elementary matrices used in the algorithm for row interchanges, row additions, and row scaling:
\begin{itemize}
\item[(i)] $Q_{(i\,j)}$ for $i<j$: obtained from interchanging the $i$-th and $j$-th rows of the identity matrix;
\item[(ii)] $I+bE_{i,j}$ for $i<j$, $b\in \mathbb{F}$;
\item[(iii)] $\Delta_{i}(s)$ for $1 \leq i \leq n$, $s \in \mathbb{F}$: diagonal matrices with $s$ at the $(i,i)$-position and $1$'s at the other diagonal entries.  
\end{itemize}
[Note that $I+bE_{j,i}$ with $i<j$ can be obtained as $Q_{(i\,j)}(I+bE_{i,j})Q_{(i\,j)}$.]
Any such elementary matrix is the monodromy matrix of a full augmented Legendrian $n$-tangle:  
(i) A single crossing between the $i$-th and $(i+1)$-th strands with no handleslides has monodromy matrix $Q_{(i\,i+1)}$, and so arbitrary permutation matrices, eg. the $Q_{(i\,j)}$, can be realized by concatenating crossings. (ii)  The matrix $I+bE_{i,j}$ is the monodromy matrix of a trivial $n$-braid having a single handleslide with coefficient $b$ connecting the $i$-th and $j$-th strands. (iii) A base point on the $i$-th strand with coefficient $s$ (with the left to right co-orientation) has monodromy  matrix $\Delta_i(s)$.  Concatenating appropriately then produces $(\Lambda, \calC)$ with $M_{\Lambda, \calC} = P$.  

Now, for general $\vec{\mu}$ and $\graded{P} \in GL(\graded{n}, \mathbb{F})$ we proceed as follows.  For each graded part $\graded{P}(l)$ of $\graded{P}$, from Step 1 we have an augmented braid $(\Lambda(l), \calC(l))$ with all the braid strands having grading $n(l)$ and with monodromy matrix $\graded{P}(l)$.  Stack the $(\Lambda(l), \calC(l))$ on top of one another so that the boundary strand gradings are non-decreasing from top to bottom, and call this augmented braid $(\widetilde\Lambda, \widetilde{\calC})$; its graded monodromy matrix is $\graded{P}$.  The boundary Maslov potential, $\vec{\mu}'$ of $(\widetilde\Lambda, \widetilde{\calC})$ is a reordering of $\vec{\mu}$.  We then form a concatenation
\[
(\Lambda, \calC) := (B, \calC_B)* (\widetilde\Lambda, \widetilde{\calC})* (\overline{B}, \overline{\calC_B}).
\]
Here, $B$ is a positive braid with left (resp. right) boundary Maslov potential $\vec{\mu}$ (resp. $\vec{\mu}'$) such that all the crossings of the $B$ are between strands with different potentials; the MCF $\calC_B$ on $B$ has no handleslides and all $d_x=0$; and $(\overline{B}, \overline{\calC_B})$ denotes the reverse of $(B, \calC_B)$.
The concatenation $(\Lambda, \calC)$ now has the correct boundary Maslov potential $\vec{\mu}$ and has graded monodromy matrix $\graded{P}$ since the crossings of $B$ and $\overline{B}$ between strands with different potentials do not affect the graded components of the monodromy matrix.

For the remaining claim regarding $\xi(\Lambda, \calC)$, note that adding a pair of consecutive base points consisting of a spin point and a homology base point with coefficient $-1$ will change the value of the spin invariant without effecting the monodromy matrix.
\end{proof}

\subsection{Aside: Simple positive permutation braid words} \label{sec:posbraid}
A Legendrian $n$-braid may be written as a product (left to right concatenation) of positive braid generators, $\sigma_1, \ldots, \sigma_{n-1}$, of the braid group $\mathcal{B}_n$ where we view $\sigma_i$ as the Legendrian $n$-tangle consisting of a single crossing between strands $i$ and $i+1$.  Moreover, 
it is a result of Garside \cite[Theorem 4]{Garside} that two positive braid words $w_1$ and $w_2$ are equal in $\mathcal{B}_n$ if and only they can be related using the braid relations $\sigma_{i}\sigma_{i+1}\sigma_{i} = \sigma_{i+1}\sigma_{i}\sigma_{i+1}$ for $1\leq i \leq n-2$ and $\sigma_i\sigma_j = \sigma_j\sigma_i$ for $|i-j|\geq 2$ in the monoid generated by the $\sigma_i$ {\it without ever needing to introduce inverses of the $\sigma_i$}.  As the relations arise from Legendrian isotopies, we see that two positive braid words that are equal in $\mathcal{B}_n$ represent Legendrian isotopic Legendrian braids.

A {\bf positive permutation braid} is
 a positive braid such that each pair of strands intersects at most once.   
There is a one-to-one correspondence between positive permutation braids (as elements of $\mathcal{B}_n$)  and elements of the symmetric group $S_n$; see \cite{EM}.  In particular, for each $\pi \in S_n$ there is a {\bf Legendrian permutation braid} $B_\pi$ well defined up to Legendrian isotopy.  
Here, we use the convention\footnote{This convention results in the map from $B_n$ to $S_n$ being an anti-homomorphism rather than a homomorphism.} 
that the permutation $\pi \in S_n$ associated to a braid in $B_n$ is the permutation of strands read from {\it left to right}, i.e. the strand that starts at position $i$ at $x=0$ ends in position $\pi(i)$ at $x=1$. 

One way to characterize Legendrian permutation braids is that if a braid is not a permutation braid, then after a Legendrian isotopy it contains a product $\sigma_i^2$, i.e. an eye shaped region as in the clasp move (C3).
This follows from  \cite[Proposition 2.4]{EM}, by making use of the above correspondence from positive braids in $B_n$ to Legendrian isotopy classes. 

\begin{lemma}[\cite{EM}] \label{lem:EM}
Let $B_\pi$ be a Legendrian permutation braid  for the permutation $\pi\in S_n$.  If the $i$-th and $(i+1)$-th strands at $x=1$  intersect in the braid, then there is  a Legendrian isotopy from $B_{\pi}$ to $B'\sigma_i$, where $B'$ is another Legendrian permutation braid.
\end{lemma}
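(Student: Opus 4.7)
The plan is to translate the hypothesis into a condition on the underlying permutation and then invoke the bijection between positive permutation braids and $S_{n}$ recalled in Section~\ref{sec:posbraid}.  Using the paper's convention that $\pi(i)$ records the \emph{ending} position of the strand starting at position $i$, the strands ending at positions $i$ and $i+1$ at $x=1$ begin at positions $\pi^{-1}(i)$ and $\pi^{-1}(i+1)$ at $x=0$.  Since $B_\pi$ is a permutation braid these two strands cross at most once, so they cross iff their starting and ending orders disagree, i.e. iff $\pi^{-1}(i) > \pi^{-1}(i+1)$.  In the language of the symmetric group this is precisely the statement that $s_i$ is a left descent of $\pi$, equivalently $\ell(s_i \pi) = \ell(\pi) - 1$, where $\ell(\cdot)$ counts inversions.

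Next I would invoke the exchange property for reduced expressions in $S_{n}$: when $s_i$ is a left descent there exists a reduced expression $\pi = s_i s_{k_1} \cdots s_{k_{\ell - 1}}$ of length $\ell = \ell(\pi)$.  Because concatenation of Legendrian positive braids composes permutations in the reverse order (the strand starting at $x$ at the left ends at $\pi(x)$ on the right), this reduced expression is realized by the positive braid word $\sigma_{k_{\ell - 1}} \cdots \sigma_{k_1} \sigma_i$, and its initial segment $B' := \sigma_{k_{\ell - 1}} \cdots \sigma_{k_1}$ is a reduced positive word for $\pi' := s_i \pi$, hence a positive permutation braid.

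Finally, both $B_\pi$ and $B' \sigma_i$ are positive permutation braids realizing the permutation $\pi$, so by the one-to-one correspondence recalled in Section~\ref{sec:posbraid} they coincide as elements of $\mathcal{B}_{n}$.  By Garside's theorem cited in the same section, any two positive braid words representing the same element of $\mathcal{B}_{n}$ are connected by a finite sequence of braid relations $\sigma_i \sigma_{i+1} \sigma_i = \sigma_{i+1} \sigma_i \sigma_{i+1}$ and far-commutations $\sigma_i \sigma_j = \sigma_j \sigma_i$ for $|i-j| \geq 2$, each of which is implemented by a Legendrian isotopy (Reidemeister III, respectively a planar isotopy).  Hence $B_\pi$ is Legendrian isotopic to $B' \sigma_i$ with $B'$ a Legendrian permutation braid, as required.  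I do not expect a substantive obstacle here; the only care needed is in the bookkeeping of conventions, in particular confirming that ``intersection of the $i$-th and $(i+1)$-th strands at $x = 1$'' corresponds to a \emph{left}, rather than right, descent of $\pi$.
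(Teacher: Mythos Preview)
Your proof is correct. The paper does not actually prove this lemma; it simply attributes it to \cite[Proposition~2.4]{EM} and observes (in the paragraph preceding the lemma) that two positive braid words equal in $\mathcal{B}_n$ are Legendrian isotopic via Garside's theorem. Your argument supplies the combinatorial content the paper leaves to the citation: translating the crossing condition into the left-descent condition $\pi^{-1}(i)>\pi^{-1}(i+1)$, invoking the exchange property to produce a reduced expression for $\pi$ beginning with $s_i$, and then reversing to a braid word ending in $\sigma_i$. The final step---that the two permutation braids for $\pi$ agree in $\mathcal{B}_n$ and hence are related by Legendrian Reidemeister moves---is exactly the mechanism the paper indicates. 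Your bookkeeping with the anti-homomorphism convention is handled correctly; in particular the braid word $\sigma_{k_{\ell-1}}\cdots\sigma_{k_1}$ does realize $s_{k_1}\cdots s_{k_{\ell-1}}=s_i\pi$, which is reduced of length $\ell-1$ and hence a permutation braid.
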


\begin{corollary}\label{cor:eye}
If a Legendrian braid $B$ is not a permutation braid, then it is Legendrian isotopic to $B' \sigma_i^2 B''$ for some braid generator $\sigma_i$ and Legendrian braids $B'$ and $B''$.
\end{corollary}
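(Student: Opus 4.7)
The plan is to induct on the number of crossings $k$ of $B$. In the base case $k=0$ the braid $B$ is the trivial $n$-braid, which is a permutation braid (corresponding to the identity in $S_n$), so the hypothesis of the corollary is vacuous.

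For the inductive step, assume the statement holds for all Legendrian braids with at most $k-1$ crossings and suppose $B$ has $k \geq 1$ crossings and is not a permutation braid. Write $B = B_0 \sigma_j$ where $\sigma_j$ is the rightmost crossing of $B$ and $B_0$ is the braid obtained by removing it. There are two cases to consider.

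If $B_0$ is not a permutation braid, then by the inductive hypothesis $B_0$ is Legendrian isotopic to $B' \sigma_i^2 B''$ for some braids $B',B''$ and some $i$.  Concatenating with $\sigma_j$ on the right yields the isotopy $B \sim B' \sigma_i^2 (B'' \sigma_j)$, completing this case.  If instead $B_0$ is a permutation braid, then in $B_0$ the $j$-th and $(j+1)$-th strands at $x=1$ must already intersect: otherwise appending $\sigma_j$ would produce a braid in which these two strands cross exactly once and all other pairs cross as they did in the permutation braid $B_0$, so $B$ itself would be a permutation braid, contradicting our assumption. Applying Lemma \ref{lem:EM} to $B_0$ then produces a Legendrian isotopy $B_0 \sim B'_0 \sigma_j$ with $B'_0$ a permutation braid, and hence
\[
B = B_0 \sigma_j \;\sim\; B'_0 \sigma_j \sigma_j = B'_0 \sigma_j^2.
\]
Taking $B' = B'_0$, $B'' = \mathbf{1}_n$ (the empty braid word), and $i = j$ gives the required form.

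The two cases exhaust the possibilities, completing the induction.  There is no real obstacle here beyond bookkeeping; the content is entirely packaged in Lemma \ref{lem:EM}, which supplies the key fact that when the rightmost generator already causes a repeated crossing relative to a permutation braid, one can isotope to expose it as an explicit $\sigma_j^2$.
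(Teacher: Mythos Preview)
Your proof is correct and follows essentially the same approach as the paper: both arguments locate a prefix that is a permutation braid while the next letter spoils this, and then invoke Lemma~\ref{lem:EM} to expose a $\sigma_i^2$. The only cosmetic difference is that the paper argues directly by choosing the minimal such prefix, whereas you phrase it as an induction on the number of crossings; the content is identical.
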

\begin{proof}
 Write $B$ as a  braid word $B=\sigma_{i_1}\sigma_{i_2}\cdots \sigma_{i_n}$.
If $B$ is not a positive permutation braid, there is a $k$ such that $\sigma_{i_1}\sigma_{i_2}\cdots \sigma_{i_{k-1}}$ is a positive permutation braid but $\sigma_{i_1}\sigma_{i_2}\cdots \sigma_{i_{k}}$ is not.
Thus, the $i_k$ and $i_{k}+1$ strands (as labelled at the right side) cross in 
$\sigma_{i_1}\sigma_{i_2}\cdots \sigma_{i_{k-1}}$.
By Lemma \ref{lem:EM}, $\sigma_{i_1}\sigma_{i_2}\cdots \sigma_{i_{k-1}}$ is Legendrian isotopic to $B'\sigma_{i_k}$ and thus $B$ is Legendrian isotopic to the required form.
\end{proof}

\begin{definition}
We call a Legendrian permutation braid {\bf simple} if it is {\it not} possible to arrange for the product $\sigma_{i+1}\sigma_{i}\sigma_{i+1}$ to appear in its braid word by applying a sequence of commutation relations,  $\sigma_i\sigma_j=\sigma_j \sigma_i$ for 
$|i-j|\neq 1$. 
\end{definition}
A similar notion called {\it reduced} positive permutation braid (which instead forbids $\sigma_{i}\sigma_{i+1}\sigma_i$) is used by K\'{a}lm\'{a}n in \cite{Kalman}.

\begin{proposition}\label{prop:ex}
Any Legendrian permutation braid is Legendrian isotopic to one that is simple.
\end{proposition}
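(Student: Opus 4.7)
The plan is to set up a termination argument based on a simple monovariant that is preserved by commutations and strictly decreased by the relevant braid relation. Write the given Legendrian permutation braid $B_\pi$ as a positive braid word $w = \sigma_{i_1}\sigma_{i_2}\cdots \sigma_{i_\ell}$, where $\ell$ equals the number of inversions of $\pi$. Define the potential
$$\Phi(w) := i_1 + i_2 + \cdots + i_\ell.$$
A commutation $\sigma_p\sigma_q = \sigma_q\sigma_p$ with $|p-q|\geq 2$ clearly preserves $\Phi$, and a direct computation gives
$$\bigl((i+1)+i+(i+1)\bigr) - \bigl(i+(i+1)+i\bigr) = 1,$$
so the braid relation $\sigma_{i+1}\sigma_i\sigma_{i+1} \mapsto \sigma_i\sigma_{i+1}\sigma_i$ decreases $\Phi$ by exactly $1$.

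With this setup the argument proceeds as follows. Suppose $w$ is not simple. By the definition of simpleness, there exists a sequence of commutations transforming $w$ into a word $w'$ containing a subword of the form $\sigma_{i+1}\sigma_i\sigma_{i+1}$. The commutations realize a Legendrian isotopy via move (R0), so $w$ and $w'$ present Legendrian isotopic braids and $\Phi(w) = \Phi(w')$. Applying the braid relation to the identified subword (move (R3)) then yields a new positive braid word $w''$ with $\Phi(w'') = \Phi(w) - 1$. Since $w''$ still has length $\ell$ and represents the same underlying permutation $\pi$, it is automatically a reduced expression for $\pi$, and so presents a Legendrian permutation braid Legendrian isotopic to $B_\pi$.

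Iterating this procedure produces a strictly decreasing sequence of nonnegative integers $\Phi(w) > \Phi(w'') > \cdots$, which must terminate. When it terminates, the current word is simple by definition, so the corresponding simple Legendrian permutation braid is Legendrian isotopic to $B_\pi$, as required. I do not anticipate any serious obstacle; the only nontrivial observation is to notice that the sum of generator indices is the right monovariant—invariant under the commutation relations, yet reduced by exactly $1$ under each application of $\sigma_{i+1}\sigma_i\sigma_{i+1} \mapsto \sigma_i\sigma_{i+1}\sigma_i$—after which termination is automatic.
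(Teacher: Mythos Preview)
Your proof is correct and follows essentially the same approach as the paper: both use the sum of generator subscripts as a monovariant that is preserved by commutations and strictly decreased by the braid relation $\sigma_{i+1}\sigma_i\sigma_{i+1} \mapsto \sigma_i\sigma_{i+1}\sigma_i$, guaranteeing termination at a simple word.
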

\begin{proof}
Start with any  positive permutation braid word $w$.
If $w$ is not simple, then after a sequence of commutation relations the product
$\sigma_{i+1}\sigma_{i}\sigma_{i+1}$ shows up.  Change it to  $\sigma_{i}\sigma_{i+1}\sigma_i$ via the braid relation, and then repeat the process.
Note that the braid relation decreases the sum of all the subscripts $j$ of braid generators $\sigma_j$ appearing in the word,  while the commutation relation preserves this sum. 
Thus, the procedure must stop at some point resulting in a simple braid word.
\end{proof}

Intuitively, a Legendrian permutation braid is simple if it only admits type $(b)$ but not type $(a)$ triangles in Figure~\ref{fig:triangle}.  The following lemma makes this statement precise.

\begin{figure}[!ht]
\labellist
\small
\pinlabel $(a)$ at 90 -15
\pinlabel $(b)$ at 380 -15
\endlabellist
\includegraphics[width=3in]{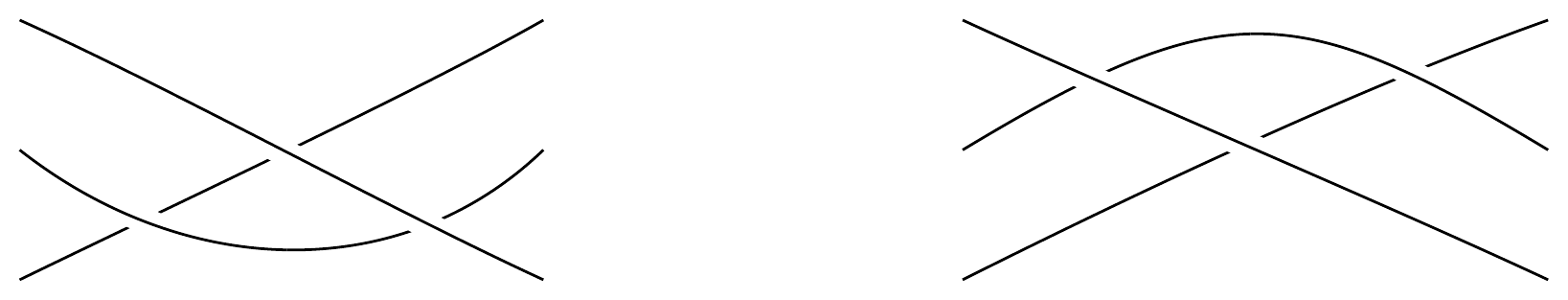}
\vspace{0.1in}
\caption{Simple braids only admit the type $(b)$ triangles but not type $(a)$ triangles.}
\label{fig:triangle}
\end{figure}

\begin{lemma} \label{lem:order}
Let $B_{\pi}$ be a simple Legendrian permutation braid for a permutation $\pi \in S_n$.
Denote by $c_{i,j}$ a crossing between the strands labeled $i$ and $j$ at the left boundary of $B_\pi$. 
For any $i<j<k$ such that $\pi(k)<\pi(j)<\pi(i)$, the crossing $c_{i,j}$  appears to the left of the crossing $c_{j,k}$ (as in the braid shown in Figure \ref{fig:triangle} (b)).
\end{lemma}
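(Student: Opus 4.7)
The plan is a proof by contradiction. Suppose $c_{j,k}$ appears to the left of $c_{i,j}$ in a braid word for $B_\pi$; I will exhibit a substring $\sigma_{l+1}\sigma_l\sigma_{l+1}$ after a sequence of commutation moves, contradicting the simplicity of $B_\pi$.

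First I would pin down the order of the three pairwise crossings $c_{i,j}, c_{i,k}, c_{j,k}$. The relative vertical order of strands $i, j, k$ changes only when two of them cross each other, so intervening crossings with other strands preserve this relative order. Since $c_{i,k}$ requires strands $i$ and $k$ to be vertically adjacent and $j$ begins between them, one of $c_{i,j}, c_{j,k}$ must precede $c_{i,k}$; by hypothesis, this first one is $c_{j,k}$. After $c_{j,k}$ the relative order becomes $(i, k, j)$ from top to bottom, putting $k$ strictly between $i$ and $j$, so the next crossing among $\{c_{i,j}, c_{i,k}\}$ must be $c_{i,k}$ rather than $c_{i,j}$. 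Hence the forced order of these three crossings is $c_{j,k}, c_{i,k}, c_{i,j}$.

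Next I would expose a $\sigma_{l+1}\sigma_l\sigma_{l+1}$ pattern by commutation moves. The key geometric observation is that between $c_{j,k}$ and $c_{i,k}$, strand $k$ must migrate upward in the braid diagram to become adjacent to $i$; if $i$ and $k$ are not already adjacent at the moment just after $c_{j,k}$, then strand $k$ must cross at least one intermediate strand $m$ (originally lying between $j$ and $i$) before $c_{i,k}$, and a symmetric observation applies between $c_{i,k}$ and $c_{i,j}$. From this one identifies a three-strand subconfiguration forming a triangle of type $(a)$ in Figure~\ref{fig:triangle}, and after commuting away irrelevant crossings---which is possible because in a positive permutation braid no pair of strands crosses twice, forcing the $\sigma$-indices of irrelevant crossings to differ by at least $2$ from the key crossings in the appropriate window---the three crossings of this subconfiguration become consecutive and yield the forbidden pattern $\sigma_{l+1}\sigma_l\sigma_{l+1}$.

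The main obstacle will be the commutation step. The forbidden pattern is typically not formed by the original three strands $i, j, k$ themselves but by a nearby triple (as already seen in small examples with $n=4$), so the combinatorial bookkeeping---identifying the correct triple of strands, tracking strand positions between the key crossings, and verifying local commutativity at each step---is the technical core. An inductive argument on the number of intervening crossings, or on the number of strands passing through the triangular region bounded by $i, j, k$ in the braid diagram, is likely the cleanest way to organize the proof.
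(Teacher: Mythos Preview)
Your overall strategy matches the paper's: argue by contradiction, pin down the order $c_{j,k}, c_{i,k}, c_{i,j}$, and extract a forbidden $\sigma_{l+1}\sigma_l\sigma_{l+1}$ substring after commutations. Your first paragraph (determining the order of the three crossings) is correct and complete.

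The gap is exactly where you flag it: the passage ``from this one identifies a three-strand subconfiguration forming a triangle of type~(a)'' is the whole content of the lemma, and you have not supplied it. You suggest induction on the number of strands passing through the triangular region, and that can be made to work, but the paper replaces the induction with a single extremal choice that makes the triangle empty outright. Concretely: among all violating crossings $c_{i,j}$ take the leftmost one, and among all companion strands $k'$ with $(i,j,k')$ violating take the one whose crossing $c_{j,k'}$ is closest to $c_{i,j}$. A short case analysis then shows that any strand $s$ entering the triangular region bounded by $i,j,k$ would produce either an earlier violating crossing (contradicting the choice of $c_{i,j}$) or a closer companion (contradicting the choice of $k$). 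Hence the triangle is empty, the three crossings $c_{j,k}, c_{i,k}, c_{i,j}$ use generators $\sigma_l, \sigma_{l-1}, \sigma_l$ for the same $l$, and everything between them commutes past, yielding the forbidden word.

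So your plan is sound, but the extremal-choice argument is both the missing ingredient and a cleaner substitute for the induction you propose.
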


\begin{proof}
Suppose $B_{\pi}= \sigma_{i_1}\cdots \sigma_{i_m}$ 
and assume $\sigma_{i_1}\cdots\sigma_{i_{\ell-1}}$ satisfies the above condition but $\sigma_{i_1}\cdots \sigma_{i_\ell}$ does not. 
Say the crossing $\sigma_{i_\ell}$ is $c_{i,j}$ with $i<j$.
Among all the strands $k'$ such that the triple $(i,j,k')$ violates the condition in Lemma \ref{lem:order} (i.e., $i<j<k'$ and $\pi(k')<\pi(j)<\pi(i)$, but the crossing $c_{i,j}$ happens on the right of the crossing $c_{j,k'}$), take the strand  such that the crossing $c_{j,k'}$ is the closest one to the crossing $c_{i,j}$ in the braid word $B_{\pi}$; call this strand $k$. 
Thus the strands $i,j$ and $k$ form a  triangular region as shown in Figure \ref{fig:triangle}(a).  

We claim that no other strand intersects the triangular region.
The claim can be proved by contradiction.
If there was a strand $s$ that intersects the triangular region, there are three cases as follows.
\begin{itemize}
\item The strand $s$ enters the triangular region
from the upper left edge. 
If the strand goes out of the triangular region by crossing the bottom edge as shown in Figure \ref{fig:tri}(a), which implies $s<j$, then the triple $(s, j, k)$  violates the condition and thus contradicts the choice of $c_{i,j}$.
Otherwise, the strand $s$ leaves the triangular region along the upper right edge as shown in Figure \ref{fig:tri}(b). Then, $(i,s,k)$ forms another triple that violates the condition and again contradicts the choice of $c_{i,j}$.
\item The strand $s$ enters the triangular region from the upper right edge, which implies $s<i$ and thus $s<j<k$. The strand $s$ can only leave the triangular region along the bottom edge as shown in Figure \ref{fig:tri}(c). Therefore, we have $\pi(s)> \pi(j)$ and thus $\pi(s)>\pi(k)$, which implies that strand $s$ and strand $k$ intersect.
 Thus the triple $(s, j,k)$ violates  the condition and contradicts  the choice of $c_{i,j}$.  
\item The strand $s$ enters the triangular region from the bottom edge as shown in Figure \ref{fig:tri}(d) and (e), which implies $s>j$ and $\pi(s)<\pi(j)$. Thus, $s>i$ and $\pi(s)<\pi(i)$, 
so the triple $(i,j,s)$ violates the condition.  This contradicts the choice of $k$.
\end{itemize}
\begin{figure}[!ht]
\labellist
\small
\pinlabel $i$ at  -5 270
\pinlabel $j$ at -5 220
\pinlabel $k$ at -5 180
\pinlabel $s$ at -5 240
\pinlabel $(a)$ at  80 155

\pinlabel $i$ at  -5 110
\pinlabel $j$ at -5 70
\pinlabel $k$ at -5 30
\pinlabel $s$ at -5 130
\pinlabel $(c)$ at  90 -5

\pinlabel $i$ at  265 270
\pinlabel $j$ at 265 220
\pinlabel $k$ at 265 180
\pinlabel $s$ at 265 240
\pinlabel $(b)$ at  355 155

\pinlabel $i$ at  265 110
\pinlabel $j$ at 265 70
\pinlabel $k$ at 265 30
\pinlabel $s$ at 265 10
\pinlabel $(d)$ at  360 -5

\pinlabel $i$ at  540 110
\pinlabel $j$ at 540 70
\pinlabel $k$ at 540 30
\pinlabel $s$ at 540 10
\pinlabel $(e)$ at  630 -5

\endlabellist
\includegraphics[width=5in]{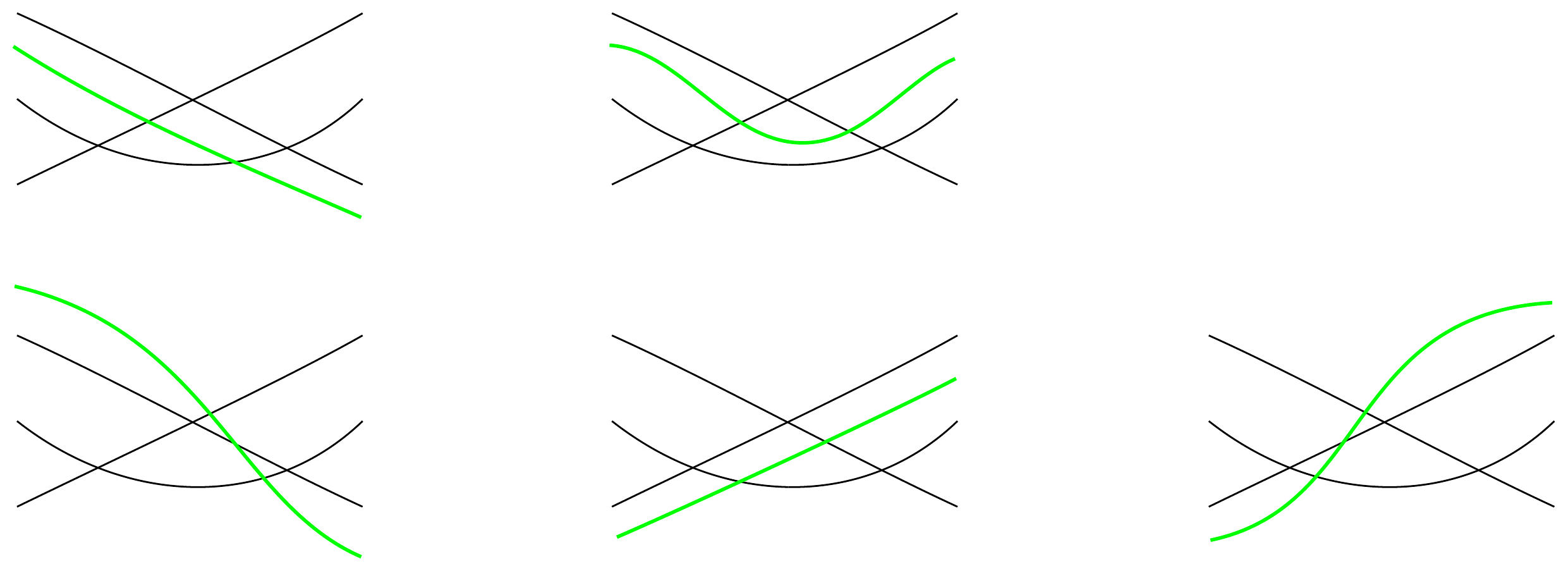}
\vspace{0.1in}

\caption{Possible cases for a strand $s$ to pass the triangular region formed by strands $i,j$ and $k$.}
\label{fig:tri}
\end{figure}

With the claim proven, we know that both $c_{j,k}$ and $c_{i,j}$ correspond to the braid generator $\sigma_{i_\ell}$ and $c_{i,k}$ corresponds to the braid generator $\sigma_{i_\ell-1}$.  Moreover, $B_{\pi}= B_1 \sigma_{i_\ell} B_2\sigma_{i_\ell-1}B_3 \sigma_{i_\ell}B_4$, where $B_2$ and $B_3$ are products of $\sigma_{t}$ with $t\neq i_\ell-1, i_\ell, i_\ell+1$.
Thus applying commutations we arrive at $B_1B_2\sigma_{i_\ell}\sigma_{i_\ell-1}\sigma_{i_\ell}B_3B_4$ showing that $B_\pi$ is not simple.
\end{proof}

Using the lemma, we can prove a uniqueness statement for simple braid words.
\begin{proposition}\label{prop:uni}
Any two simple Legendrian permutation braids with the same permutation $\pi\in S_n$ are related by
commutation relations $\sigma_i\sigma_j=\sigma_j\sigma_i$ for $|i-j| \geq 2$.
\end{proposition}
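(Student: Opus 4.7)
I will prove this by strong induction on the word length $m$; the base cases $m \leq 1$ are trivial. For the inductive step, write $w_1 = \sigma_i w_1'$ and $w_2 = \sigma_j w_2'$. If $i = j$, then the suffixes $w_1', w_2'$ are themselves simple (any commutation sequence producing a forbidden $\sigma_{k+1}\sigma_k\sigma_{k+1}$ in a suffix would do the same inside the full word), they represent the same permutation, and the inductive hypothesis finishes this case.

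Now assume $i \neq j$. I first claim $|i-j| \geq 2$. If instead $j = i+1$, then both $(i,i+1)$ and $(i+1,i+2)$ are inversions of $\pi$, so the triple $i<i+1<i+2$ satisfies the hypothesis of Lemma \ref{lem:order}. The lemma then forces $c_{i,i+1}$ to precede $c_{i+1,i+2}$ in the simple word $w_2$, contradicting that $c_{i+1,i+2}$ is the first crossing of $w_2$. The case $j = i-1$ is analogous, applied to $w_1$.

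The main step is to show that no crossing of $w_2$ strictly preceding $c_{i,i+1}$ involves strand $i$ or strand $i+1$. Such a crossing would have one of the forms $c_{i,x}$ with $x>i+1$, $c_{x,i}$ with $x<i$, $c_{i+1,y}$ with $y>i+1$, or $c_{y,i+1}$ with $y<i$. The forms $c_{i,x}$ and $c_{i+1,y}$ each yield a triangle on $\{i,i+1,x\}$ or $\{i,i+1,y\}$ whose Lemma \ref{lem:order} constraint forces $c_{i,i+1}$ to appear first in $w_2$, an immediate contradiction. For $c_{x,i}$ with $x<i$: applying Lemma \ref{lem:order} to $w_1$ shows $(x,i)$ cannot be an inversion of $\pi$, since otherwise the triangle $\{x,i,i+1\}$ would require $c_{x,i}$ to precede $c_{i,i+1}$ in $w_1$, impossible as $c_{i,i+1}$ is the first crossing of $w_1$; hence no such crossing exists. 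For $c_{y,i+1}$ with $y<i$: the previous step shows $(y,i)$ is not an inversion, so strand $y$ stays above strand $i$ throughout; as long as $c_{i,i+1}$ has not yet occurred, strand $i+1$ remains below strand $i$ as well, so strand $i$ sits between strands $y$ and $i+1$ in vertical order, and strands $y$ and $i+1$ cannot yet be adjacent. Hence $c_{y,i+1}$ cannot occur before $c_{i,i+1}$, again a contradiction.

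With this claim, strands $i$ and $i+1$ remain at vertical positions $i$ and $i+1$ throughout the prefix of $w_2$ up to $c_{i,i+1}$, so $c_{i,i+1}$ is represented by the generator $\sigma_i$ and every intervening generator $\sigma_k$ has $|k-i|\geq 2$ and therefore commutes with $\sigma_i$. Sliding $\sigma_i$ leftward through these commutations and then commuting once more with $\sigma_j$ (permitted since $|i-j|\geq 2$) yields $w_2 \sim \sigma_i\sigma_j u$. The symmetric argument applied to $w_1$ produces $w_1 \sim \sigma_i\sigma_j v$. Now $u$ and $v$ are simple reduced words of length $m-2$ for the same permutation, so the inductive hypothesis relates them by commutations, and hence so are $w_1$ and $w_2$.
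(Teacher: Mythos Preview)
Your proof is correct and follows essentially the same strategy as the paper: use Lemma~\ref{lem:order} to show that the initial generator of one simple word can be commuted to the front of the other, then induct. The paper's recursion is slightly leaner---it moves only the first letter $\sigma_k$ of $B_2$ to the front of $B_1$ and recurses on length $m-1$, rather than symmetrically arranging a common $\sigma_i\sigma_j$ prefix and recursing on length $m-2$ as you do---but both work.

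One imprecision worth fixing: in the case $c_{i,x}$ with $x>i+1$, your appeal to ``the Lemma~\ref{lem:order} constraint on the triangle $\{i,i+1,x\}$'' does not quite do what you say. The lemma only orders $c_{i,i+1}$ relative to $c_{i+1,x}$, not relative to $c_{i,x}$, and it applies only when $(i+1,x)$ is also an inversion of $\pi$. When $(i+1,x)$ is an inversion, the lemma together with the elementary geometric fact that $c_{i,k}$ must sit between $c_{i,j}$ and $c_{j,k}$ in any triangle does force $c_{i,i+1}<c_{i,x}$; when $(i+1,x)$ is not an inversion, strands $i+1$ and $x$ never cross, so strand $i+1$ separates strands $i$ and $x$ until $c_{i,i+1}$ occurs and $c_{i,x}$ cannot come first. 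Either way your conclusion holds, but the argument as written conflates these two sub-cases. (The paper's proof is similarly terse here: it explicitly checks only the analogues of your $c_{x,i}$ and $c_{i+1,y}$ cases and leaves the remaining two to the reader via the same geometric reasoning.)
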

\begin{proof}
Suppose we have two simple Legendrian permutation braids $B_1$, $B_2$ that give the same permutation $\pi$. 
We can assume that they are different on the first braid letter, 
say $B_1=\sigma_iB_1'$, $B_2=\sigma_k B_2'$ and $i<k$.
By considering the location of the crossing $c_{k,k+1}$ in $B_1$, we will show that one can use the commutation relation 
to move $c_{k,k+1}$ to be the first crossing of $B_1$.  One can then repeat the argument on the new $B_1'$ and $B_2'$ to inductively complete the proof.

First note that $k\neq i+1$.  [If $k=i+1$, then  we would have $\pi(i+2)<\pi(i+1)<\pi(i)$.  Then, Lemma~\ref{lem:order} would fix an order for the crossings $c_{i,i+1}$ and $c_{k,k+1}$ in any simple permutation braid for $\pi$ so that they cannot both be the first crossing of a simple permutation braid.]
In order to show $c_{k,k+1}$ can be moved to the leftmost of $B_1$, we only need to prove that  
no strand intersects the strands $k$ or $k+1$ on the left of $c_{k,k+1}$ in $B_1$.
On one hand, since $c_{k,k+1}$ is the left most crossing of $B_2$, Lemma~\ref{lem:order} prevents any strand $s$ for $s<k$ from intersecting strand $k$ in any simple positive permutation braid for $\pi$.
On the other hand, as $c_{k,k+1}$ is a crossing in $B_1$, Lemma~\ref{lem:order} prevents any strand $s$ for $s>k+1$ to intersect strand $k+1$ on the left of $c_{k,k+1}$ in $B_1$.
These two facts permit us to move $c_{k,k+1}$ to the leftmost in $B_1$ using the commutation relations.
\end{proof}

\subsection{Standard form augmented braids}\label{sec:pro01}

\begin{definition}
For $n \geq 1$, a full augmented Legendrian $n$-braid $(B_\pi,\calC) \in  \mathit{Leg}^\rho_{\vec{\mu}}(J^1[0,1]; \mathbb{F})$ 
is called a {\bf standard form} if 
\begin{itemize}
\item  $B_{\pi}$ is a  simple Legendrian permutation braid for a permutation $\pi \in S_n$; and 
\item any handleslide of $\calC$ is located either:  (i) directly to the left of a crossing of $B_{\pi}$, connecting the crossing strands, or (ii) in a {\it properly ordered handleslide collection} (see Definition \ref{def:Properly}), $\mathcal{H}_1$, appearing to the right of all crossings of $B_{\pi}$.
\item Near $x=0$, each strand contains
one homology base point (possibly with coefficient $1$), and the top strand 
contains either one or zero spin base points located to the left of all homology base points.
\end{itemize}
For $n=0$, we say an augmented Legendrian $0$-tangle, $(\Lambda, \mathcal{C})$, is a {\bf standard form} if either 
\begin{itemize}
\item[(i)]  $\mbox{Char}\, \mathbb{F} \neq 2$ and either $\Lambda=\emptyset$ or $(\Lambda, \mathcal{C})$ is the augmented Legendrian unknot with spin invariant $1$ pictured in Figure \ref{fig:UnknotStd},
\item[(ii)] $\mbox{Char}\, \mathbb{F} = 2$, $\rho \neq 1$ and $\Lambda =\emptyset$, or 
\item[(iii)] $\mbox{Char}\, \mathbb{F} = 2$, $\rho = 1$ and $(\Lambda, \mathcal{C}) =(U, \mathcal{C}_b)$ is as in Figure \ref{fig:UnknotStd} for some $b \in \mathbb{F}$.
\end{itemize}
\end{definition}
See Figure \ref{fig:order} for an example of a standard form augmented $4$-braid.  Note that for $n\geq 1$, the MCFs of standard forms are in $SR$-form with respect to the normal ruling $\sigma_0$ of $B_\pi$ for which every strand is a fixed point strand; all crossings of $B_\pi$ are returns of $\sigma_0$.

\begin{figure}[!ht]
\labellist
\tiny
\pinlabel $-1$ [b] at 152 34
\pinlabel $-1$ [b] at 170 28
\pinlabel $b$ [b] at 26 32

\endlabellist

\quad 

\centerline{\includegraphics[scale=.6]{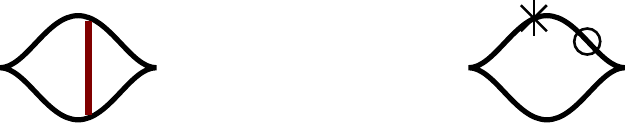}}

\caption{(left) The $1$-graded augmented Legendrian unknots $(U,\mathcal{C}_b)$, $b \in \mathbb{F}$.  (right) A Legendrian unknot with spin invariant $1$.}
\label{fig:UnknotStd}
\end{figure}

\begin{figure}[!ht]
\includegraphics[width=3in]{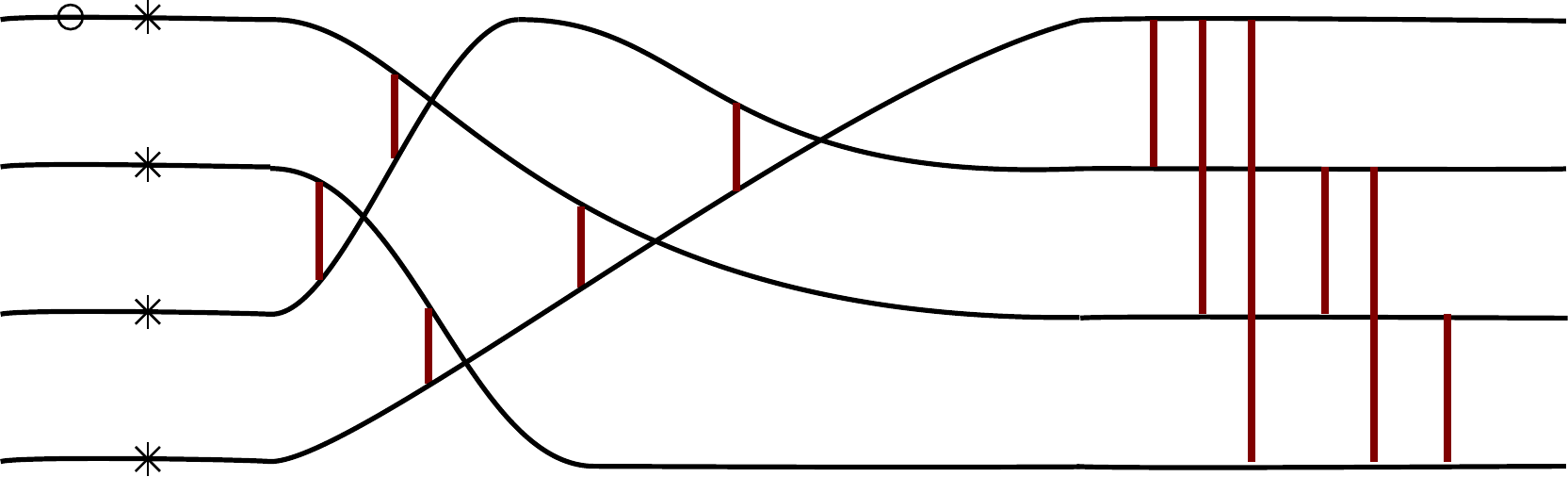}
\caption{An example of a standard form full augmented braid with permutation $\pi = (1\,3\,2\,4)$.  Coefficients of handleslides (resp.  basepoints) are not pictured, but may be any elements of $\mathbb{F}$ (resp. $\mathbb{F}^*$).}
\label{fig:order}
\end{figure}

Next, we compute the monodromy matrices of standard form augmented Legendrians.  In the statement, we use the perspective of Remark \ref{rmk:onemono} and view the graded monodromy matrix $\graded{M}_{B_{\pi}, \calC} \in GL(\graded{n},\mathbb{F})$ as a single matrix $M_{B_{\pi}, \calC} \in GL(n,\mathbb{F})$.  We reprise the notation $c_{i,j}$ for a crossing between strands $i$ and $j$ (as labeled at $x=0$) of a permutation braid.
\begin{proposition}\label{prop:mono}
For $n \geq 1$, the monodromy matrix of  a standard form full augmented $n$-braid $(B_{\pi}, \calC)$ satisfies 
\[
M_{B_{\pi}, \calC} = RT_\pi\Delta \Xi 
\]
where: 
\begin{itemize}
\item $R=I+\sum_{i<j} b_{i,j}E_{i,j}$ is upper-triangular with ones on the diagonal and $b_{i,j}\in \mathbb{F}$ is the coefficient of the $(i,j)$-handleslide in $\mathcal{H}_1$.
\item $T_\pi$ is obtained from the permutation matrix $Q_\pi=\sum_{j} E_{\pi(j),j}$ as
\begin{equation} \label{eq:Tpi}
T_{\pi}= Q_{\pi}+ \sum_{(i,j)\in X_{\pi}} h_{i,j} E_{\pi(i), j}
\end{equation}
where  $X_{\pi}=\{(i,j)\,|\, i<j \, \mbox{and} \,\, \pi(i)>\pi(j)\}$, i.e. the sum is over crossings of $B_\pi$, and   $h_{i,j} \in \mathbb{F}$ is the coefficient of the handleslide to the left of $c_{i,j}$.
\item $\Delta = \mathit{diag}(s_1, s_2, \ldots, s_n)$ where $s_i \in \mathbb{F}^*$ is the coefficient of the homology base point on strand $i$ with respect to the left-to-right co-orientation.  
\item $\Xi = \mathit{diag}(\pm1, 1, \ldots, 1)$ where the $(1,1)$-entry is $-1$ (resp. $1$) if the top sheet at $x=1$ has (resp. does not have) a spin basepoint.
\end{itemize}
\end{proposition}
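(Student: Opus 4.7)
The plan is to identify the continuation map from $0$ to $1$ as the composition, in right-to-left matrix order, of the elementary continuation maps of Section~\ref{sec:2-4} associated to each singularity along the path $\sigma(t) = t$. Because $(B_\pi, \calC)$ is full so that $d_0 = d_1 = 0$, this continuation map is itself the monodromy on cohomology, and $M_{B_\pi, \calC}$ is its matrix in the bases $\{S_j^0\}$, $\{S_k^1\}$. Reading the singularities from left to right in the standard form gives, in that order: at most one spin basepoint on the top strand, one homology basepoint on each strand, the simple permutation braid $B_\pi$ with a handleslide just to the left of each crossing, and the properly ordered collection $\mathcal{H}_1$. By \eqref{eq:basepointmap} the basepoints contribute $\Xi$ and $\Delta$, and by Proposition~\ref{prop:proper}(2) the collection $\mathcal{H}_1$ contributes $R = I + \sum_{i<j} b_{i,j} E_{i,j}$. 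Thus $M_{B_\pi, \calC} = R \cdot T_{\mathrm{braid}} \cdot \Delta \cdot \Xi$, and it remains to identify $T_{\mathrm{braid}}$ with $T_\pi$.

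For the braid, I pass to a ``strand identity'' basis at each generic $x$ in the braid region: let $E_j$ denote the sheet at $x$ corresponding to the strand originating at position $j$ at $x=0$. In this basis the crossing continuation map \eqref{eq:crossingmap} is the identity, since a crossing swaps sheet positions while preserving strand identities. A handleslide with upper strand $i$, lower strand $j$, and coefficient $h_{i,j}$ acts via \eqref{eq:handleslidemap} as $E_j \mapsto E_j + h_{i,j} E_i$, fixing every other $E_a$. Hence $T_{\mathrm{braid}}$ is a product of rank-one perturbations indexed by the crossings of $B_\pi$, with the crossings themselves invisible.

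Tracking the image $\sum_a c_a E_a$ of a fixed basis vector $E_{j_0}$ through the braid, each handleslide $h_{i,j}$ updates $c_i \leftarrow c_i + c_j\, h_{i,j}$. Since a handleslide can only raise $c_i$ when $c_j \neq 0$ for some $j > i$, and initially only $c_{j_0} = 1$ is nonzero, all coefficients $c_j$ with $j > j_0$ remain zero throughout; in particular $c_{j_0} \equiv 1$. The claim is that whenever a handleslide $h_{i,j}$ fires, the current coefficient $c_j$ equals $1$ if $j = j_0$ and $0$ otherwise, so that
\[
f(E_{j_0}) = E_{j_0} + \sum_{i:\,(i,j_0) \in X_\pi} h_{i,j_0}\, E_i.
\]
This is where simplicity of $B_\pi$ enters through Lemma~\ref{lem:order}. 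The shortest potentially problematic cascade has the form $h_{a_1, j_0}$ firing followed by $h_{a_2, a_1}$ firing, with both handleslides coming from $X_\pi$, so $a_2 < a_1 < j_0$ and $\pi(j_0) < \pi(a_1) < \pi(a_2)$. Lemma~\ref{lem:order} applied to the triple $(a_2, a_1, j_0)$ forces $c_{a_2, a_1}$ to lie to the \emph{left} of $c_{a_1, j_0}$, so $h_{a_2, a_1}$ actually fires before $h_{a_1, j_0}$ can make $c_{a_1}$ nonzero, and the cascade never activates. Induction on cascade length extends this to all longer chains. This combinatorial step is the main technical obstacle.

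With the claim in hand, translating back via $S_j^0 = E_j$ at $x=0$ and $E_a = S_{\pi(a)}^1$ at $x=1$ yields $f(S_{j_0}^0) = S_{\pi(j_0)}^1 + \sum_{i:\,(i,j_0)\in X_\pi} h_{i,j_0}\, S_{\pi(i)}^1$, matching the description of $T_\pi$ in \eqref{eq:Tpi} and completing the proof.
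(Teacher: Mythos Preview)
Your proof is correct and hinges on the same key ingredient as the paper's, namely Lemma~\ref{lem:order}; the difference is organizational. The paper argues by induction on the word length of $B_\pi$: writing $B_\pi = B_{\pi'}\sigma_k$, it computes $Q_{(k\,k+1)}(I+hE_{k,k+1})T_{\pi'}$ in the sheet-position basis and uses Lemma~\ref{lem:order} to see that no entry of $T_{\pi'}$ lies in row $k+1$, which lets the product collapse to $T_\pi$. You instead pass to the strand basis so that the crossing maps become trivial and only handleslides act, then compute each column $f(E_{j_0})$ directly; Lemma~\ref{lem:order} enters as the statement that no two-step cascade can occur. Both arguments encode the same combinatorics, and your column-by-column viewpoint is arguably a bit more transparent about why simplicity of $B_\pi$ is exactly what is needed.

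One minor simplification: the phrase ``induction on cascade length'' is not actually necessary. Whenever $(i,j)\in X_\pi$ and $(j,j')\in X_\pi$, the triple $(i,j,j')$ already satisfies the hypotheses of Lemma~\ref{lem:order}, so $c_{i,j}$ lies to the left of $c_{j,j'}$ and hence $h_{i,j}$ fires before \emph{every} $h_{j,j'}$. Thus at the moment $h_{i,j}$ fires, no handleslide with lower endpoint on strand $j$ has yet fired, and $c_j$ is automatically zero for $j\neq j_0$; there is no need to trace the chain back to $j_0$.
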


\begin{proof}
Since the differentials of $\calC$ are all zero, the continuation map on homology is the same as the chain level map which can be computed as 
\[
f_{\tau_4} \circ f_{\tau_3} \circ f_{\tau_2} \circ f_{\tau_1}
\] 
where from left to right $\tau_1$ is the interval containing the spin base point (if it exists), $\tau_2$ contains the homology base points, $\tau_3$ contains the crossings of $B_\pi$ and their associated handleslides, and $\tau_4$ contains the collection of handleslides $\mathcal{H}_1$.   It is clear from the definition (see (\ref{eq:basepointmap})) that the matrices of $f_{\tau_1}$ and $f_{\tau_2}$ are $\Xi$ and $\Delta$ respectively, and Proposition \ref{prop:proper} (2) shows the matrix of $f_{\tau_4}$ is $R$.  

It remains to show that the matrix of $f_{\tau_3}$ is 
\[
T_{\pi}= Q_{\pi}+ \sum_{(i,j)\in X_\pi} h_{i,j} E_{\pi(i), j}.
\]
We use induction on the word length of the braid $B_\pi$.
When the word length is $0$, the braid is a trivial braid and the monodromy matrix is the identity matrix, which satisfies the formula.

For the inductive step, suppose that $B_{\pi}=B_{\pi'}\sigma_k$ and let $\calC'$ be the restriction of $\calC$ to the part of the interval $\tau_3$ containing $B_{\pi'}$.  The monodromy matrix of $(B_{\pi'}, \calC')$ has the stated form, $T_{\pi'}$, and
 the monodromy matrix for $(B_{\pi}, \calC)$ is
\[
M=Q_{(k\,k+1)}(I+h E_{k, k+1}) T_{\pi'}
\]
where $h:=h_{\pi^{-1}(k+1),\pi^{-1}(k)}$ is the coefficient of the handleslide before $\sigma_k$, or zero if it does not exist.

$$\begin{array}{rcl}
M&=&Q_{(k\,k+1)}(I+h E_{k, k+1}) (Q_{\pi'}+\displaystyle{\sum_{(i,j)\in X_{\pi'}} h_{i,j} E_{\pi'(i),j})}\\
&=&(Q_{(k\, k+1)}+ h E_{k+1, k+1})(Q_{\pi'}+\displaystyle{\sum_{(i,j)\in X_{\pi'}} h_{i,j} E_{\pi'(i),j})}.
\end{array}$$
Note from Lemma \ref{lem:order} that  there is no crossing between  
the strand $(\pi')^{-1}(k+1)$ and any strand $s$ for $s>(\pi')^{-1}(k+1)$.
Thus for $(i,j)\in X_{\pi'}$, the term $\pi'(i)$ cannot  
be $k+1$, so we have  
$$\begin{array}{rcl}
M&=&Q_{\pi}+ h E_{k+1, (\pi')^{-1}(k+1)} + \displaystyle{\sum_{(i,j)\in X_{\pi'}} h_{i,j} E_{\pi(i),j}} \\
&=& Q_{\pi}+  h E_{\pi(\pi^{-1}(k+1)), \pi^{-1}(k)} +\displaystyle{\sum_{(i,j)\in X_{\pi'}} h_{i,j} E_{\pi(i),j}}\\
&=&Q_{\pi}+\displaystyle{ \sum_{(i,j)\in X_{\pi}} h_{i,j} E_{\pi(i),j}}.
\end{array}$$
This completes the proof.
\end{proof}

\begin{remark} \label{rem:Tmatrix}
\begin{itemize}
\item[(i)]  The matrices $T_\pi$ have the following property:  Besides the $1$'s from the permutation matrix $Q_{\pi}$, all additional non-zero entries appear in positions $(i,j)$ that are both to the right of the $1$ from $Q_\pi$ in row $i$ and below the $1$ from $Q_\pi$ in column $j$.

\item[(ii)] If $B_{\pi}$ is allowed to be a Legendrian permutation braid that is {\it not simple}, then the monodromy matrix is not in the form shown in Proposition \ref{prop:mono}.
For example, Figure \ref{fig:simple} shows two full augmented braids with the same monodromy matrices.
$$\begin{pmatrix}0& 0& 1\\
0& 1& 1\\
1& 1& 1\end{pmatrix}$$
The  braid in part $(b)$ is simple and the monodromy matrix satisfies the formula in
Proposition \ref{prop:mono} while the braid in part (a) is not simple and the monodromy matrix does not have the form of $T_\pi$.
\end{itemize}
\end{remark}
\begin{figure}[!ht]
\labellist
\small
\pinlabel $(a)$ at 90 -20
\pinlabel $(b)$ at 380 -20
\endlabellist
\includegraphics[width=3in]{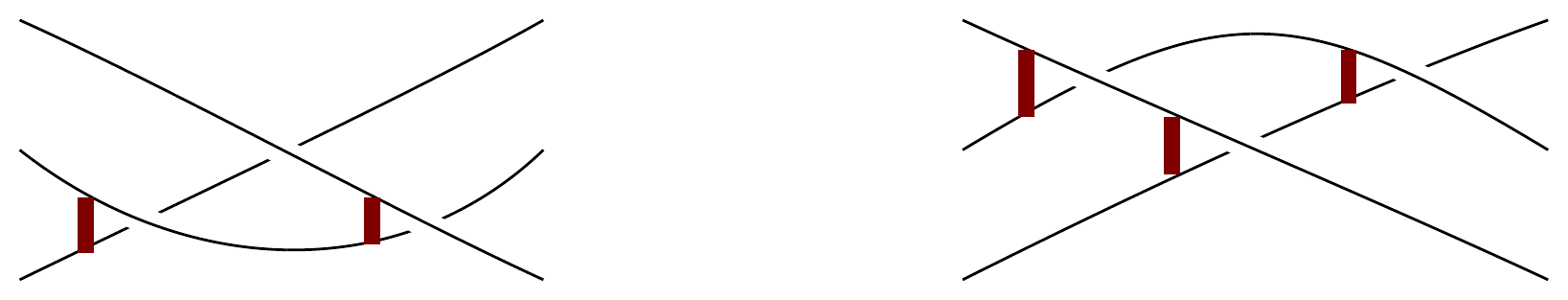}
\vspace{0.2in}

\caption{Two full augmented braids with the same monodromy matrix.  In both cases, all handleslide coefficients are $1$.}
\label{fig:simple}
\end{figure}

Next, we prove a uniqueness statement for matrix decompositions as in Proposition \ref{prop:mono} without the spin term, $\Xi$.  This is related to the Bruhat decomposition.  

\begin{lemma}\label{lem:uni}
Let $R_i, T_i, \Delta_i \in GL(n, \mathbb{F})$ for $i=1,2$ be such that
\begin{itemize}
\item $R_i$ is upper triangular with $1$'s on the diagonal;
\item $T_i$ has the form (\ref{eq:Tpi}) for some permutation $\pi_i \in S_n$; and
\item $\Delta_i$ is diagonal.
\end{itemize}
If $R_1T_1\Delta_1 = R_2T_2\Delta_2$, then $R_1=R_2$, $T_1=T_2$ (in particular, $\pi_1=\pi_2$), and $\Delta_1=\Delta_2$.
\end{lemma}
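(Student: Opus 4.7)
The proof will proceed by induction on $n$, with the base case $n = 1$ being immediate. The key observation for the inductive step is that column $1$ of $T_\pi$ always equals $e_{\pi(1)}$, because $X_\pi$ contains no pairs of the form $(i, 1)$. Therefore column $1$ of $M = R_i T_i \Delta_i$ equals $(\Delta_i)_{11}$ times column $\pi_i(1)$ of $R_i$; since $R_i$ is unit upper triangular, this column has bottommost nonzero entry at row $\pi_i(1)$ with value $(\Delta_i)_{11}$. Reading off column $1$ of $M$ therefore recovers $\pi_1(1) = \pi_2(1) =: p$ and $(\Delta_1)_{11} = (\Delta_2)_{11}$, and forces agreement of column $p$ of $R_1$ and $R_2$ (each entry being $(M)_{k,1}/(M)_{p,1}$).

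Next, let $L$ denote the unit upper triangular matrix (determined by column $1$ of $M$ alone) that clears column $p$ of $R_i$ above the diagonal, so that $R'_i := L R_i$ has column $p$ equal to $e_p$. Since column $1$ of $T_i$ also equals $e_p$, the product $LM = R'_i T_i \Delta_i$ has the structure needed to delete row $p$ and column $1$ and obtain an $(n-1) \times (n-1)$ factorization $\wt M = \wt{R'_i} \, \wt{T_i} \, \wt{\Delta_i}$, where the tildes denote the corresponding minors. A direct check shows $\wt{T_i}$ has the form $T_{\wt\pi_i}$ for a permutation $\wt\pi_i \in S_{n-1}$ induced from restricting $\pi_i$ to $\{2, \ldots, n\}$, with coefficients inherited from $\{h^{(i)}_{s, t} : s \geq 2\}$. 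Applying the induction hypothesis to $\wt M$ then yields $\pi_1 = \pi_2 =: \pi$, $\Delta_1 = \Delta_2$, and equality of all entries of $R_1$ and $R_2$ except possibly in row $p$ above the diagonal.

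The remaining unknowns are $(R_i)_{p, m}$ for $m > p$ together with the coefficients $h^{(i)}_{1, l}$ for $(1, l) \in X_\pi$. To recover these from row $p$ of $M$, expand
\[
(M)_{p, l} = \sum_{m \geq p} (R_i)_{p, m} (T_i)_{m, l} \Delta_{l,l}
\]
and restrict to the $n - p$ columns $l$ with $\pi(l) > p$; for these columns the term $h^{(i)}_{1, l} \Delta_{l,l}$ vanishes, producing a square linear system in the unknowns $(R_i)_{p, m}$. The main technical point is to verify that the coefficient matrix of this system, after order-preserving reindexing of its rows $\{p+1, \ldots, n\}$ and columns $\{l \geq 2 : \pi(l) > p\}$, has exactly the form $T_\sigma$ for some $\sigma \in S_{n-p}$. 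This uses the characterization of $T_\pi$ by its support pattern (each row having its leftmost nonzero entry at the pivot position $\pi^{-1}(r)$ with extras strictly to the right). Once verified, the invertibility $\det T_\sigma = \pm 1$ gives unique solvability, and the remaining equations determine the $h^{(i)}_{1, l}$, completing the induction.
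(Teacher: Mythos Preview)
Your proof is correct and takes a genuinely different route from the paper's.  The paper argues directly: writing $T_1 = R T_2 \Delta$ with $R = R_1^{-1}R_2$ and $\Delta = \Delta_2\Delta_1^{-1}$, it splits into the cases $\pi_1 = \pi_2$ and $\pi_1 \neq \pi_2$ and in each case locates a single matrix entry that forces a contradiction, using only the pivot structure of $T_\pi$ recorded in Remark~\ref{rem:Tmatrix}.  Your argument is instead an inductive, constructive reconstruction of the factors from $M$, peeling off column~$1$ and row~$p$ and reducing to size $n-1$; the observation that the relevant $(n-p)\times(n-p)$ sub-block of $T_\pi$ is again a matrix of the form $T_\sigma$ (hence $\det = \pm 1$ via the factorization $T_\sigma = Q_\sigma U$ with $U$ unit upper-triangular) is a nice structural point.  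The paper's approach is shorter and closer in spirit to the standard Bruhat uniqueness argument; yours gives an explicit recovery algorithm.

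One small imprecision: after the induction you assert ``equality of all entries of $R_1$ and $R_2$ except possibly in row $p$'', but what the induction literally gives is $\wt{R'_1} = \wt{R'_2}$, i.e.\ equality of $R'_1$ and $R'_2$ away from row $p$.  Since $(R_i)_{k,m} = (R'_i)_{k,m} + (R_i)_{k,p}(R_i)_{p,m}$ for $k<p$, the entries of $R_1$ and $R_2$ in rows above $p$ could still differ at this stage.  This does not affect the argument: your final linear system for row $p$ of $M$ involves only $(R_i)_{p,m} = (R'_i)_{p,m}$ together with entries of $T_i$ and $\Delta_i$ already fixed by the induction, so once you solve it you get $R'_1 = R'_2$ and hence $R_1 = L^{-1}R'_1 = L^{-1}R'_2 = R_2$.
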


\begin{proof}
 Assume $R_1 T_1 \Delta_1= R_2 T_2 \Delta_2$, and write  
\[
T_1 = R T_2 \Delta  \quad \mbox{where} \quad R = R_1^{-1}R_2 \quad \mbox{and} \quad \Delta = \Delta_2 \Delta_1^{-1}.
\]

\medskip

\noindent {\bf Case 1.}   $\pi_1=\pi_2$.  Write $\pi = \pi_1=\pi_2$, and assume for contradiction that $R \neq I$.  Choose $i<j$ such that $r_{i,j} \neq 0$ and $r_{i,j'}=0$ for all $j<j'\leq n$ where $R=(r_{i,j})$.  Then, the $(i, \pi^{-1}(j))$-entry of $RT_2 \Delta$ is non-zero.  [This is because the $(j,\pi^{-1}(j))$-entry of $T_2 \Delta$ is the top most non-zero entry in $\mbox{col}_{\pi^{-1}(j)}(T_2 \Delta)$; see Remark \ref{rem:Tmatrix}.]  This contradicts that the $(i, \pi^{-1}(j))$-entry of $T_1$ is zero.    
Thus, $R =I$, and considering the $(\pi(j),j)$-entries in the identity $T_1=T_2\Delta$ shows $\Delta=I$ also.  

\medskip

\noindent {\bf Case 2.}  $\pi_1 \neq \pi_2$.  This case leads to a contradiction.  Let $k$ be chosen such that $\pi_1^{-1}(k) \neq \pi_2^{-1}(k)$.  Without loss of generality we may assume $\pi_1^{-1}(k) > \pi_2^{-1}(k)$, i.e. the $1$ in $\mbox{row}_k Q_{\pi_1}$ is to the right of the $1$ in $\mbox{row}_k Q_{\pi_2}$.  Now, we have
\begin{equation} \label{eq:row}
\mbox{row}_k T_1 = \mbox{row}_k (T_2\Delta) +  \sum_{k<i} r_{k,i}\mbox{row}_i (T_2\Delta).
\end{equation}
Note that the set of $i$ such that $k<i$ and $r_{k,i} \neq 0$ is non-empty since $\mbox{row}_k T_1 \neq \mbox{row}_k (T_2 \Delta)$ as the first non-zero entries in these rows are located in the same position as in $Q_{\pi_1}$ and $Q_{\pi_2}$ respectively.
Among these values of $i$, let $i'$ be the one having $\pi_2^{-1}(i)$ smallest, i.e. such that the first non-zero entry of $\mbox{row}_i(T_2\Delta)$ is farthest to the left.  Note that we must have $\pi^{-1}_2(i')< \pi_2^{-1}(k)$ as well.  [Otherwise, all entries below the (non-zero) $(k,\pi_2^{-1}(k))$-entry of $T_2 \Delta$ and having non-zero $r_{k,i}$ coefficient in (\ref{eq:row}) would be $0$.  This would show the $(k,\pi_2^{-1}(k))$-entry of $T_1$ is non-zero, contradicting that $\pi_2^{-1}(k) < \pi_1^{-1}(k)$.] Thus, the $\pi_2^{-1}(i')$ entry of the row vector on the right side of (\ref{eq:row}) is non-zero, contradicting that the corresponding entry on the left vanishes since $\pi_2^{-1}(i') < \pi_1^{-1}(k)$. 
\end{proof}

\begin{corollary} \label{cor:standard}  Suppose either $n \geq 1$ or $n=0$ and $\rho \neq 1$.  Two standard form $\rho$-graded augmented Legendrian $n$-tangles with the same spin invariant (when defined) and monodromy matrix are cobordant.
\end{corollary}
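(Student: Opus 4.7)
The plan is to establish the stronger statement that two standard forms with matching invariants are in fact Legendrian isotopic, after which Proposition \ref{prop:Rmove} yields the desired cobordism. The case $n=0$ will be handled directly: when $\mathrm{Char}\,\mathbb{F}=2$ and $\rho\neq 1$ the only standard form is $\emptyset$, and when $\mathrm{Char}\,\mathbb{F}\neq 2$ with $\rho$ even the two standard forms $\emptyset$ and the unknot of Figure \ref{fig:UnknotStd} (right) are distinguished by the spin invariants $0$ and $1$ respectively, so matching spin invariants forces equality.

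For $n\geq 1$, given two standard forms $(B_{\pi_1},\calC_1)$ and $(B_{\pi_2},\calC_2)$ with a common monodromy matrix, I will apply Proposition \ref{prop:mono} to decompose the monodromies as $R_iT_{\pi_i}\Delta_i\Xi_i$. Since $\Delta_i$ and $\Xi_i$ are diagonal, they combine into a single diagonal factor $\tilde\Delta_i:=\Delta_i\Xi_i$, and Lemma \ref{lem:uni} applied to $R_1T_{\pi_1}\tilde\Delta_1=R_2T_{\pi_2}\tilde\Delta_2$ delivers $\pi_1=\pi_2=:\pi$, $R_1=R_2$, $T_{\pi_1}=T_{\pi_2}$, and $\tilde\Delta_1=\tilde\Delta_2$.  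Consequently the handleslide collection $\mathcal{H}_1$ on the right and all handleslide coefficients $h_{i,j}$ preceding each crossing already agree.

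To separate $\Xi$ from $\Delta$ I will use the spin invariant. In characteristic $2$ there are no spin basepoints, so $\Xi_i=I$ and $\Delta_1=\Delta_2$ are immediate. Otherwise, since $B_\pi$ has no cusps and the components of $\widehat{B_\pi}$ are in bijection with the cycles of $\pi$, the definition of $\xi$ reduces to
\[
\xi(B_\pi,\calC)\equiv \#\{\text{spin basepoints of }\calC\}+\#\{\text{cycles of }\pi\}+n\pmod{2}.
\]
Since $\pi_1=\pi_2$, the hypothesis of equal spin invariants forces equal numbers of spin basepoints on the top strand, giving $\Xi_1=\Xi_2$ and hence $\Delta_1=\Delta_2$.

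At this stage every combinatorial datum of the two MCFs coincides except possibly the specific simple permutation braid word used to represent $\pi$. By Proposition \ref{prop:uni}, any two such words are related by a sequence of commutation relations $\sigma_i\sigma_j=\sigma_j\sigma_i$ with $|i-j|\geq 2$, each of which is realized by a Legendrian isotopy during which the handleslides (which involve disjoint pairs of strands) can be carried along using move (E3) without altering their coefficients. Applying Proposition \ref{prop:Rmove} then upgrades this Legendrian isotopy to an augmented Legendrian cobordism between $(B_{\pi_1},\calC_1)$ and $(B_{\pi_2},\calC_2)$. The only point requiring care, though not a serious obstacle, is checking that the bijection between the entries $h_{i,j}$ of $T_\pi$ and the geometric handleslide placements is preserved throughout this process; this is automatic because the indices $(i,j)$ in Proposition \ref{prop:mono} refer to strand labels at $x=0$ and are intrinsic to $\pi$.
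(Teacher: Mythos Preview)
Your proof is correct and follows essentially the same approach as the paper: handle $n=0$ directly from the definition, then for $n\geq 1$ combine $\Delta_i\Xi_i$ into a single diagonal factor, apply Lemma \ref{lem:uni} to force $R_1=R_2$, $T_{\pi_1}=T_{\pi_2}$, $\pi_1=\pi_2$, and $\Delta_1\Xi_1=\Delta_2\Xi_2$, use equality of spin invariants (together with $\pi_1=\pi_2$) to separate $\Xi_i$ from $\Delta_i$, and finally observe that the remaining ambiguity in the simple braid word is resolved by commutation moves via Proposition \ref{prop:uni}. Your write-up is slightly more detailed than the paper's (explicitly invoking Proposition \ref{prop:uni} and tracking the handleslides through the commutation isotopy), but the logical structure is the same.
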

\begin{proof}
In the special case of $0$-tangles, this is obvious from the definition.  Given two such standard form augmented $n$-braids $(B_{\pi_1}, \calC_1)$ and $(B_{\pi_2}, \calC_2)$ with $n\geq 1$, 
computing their monodromy matrices as in Proposition \ref{prop:mono} we have
\[
R_1T_1(\Delta_1 \Xi_1) = R_2T_2(\Delta_2 \Xi_2). 
\]
	Applying Lemma \ref{lem:uni} we see that $R_1=R_2, T_1=T_2$ (in particular, $\pi_1=\pi_2$), and $\Delta_1\Xi_1=\Delta_2\Xi_2$.  Moreover, since $\pi_1=\pi_2$, the closures of $B_{\pi_1}$ and $B_{\pi_2}$ have the same number of components.  Therefore, the equality of spin invariants, $\xi(B_{\pi_1}, \calC_1) = \xi(B_{\pi_2}, \calC_2)$, implies that $\calC_1$ and $\calC_2$ have the same number of spin basepoints (this number being $1$ or $0$).  Thus, $\Xi_1 = \Xi_2$, so $\Delta_1=\Delta_2$ as well.  
		As the entries of the individual matrices $R$, $T_\pi$, $\Delta$, and $\Xi$ in Proposition \ref{prop:mono} together uniquely determine a standard form $(B_{\pi}, \calC)$ (up to applying commutation moves to braid crossings, in particular, up to cobordism), this completes the proof.
\end{proof}

\subsection{Constructing cobordisms to standard form}

The final ingredient before proving Theorem \ref{thm:J101} is 
the following:

\begin{proposition} \label{prop:standardform}
Any full augmented Legendrian $n$-tangle is cobordant to one in standard form.
\end{proposition}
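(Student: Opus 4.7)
\emph{Plan.} The strategy is to first reduce to an SR-form MCF with respect to the \emph{trivial} ruling (all strands being fixed points), which amounts to eliminating all cusps, and then further to reduce the resulting braid and its handleslide/basepoint data to the standard form.

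I begin by applying Proposition \ref{prop:SRalgorithm} to find an MCF $\mathcal{C}'$ equivalent to $\mathcal{C}$ in SR-form with respect to some generalized normal ruling $\sigma$ of $\Lambda$. Because $(\Lambda, \mathcal{C})$ is full, $d_0 = 0 = d_1$ are already in Barannikov normal form with trivial involution, so by the ``moreover'' clause of Proposition \ref{prop:SRalgorithm}, no handleslides appear in $I_0$, and $\sigma$ has only fixed-point strands at the two boundaries. In particular, every ruling disk of $\sigma$ (bounded between a left and a right cusp) is contained in the interior.

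Next, I eliminate the ruling disks one at a time. Call a ruling disk \emph{simple} if it contains no switches; on such a disk the paired sheets $S_k, S_{k+1}$ satisfy $d_x S_{k+1} = S_k$ in isolation from the other sheets throughout the disk, so after pushing any intervening handleslides outside via equivalence moves the pinch move (C2) $\rightarrow$ cancels the disk. An innermost ruling disk is not automatically simple, but one can always be made so: interior switches may be absorbed by applying the $D_4^-$ cobordism of Proposition \ref{prop:D4minus} at an interior right cusp, which trades a cusp pair with a crossing pattern plus a homology basepoint, strictly decreasing the cusp count while preserving fullness. Iterating this process yields a full augmented Legendrian \emph{braid} $(\Lambda'', \mathcal{C}'')$ whose MCF has the trivial ruling; the differentials $d_x$ vanish outside a neighborhood of each crossing.

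For this braid, I use Corollary \ref{cor:eye} together with Proposition \ref{prop:Rmove} to arrange an occurrence of $\sigma_i^2$ whenever the braid is not a positive permutation braid, then remove the intervening handleslide at the second crossing by sliding it out (to the left past the first crossing using (E3), possibly producing auxiliary handleslides absorbed elsewhere), and apply the clasp move (C3) $\leftarrow$ to cancel the double crossing. Iterating gives a positive permutation braid, and Proposition \ref{prop:ex} then makes it simple via Legendrian isotopy. Finally, I normalize the handleslide and basepoint data: push the handleslide associated to each crossing (in SR-form for the trivial ruling, a single return handleslide lies just to the left) into the required position; consolidate the handleslides right of all crossings into a properly ordered collection via Proposition \ref{prop:proper}; slide all basepoints near $x=0$ via (F3), combine homology basepoints on each strand with (F2) (inserting unit basepoints using Remark \ref{rem:F1} for strands without one), commute spin basepoints to the top strand via (F4), and cancel spin basepoints in pairs using (F1) to leave at most one. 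The case $n = 0$ is handled separately: any closed components are reduced via Legendrian isotopy and (C1) to the empty tangle or, in the exceptional $(\mathrm{Char}\,\mathbb{F}, \rho)$ cases, to the prescribed standard unknots.

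\emph{Main obstacle.} The technical core is Step 2: verifying that every ruling disk can indeed be eliminated within the class of full augmented cobordisms. The delicate point is coefficient bookkeeping when applying the $D_4^-$ cobordism in the presence of interior switches and when pushing handleslides out of a disk prior to pinching, as both operations must preserve the MCF axioms (B1)--(B3) globally throughout the construction.
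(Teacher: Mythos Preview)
Your proposal has several genuine gaps that would prevent the argument from going through.

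\textbf{Cusp elimination (your Step 2).} The $D_4^-$ cobordism of Proposition \ref{prop:D4minus} does not involve cusps at all; it relates a single handleslide on two parallel strands to a pair of crossings with handleslides and basepoints. So it cannot ``trade a cusp pair with a crossing pattern'' as you claim. Moreover, even for a ``simple'' ruling disk (no switches), the two cusps may be separated by many departures and returns with other strands, so the local pinch move (C2) cannot simply ``cancel the disk.'' The paper's approach (Proposition \ref{prop:cusp}) is quite different and more delicate: it takes the rightmost left cusp and inductively pushes it past the singularities to its right one at a time, using a case analysis (cusp-tangency moves at departures, a pinch-based cobordism at switches, an unknot-absorption trick when the cusp sits on an isolated unknot, etc.).

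\textbf{Removing $\sigma_i^2$ (your Step 3).} You propose to slide the $(k,k+1)$-handleslide between the two $(k,k+1)$-crossings out past the first crossing via (E3) and then apply the clasp move. But a $(k,k+1)$-handleslide cannot be slid past a $(k,k+1)$-crossing: on the other side the upper and lower endpoints would swap, violating $z(u)>z(l)$. This is exactly why the paper uses the $D_4^-$ cobordism here (Step 1 of Proposition \ref{prop:standard}): when the trapped handleslide has nonzero coefficient, $D_4^-$ removes one crossing; only when the coefficient is zero (or absent) does the clasp move apply. You have the two roles of $D_4^-$ and (C3) essentially reversed.

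\textbf{Spin basepoints.} Move (F4) only commutes a spin basepoint with a homology basepoint on the \emph{same} strand; it does not transport a spin basepoint to a different strand. Moving spin basepoints to the top strand is nontrivial and in the paper requires the construction of Figure \ref{fig:MovingSpin}, which uses (R1), (C2), and auxiliary cusps.
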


We establish Proposition \ref{prop:standardform} in two steps.  First, Proposition \ref{prop:cusp} handles the case $n=0$, and shows that when $n\geq 1$  we can remove cusps by an augmented cobordism;  this transforms a given tangle with $n\geq1$ into an augmented Legendrian braid.  
Then, in Proposition \ref{prop:standard} we show that any augmented Legendrian braid is cobordant to one in standard form.  
Clearly the combination of these two statements proves Proposition \ref{prop:standardform}.

\begin{proposition} \label{prop:cusp}  
Assume that either $\rho$ is even, or $\Char \mathbb{F}=2$.
\begin{enumerate}
\item When $n \geq 1$, any {\it full} augmented Legendrian $n$-tangle in $J^1[0,1]$ is cobordant to one without cusps, i.e. to an augmented Legendrian braid. 
\item Any augmented $0$-tangle is cobordant to a standard form $0$-tangle.
\end{enumerate}
\end{proposition}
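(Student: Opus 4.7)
The plan is to prove part (1) by inducting on the number of cusps, using the pinch move (C2) to cancel matched left-right cusp pairs after bringing them into the required form; part (2) then follows by a parallel reduction applied componentwise.

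For part (1), I first apply Proposition \ref{prop:SRalgorithm} to arrange $\calC$ in $SR$-form with respect to some generalized normal ruling $\sigma$ of $\Lambda$. Since $(\Lambda, \calC)$ is full, $d_0 = d_1 = 0$, so the Barannikov form at the boundary has no paired sheets and hence $\sigma$ pairs no sheets at $x = 0$ or $x = 1$. Consequently every paired pair of $\sigma$ is born at some left cusp and dies at some right cusp, giving a bijection between left and right cusps of $\Lambda$. The induction is on this common count; the base case of no cusps is precisely an augmented braid.

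For the inductive step, I pick an innermost matched cusp pair, namely a left cusp $c'$ and its $\sigma$-matched right cusp $c$ such that no other cusp lies in the interval $(x_{c'}, x_c)$. Existence of such a pair follows by choosing the leftmost right cusp and iterating on the matched left cusp. My goal is to arrange that the two sheets $S_k, S_{k+1}$ paired at $c$ satisfy $d_x S_{k+1} = S_k$ immediately to the left of $c$ (with no other differential involving $S_k$ or $S_{k+1}$), so that the hypothesis of (C2)$\leftarrow$ is met. The $SR$-form together with the normality of $\sigma$ already give $d_x S_{k+1} = \alpha S_k$ for some $\alpha \in \F^*$, and adjusting a homology basepoint on $S_{k+1}$ via moves (F1)-(F3) normalizes $\alpha = 1$. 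The remaining task is to clear switches and handleslides between $c'$ and $c$. Handleslides and switches involving sheets disjoint from $\{S_k, S_{k+1}\}$ are swept out of the interval by Legendrian isotopy (Proposition \ref{prop:Rmove}) together with the equivalence moves (E1)--(E7) from Section \ref{sec:slice}. Switches involving the pair $\{S_k, S_{k+1}\}$ are the main obstacle; here the $D_4^-$ cobordism of Proposition \ref{prop:D4minus} is used to trade a left-cusp-plus-switch configuration for a configuration that, after further equivalence, leaves the cusp pair clean enough for (C2)$\leftarrow$ to apply. Once the region between $c'$ and $c$ consists only of parallel strands on $S_k$ and $S_{k+1}$, the pinch move cancels the pair and the induction advances.

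For part (2), a full augmented $0$-tangle is a closed Legendrian in $J^1\mathrm{Int}[0,1]$, hence a disjoint union of Legendrian knots. Applying $SR$-form and the cusp-cancellation strategy of part (1) componentwise reduces each component to a simple Legendrian unknot carrying at most a residual homology or spin basepoint, or, when $\rho=1$, a residual handleslide. In the cases $\Char\,\F\neq 2$ or ($\Char\,\F = 2$ and $\rho \neq 1$), move (C1) discards each trivial unknot, and when the spin invariant is nontrivial a single spin-$1$ unknot as in Figure \ref{fig:UnknotStd} is retained to match it. In the case $\Char\,\F = 2,\ \rho = 1$, basepoint moves (F1)--(F4) together with (E1)--(E2) consolidate the residual decoration onto a single unknot $(U,\calC_b)$, and the remaining trivial unknots are then removed by (C1). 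The technical heart of the argument, and what I expect to be the main obstacle, is the cusp-cancellation step of part (1), specifically controlling the residual handleslides produced by each invocation of the $D_4^-$ move while maintaining the Barannikov form needed to re-invoke (C2)$\leftarrow$ at the next stage.
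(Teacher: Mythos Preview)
Your proposal has a genuine gap in the core inductive step of part (1).  The notion of a ``matched cusp pair'' via the ruling $\sigma$ is not well-defined: at switches the pairing of sheets changes, so the two sheets that meet at a right cusp $c$ need not trace back to a single left cusp.  What you can find is an \emph{adjacent} cusp pair (rightmost left cusp $c'$ to the left of the leftmost right cusp $c$, so no cusps lie strictly between them), but then the sheets meeting at $c'$ are generally at a different height than those meeting at $c$, and there may be many crossings of all types in between.  Your sketch of how to ``sweep out'' this intervening material and reduce to a (C2)$\leftarrow$ configuration is where all of the content lies, and it is not addressed.  In particular, the $D_4^-$ cobordism of Proposition~\ref{prop:D4minus} converts between a crossing-with-handleslide and a cusp-pair configuration; it is not the right tool for eliminating a switch adjacent to a left cusp, and using it here would change the cusp count in the wrong direction.

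The paper's proof takes a rather different route, following the inductive scheme of \cite[Section~7.3]{PanRu2}.  One takes the \emph{rightmost} left cusp $l$, writes $\Lambda = X\, l\, \sigma_{k-1}\cdots\sigma_{k-s}\sigma_{k+1}\cdots\sigma_{k+t}\, Y$, and performs a secondary induction on the word length $|Y|$.  The full hypothesis ensures $Y$ contains at least one right cusp, so the base case is vacuous; the inductive step is a ten-way case analysis on the first letter $z$ of $Y$.  Most cases are handled by Legendrian isotopy (Proposition~\ref{prop:Rmove}); the substantive ones use the cusp tangency move (a combination of (R2) and (C3)) at departures and a specific pinch-move cobordism at switches next to $l$---not the $D_4^-$ move, which is reserved for the later braid-simplification step in Proposition~\ref{prop:standard}.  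The $n=0$ case is handled within the same induction, using an absorption cobordism when another component exists.
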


\begin{proof}  
We apply the inductive procedure from Section 7.3 of \cite{PanRu2}, used there for constructing cobordisms from augmented Legendrians in $\R^3$ over $\mathbb{F}_2$ to the empty set, and address the important new issues arising from (i) the setting of $n$-tangles instead of links in $\R^3$ and (ii) the coefficients in a general field $\mathbb{F}$ together with the presence of homology and spin basepoints.

The statement is established by induction on the number of cusps, $c$, with the $c=0$ case being tautological.  
Let $(\Lambda, \mathcal{C}) \subset J^1[0,1]$ be an $n$-tangle with $n \geq 0$ such that $\Lambda$ has at least one cusp, and suppose that the (1) and (2) are established for Legendrians with fewer cusps. Let $l$ be the left cusp of $\Lambda$ with largest $x$-coordinate.  We can write $\Lambda$ as a product of tangles in the form
\begin{equation} \label{eq:Xl}
\Lambda = X l \sigma_{k-1}\sigma_{k-2}\cdots \sigma_{k-s}\sigma_{k+1}\sigma_{k+2} \cdots \sigma_{k+t} Y
\end{equation}
where the sheets of $l$ are numbered $k$ and $k+1$ and $s,t \geq 0$.  (Initially, we can take $s=t=0$.)  Then, we apply induction on the word length $|Y|$, i.e. the number of crossings and right cusps in $Y$, to establish (1) and (2) for Legendrians of the form (\ref{eq:Xl}) with the current value of $c$.  In our $J^1[0,1]$ setting, it is crucial to note that the base case $|Y|=0$ is vacuous since $Y$ contains at least one right cusp.  
[This is because the two sheets of $l$ cancel in the homology of $(C_x,d_x)$, so immediately to the right of $l$ the number of sheets of $\Lambda$ is larger than the total dimension of $H^*(C_x,d_x)\cong H^*(C_1,d_1)$ which is $n$ since $\mathcal{C}$ is {\it full}.  Thus, there must be a right cusp between $l$ and $x=1$.]

The inductive (on $|Y|$) step is carried out by writing $Y= zY'$ and considering cases (numbered 1-10 in \cite{PanRu2}) based on the type (right cusp or crossing) and location of the singularity of $z$.  In all cases, it is shown how to construct an augmented cobordism from $(\Lambda, \mathcal{C})$ to an augmented Legendrian $(\Lambda', \mathcal{C}')$ such that $\Lambda'$ either can be written in the form (\ref{eq:Xl}) with $Y$ replaced by $Y'$ or has fewer left cusps than $\Lambda$.  For many of the cases (specifically for Cases 1, 4, 5b, 6, 7, 9, 10b) the underlying Legendrian cobordism is a Legendrian isotopy, and in our setting of $\mathbb{F}$ coefficients we apply Proposition \ref{prop:Rmove} to extend $\mathcal{C}$ over the cobordism.  
Cases 3 and 10a are possibilities for $z$ that cannot actually occur since $\Lambda$ would have a zig-zag stabilization, and hence could not have any MCF.  This point remains valid for MCFs over $\mathbb{F}$.  

In the Cases 2, 5c, 8, and 10c the first step in \cite{PanRu2} is to apply an equivalence to put $\mathcal{C}$ into $SR$-form with respect to a normal ruling for $\Lambda$.  In our setting, we first move all homology and spin basepoints away from the $l \sigma_{k-1}\sigma_{k-2}\cdots \sigma_{k-s}\sigma_{k+1}\sigma_{k+2} \cdots \sigma_{k+t} z$ part of $\Lambda$, and then we apply Proposition \ref{prop:SRalgorithm} to arrange the $SR$-form.  We should keep in mind that the normal ruling may now be {\it generalized}.  The construction of the cobordism then depends on whether the crossing $\sigma_{k-1}$ that appears next to $l$ is a departure or a switch.  [Note:  Even with a {\it generalized} normal ruling it is still impossible for $\sigma_{k-1}$ to be a return since the cusp strands are not fixed point strands.]  If the crossing is a departure, the cusp tangency move pictured in Figure \ref{fig:CuspTang}  is applied, noting that $SR$-form MCFs have no handleslides at departures.  If the crossing is a switch then a cobordism involving the Pinch move (C2) is applied to remove the switch.  The modification of this cobordism to $\mathbb{F}$ coefficients, allowing for the first handleslide coefficient at the switch to be arbitrary, makes use of an added homology curve and is pictured in Figure \ref{fig:SwCobrdism}.  Note that because the differentials of an $SR$-form MCF are in Barannikov normal form, the pictured arrow at the location of the pinch move, $x=x_0$, is the only matrix coefficient of $d_{x_0}$ involving the sheets of the pinch move.  Thus, the pinch move (C2) $\leftarrow$ can be applied once the base points are introduced to make the coefficient in the differential become $1$.

\begin{figure}[!ht]
\labellist
\tiny

\endlabellist
\centerline{\includegraphics[scale=.6]{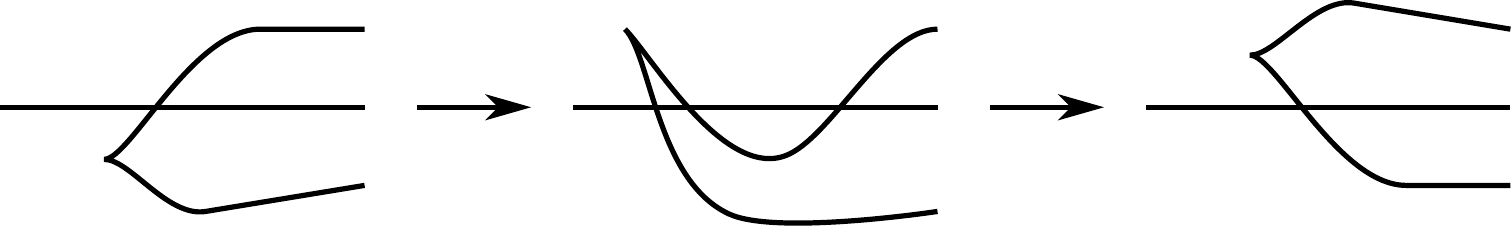}}

\caption{The Cusp Tangency Move is a combination of (R2) and the Clasp Move (C3). (No handleslides are present.)}
\label{fig:CuspTang}
\end{figure}

\begin{figure}[!ht]
\labellist
\tiny
\pinlabel $r$ [b] at 30 88
\pinlabel $-r^{-1}$ [b] at 66 88
\pinlabel $1$ [t] at 14 36
\pinlabel $r$ [l] at 108 26

\pinlabel $r$ [b] at  202 88
\pinlabel $-r^{-1}$ [b] at 238 88
\pinlabel $1$ [t] at 188 36
\pinlabel $r^{-1}$ [b] at 262 56
\pinlabel $r$ [b] at 294 56 
\pinlabel $1$ [l] at 280 26

\pinlabel $r$ [b] at 376 88
\pinlabel $-r^{-1}$ [b] at 404 82
\pinlabel $1$ [t] at 358 36
\pinlabel $r^{-1}$ [bl] at 420 58
\pinlabel $r$ [b] at 476 68

\pinlabel $r^{-1}$ [t] at 566 86
\pinlabel $r$ [b] at 628 68

\endlabellist
\centerline{\includegraphics[scale=.6]{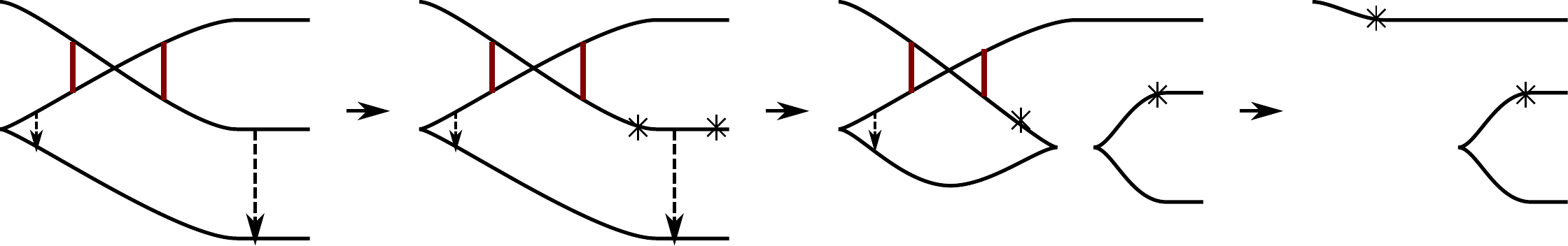}}

\caption{Cobordism to remove a switch next to a left cusp.  The coefficient $r$ is non-zero.  There may be one additional unpictured handleslide at the switch with upper endpoint on the lower sheet of the cusp, if the switch is in the first configuration pictured in Figure \ref{fig:SR}.}
\label{fig:SwCobrdism}
\end{figure}

In the last remaining Case 5A the left cusp $l$ is part of a Legendrian unknot, $U$, disjoint from the rest of $\Lambda$.  The new issue here is that $U$ may contain base points and, if $\rho =1$, a handleslide in its interior.  If $\Lambda$ has another component, $\Lambda'$, then we use the cobordism pictured in Figure \ref{fig:UnknotSpin} to absorb $U$ into $\Lambda'$.  This decreases the number of cusps of $\Lambda$ by $1$ so that the  outer induction on $c$ applies.  In the remaining case where $\Lambda = U$ we must have $n=0$.  Using (F1)-(F4) we move all of the homology and spin basepoints together so that we have at most $1$ base point of each type.
The coefficient of the homology basepoint is necessarily $(-1)^{s_0}$ where $s_0$ is the number of spin base points. [This is necessary in order for (A2) to be satisfied at both the left and right cusp of $U$.]  If $s_0=1$ and $\Lambda=U$ we are done, as $(\Lambda, \mathcal{C})$ is now the standard form pictured on the right side of Figure \ref{fig:UnknotStd}.  [There is no handleslide since the presence of spin base points implies $\Char \mathbb{F} \neq 2$, so that we have required that $\rho$ is even.]  
If $s_0=0$, then the homology base point has coefficient $1$ and can be erased, see Remark \ref{rem:F1}.  Then, if $U$ has no handleslide, we apply (C1) $\leftarrow$ to obtain a cobordism to $\emptyset$.  If there is a handleslide, then $\rho=1$ 
and  $(\Lambda, \mathcal{C})$ has the standard form pictured on the left side of Figure \ref{fig:UnknotStd}.

\begin{figure}[!ht]
\labellist
\tiny

\endlabellist

\quad 

\centerline{\includegraphics[scale=.6]{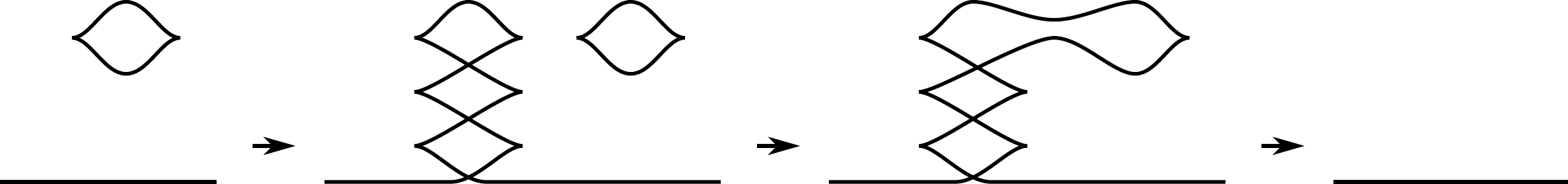}}

\quad

\caption{Cobordism to remove an unknot, $U$, in the presence of at least one additional component, $\Lambda'$.  Note that $U$ may contain basepoints or (if $\rho =1$) handleslides.  The 1st and 3rd arrows are augmented cobordisms constructed from Legendrian isotopies via Proposition \ref{prop:Rmove}.    The  (R1) moves are performed so that the Maslov potentials agree when applying the (C2) $\rightarrow$ move. 
}

\label{fig:UnknotSpin}
\end{figure}

\end{proof}

\begin{proposition}\label{prop:standard}
Any full augmented Legendrian $n$-braid $(B_0, \calC_0)$ in $J^1[0,1]$ with $n \geq 1$ is cobordant to a standard form $(B_{\pi}, \calC)$.
\end{proposition}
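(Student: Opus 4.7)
The plan is to proceed in four stages: put $\calC_0$ into $SR$-form; simplify $B_0$ to a simple Legendrian permutation braid; restore the $SR$-form handleslide configuration; and normalize the basepoints.

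First, apply Proposition \ref{prop:SRalgorithm} to reduce $\calC_0$ to $SR$-form with respect to some generalized normal ruling $\sigma$. A key simplification is that because $(B_0, \calC_0)$ is full, $d_0 = 0$, and because axioms (A1)-(A4) transform $d_x$ by conjugation at every crossing, handleslide, and basepoint, the differentials vanish throughout $B_0$: $d_x \equiv 0$. Consequently $d_x$ is already in Barannikov normal form with respect to the trivial involution, so $\sigma$ consists entirely of fixed-point strands and every crossing of $B_0$ is a return. The $SR$-form handleslides at such returns (bottom-right of Figure \ref{fig:SR}) are exactly single handleslides directly to the left of each crossing connecting the two crossing strands --- matching condition (i) of the standard form. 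Moreover, since $d_0 = 0$ is already in Barannikov form, the second conclusion of Proposition \ref{prop:SRalgorithm} lets us omit the handleslide collection in $I_0$, and any remaining handleslides gather into a properly ordered collection $\mathcal{H}_1$ in $I_1$ satisfying (ii).

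Second, simplify $B_0$ to a simple Legendrian permutation braid. If $B_0$ is not a positive permutation braid, Corollary \ref{cor:eye} supplies a Legendrian isotopy of $B_0$ to the form $B'\sigma_i^2 B''$; extend the MCF over this isotopy via Proposition \ref{prop:Rmove} and apply the Clasp move (C3) $\leftarrow$, whose hypothesis holds since $d_x \equiv 0$. Iterating produces a positive permutation braid, and then Proposition \ref{prop:ex} together with Proposition \ref{prop:Rmove} yields a simple Legendrian permutation braid $B_\pi$. Re-applying the $SR$-form algorithm restores the standard-form handleslide configuration of (i) and (ii). Next, normalize the basepoints: slide every basepoint leftward into $I_0$ using (F3), (F4), and (E6), noting that (E6) only adjusts handleslide coefficients and leaves their positions unchanged. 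On each strand, use (F1), (F2), and Remark \ref{rem:F1} to arrange exactly one homology basepoint, and use (F1) to cancel pairs of spin basepoints on the same strand. When $\Char\mathbb{F} = 2$ there are no spin basepoints and the construction is complete; when $\Char\mathbb{F} \neq 2$ (so $\rho$ is even) what remains is to consolidate any surviving spin basepoints onto the top strand.

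The main obstacle will be this final consolidation of spin basepoints, since (F1)-(F4) cannot hop a basepoint between different strands at $x=0$. The plan is to introduce a temporary cobordism that opens a pair of cusps between two adjacent strands whose Maslov potentials differ by $1$ via (C2) $\rightarrow$ (arranging the required adjacency, if necessary, by first conjugating with braid crossings). One then slides the spin basepoint along the Legendrian through the resulting local loop via (F3) so that it emerges on the upper strand, and closes the cusp pair via (C2) $\leftarrow$. During this manipulation the previously normalized homology basepoints and $SR$-form handleslides can be commuted out of the cusp region first, then returned afterward. Iterating this procedure transfers any surviving spin basepoint onto the top strand, producing the required standard form.
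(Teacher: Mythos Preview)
Your argument has a genuine gap at the clasp-move step. The hypothesis of (C3) in the $\leftarrow$ direction (removing the two crossings in $\sigma_i^2$) is that there be \emph{no handleslides} between the two crossings; the condition $\langle d_x S_{k+1}, S_k\rangle = 0$ that you cite is the hypothesis for the opposite direction (creating crossings). After you isotope to $B'\sigma_i^2 B''$ and re-apply the $SR$-form algorithm, each crossing of the braid --- being a return of the trivial ruling --- acquires a handleslide immediately to its left, so there is a $(k,k+1)$-handleslide trapped between the two $\sigma_i$'s. Such a handleslide cannot be slid past either $\sigma_i$ by (E3) (its endpoints would collide at the crossing and violate $z(u)>z(l)$), and the super-handleslide move (E7) produces nothing since $d_x=0$. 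Concretely, the $2$-braid $\sigma_1^2$ with a single handleslide of coefficient $r\neq 0$ between the crossings has monodromy $\left(\begin{smallmatrix}1&0\\ r&1\end{smallmatrix}\right)$; the unique standard form realizing this matrix has permutation $\pi=(1\,2)$, i.e.\ \emph{one} crossing, so a move that removes exactly one crossing is unavoidable. The paper supplies this via the $D_4^-$ cobordism (Proposition~\ref{prop:D4minus}); your proof must either invoke it or provide a substitute.

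There is a smaller issue with your spin-basepoint transfer. The direction of (C2) that pinches two parallel strands into a cusp pair requires $d_x S_{k+1}=S_k$, which fails because $d_x\equiv 0$; and in any case two adjacent strands whose Maslov potentials differ by $1$ need not exist. The paper's Figure~\ref{fig:MovingSpin} handles this by first performing several (R1) moves on each of the two strands to manufacture cusps whose Maslov potentials match, then applying (C2) in the cusp-to-strand direction; your sketch is in the right spirit but omits the essential (R1) step.
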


\begin{proof}  
{\bf Step 1.}  Construct a cobordism from $(B_0,\calC_0)$ to $(B_\pi,\calC_1)$ where $B_\pi$ is a simple permutation braid.  

To ease considerations we begin by using (F1)-(F4) to move all basepoints (homology and spin) to 
the far left side of $B_0$.  According to Corollary \ref{cor:eye}, if $B_0$ is not a positive permutation braid we can apply a Legendrian isotopy to $B_0$ to arrange for $\sigma_i^2$ to appear in its braid word.  As usual Proposition \ref{prop:Rmove} makes this Legendrian isotopy into an augmented cobordism.
One can then remove either one or two crossings from the $\sigma_i^2$ part of $B$ via an augmented Legendrian cobordism:  If there is a handleslide in the middle of the two crossings and connecting the crossing strands with coefficient $r \in \mathbb{F}^*$, then we apply the $D^-_4$ cobordism from Proposition \ref{prop:D4minus} reducing the number of crossings by $1$.  If not, a clasp move (C3) $\leftarrow$ applies to remove both crossings.
Repeating this procedure inductively, we arrive at $(B_1,\mathcal{C}_1)$ with $B_1$ a permutation braid, and then applying Proposition \ref{prop:ex} we may arrange that $B_1 = B_\pi$ is a simple permutation braid.

\medskip

\noindent {\bf Step 2.}  Move the handleslides of $\calC_1$ into standard form position.

To do this we simply apply Proposition \ref{prop:SRalgorithm}.  Note that since $(B_\pi, \calC_1)$ is full the differential at $x=0$ is $d_0=0$ which is in Barannikov normal form.  Thus,  $\calC_1$ is equivalent to an $SR$-form, $\calC_2$, without any properly ordered handleslide collection at the $x=0$ side of the braid.  

\medskip  

\noindent {\bf Step 3.}  Move the basepoints of $(B_\pi, \calC_2)$ into standard form position.

For each strand of $B_\pi$ we move the homological and spin basepoints to the far left and collect them together so that each strand has at most $1$ basepoint of each type.  To arrange the standard form, we need to move all of the spin basepoints to the top strand.  This is done using a sequence of cobordisms as in Figure \ref{fig:MovingSpin}.  

\end{proof}

\begin{figure}[!ht]
\labellist
\tiny
\pinlabel $j$ [r] at -2 6
\pinlabel $i$ [r] at -2 78
\pinlabel $-1$ [t] at 146 -2
\pinlabel $-1$ [t] at 176 -2
\pinlabel $-1$ [t] at 956 -2
\pinlabel $-1$ [t] at 972 80
\pinlabel $\underbrace{\quad\quad}_{k}$ [t] at 332 80
\pinlabel $\underbrace{\quad\quad}_{l}$ [t] at 332 -2
\endlabellist

\quad 

\centerline{\includegraphics[scale=.45]{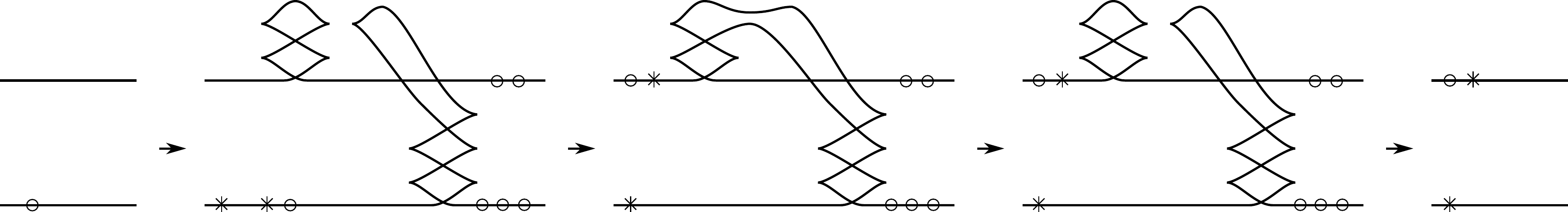}}

\quad

\caption{The figure shows how to move a spin basepoint near $x=0$ up to the next adjacent sheet, introducing in the process an additional homology basepoint with coefficient $-1$ on both sheets.  The Maslov potential on the two sheets is $i$ and $j$, and $k,l>0$ should be chosen so that $i+k = j+l$ mod $\rho$.
There are handleslides (unpictured) and new spin points arising from the $k+l$ (R1) moves that are performed.  When the pair of basepoints, one spin and one homology, are moved together the coefficients of differentials and handleslides remain unchanged.  Thus, the (C2) and (R1) moves can all be reversed at the 4-th arrow removing the extra $k+l$ spin points that were created at the 1-st arrow.    
}

\label{fig:MovingSpin}
\end{figure}

We now complete the proof of Theorem \ref{thm:J101}.

\begin{proof}[Proof of Theorem \ref{thm:J101}]  The statement (3) about the case when $n=0$ and $\rho=1$ is Proposition \ref{prop:cusp} (2).  For (1) and (2) we need to show $\Phi$ is an isomorphism.

\medskip

\noindent {\bf $\Phi$ is bijective:}  
The surjectivity of $\Phi$
was established in Proposition \ref{prop:surj}.  To establish injectivity, assume that two full augmented $n$-tangles $(\Lambda_1, \calC_1)$ and $(\Lambda_2, \calC_2)$ have the same monodromy matrix $M_1=M_2$ and (if $\mbox{Char}\,\mathbb{F} \neq 2$) the same spin invariant.  By Proposition \ref{prop:standardform}, we may assume the $(\Lambda_i, \calC_i)$ are in standard form, and then Corollary \ref{cor:standard} applies to show that $[(\Lambda_1,\calC_1)] = [(\Lambda_2,\calC_2)]$.

\medskip

\noindent {\bf $\Phi$ is a homomorphism:}   Let $[(\Lambda_1,\calC_1)],[(\Lambda_2,\calC_2)] \in \mathit{Cob}^\rho_{\vec{\mu}}(J^1[0,1]; \mathbb{F})$.  From Proposition \ref{prop:continuation} we get that $\graded{M}_{\Lambda_1*\Lambda_2, \mathcal{C}_1*\mathcal{C}_2} = \graded{M}_{\Lambda_2,\calC_2} \graded{M}_{\Lambda_1,\calC_1}$.  In the case that $\mbox{Char} \, \mathbb{F} \neq 2$, we also need to show that the spin invariant $\xi$ defines a homomorphism to $\Z/2$.  For the case $n=0$ this is clear as the operation on the cobordism group is just disjoint union.  When $n \geq 1$, using Proposition \ref{prop:cusp}, we may assume that $\Lambda_1$ and $\Lambda_2$ are Legendrian braids.  Clearly the number of spin basepoints of $(\Lambda_1 *\Lambda_2, \mathcal{C}_1*\mathcal{C}_2)$ is the sum of that of the factors.  As there are no right cusps, we just need to show that 
\[
c+n = (c_1+n) +(c_2 +n) \quad \mbox{mod $2$}
\]
where $c$, $c_1,$ and $c_2$ are  number of components of the closure of $\Lambda_1 *\Lambda_2$, $\Lambda_1$ and $\Lambda_2$.  The number of components, $c(b)$, of the closure of a braid $b \in B_n$ is the number of cycles in the cycle decomposition of the image of $b$ under the standard homomorphism $\theta:B_n \rightarrow S_n$.  Moreover, one computes that $c(b)+n$ mod $2$ is just the parity of $\theta(b)$, and hence is additive under concatenation.  [If $\theta(b)$ factors into disjoint cycles of length $\ell_1, \ldots, \ell_{c(b)}$, then it factors into $(\ell_1-1) + \cdots +(\ell_{c(b)}-1)$ transpositions.  Thus, 
\[
\mbox{parity of $\theta(b)$} = (\ell_1-1) + \cdots +(\ell_{c(b)}-1) = (\ell_1 +\cdots + \ell_{c(b)}) - c(b) = n+ c(b) \, \mbox{ (mod $2$).  ]}  
\]
\end{proof}

\section{Computation of cobordism classes in $J^1S^1$}  \label{sec:J1S1}

In this short section we obtain the following cobordism classification for augmented Legendrians in $J^1S^1$.

\begin{theorem}  \label{thm:S1main}  Assume that either $\rho$ is even, or $\mathit{Char}\,\mathbb{F} =2$ and $\rho \neq 1$.  Then,
two $\rho$-graded augmented Legendrians in $J^1S^1$ over $\mathbb{F}$ are cobordant if and only if 
\begin{itemize}
\item they have the same spin invariant (if $\mathit{Char}\, \mathbb{F} \neq 2$),
\item their fiber cohomologies have the same graded dimension, $\graded{n}:\Z/\rho \rightarrow \Z_{\geq 0}$, and
\item their monodromy matrices are conjugate in $GL(\graded{n}, \mathbb{F})$.
\end{itemize}
Moreover, with $\mathit{Cl}(GL(\graded{n}, \mathbb{F}))$ denoting conjugacy classes in $GL(\graded{n}, \mathbb{F})$ we have bijections:
\begin{enumerate}
\item When $\mbox{Char}\,\mathbb{F} \neq 2$ ($\rho$ is even),
\[
\begin{array}{ccc} \displaystyle \Psi:\mathit{Cob}^\rho(J^1S^1;\mathbb{F}) & \stackrel{\cong}{\rightarrow} & \displaystyle \left(\bigsqcup_{\graded{n}} \mathit{Cl}(GL(\graded{n}, \mathbb{F})) \right)\times \Z/2  
\\  
  \mbox{$[(\Lambda, \mathcal{C})]$} & \mapsto & \left([\graded{M}_{\Lambda, \mathcal{C}}], \xi(\Lambda, \mathcal{C}) \right). 
	\end{array}
\]
\item When $\mbox{Char}\,\mathbb{F} = 2$ and $\rho\neq 1$,
\[
\begin{array}{ccc} \displaystyle \Psi:\mathit{Cob}^\rho(J^1S^1;\mathbb{F}) & \stackrel{\cong}{\rightarrow} & \displaystyle \bigsqcup_{\graded{n}} \mathit{Cl}(GL(\graded{n}, \mathbb{F}))  
\\  
  \mbox{$[(\Lambda, \mathcal{C})]$} & \mapsto & [\graded{M}_{\Lambda, \mathcal{C}}]. 
	\end{array}
\]
\end{enumerate}
\end{theorem}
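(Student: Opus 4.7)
The plan is to deduce Theorem \ref{thm:S1main} from the $J^1[0,1]$ classification (Theorem \ref{thm:J101}) via the closure construction. First, $\Psi$ is well-defined by Propositions \ref{prop:invaS1} and \ref{prop:SpinInvariant}. Surjectivity is immediate: given target data $([M], \xi)$, Proposition \ref{prop:surj} produces a full $n$-tangle realizing $(M, \xi)$, and its closure realizes $([M], \xi)$ under $\Psi$.

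The core step is the reduction that every $(\Lambda, \mathcal{C}) \subset J^1 S^1$ is cobordant to the closure $\widehat{(\Lambda', \mathcal{C}')}$ of a full $n$-tangle $(\Lambda', \mathcal{C}')$. I would prove this by first putting $\mathcal{C}$ in $SR$-form with respect to a generalized normal ruling $\sigma$ (Proposition \ref{prop:SRalgorithm}) and choosing $x_0 \in S^1$ in a Barannikov region. The sheets at $x_0$ then partition into fixed-point sheets of $\sigma$ (on which $d_{x_0}$ vanishes) and paired sheets. I would then construct an augmented cobordism that removes each ruling pair by combining Legendrian isotopies (to make paired sheets vertically adjacent at $x_0$ and to satisfy the ``clean differentials'' hypothesis of the pinch move) with pinch moves (C2). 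After all pairs are eliminated, $d_{x_0} = 0$ and cutting at $x_0$ produces the required full tangle.

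Injectivity then follows by a conjugation argument. Given two Legendrians with the same image under $\Psi$, by the reduction (and, if necessary, pre-concatenating a positive permutation braid to match boundary Maslov potentials) we may write them as $\widehat{T_i}$ for full $T_i \in \mathit{Leg}^\rho_{\vec{\mu}}(J^1[0,1]; \mathbb{F})$ with a common $\vec{\mu}$. If $\graded{M}_{T_2} = P \graded{M}_{T_1} P^{-1}$, Proposition \ref{prop:surj} produces a full tangle $U$ with $\graded{M}_U = P^{-1}$ and trivial spin invariant, so $\graded{M}_{\overline{U}} = P$ and $U * T_1 * \overline{U}$ has monodromy $\graded{M}_{\overline{U}} \graded{M}_{T_1} \graded{M}_U = \graded{M}_{T_2}$ and spin invariant equal to that of $T_2$. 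By Theorem \ref{thm:J101}, $U * T_1 * \overline{U} \sim T_2$ in $J^1[0,1]$, so $\widehat{U * T_1 * \overline{U}} \sim \widehat{T_2}$ in $J^1 S^1$. On the other hand, cyclic rotation in $S^1$ is a Legendrian isotopy $\widehat{U * T_1 * \overline{U}} \sim \widehat{\overline{U} * U * T_1}$, and the $J^1[0,1]$ cobordism $\overline{U} * U \sim \mathbf{1}_n$ yields a cobordism $\widehat{\overline{U} * U * T_1} \sim \widehat{T_1}$. Hence $\widehat{T_1} \sim \widehat{T_2}$.

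The main obstacle will be executing the reduction cleanly. A pinch move (C2) requires vertical adjacency of $S_k, S_{k+1}$ as well as the condition that these sheets do not appear in the image of $d_x$ for any other sheet; these hypotheses may need to be re-established (via Legendrian isotopies of the front and further $SR$-form rearrangements) after each pinch. This is essentially an $S^1$-analogue, with general coefficients, of the cusp-removal algorithm of Proposition \ref{prop:cusp}; no fundamentally new phenomenon should arise from the circle topology since the required moves are local in $x$, but the bookkeeping of normal rulings and handleslide coefficients will require careful attention.
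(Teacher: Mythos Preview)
Your proposal is correct and follows essentially the same route as the paper: reduce to closures of full tangles via $SR$-form and pinching off ruling pairs (this is the paper's Lemma~\ref{lemma:diff}), then run the conjugation/cyclic-rotation argument using Theorem~\ref{thm:J101}. One remark: the ``main obstacle'' you anticipate largely evaporates, since at a point $x_0$ where $d_{x_0}$ is in Barannikov normal form each ruling pair already splits off as a direct summand, so the (C2) hypothesis is automatic once the pair is made vertically adjacent and the nonzero coefficient $r$ is normalized to $1$ by inserting a homology basepoint (this is exactly the content of the paper's Figure~\ref{fig:S1toFull}).
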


The proof will be given after the following lemma.  
Recall the closure operation from Section \ref{sec:fullaug}.

\begin{lemma} \label{lemma:diff}
Any augmented Legendrian in $(\Lambda, \calC) \subset J^1S^1$ is cobordant to the closure of a {\it full} augmented $n$-tangle in $J^1[0,1]$ 
 	whose boundary Maslov potential, $\vec{\mu}= (\mu_1, \ldots, \mu_n)$, is non-decreasing from top to bottom.
\end{lemma}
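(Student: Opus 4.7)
The strategy is to cut $S^1$ at a convenient point to produce an augmented tangle in $J^1[0,1]$, perform cobordisms to render this tangle full, and then conjugate by a positive braid to make its boundary Maslov potential non-decreasing; since the closure operation is compatible with cobordism, the closure of the resulting full tangle will be cobordant to $(\Lambda, \calC)$.

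First I would apply Proposition~\ref{prop:SRalgorithm} to replace $\calC$ by an equivalent $SR$-form MCF with respect to some $\rho$-graded generalized normal ruling $\sigma$ of $\Lambda$. Choose a cut point $x_0 \in I_0 \subset S^1$ lying just after the properly ordered handleslide collection of the $SR$-form; at $x_0$ the differential $d_{x_0}$ is in Barannikov normal form with respect to $\sigma_{x_0}$, so its only nonzero entries $\alpha_{i,j} = \langle d_{x_0} S_j, S_i\rangle$ are indexed by ruling pairs $\{i,j\}$ with $\sigma(j) = i$. Cutting at $x_0$ yields an augmented tangle $(T, \calC|_T) \subset J^1[0,1]$ whose closure recovers $(\Lambda, \calC)$.

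Next, for each ruling pair, I would cap it off by a cobordism near the cut. The normality condition at switches (cf. Figure~\ref{fig:NormC}) forces the pairs of $\sigma_{x_0}$ to be non-interleaved, hence nested, so one can process them inductively from innermost outward. For an innermost pair $\{i,j\}$, the intermediate sheets $S_{i+1}, \ldots, S_{j-1}$ are all fixed-point sheets of $\sigma_{x_0}$; using clasp moves (C3) and Reidemeister/equivalence moves to slide these intermediate strands aside near a boundary of $T$, one arranges $S_i$ and $S_j$ to become adjacent at a slice just inside the boundary. Since $d_{x_0}$ is in Barannikov form, $\{S_i, S_j\}$ is an isolated rank-one subcomplex, so after normalizing $\alpha_{i,j}$ to $1$ via a homology basepoint the pinch move $\stackrel{(C2)}{\leftarrow}$ applies and replaces the pair by a left cusp plus a right cusp, removing both strands from the boundary of $T$. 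Iterating through all pairs produces a tangle $T_\mathrm{full}$ with vanishing boundary differential, i.e. a full augmented $n$-tangle, where $n$ is the number of fixed-point sheets of $\sigma_{x_0}$.

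Finally, to arrange the boundary Maslov potential non-decreasing, let $\beta$ be a positive Legendrian permutation braid sorting the boundary Maslov potential of $T_\mathrm{full}$, and set $T' = \beta \cdot T_\mathrm{full} \cdot \bar\beta$ with $\bar\beta$ the horizontal reflection of $\beta$. Then $T'$ is a full augmented tangle whose boundary Maslov potential is non-decreasing, and its closure $\hat{T'}$ is cobordant to $\hat{T}_\mathrm{full}$: rotate $\beta$ around the circle so that it meets $\bar\beta$, and then iteratively remove successive pairs of inverse crossings using $\stackrel{(C3)}{\leftarrow}$. The main obstacle will be the capping step: manoeuvering intermediate strands to make a ruling pair adjacent without disturbing the Barannikov normal form on the pair requires a delicate combinatorial argument that depends on the local $SR$-form data near $x_0$, and it is the non-interleaved structure of ruling pairs together with the isolation property afforded by the Barannikov form that make the inductive procedure feasible.
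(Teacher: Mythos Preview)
Your overall strategy matches the paper's: put $\mathcal{C}$ in $SR$-form, work at a point $x_0$ where $d_{x_0}$ is in Barannikov normal form, pinch off the paired sheets one at a time, then sort the remaining fixed-point strands by Maslov potential.  The paper does the last step with clasp moves directly at $x_0$ (since $d_{x_0}=0$), while your $\beta \cdot T_{\mathrm{full}} \cdot \bar\beta$ conjugation followed by rotation and (C3) cancellation works equally well.

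There is, however, a genuine error in the capping step.  You claim that the normality condition at switches forces the pairs of $\sigma_{x_0}$ to be non-interleaved.  This holds for normal rulings in $J^1\mathbb{R}$ (where one can induct from the leftmost cusp), but it fails in $J^1S^1$.  Counterexample: the closure of the trivial $4$-braid with Maslov potential $(1,1,0,0)$ and constant differential $d_x S_3 = S_1$, $d_x S_4 = S_2$ is a valid MCF, already in Barannikov form, with interleaved pairing $(1,3)(2,4)$.  So your assertion that the intermediate sheets of an ``innermost'' pair are all fixed-point sheets is unjustified.

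The fix is painless.  The clasp move (C3) only needs $\langle d_x S_{k+1}, S_k\rangle = 0$, and in Barannikov form this holds whenever $(k,k+1)$ is not itself a ruling pair.  Since you are processing the pair $(i,j)$, the sheet $S_i$ is paired with $S_j$ and nothing else, so $\langle d_x S_{i+m}, S_i\rangle = 0$ for all $1 \le m < j-i$; hence you can clasp $S_i$ downward past each intermediate sheet regardless of how those sheets are paired among themselves or with sheets outside $[i,j]$.  After the clasps the pair is adjacent and still an isolated direct summand (this uses only that row $i$, column $i$, row $j$, column $j$ of the Barannikov differential are supported at the single entry $(i,j)$), so the basepoint-normalized pinch applies.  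This is essentially what the paper's Figure~\ref{fig:S1toFull} encodes; drop the non-interleaving claim and argue directly from the Barannikov form.
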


\begin{proof}
Applying Proposition \ref{prop:SRalgorithm} we can arrange that $(\Lambda, \calC)$ is in $SR$-form.  Then, we can choose some $x=x_0$ where $d_{x_0}$ is in Barannikov normal form.
That is, the sheets of $\Lambda$ at $x=x_0$ are divided into some number of pairs, $\{S_i,S_j\}$ satisfying $\partial S_j = r S_i$ for $i<j$ and $r \in \mathbb{F}^*$, and some number of fixed point sheets, $S_k$ with $\partial S_k=0$. 
Now, for a given pair $\{S_i,S_j\}$ 
we apply the cobordism in Figure \ref{fig:S1toFull}.  We repeat this procedure until only the fixed point strands are left.  At this point the differential $d_{x_0}$ vanishes, so that cutting along $x=x_0$ realizes $(\Lambda, \calC)$ as the closure of a full $n$-tangle.  Moreover, since $d_{x_0}=0$ we can apply clasp moves at $x=x_0$ to arrange that $\vec{\mu}$ is non-decreasing.

\end{proof}

\begin{figure}[!ht]
\labellist
\tiny
\pinlabel $S_i$ [r] at -2 92
\pinlabel $S_j$ [r] at -2 2
\pinlabel $r$ [b] at 74 94
\pinlabel $r$ [b] at 240 94
\pinlabel $r^{-1}$ [b] at 384 100
\pinlabel $r$ [b] at 440 100
\pinlabel $1$ [b] at 412 96
\pinlabel $r^{-1}$ [b] at 546 100
\pinlabel $r$ [b] at 620 100
\endlabellist
\includegraphics[scale=.6]{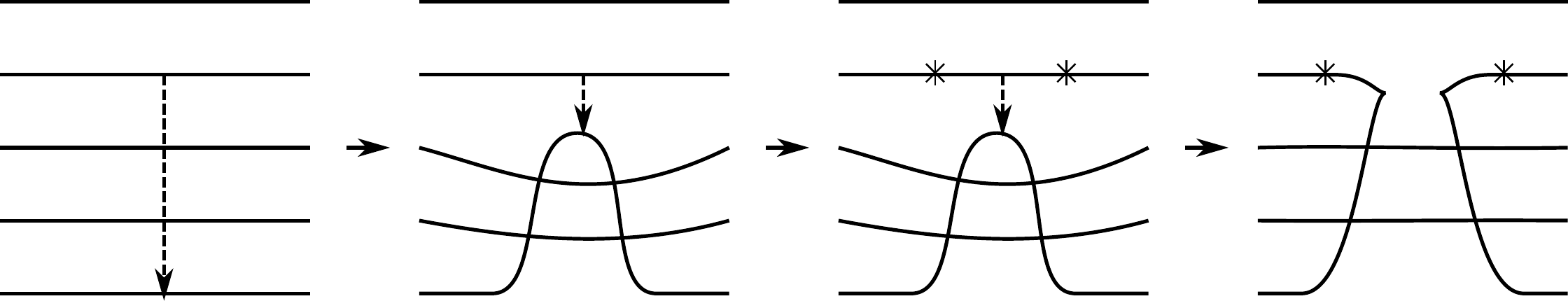}
\caption{The differential satisfies $\langle d_{x_0} S_j, S_i\rangle = r \in \mathbb{F}^*$.  Coefficients of basepoints are indicated with respect to the left to right co-orientation.}
\label{fig:S1toFull}
\end{figure}

\begin{proof}[Proof of Theorem \ref{thm:S1main}]  Surjectivity of $\Psi$ follows from Theorem \ref{thm:J101} since we can take the closure of an $n$-tangle with arbitrary monodromy matrix and spin invariant.  To prove injectivity, suppose that $(\Lambda_1, \mathcal{C}_1)$ and $(\Lambda_2, \mathcal{C}_2)$ have equal spin invariants (if $\mbox{Char}\,\mathbb{F} \neq 2$) and have monodromy matrices  $\graded{M}_{\Lambda_i, \mathcal{C}_i} \in GL(\graded{n}, \mathbb{F})$ that are conjugate,
\[
\graded{M}_{\Lambda_1, \mathcal{C}_1} = P \graded{M}_{\Lambda_2, \mathcal{C}_2} P^{-1}.
\]
Using Lemma \ref{lemma:diff} we can assume that each $(\Lambda_i, \mathcal{C}_i)$ is the closure of a full augmented $n$-tangle in $J^1[0,1]$ that we will denote with the same notation, and that the $\Lambda_i$ have the non-decreasing boundary Maslov potential, $\vec{\mu}$, determined by $\mathbf{n}$.  From Theorem \ref{thm:J101}, there exist two further full augmented $n$-tangles, $(\Lambda_P, \mathcal{C}_P)$ and $(\Lambda_{P^{-1}}, \mathcal{C}_{P^{-1}})$ in $\mathit{Leg}^\rho_{\vec{\mu}}(J^1[0,1]; \mathbb{F})$ with vanishing spin invariants (if $\mbox{Char}\,\mathbb{F} \neq 2$) 
and monodromy matrices $P$ and $P^{-1}$ respectively.  Now, working with cobordism classes in $J^1S^1$,  we can compute
\begin{align*}
(\Lambda_2, \mathcal{C}_2) &=   (\Lambda_P, \mathcal{C}_P) *(\Lambda_{P^{-1}}, \mathcal{C}_{P^{-1}}) * (\Lambda_2, \mathcal{C}_2) \\
 & =  (\Lambda_{P^{-1}}, \mathcal{C}_{P^{-1}}) * (\Lambda_2, \mathcal{C}_2) * (\Lambda_P, \mathcal{C}_P) \\
 & = (\Lambda_1, \mathcal{C}_1)
\end{align*}
where the first and third equalities are from cobordisms in $J^1[0,1]$ that arise from applying Theorem \ref{thm:J101} and the second equality is an isotopy in $J^1S^1$.
\end{proof}

\begin{remark}
When $\rho =1$, the statement (and proof) that cobordism classes of $1$-graded augmented Legendrians in $J^1S^1$ are determined by the dimension of their fiber cohomology, $\mathbf{n}$, and the conjugacy class of the monodromy matrix remains true {\it provided that} $\mathbf{n} \neq 0$.  As in the $J^1[0,1]$ case, cf. Theorem \ref{thm:J101}, any $1$-graded $(\Lambda, \mathcal{C}) \subset J^1S^1$ with  $\mathbf{n}=0$, is cobordant to $(U, \mathcal{C}_b)$ for some $b \in \mathbb{F}$, but it is not known that the $(U,\mathcal{C}_b)$ are non-cobordant.
\end{remark}

We close this section with a comparison between augmented Legendrian cobordism classes in $J^1S^1$ and ordinary Legendrian cobordism classes.  The following is an easy consequence of the work of Arnold \cite{Arnold1, Arnold2, ArnoldWave} and is likely well known, though we have not been able to locate an explicit discussion of the $J^1S^1$ case.\footnote{We remark that \cite{ArnoldWave} does state the cobordism classification for Legendrians in $ST^*\R^2$, but despite the contactomorphism $ST^*\R^2 \cong J^1S^1$ the cobordism classification is {\it not} the same.  The reason is that when working with Legendrians in $ST^*\R^2$ Arnold considers cobordisms that live in $ST^*( \R^2 \times[0,1])$, and this is not contactomorphic (or even homeomorphic) to $J^1(S^1 \times [0,1])$.}

\begin{theorem}
Two oriented Legendrian links in $J^1S^1$ are oriented cobordant if and only if 
\begin{itemize}
\item[(i)] they have the same rotation number (taken to be the sum of the rotation numbers of the components), and 
\item[(ii)] they represent the same homology class in $H_1(J^1S^1)$.
\end{itemize}
\end{theorem}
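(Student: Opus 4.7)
\emph{Plan.} My plan is to handle necessity by a direct topological argument and then to reduce sufficiency to the existence of cobordisms to standard representatives.

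\emph{Necessity.} Let $\Sigma\subset J^1(S^1\times[0,1])$ be an oriented Legendrian cobordism from $\Lambda_0$ to $\Lambda_1$. I would first observe that $[\Lambda_0]=[\Lambda_1]$ in $H_1(J^1S^1)\cong\Z$ follows from $\Sigma$ being an oriented bordism inside $J^1(S^1\times[0,1])$, which deformation-retracts to $S^1\times[0,1]$. For the rotation numbers, I would use the global Legendrian framing of the contact distribution $\xi$ on $J^1S^1$ coming from the vector fields $\partial_y$ and $\partial_x+y\partial_z$; this framing extends across $J^1(S^1\times[0,1])$. The rotation number of $\Lambda_i$ is the winding of its tangent field inside $\xi$ relative to this framing, and since the tangent line field of $\partial\Sigma$ extends to a nowhere-zero section of the trivialized rank-$2$ tangent bundle along $\Sigma$, the two winding numbers must agree. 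One can equivalently check this combinatorially by confirming that the cusp-count formula $r=\tfrac{1}{2}(D-U)$ is preserved by every elementary oriented cobordism move.

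\emph{Sufficiency via standard forms.} For each $(n,r)\in\Z\times\Z$ I would fix a standard representative $\Lambda_{n,r}\subset J^1S^1$: if $n\neq 0$, take a single Legendrian curve of winding number $n$ together with $|r|$ zig-zag stabilizations of the appropriate sign to set the rotation number to $r$; if $n=0$, take $|r|$ small planar zig-zag unknots of matching signs, with the empty link representing $(0,0)$. Two such representatives with the same $(n,r)$ are Legendrian isotopic, hence cobordant. The bulk of the argument is to produce an oriented cobordism from an arbitrary $\Lambda\subset J^1S^1$ with invariants $(n,r)$ to $\Lambda_{n,r}$. My plan is: (a) apply Legendrian isotopy together with oriented saddle moves (joining two arcs of opposite orientation across a common vertical slice by a pair of cusps) to progressively fuse components of $\Lambda$; (b) use cap cobordisms combined with saddles to cancel pairs of null-homologous components whose rotation contributions cancel; (c) after these reductions only a single curve of winding number $n$ (or the empty link if $n=0$) remains, together with the correct net count of zig-zag adjusters to produce rotation number $r$, at which point a further Legendrian isotopy and cusp cancellations identify the result with $\Lambda_{n,r}$. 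Each move in (a)--(c) preserves both $n$ and $r$.

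\emph{Main obstacle.} The hardest step is verifying that an oriented saddle can always be set up between two chosen arcs after Legendrian isotopy: given two arcs of $\Lambda$ of opposite local orientation, I must bring them into position as vertically adjacent parallel strands in the front projection along a common $x$-interval, with no other strand obstructing the saddle. This is a wave-front positioning problem following Arnold's treatment of fronts, and it is essentially where all the geometric content of the theorem resides; in topologically complex cases it may require iterated small isotopies and local saddles before each fusion can be carried out. Once the positioning is granted, the rest of the argument is a direct bookkeeping of $(n,r)$ under the allowed elementary moves.
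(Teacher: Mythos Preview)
Your necessity argument is fine and in the same spirit as the paper's (which simply cites ``moves on slices, or an argument with the Maslov class'').

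For sufficiency, the paper takes a different and more economical route: it \emph{reduces to} Arnold's known classification in $J^1\R$ rather than rebuilding it.  The trick is to first stack $\widehat{1_m}\sqcup\widehat{1_{-m}}$ (which is null-cobordant) above $\Lambda$, and then use oriented saddles between the strands of $1_{-m}$ and strands of $\Lambda$ at a fixed $x$-slice to cut $\Lambda$ down to $1_m\sqcup\Lambda'$ with $\Lambda'\subset J^1(0,1)\cong J^1\R$.  Since $1_{-m}$ supplies exactly the right number of oppositely-oriented strands, the positioning problem you flag as the ``main obstacle'' becomes trivial: at any generic slice the null-homologous link $1_{-m}\sqcup\Lambda$ has equal numbers of left- and right-pointing strands, so adjacent opposite pairs always exist.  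After this reduction, Arnold's theorem for $J^1\R$ finishes the proof in one line.

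Your approach, by contrast, hides the real content inside steps (b) and (c).  A Legendrian ``cap cobordism'' only removes a \emph{standard} unknot (rotation number $0$, one left and one right cusp); an arbitrary null-homologous component cannot be capped until you have already shown it is cobordant to such an unknot---and that is precisely Arnold's $J^1\R$ theorem.  Likewise, your claim in (c) that the process terminates in a single winding-$n$ curve plus zig-zags is exactly what needs to be proved.  So your outline is not wrong, but it is circular unless you either invoke Arnold's result explicitly (in which case the paper's reduction is cleaner) or reprove it in full (in which case your steps (b)--(c) need substantially more detail than ``bookkeeping'').  The positioning issue you identify as the hard step is in fact the easy part.
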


\begin{proof}
It is clear from various perspectives (eg., moves on slices, or an argument with the Maslov class) that both the rotation number and the homology class are cobordism invariants. To prove the converse statement, we make use of Arnold's result that two Legendrians in $J^1\R$ are oriented cobordant if and only if they have the same rotation number.  For Legendrians, $A, B \subset J^1S^1$ we write $A \sqcup B$ for $A$ stacked above $B$ in $J^1S^1$.   Note that any $\Lambda \subset J^1S^1$ with $[\Lambda] = [\widehat{1_m}] \in H_1(J^1S^1)$ for $m \in \Z$ (positive $m$ means oriented right, negative $m$ means oriented left)  is cobordant to $1_m \sqcup \Lambda'$ with $\Lambda' \subset J^1(0,1) \subset J^1S^1$ and $r(\Lambda) = r(\Lambda')$.  To see this, use the cobordisms 
\[
\Lambda  \sim 1_m \sqcup 1_{-m} \sqcup \Lambda \sim 1_m \sqcup \Lambda'
\]
where both cobordisms are constructed  as in Figure \ref{fig:S1toFull}.  [Without MCFs, the cobordism can be done between any two strands with opposite orientation.  Moreover, since $1_{-m} \sqcup \Lambda$ is null homologous the number of strands oriented left is equal to the number oriented right at any generic $x \in S_1$.  Thus, we can remove all of the strands at some $x$ resulting in $\Lambda'$ as required.]
Now, given any $\Lambda_0, \Lambda_1$ with the same homology class and rotation number we have
\[
\Lambda_0 \sim 1_m \sqcup \Lambda_0' \sim 1_m \sqcup \Lambda_1' \sim \Lambda_1
\]
where the second cobordism exists by the classification in $J^1\R$ since $\Lambda_0'$ and $\Lambda_1'$ have the same rotation number.
\end{proof}

As a consequence, there are many examples of augmented Legendrians $(\Lambda_i, \mathcal{C}_i)$, $i=0,1$ that are distinct in $\mathit{Cob}^\rho(J^1S^1;\mathbb{F})$ with $\rho$ even, but such that the underlying Legendrians $\Lambda_0$ and $\Lambda_1$ are oriented cobordant. In fact, in this case $\Lambda_0$ and $\Lambda_1$ are oriented cobordant if and only if their fiber cohomologies have the same Euler characteristic.  [The Euler characteristic determines the homology class of $\Lambda$ in $H_1(J^1S^1) \cong \Z$; the existence of an even graded MCF implies the existence of an even graded generalized normal ruling 
and as in \cite[Section 4]{Sabloff} it follows that the rotation number must be $0$.]  For example, the closure of the full $2$-tangle, $(\mathbf{1}_2, \mathcal{C})$, with boundary Maslov potential $\mu = (0,1)$ is nontrivial in $\mathit{Cob}^\rho(J^1S^1;\mathbb{F})$, while  $\widehat{\mathbf{1}_2}$ is oriented cobordant to $\emptyset$.

\section{Augmented Legendrian surfaces with prescribed monodromy representation}  \label{sec:representation}

As an application of the cobordism classification, we construct augmented Legendrian surfaces having arbitrary monodromy representations.  
Let $M$ be a connected, compact surface, and fix a base point $x_0 \in \mathit{Int}(M)$.  We allow for cases where $\partial M \neq 0$ and/or $M$ is non-orientable.  Recall the definition of the monodromy representation from Section \ref{sec:2-4}.

\begin{theorem}  Assume that $\rho$ is even or $\Char \mathbb{F} =2$.  
Let $\graded{n}: \mathbb{Z}/\rho \rightarrow  \Z_{\geq 0}$ with $n =\sum_l \graded{n}(l)$ finite, and let $\Phi: \pi_1(M, x_0) \rightarrow GL(\graded{n}, \mathbb{F})^\mathit{op}$ be a group homomorphism.  Then, there exists a $\rho$-graded augmented Legendrian surface $(\Sigma, \mathcal{C}) \subset J^1M$ whose monodromy representation satisfies $\Phi_{\Sigma, \mathcal{C}} = \Phi$.
\end{theorem}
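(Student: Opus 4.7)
The plan is to construct $(\Sigma, \mathcal{C})$ cell-by-cell using a CW decomposition of $M$, combining the realization result Proposition~\ref{prop:surj} on $1$-cells with the cobordism classification Theorem~\ref{thm:01} to fill in over $2$-cells. We may assume $n > 0$ (else take $\Sigma = \emptyset$), and the hypothesis on $(\rho, \Char \mathbb{F})$ is precisely what is needed to apply Theorem~\ref{thm:01}.

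First I would choose a CW structure on $M$ with a single $0$-cell at $x_0$, $1$-cells $e_1, \ldots, e_k$ representing a generating set $\gamma_1, \ldots, \gamma_k$ of $\pi_1(M, x_0)$, and $2$-cells $f_1, \ldots, f_\ell$ whose attaching words $r_j = \gamma_{i_1}^{\varepsilon_1} \cdots \gamma_{i_{s_j}}^{\varepsilon_{s_j}}$ give a complete set of relations. Fix a boundary Maslov potential $\vec{\mu} \in (\mathbb{Z}/\rho)^n$ realizing the graded dimension $\graded{n}$. Over a small closed disk $U_0$ around $x_0$, place the trivial Legendrian consisting of $n$ horizontal flat sheets at distinct $z$-values with Maslov potentials given by $\vec{\mu}$, equipped with the trivial MCF (no differentials, handleslides, or basepoints).

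Next, for each $1$-cell $e_i$, choose a tubular neighborhood $N_i \cong e_i \times (-\delta, \delta)$ meeting $U_0$ only near the endpoints of $e_i$. By Proposition~\ref{prop:surj}, construct a full augmented Legendrian $n$-tangle $T_i \in \mathit{Leg}^\rho_{\vec{\mu}}(J^1[0,1]; \mathbb{F})$ with graded monodromy matrix $\graded{M}_{T_i} = \Phi(\gamma_i)$, and with $\xi(T_i) = 0$ when $\Char \mathbb{F} \neq 2$. Place $T_i$ as the product MCF on $N_i$, with the $[0,1]$ parameter of $T_i$ matching the positive direction along $e_i$. The restriction to $N_i \cap U_0$ is the trivial $n$-tangle $\mathbf{1}_n$, matching the $n$ sheets of $\Sigma|_{U_0}$, so the MCF extends consistently over a neighborhood of the $1$-skeleton.

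Now I must extend the MCF over each $2$-cell $f_j$. Parametrize $f_j$ as the closed unit disk with center $c_j$, pick a small central closed disk $D_j \ni c_j$, and let $A_j = f_j \setminus \mathrm{int}(D_j)$ be the closed annulus. Over $D_j$, place $n$ horizontal flat Legendrian disks (the trivial filling). Cut $A_j$ along a radial arc to obtain a rectangle $R_j \cong [0,1] \times [0,1]$, with $[0,1] \times \{1\} \subset \partial f_j$ carrying the concatenated tangle $T^\partial_j := T_{i_1}^{\varepsilon_1} \ast T_{i_2}^{\varepsilon_2} \ast \cdots \ast T_{i_{s_j}}^{\varepsilon_{s_j}}$ (where $T^{-1}$ denotes the inverse in $\mathit{Cob}^\rho_{\vec{\mu}}(J^1[0,1]; \mathbb{F})$), and $[0,1] \times \{0\}$ carrying $\mathbf{1}_n$ from $\partial D_j$. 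Since $\Phi$ is a homomorphism and $r_j = 1$ in $\pi_1(M, x_0)$, we have $\graded{M}_{T^\partial_j} = \Phi(r_j) = I$; and since each $\xi(T_i) = 0$, also $\xi(T^\partial_j) = 0$. Thus by Theorem~\ref{thm:01}, $T^\partial_j$ is cobordant to $\mathbf{1}_n$, and this cobordism realizes the desired MCF on $R_j$. The vertical sides $\{0\} \times [0,1]$ and $\{1\} \times [0,1]$ both carry $n$ parallel strands (by the definition of cobordism of full tangles in Section~\ref{sec:equivcob}), so the cobordism glues smoothly along the radial cut to fill $A_j$. Combined with the trivial filling over $D_j$, this completes the extension over $f_j$.

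Once all cells have been filled, verifying $\Phi_{\Sigma, \mathcal{C}} = \Phi$ reduces to checking generators: a loop representing $\gamma_i$ may be chosen to traverse $e_i$ once while staying in $U_0$ elsewhere, where the MCF has no base singularities, so its continuation map is just that of $T_i$, namely $\Phi(\gamma_i)$. The main obstacle is geometric gluing at the $2$-cell step: the key observation making this work is that any cobordism in $\mathit{Cob}^\rho_{\vec{\mu}}(J^1[0,1]; \mathbb{F})$ is by definition a product of $n$ parallel strands on its left and right edges, so a rectangular cobordism can always be wrapped around an annulus. A secondary issue is keeping the spin data coherent around every $2$-cell boundary simultaneously; this is handled uniformly at the outset by imposing $\xi(T_i) = 0$ for every $i$, which forces $\xi(T^\partial_j) = 0$ for every $j$ regardless of the specific attaching words.
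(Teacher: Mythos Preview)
Your argument is correct and follows essentially the same approach as the paper: build the augmented Legendrian over a CW/handle decomposition by placing trivial sheets over the $0$-cell, product tangles realizing $\Phi(\gamma_i)$ (with zero spin invariant) over the $1$-cells via Proposition~\ref{prop:surj}, and then fill each $2$-cell by applying Theorem~\ref{thm:J101} to the boundary tangle, which has identity monodromy and zero spin. The only cosmetic difference is that the paper phrases the $2$-cell step as finding a cobordism in $J^1(S^1\times[0,1])$ from $(\Lambda_\partial,\mathcal{C}_\partial)$ to $\mathbf{1}_n$, while you cut the annulus radially and work directly in $J^1([0,1]\times[0,1])$; these are equivalent since the cobordism is trivial on its vertical edges. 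One small notational point: when you write $T_i^{-1}$, what actually sits over a $1$-cell traversed backwards is the \emph{reverse} tangle $\overline{T_i}$, which is the specific representative of the inverse cobordism class---this is what you need for the geometric gluing, and its monodromy and spin are as required.
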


\begin{proof}
Fix a handle decomposition of $M$ with a single $0$-handle having $x_0$ at its center.  
Above the $0$-handle, $h_0$, of $M$ define $\Sigma$ so that the front projection consists of $n$ non-singular, non-intersecting sheets, $S_1, \ldots, S_n$, with Maslov potential $\vec{\mu} = (\mu(S_1), \ldots, \mu(S_n))$ as specified by $\graded{n}$, eg. there are $n_l$ sheets with $\mu(S_i) = l$ and the Maslov potential is non-decreasing as $i$ increases.  Define $\mathcal{C}$ above $h_0$ to have $d_x \equiv 0$.  

For each $1$-handle $h^i_1$ with core curve $\sigma_i$, we apply Theorem \ref{thm:J101} to produce a full augmented, $n$-tangle $(\Lambda_i, \mathcal{C}_i) \subset J^1\sigma_i$, with boundary Maslov potential $\vec{\mu}$ and vanishing spin invariant, having monodromy matrix $\graded{M}_{\Lambda_i, \mathcal{C}_i} = \Phi([\sigma_i])$.  Define $(\Sigma, \mathcal{C})$ above $h^i_1$ to be the product cobordism under the identification $J^1h^i_1 \cong J^1(\sigma_i\times[0,1])$.  

For each $2$-handle, $h_2 \cong D^2$, we already have $(\Sigma, \mathcal{C})$ defined above $\partial D^2$.  Moreover, the monodromy matrix of $(\Lambda_{\partial}, \mathcal{C}_{\partial}) := (\Sigma, \mathcal{C})|_{\partial h_2}$ is the identity.  [The attaching map $\gamma:S^1 \rightarrow M\setminus \mathit{Int}(h_2)$ is homotopic in $M \setminus \mathit{Int}(h_2)$ to a loop $\widetilde{\gamma}$ that is product of the $\sigma_i$ and their inverses.  Since  the continuation isomorphisms $\phi_{\sigma_i}$ along these loops have matrices $\Phi([\sigma_i])$ we get from Proposition \ref{prop:continuation} that the matrix of $\phi_{\gamma}$ is $\Phi([\widetilde{\gamma}])$.  The latter is the identity matrix since $\gamma$ is null homotopic in $M$.]  
In addition, the spin invariant of $(\Lambda_{\partial}, \mathcal{C}_{\partial})$ viewed as a full $n$-tangle is $0$.  
Therefore, another application of Theorem \ref{thm:J101} produces a cobordism $(\Sigma_A, \mathcal{C}_A)$ from $(\Lambda_{\partial}, \mathcal{C}_{\partial})$ to the identity augmented $n$-tangle.  We then use $(\Sigma_A, \mathcal{C}_A)$ to fill $(\Sigma, \mathcal{C})$ into an annular neighborhood $A \subset h_2$ of $\partial h_2$, and complete the construction of $(\Sigma, \mathcal{C})$  by placing $n$ non-intersecting sheets with vanishing $d_x$ in the remaining disk $h_2 \setminus A$.  By construction, on the generating set $[\sigma_i] \in \pi_{1}(M,x_0)$ we have $\Phi_{\Sigma, \mathcal{C}}([\sigma_i]) = \graded{M}_{\Lambda_i, \mathcal{C}_i} = \Phi([\sigma_i])$.
\end{proof}

\begin{remark}  In the case that $\mbox{Char}\,\mathbb{F}=2$, the augmented Legendrian $(\Sigma, \mathcal{C})$ can be constructed so that $\Sigma$ is a Legendrian {\it $n$-weave} in the terminology of Casals and Zaslow \cite{CZ}.  To see this, note that above the $1$-skeleton of $M$ the $(\Lambda_i, \mathcal{C}_i)$ can be chosen with $\Lambda_i$ a Legendrian braid (in fact, by Propositions \ref{prop:cusp} and \ref{prop:standard} a standard form, permutation braid).  Moreover, since we start with an augmented Legendrian braid above $\partial h_2$, the cobordism used to fill in $(\Sigma, \mathcal{C})$ over the $2$-handle can be constructed using only Proposition \ref{prop:standard}.  The underlying Legendrian cobordism involves only clasp moves and $D_4^-$ singularities at Step 1 and Step 2, and since there are no spin base points the Legendrian remains unchanged during Step 3.  
\end{remark}

For example, Figure \ref{fig:TorusEx} presents a pair of three sheeted augmented Legendrian weaves over $\mathbb{F}_2$ in $J^1T^2$  with monodromy representations, $\Phi_1$ and $\Phi_2$, satisfying
\[
\Phi_1(\mathbf{l}) = \left[ \begin{array}{ccc} 1 & 0 & 0 \\ 1 & 1 & 1 \\ 0 & 0 & 1 \end{array} \right], \quad \Phi_1(\mathbf{m}) = \left[ \begin{array}{ccc} 0 & 0 & 1 \\ 0 & 1 & 0 \\ 1 & 0 & 0 \end{array} \right], \quad 
\Phi_2(\mathbf{l}) = \left[ \begin{array}{ccc} 0 & 1 & 1 \\ 0 & 1 & 0 \\ 1 & 1 & 0 \end{array} \right], \quad \Phi_2(\mathbf{m}) = \left[ \begin{array}{ccc} 0 &  1 & 1 \\ 1 & 0 & 1 \\ 0 & 0 & 1 \end{array} \right] 
\] 
where $\mathbf{l}$ and $\mathbf{m}$ are the generators of $\pi_1(T^2, (0,0))$ corresponding to the horizontal and vertical edges of the square respectively.

\begin{figure}[!ht]

\quad

\labellist
\tiny
\pinlabel $\sigma_1$ [r] at -2 68
\pinlabel $\sigma_2$ [r] at -2 150
\pinlabel $\sigma_1$ [r] at -2 226

\pinlabel $(1,2)$ [t] at 60 -2
\pinlabel $\sigma_1$ [t] at 136 -2
\pinlabel $(1,2)$ [t] at 202 -2
\pinlabel $(2,3)$ [t] at 256 -2

\pinlabel $\sigma_1$ [l] at 310 68
\pinlabel $\sigma_2$ [l] at 310 150
\pinlabel $\sigma_1$ [l] at 310 226

\pinlabel $(1,2)$ [b] at 60 310
\pinlabel $\sigma_1$ [b] at 136 310
\pinlabel $(1,2)$ [b] at 202 310
\pinlabel $(2,3)$ [b] at 256 310

\pinlabel $(1,2)$ [br] at 170 114
\pinlabel $(1,3)$ [bl] at 222 90
\pinlabel $(1,2)$ [tr] at 186 198
\pinlabel $(1,3)$ [l] at 258 200

\pinlabel $(1,2)$ [t] at 450 -2
\pinlabel $\sigma_1$ [t] at 502 -2
\pinlabel $\sigma_2$ [t] at 562 -2
\pinlabel $\sigma_1$ [t] at 622 -2
\pinlabel $(1,2)$ [t] at 668 -2

\pinlabel $(2,3)$ [r] at 402 256
\pinlabel $(1,3)$ [r] at 402 188
\pinlabel $\sigma_1$ [r] at 402 92

\pinlabel $(1,2)$ [b] at 450 310
\pinlabel $\sigma_1$ [b] at 502 310
\pinlabel $\sigma_2$ [b] at 562 310
\pinlabel $\sigma_1$ [b] at 622 310
\pinlabel $(1,2)$ [b] at 668 310

\pinlabel $(2,3)$ [l] at 716 256
\pinlabel $(1,3)$ [l] at 716 188
\pinlabel $\sigma_1$ [l] at 716 92

\pinlabel $(1,2)$ [tl] at 516 182
\pinlabel $(1,3)$ [t] at 518 254
\pinlabel $(1,2)$ [tr] at 614 160
\pinlabel $(1,3)$ [br] at 622 214

\endlabellist

\centerline{\includegraphics[scale=.6]{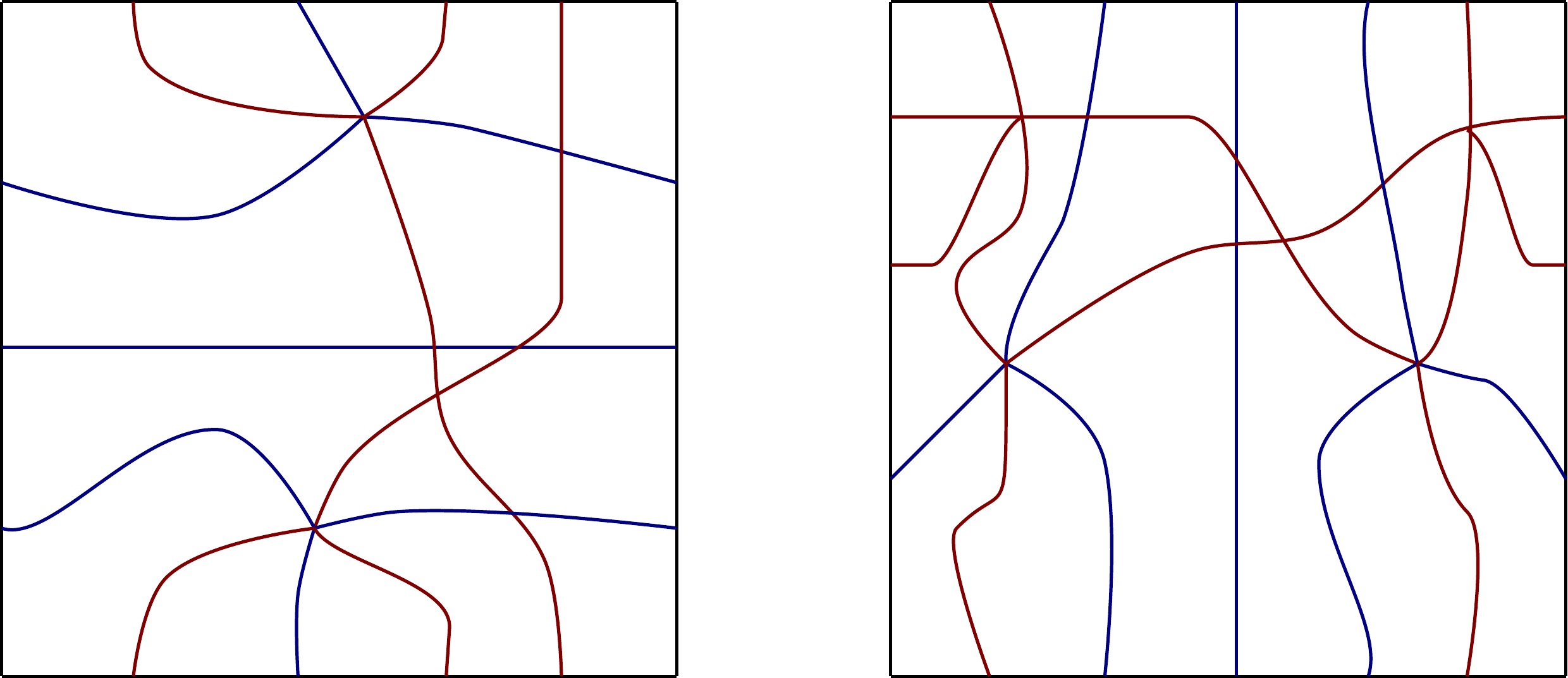}}

\quad 

\caption{A blue curves labeled with $\sigma_i$ denotes a crossing arc between sheets $i$ and $i+1$.  A red curve labeled with $(i,j)$ denotes an $(i,j)$-handleslide with coefficient $1\in \mathbb{F}_2$.}
\label{fig:TorusEx}
\end{figure}

\bibliographystyle{abbrv}
\bibliography{PR}

\end{document}